%% file: Final.tex
\documentclass[11pt,letterpaper]{amsart}

\usepackage[utf8]{inputenc}
\usepackage[T1]{fontenc}
\usepackage{amsmath, amssymb, amsthm, mathtools}
\usepackage{bbm} 
\usepackage{xcolor}

\usepackage[colorlinks=true, linkcolor=blue, citecolor=blue, urlcolor=blue]{hyperref}

\usepackage[margin=1.2in]{geometry}
\usepackage{tikz}
\usetikzlibrary{arrows.meta}

\DeclareMathOperator{\dist}{dist}
\DeclareMathOperator{\supp}{supp}

\DeclareMathOperator*{\essinf}{ess\,inf}
\providecommand{\abs}[1]{\left \lvert#1 \right \rvert} 
\providecommand{\norm}[1]{\left \lVert#1 \right \rVert}

\newcommand{\CM}{{\mathbb C}}
\newcommand{\NM}{{\mathbb N}}
\newcommand{\RM}{{\mathbb R}}

\newcommand{\TM}{{\mathbb T}}
\newcommand{\ZM}{{\mathbb Z}}

\theoremstyle{plain}
\newtheorem{theorem}{Theorem}[section]
\newtheorem{proposition}[theorem]{Proposition}
\newtheorem{lemma}[theorem]{Lemma}
\newtheorem{corollary}[theorem]{Corollary}

\newtheorem{assumption}{Assumption}

\theoremstyle{definition}
\newtheorem{definition}[theorem]{Definition}

\theoremstyle{remark}
\newtheorem{remark}[theorem]{Remark}

\theoremstyle{definition}
\newtheorem{example}[theorem]{Example}

\numberwithin{equation}{section}

\begin{document}
\sloppy
\title{(Sharp arithmetic) localization beyond transversality}

\author{Svetlana Jitomirskaya}
\address{Department of Mathematics, University of California,
Berkeley, California 94720}
\email{sjitomi@berkeley.edu}

\author{Netanel Levi}
\address{Department of Mathematics, University of California,
Irvine, CA 92697}
\email{netanell@uci.edu}

\author{Tom Stoiber}
\address{Department of Mathematics, University of California,
Berkeley, California 94720}
\address{Simons Institute for the Theory of Computing, Berkeley, CA 94720}
\email{tstoiber@berkeley.edu}

\begin{abstract}
We prove sharp arithmetic localization for one-dimensional quasiperiodic potentials with jump discontinuities, beyond uniform transversality.  Our results include every non-constant $C^1$ monotone sampling function, even with arbitrarily large flat regions, and also certain  non-monotone potentials. A key new mechanism is a symbolic unique-continuation principle: first-return words to the region where $f'>0$ have no common Dirichlet resonance, forcing eigenfunction mass to transfer away from flat or negatively sloped regions and recovering quantitative monotonicity of finite-volume eigenvalue branches.
\end{abstract}

\maketitle

\section{Introduction}
In this work we study localization for quasiperiodic Schr\"odinger
operators in the absence of quantitative monotonicity or transversality
assumptions on the sampling function. We consider operators acting on
$\ell^2(\mathbb Z)$ of the form
\[
 (H(x)\psi)(n)=\psi(n-1)+\psi(n+1)+f(x+n\alpha)\psi(n),
\]
where $f$ is $1$-periodic, and $x\in\mathbb R$ and
$\alpha\in\mathbb R\setminus\mathbb Q$ are the phase and frequency,
respectively.

A basic difficulty in proving localization is to control the set of
parameters for which a finite-volume operator has an eigenvalue close
to a prescribed energy. Quantitative monotonicity is one of the main
ways to obtain this control. In the standard Anderson model, increasing
a single site potential gives a positive rank-one perturbation. With a regular single-site law, this underlies the
usual Wegner bounds and spectral-averaging arguments. Its absence is
a known difficulty for
nonmonotone random operators, requiring special efforts to overcome, see Elgart--Shamis--Sodin~\cite{ess}.

For quasiperiodic operators, all potential values depend on the same
phase and cannot be averaged independently. Quantitative monotonicity
or transversality of $f$ provides a way to control resonances through the
dependence on this phase. One way to ensure transversality is Łojasiewicz type inequality for analytic $f,$  a property significantly exploited in various arguments since the work of Bourgain and collaborators (see \cite{bbook} and references therein.)   For smooth potentials beyond the analytic category, quantitative transversality has also played a role in all existing general proofs of quasiperiodic localization known to us, either in the localization argument itself or in establishing positivity of the Lyapunov exponent used as an input. This includes Gevrey \cite{eliasson, K}, and finitely smooth or cosine-type potentials \cite{CSZ,CSZ1,CSZ2,FSW,FV,GYZa2, sin}.

Transversality assumptions exclude in particular flat intervals. If $f$ is constant on an
interval, the
set of phases for which $f(x)$ lies within $\varepsilon$ of that
value has measure bounded away from zero as $\varepsilon\to0$.
Thus quantitative spectral variation cannot be read off from the
sampling function itself.

Here we consider monotone sampling functions, namely $1$-periodic
functions that are nondecreasing on $[0,1)$. For this class,
localization at all coupling strengths and Diophantine frequencies
was proved in~\cite{JK1}, with an extension to unbounded potentials
in~\cite{Kach}, under the assumption that $f$ is {\it anti-Lipschitz}:
\[
 f(y)-f(x)\geq\gamma(y-x),\qquad 0\leq x<y<1,
\]
for some $\gamma>0$. The same assumption is used in the sharp
arithmetic localization theorem of~\cite{JK2}. This strong form of transversality forces $f$ to grow at a definite linear rate everywhere, and in particular excludes all monotone potentials with flat pieces. This condition has been crucial for all proofs.

Here we prove localization without this quantitative monotonicity
assumption for every bounded, nonconstant $C^1$ monotone sampling
function, throughout the sharp arithmetic localization regime.
In particular, the sampling function may even be constant on most of the
period. The result holds for every irrational frequency.

To state the result, let $L(E)$ denote the Lyapunov exponent and let
\begin{equation}\label{eq:beta-def}
     \beta(\alpha):=\limsup_{n\to\infty}\frac{1}{q_n}\log q_{n+1},
 \end{equation}
 where $p_n/q_n$ are the continued fraction convergents of $\alpha$.
 
 For quasiperiodic operators exponential almost repetitions at $q_n$ corresponding to $\alpha$ with $\beta(\alpha)>0,$ are a delocalizing force that is in competition with the localizing force coming from positive  $L(E)$. 

 We say that operator exhibits \emph{sharp arithmetic transition in frequency} if  the set
     $
     \{E\in\mathbb{R}:L(E)>\beta(\alpha)\}
    $ 
     can only support pure point spectrum with exponentially decaying eigenfunctions, and the set $\{E:0<L(E)<\beta(\alpha)\}$ supports singular continuous spectrum. 
 
\begin{theorem}\label{thm:main1}
 Let $f:[0,1)\to\mathbb{R}$ be a bounded $C^1$ monotone function which is non-constant, and let $\alpha\in\mathbb{R}\setminus\mathbb{Q}$. Then the corresponding quasiperiodic family $H(x)$ exhibits a sharp arithmetic spectral transition in frequency for a.e. $x$. 
\end{theorem}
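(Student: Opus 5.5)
The proof has two parts, matching the two halves of the definition of a sharp arithmetic transition. The singular continuous half, namely that $\{E:0<L(E)<\beta(\alpha)\}$ carries no point spectrum for a.e.\ $x$, does not use monotonicity. I will obtain it by the standard Gordon-type argument. Since $f$ is bounded, the transfer matrices are bounded, and $\|A_{q_n}(x)\|\le e^{(L+\epsilon)q_n}$ holds uniformly by upper semicontinuity. The almost-repetition $|f(x+(n+q_k)\alpha)-f(x+n\alpha)|$ fails to be small only when the orbit straddles a jump or a steep region of $f$. For a.e.\ $x$, and along a subsequence of $q_k$ with $q_{k+1}\ge e^{(\beta-\epsilon)q_k}$, such straddling occurs for no $n$ in a window of length about $q_{k+1}^{1-\epsilon}$. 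On that window the difference is of order $q_{k+1}^{-1+\epsilon}$, which beats $e^{-(L+\epsilon)q_k}$ once $L<\beta$. The Gordon lemma then excludes $\ell^2$ solutions. Absolute continuity is excluded by Kotani/Damanik--Killip-type results for discontinuous (or, for continuous $f$, by $L>0$) potentials, or simply because $L>0$ forbids a.c.\ spectrum.

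The main part is localization on $\{L(E)>\beta\}$. Here I will follow the Jitomirskaya--Kachkovskiy scheme of \cite{JK1,JK2}. Take a generalized eigenfunction $\psi$ at energy $E$ that is polynomially bounded. Positivity of $L$ together with Avalanche-type/large deviation upper bounds, which hold for monotone $f$ by the argument of \cite{JK2} (they need only boundedness, not transversality), show the following: a point $n$ is \emph{regular}, meaning the Green's function on an interval of length about $q_k$ containing $n$ decays at rate $L-\epsilon$, unless $E$ lies exponentially close to an eigenvalue of $H_{[a,a+q_k)}(x)$ for some nearby $a$.

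The key step replaces the anti-Lipschitz eigenvalue-separation lemma. In \cite{JK2}, monotonicity of $f$ makes each finite-volume eigenvalue $E_j(x+a\alpha)$ monotone in the phase. Anti-Lipschitz upgrades this to derivative at least $\gamma$, so two boxes whose phases differ by at least $\|q\alpha\|$ cannot both resonate. Without $\gamma$ I would prove a quantitative substitute: for a normalized eigenvector $\phi$ of $H_{[0,N)}(y)$, Hellmann--Feynman gives
\[
\partial_y E_j=\sum_n f'(y+n\alpha)|\phi(n)|^2 .
\]
So it suffices to show that every such eigenvector with $|E_j-E|$ small carries mass at least $e^{-\epsilon N}$ on sites where $f'\ge c>0$. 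This is the symbolic unique-continuation principle from the abstract. Let $I$ be the set where $f'>0$. The orbit returns to $I$ with bounded gaps, by the three-gap theorem applied to a fixed subinterval of $I$. Between returns, $f$ sits in flat or decreasing regions, and the resulting first-return words are finitely many Dirichlet blocks. If $\phi$ had mass less than $e^{-\epsilon N}$ on all returns, then on each block it would be an approximate Dirichlet eigenvector at the same energy $E$. I will show that the finitely many first-return words have no common Dirichlet eigenvalue. If they did, transferring across blocks would force the monodromy of each word to fix a common direction, and finitely many continuous constraints on $f$ can be ruled out using non-constancy. This gives a uniform lower bound on the mass, hence $\partial_y E_j\ge e^{-\epsilon N}$. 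That subexponential bound is affordable against $e^{-(L-\beta)q}$. Non-differentiability at jumps only helps, since jumps are upward.

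With separation in hand, I will rerun the \cite{JK2} argument. Resonant boxes at scale $q_k$ are close to a single phase. Using $L>\beta$ and the continued fraction structure, a point cannot be singular relative to two well-separated boxes for a.e.\ $x$; the exceptional phases are handled by a Borel--Cantelli argument over rational approximations. The block-resolvent expansion then yields exponential decay of $\psi$, and Schnol's theorem gives pure point spectrum. I expect the main obstacle to be making the no-common-resonance step quantitative and uniform in $E$ and in $N$. Compactness in $E$ over the finite word set gives a uniform constant, but controlling error propagation across roughly $N$ blocks without exponential loss will require arguing in terms of transfer-matrix angles rather than by a naive union bound.
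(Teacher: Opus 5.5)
Your overall architecture matches the paper's: JK2-type localization driven by a uniform Hellmann--Feynman lower bound on eigenvalue branches, obtained from a mass estimate on good sites. However, the proposal has a genuine gap at exactly the step you single out, namely the absence of a common Dirichlet eigenvalue for the blocks between returns to the good region. Your justification (``the monodromy of each word fixes a common direction, and finitely many continuous constraints on $f$ can be ruled out using non-constancy'') does not work, for two reasons.

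\begin{itemize}
\item The blocks are not finitely many words. Between returns to a subinterval $G\subset\{f'>0\}$ the orbit also visits non-flat regions, and $f$ may take a continuum of values on $\{f'=0\}$. So one really has piecewise continuous families $H_{[1,\ell_G(x)-1]}(x)$, $x\in G$.
\item $f$ is fixed, so there is no genericity in $f$ to exploit, and non-constancy cannot be what excludes a coincidence. The paper notes that common resonances do occur when endpoints lie on a common $\alpha$-orbit, so an arithmetic input is indispensable.
\end{itemize}

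The paper proceeds in three steps.
\begin{itemize}
\item It chooses $G=[a,b)$ inside $\{f'\ge\gamma_+\}$ with $a,b$ on $\alpha$-orbits distinct from each other and from that of the jump at $0$.
\item If a common eigenvalue existed, Lemma~\ref{lemma:common_eigenvector} applies: a constant eigenvalue along a monotone family without crossings forces, via Hellmann--Feynman, an $x$-independent eigenvector vanishing on every site whose potential varies. Hence only finitely many plateau values are sampled, and $f$ may be replaced by a function locally constant outside $G$ without destroying the common eigenvalue (Proposition~\ref{prop:monotone_no_eigenvalue}).
\item This reduces everything to return words of a finite interval partition with endpoints on distinct orbits. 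Theorem~\ref{theorem:nodirichlet} shows these have no common Dirichlet eigenvalue for \emph{any} real assignment of values to letters.
\end{itemize}
The proof of Theorem~\ref{theorem:nodirichlet} is an induction on the number of letters. One enlarges $G$ through finitely many transition points, and at most one word changes at each. Every new word $\tilde w$ satisfies $w=\tilde w\,l\,\tilde v$ (or $\tilde v\,l\,\tilde w$) with $w,\tilde v$ old words. So, by Lemma~\ref{lemma:stripping}, a common eigenvalue would persist until $G$ absorbs the adjacent interval and a letter disappears, contradicting the induction hypothesis. In the one-letter base case, the coprime return times $r_i$ give disjoint Dirichlet spectra $\{2\cos(k\pi/r_i)\}$. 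None of this is present in your sketch.

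The quantitative part also needs correcting. You aim only for $\partial_yE_j\ge e^{-\epsilon N}$ and call it affordable, but the JK2 scheme you propose to rerun uses $\mathcal{A}_1$ with $\gamma$ independent of $n$. The lower bound on $P_n^{\mathrm{small}}$ counts intersection points within distance $B_-/\gamma$ of a phase, with $B_-$ only polynomially small in $q(n)$, and the Lipschitz bound on the IDS is $\gamma^{-1}$. With $\gamma=e^{-\epsilon n}$ both become vacuous.

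On the other hand, the difficulty you anticipate, error propagation across $\sim N$ blocks, does not arise. By compactness, finitely many choices suffice so that for every $E$ in the energy window some block type with $\dist(E,\sigma(H_{\mathrm{block}}))\ge\delta$ recurs with bounded gaps. Then each site is compared to a good site at bounded distance via a bounded number of transfer steps and the Poisson formula on that block (Lemma~\ref{lemma:poisson_mass_transfer}). Summing with bounded multiplicity puts a fixed fraction $\eta/(1+\eta)$ of the mass on good sites. Hence $E_k'\ge\frac{\eta}{1+\eta}\gamma_+$ uniformly in $n$, which is $\mathcal{A}_1$; $\mathcal{A}_2$ is automatic for monotone $f$ since $f(0^+)<f(1^-)$. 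Proposition~\ref{prop:main} then applies directly. For the singular continuous half you can simply invoke the sharp Gordon-type result of \cite{JK3} for BV potentials rather than re-derive it.
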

\begin{remark}\begin{enumerate}
\item As in \cite{JK2}, the localization part of the conclusion holds for all $x.$
\item
    The $C^1$ assumption in Theorem \ref{thm:main1} is imposed here only for simplicity. The actual hypotheses used in the proof are weaker, and allow points of discontinuity. See Section \ref{sec:ev_branches} and Theorem \ref{thm:monotonederivative_implies_A1A2} for more details; also see Theorem \ref{thm:main2} below.

\item Sharp arithmetic transition in frequency was originally conjectured for the almost Mathieu operator (AMO), $f(x)=2\lambda\cos(2\pi x)$ in \cite{Jit2}.  This conjecture was subsequently established in \cite{AYZ,JL2} (see also \cite{GYZa}), after a sequence of important earlier works \cite{AJ,Jit1, Jit3,LY}. The spectral behavior on the critical line
$|\lambda|=e^{\beta(\alpha)}$ was also investigated in~\cite{AJZ}. 
\item It was conjectured in \cite{JL2} that this sharp transition is universal for one-frequency quasiperiodic operators. It was proved for the Maryland model, $f(x)=\lambda\tan(\pi(x-\tfrac{1}{2}))$, in \cite{JL1,Simon1, HJY}, and for anti-Lipshitz monotone potentials \cite{JK2}. More recently, it was proved for all Type I operators \cite{GJ}, as well as in several other settings \cite{GYZ,HX}, see also \cite{KZ}. As mentioned, quantitative transversality is an important feature of all these families.

\item The absence of point spectrum in $\{E:L(E)<\beta(\alpha)\}$ is actually just a special case of the sharp Gordon-type result of \cite{JK3}, which applies to general functions $f$ of bounded variation. Therefore, we only need to prove the localization part of the result.

\end{enumerate}
\end{remark}
We emphasize that no quantitative lower bound on the derivative of $f$ is assumed: the derivative may vanish on a set of measure arbitrarily close to $1$, and $f$ is allowed to have arbitrarily large flat regions. The only requirement is that $f$ be non-constant and $C^1$. In particular, the continuous differentiability guarantees the existence of \emph{some} interval on which $f$ has positive derivative, and this alone turns out to suffice. To our knowledge, this is the first nonperturbative (and also the first sharp arithmetic) localization result for a class of quasiperiodic potentials which are allowed to have flat pieces.

The only exception known to us that definitely allows flat pieces is also in the monotone category, namely the perturbative results of \cite{KPS1,KKPS}, where localization at Diophantine $\alpha$ is established for a class of Maryland-type potentials:
$
\lim_{\varepsilon\to0^+}f(\varepsilon)=-\infty,
\qquad
\lim_{\varepsilon\to 0^+}f(1-\varepsilon)=\infty,
$
where $f$ may contain flat regions but require additional structural assumptions on their size and location.

In fact, one should note that most of the existing localization results for monotone quasiperiodic potentials (e.g. \cite{BLS,CSZ3,KKPS, KPS1,KPS2}) are \emph{perturbative} in nature,
and in particular do not yield a sharp arithmetic transition at the threshold $L(E)=\beta(\alpha)$. By contrast, our result establishes the full transition in the same sense as for the almost Mathieu and Maryland models or \cite{GJ,JK2}, without any frequency exclusions or perturbative assumptions. In particular, Theorem \ref{thm:main1} shows that flatness itself is not an obstruction: a potential may be essentially constant on most of the circle and still exhibit sharp arithmetic localization.

What is perhaps even more surprising is that even monotonicity is not required:

\begin{theorem}\label{thm:main2}
    Let $f$ be as in Theorem \ref{thm:main1}. Then there exists for every $\alpha\notin\mathbb{Q}$ some $\eta>0$ depending on $f$ and $\alpha$ such that for every bounded $C^1$ function $g:[0,1)\to\mathbb{R}$, if $|g'(x)|<\eta$ for all $x\in[0,1)$, then the quasiperiodic family associated with $f+g$ exhibits a sharp arithmetic transition.
\end{theorem}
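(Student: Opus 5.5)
The plan is to reduce Theorem \ref{thm:main2} to the same abstract localization machinery that proves Theorem \ref{thm:main1}, namely the hypotheses on finite-volume eigenvalue branches developed in Section \ref{sec:ev_branches} and verified via Theorem \ref{thm:monotonederivative_implies_A1A2}. By the Gordon-type result of \cite{JK3} (which only needs bounded variation, and $f+g$ is of bounded variation), the singular continuous part is automatic. So only the localization part in $\{L(E)>\beta(\alpha)\}$ remains, and it suffices to show that $f+g$ satisfies those branch hypotheses, with constants uniform in $g$ once $\eta$ is small.

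Write $I\subset[0,1)$ for an interval where $f'\ge c>0$; such an interval exists because $f$ is non-constant and $C^1$. Then $(f+g)'\ge c-\eta\ge c/2$ on $I$ and $|(f+g)'|\le\eta$ off the region where $f'>0$. The key quantity is the derivative in the phase $x$ of a Dirichlet eigenvalue $E_j(x)$ of $H_{[a,b]}(x)$. By Hellmann--Feynman,
\[
E_j'(x)=\sum_{n}(f'+g')(x+n\alpha)\,|\psi_j(n)|^2\ \ge\ \frac c2\sum_{n:\,x+n\alpha\in I}|\psi_j(n)|^2-\eta.
\]
The first step is therefore to invoke the symbolic unique-continuation principle from the monotone case. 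Since first-return words to $\{f'>0\}$ have no common Dirichlet resonance, a normalized eigenfunction carries mass at least some $\delta>0$ on the sites returning to $I$. I need this in a form that depends only on $f$, $\alpha$ and the energy range, and is stable under small perturbations of the potential on the complement.

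The main obstacle is that this uniformity is not automatic. The unique-continuation bound comes from a finite combinatorial set of first-return words (finitely many, since $\alpha$ is fixed and return times to $I$ are bounded). Each word gives a finite-block transfer problem. Adding $g$ changes the potential on flat stretches by a bounded amount, not a small one: $g$ is small in $C^1$ but not in $C^0$. I would handle this by writing $g=g(x_0)+\tilde g$ with $\|\tilde g\|_\infty\le\eta$. The constant part only shifts the energy, and it does not shift energies uniformly across the flat pieces. So the cleaner route is to notice that the no-common-resonance statement should be formulated for \emph{all} potential values on flat pieces lying in a compact set, by a compactness argument over a parameter space of block potentials. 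That argument gives a uniform $\delta(f,\alpha)$. Small $C^0$ variation of the potential within each return block ($\eta\cdot$block length) then perturbs the eigenvalue conditions continuously, so the resonance-free property persists with $\delta/2$.

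With $\delta$ in hand, choose $\eta<c\delta/8$, which gives $E_j'(x)\ge c\delta/4>0$. This recovers quantitative monotonicity of the branches, which is exactly the hypothesis (A1)/(A2)-type input of Theorem \ref{thm:monotonederivative_implies_A1A2}. The jump-discontinuity and counting hypotheses are unaffected by adding a $C^1$ function. The sharp localization argument of \cite{JK2} as adapted in the paper then applies verbatim for all $x$ and all $E$ with $L(E)>\beta(\alpha)$. That completes the transition.
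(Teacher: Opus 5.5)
Your architecture matches the paper's route through Theorem~\ref{thm:monotonederivative_plus_perturbation_implies_A1A2}. You get the Gordon half from \cite{JK3}, reduce localization via Proposition~\ref{prop:main} to $\mathcal{A}_1,\mathcal{A}_2$ for $f+g$, and obtain these from a mass lower bound on sites sampling a good interval $G\subset I$ combined with Hellmann--Feynman, absorbing $(f+g)'\ge-\eta$ off $G$. The gap is in the step that carries the content of the theorem, the uniformity in $g$.

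Your assertion that the constant part $g(x_0)$ \emph{does not shift energies uniformly across the flat pieces} is false. Adding a constant replaces every restriction $H_{[a,b]}(x)$ by $H_{[a,b]}(x)+g(x_0)$: all eigenvalues move by the same amount and the eigenvectors are unchanged. The properties $\mathcal{A}_1$, $\mathcal{A}_2$ and the transition (with $L$ shifted accordingly) are invariant under this.

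The substitute you adopt is a no-resonance statement uniform over all values assigned to flat pieces, obtained by compactness. It is not available. Theorem~\ref{theorem:nodirichlet} concerns functions that are locally constant outside $G$ with finitely many pieces. For a general monotone $C^1$ $f$, the return blocks to $G$ also sample regions where $f$ strictly increases, possibly with tiny $f'$. Also, $f$ may have infinitely many flat pieces, so there is no finite space of letter values to compactify. In the paper, Theorem~\ref{theorem:nodirichlet} enters only as the terminal contradiction in Proposition~\ref{prop:monotone_no_eigenvalue}, via Lemma~\ref{lemma:common_eigenvector}, which uses monotonicity of $f$ itself. It yields a qualitative fact about the return family of $f$ alone. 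It says nothing about reassigning plateau values, and it cannot be applied to $f+g$, which is not monotone.

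The repair is the route you set aside, and it is the paper's.
\begin{itemize}
\item Since $g$ is $C^1$ on $[0,1)$, $\|g-g(0)\|_\infty\le\eta$. After dropping $g(0)$ you may assume $\|g\|_\infty\le\eta$.
\item For $f$ alone, Proposition~\ref{prop:monotone_no_eigenvalue} gives $G\subset I$ such that $(H_{[1,\ell_G(x)-1]}(x))_{x\in\tilde G}$ has no common eigenvalue.
\item Piecewise continuity in $x$ and compactness in $E$, as in the proof of Proposition~\ref{prop:mass_lower_bound}, give finitely many phase intervals $I_j\subset G$, each with constant return time $\ell_j$, and some $\delta>0$. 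Every $E$ in a fixed compact interval containing all spectra then satisfies $\mathrm{dist}(E,\sigma(H_{[1,\ell_j-1]}(x)))>2\delta$ for all $x$ in some $I_j$.
\item These intervals do not depend on $g$. By Weyl's inequality, the bound $>\delta$ therefore holds for $f+g$ once $\eta<\delta$.
\item Lemma~\ref{lemma:poisson_mass_transfer} then gives a mass bound $m>0$ on the good sites, uniform in $g$. Now also take $\eta$ below a fixed fraction of $c\,m$.
\end{itemize}
No per-block factor is needed here, since $g$ varies by at most $\eta$ on the whole circle.

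Finally, your remark that the jump structure is unaffected requires $\eta<f(1^-)-f(0)$. This is because $g$ need not be periodic, and $\mathcal{A}_2$ is deduced from $\mathcal{A}_1$ using $(f+g)(0)<(f+g)(1^-)$.
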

	Starting with a $C^1$ function $f$ which is mostly flat, using Theorem \ref{thm:main2} one can find $g$ so that on the majority of the interval, $f+g$ is monotone decreasing. See Figure~\ref{fig:intro} for some visual examples. Sampling functions of this type can be considered to be perturbatively monotone. Although they need not be monotone themselves, the interplay with the kinetic part forces the eigenvalue curves of all sufficiently large finite-volume restrictions to be monotone on each monotonicity interval. Let us note that this result is not perturbative in the sense that (de-)localization of $f$ implies the same for $f+g$; whether $f+g$ is localized at some energy $E$ or not depends entirely on the Lyapunov exponent of $f+g$, not that of $f$.

We also mention that continuity assumptions on $f$ may be relaxed. Since our methods only require that, for each $x$, its trajectory will hit a point with large derivative in bounded time, we can establish the following.
\begin{theorem}\label{thm:main3}
    Let $f$ be a bounded, monotone, piecewise absolutely continuous function. Suppose that there is some interval $I\subseteq[0,1)$ and $\gamma>0$ such that for almost every $x\in I$, $f'(x)>\gamma$. Suppose in addition that the discontinuities of $f$ do not lie on the same $\alpha$-trajectory. Then $H(x)$ exhibits a sharp arithmetic transition.
\end{theorem}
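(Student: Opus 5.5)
The plan is to reduce Theorem~\ref{thm:main3} to the same abstract machinery used for Theorem~\ref{thm:main1}. That machinery consists of the hypotheses on finite-volume eigenvalue branches introduced in Section~\ref{sec:ev_branches}, which Theorem~\ref{thm:monotonederivative_implies_A1A2} verifies from a derivative condition, followed by the sharp arithmetic localization scheme of \cite{JK2}. The singular continuous part needs no work, since $f$ is monotone and bounded, hence of bounded variation, and the Gordon-type result of \cite{JK3} applies directly. So only localization in $\{L(E)>\beta(\alpha)\}$ remains, and for that it suffices to check the hypotheses feeding Theorem~\ref{thm:monotonederivative_implies_A1A2}.

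The first step is a \emph{bounded return time} property. Since $\alpha$ is irrational and $I$ is an interval, there is $N=N(I,\alpha)$ such that for every $x$, some $0\le k\le N$ has $x+k\alpha \bmod 1\in I$ (minimality of the rotation plus compactness). On $I$ we have $f'>\gamma$ almost everywhere, and $f$ is absolutely continuous there after removing finitely many discontinuities; we may shrink $I$ to avoid them. Hence $f(y)-f(x)\ge\gamma(y-x)$ for $x<y$ in $I$. Monotonicity elsewhere gives $f(y)-f(x)\ge0$ away from jumps, and jumps are upward except at the single wrap point $0\equiv1$.

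The second step is the quantitative monotonicity of eigenvalue branches. For a finite box $[a,b]$ and phase $x$, the Hellmann--Feynman formula gives, for the $j$-th eigenvalue $E_j(x)$ with normalized eigenvector $\psi$,
\[
 \partial_x E_j(x)=\sum_{n=a}^b f'(x+n\alpha)\,|\psi(n)|^2 ,
\]
understood in the distributional/one-sided sense, with jumps contributing positive increments. The only sites that can make this sum small are those in flat regions. By bounded return every window of length $N$ contains a site in $I$. The symbolic unique-continuation principle from the abstract then transfers mass from flat stretches to the sites in $I$. Concretely, if $\psi$ had mass at most $\delta$ on all $I$-sites, then on each first-return word the transfer matrices would be forced to produce a common Dirichlet resonance, contradicting the no-common-resonance property. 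The upshot is $\partial_x E_j\ge c(\gamma,N)>0$ uniformly in the box length. Near $I$ this is essentially the argument of Theorem~\ref{thm:monotonederivative_implies_A1A2}, now using only ``$f'>\gamma$ a.e.\ on $I$'' rather than continuity of $f'$.

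The third step, which I expect to be the main obstacle, concerns the discontinuities. Here the eigenvalue branches jump as $x$ crosses a point where some $x+n\alpha$ hits a discontinuity. The hypothesis that no two discontinuities lie on the same $\alpha$-orbit guarantees that for each $x$ at most one site of the box sits at a discontinuity. Branch jumps are therefore rank-one, and the eigenvalue counting function remains monotone with controlled multiplicity, which is exactly the structure the weaker hypotheses of Section~\ref{sec:ev_branches} are designed for. The wrap-around downward jump at $0$ must be handled as in the anti-Lipschitz setting of \cite{JK1,JK2}: it occurs at one site per crossing and is absorbed by the usual counting argument. With monotone branches having slope at least $c$ between jumps, the measure of phases where the box has an eigenvalue within $\varepsilon$ of $E$ is $O(\text{box length}\cdot\varepsilon)$. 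That estimate is the input for the elimination of double resonances in the \cite{JK2} scheme, which then yields exponential localization for every $x$ whenever $L(E)>\beta(\alpha)$.
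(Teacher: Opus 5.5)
Your top-level reduction is the paper's. The hypotheses of Theorem~\ref{thm:main3} are exactly those of Theorem~\ref{thm:monotonederivative_implies_A1A2}: piecewise absolute continuity gives piecewise continuity, and $f'>\gamma$ a.e.\ on $I$ gives $\essinf_I f'\ge\gamma>0$. So $\mathcal{A}_1$ and $\mathcal{A}_2$ hold. Proposition~\ref{prop:main} then gives pure point spectrum on $\{L(E)>\beta(\alpha)\}$, since a bounded $f$ is trivially log-integrable, and \cite{JK3} covers the rest. Cited as a black box, this is a complete proof. However, your Steps~2--3, which describe what establishing $\mathcal{A}_1$ actually involves, misplace both the difficulty and the role of the orbit hypothesis. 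As an independent argument they have a genuine gap.

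\textbf{Where the orbit condition is really used.} In Step~2 you invoke ``no common Dirichlet resonance'' as if it were available. For a general monotone $f$ this is the heart of the matter, and it is exactly where the orbit condition is needed.
\begin{itemize}
\item Outside $I$, $f$ need not be locally constant. The restrictions $H_{[1,\ell_G(x)-1]}(x)$, $x\in G\subset I$, are then not word Hamiltonians, and the symbolic principle does not apply directly.
\item The paper first uses Lemma~\ref{lemma:common_eigenvector}: a common eigenvalue of a continuous monotone family forces a common eigenvector vanishing at every site where the potential varies.
\item Consequently only finitely many plateau values are sampled, and $f$ can be replaced by a locally constant $\tilde f$ that keeps the common eigenvalue.
\item The plateau boundaries $a_i$ can be placed freely inside open gaps. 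But where two sampled plateaus meet at a jump, $a_i$ is forced to be a discontinuity of $f$.
\item Theorem~\ref{theorem:nodirichlet} requires all boundary points to lie on distinct $\alpha$-orbits. That is precisely what the hypothesis supplies (Proposition~\ref{prop:monotone_no_eigenvalue}).
\end{itemize}

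\textbf{Why Step~3 is beside the point.} You use the hypothesis to make interior jumps rank one. That is true but irrelevant. For monotone $f$, interior jumps are upward and can only raise eigenvalue branches, whatever their rank. The only downward jumps, at the points $\{-j\alpha\}$, are automatically single-site and are handled by $\mathcal{A}_2$. The failure of $\mathcal{A}_1$ that the paper mentions when discontinuities share an orbit is a loss of anti-Lipschitz slope, which rank considerations cannot detect.

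\textbf{The constant.} Your slope bound cannot be of the form $c(\gamma,N)$. It depends on $f$ itself, through the Dirichlet gap $\delta$ in Lemma~\ref{lemma:poisson_mass_transfer} and the compactness argument in $E$ in Proposition~\ref{prop:mass_lower_bound}. A constant of your form is what one gets only in the consecutive-good-sites setting of Theorem~\ref{thm:bounded_implies_A1A2}.
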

The assumption on the location of the discontinuity points can be weakened substantially \cite{ReturnWords}, but with the present proof strategy it is not possible to remove it entirely. However, it is compatible with the Damanik-Killip \cite{DK} argument for positivity of the Lyapunov exponent (see also \cite{JK1}). 

\begin{figure}[h!]
\centering
\scalebox{0.9}{
\begin{tikzpicture}[x=7.0cm,y=3.7cm]

\tikzset{
    myaxis/.style={black, line width=0.5pt},
    mygraph/.style={black, line width=1.15pt},
    descr/.style={font=\small, align=center, text width=4.9cm},
    ticklabel/.style={font=\small}
}

\begin{scope}[shift={(0cm,0cm)}]
    \draw[myaxis] (0,0) -- (1.06,0);
    \draw[myaxis] (0,0) -- (0,1.02);

    \node[ticklabel, below] at (0,0) {$0$};
    \node[ticklabel, below] at (1,0) {$1$};

    \draw[mygraph]
        plot[smooth] coordinates {
            (0.00,0.56)
            (0.12,0.555)
            (0.26,0.548)
            (0.39,0.542)
            (0.47,0.540)
            (0.51,0.555)
            (0.55,0.67)
            (0.59,0.79)
            (0.64,0.84)
            (0.76,0.833)
            (0.88,0.824)
            (1.00,0.815)
        };

    \node[descr] at (0.5,-0.20)
    {monotone decreasing on the majority of the interval $[0,1)$};
\end{scope}

\begin{scope}[shift={(8.6cm,0cm)}]
    \draw[myaxis] (0,0) -- (1.06,0);
    \draw[myaxis] (0,0) -- (0,1.02);

    \node[ticklabel, below] at (0,0) {$0$};
    \node[ticklabel, below] at (1,0) {$1$};

    \draw[mygraph] (0.00,0.24) -- (0.30,0.24);
    \draw[mygraph] (0.30,0.47) -- (0.54,0.47);
    \draw[mygraph] (0.54,0.47) -- (0.78,0.73);
    \draw[mygraph] (0.78,0.88) -- (1.00,0.88);

    \node[descr] at (0.5,-0.20)
    {Jump function with one linearly increasing step};
\end{scope}

 \begin{scope}[shift={(4.3cm,-5.9cm)}]
     \draw[myaxis] (0,0) -- (1.06,0);
     \draw[myaxis] (0,0) -- (0,1.02);

     \node[ticklabel, below] at (0,0) {$0$};
     \node[ticklabel, below] at (1,0) {$1$};

     \draw[mygraph]
         plot[smooth] coordinates {
             (0.00,0.07)
             (0.18,0.07)
             (0.36,0.07)
             (0.54,0.07)
             (0.72,0.07)
             (0.84,0.07)
             (0.91,0.071)
             (0.96,0.078)
             (1.00,0.10)
         };

     \node[descr] at (0.5,-0.20)
     {Almost constant, with a small smooth rise near \(1\)};
\end{scope}

\end{tikzpicture}}
\caption{Top row: Examples of sampling functions which satisfy the conditions of Theorem~\ref{thm:main2} or ~\ref{thm:main3} respectively. Bottom: For Theorem~\ref{thm:main1} the sampling function may have an arbitrarily large flat piece.}
\label{fig:intro}
\end{figure}

We  have an immediate consequence for Diophantine frequencies (where $\beta(\alpha)=0$):
\begin{corollary}\label{cor}
		For Diophantine $\alpha$ under the conditions of Theorem~\ref{thm:main3}, if $f$ is non-zero then the Lyapunov exponent $L(E)$ is positive for Lebesgue-almost every $E$ and therefore $H(x)$ almost surely has pure point spectrum with exponentially localized eigenfunctions.
\end{corollary}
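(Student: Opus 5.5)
The corollary follows by combining Theorem~\ref{thm:main3} with a positivity result for the Lyapunov exponent. For Diophantine $\alpha$ we have $q_{n+1}\le C q_n^{\tau}$, so $\frac{1}{q_n}\log q_{n+1}\to 0$ and $\beta(\alpha)=0$. The sharp arithmetic transition from Theorem~\ref{thm:main3} then says that the set $\{E: L(E)>0\}$ supports only pure point spectrum with exponentially decaying eigenfunctions, for every phase $x$ (this is the first item of the remark after Theorem~\ref{thm:main1}). It therefore remains to show two things: that $L(E)>0$ for Lebesgue-a.e.\ $E$, and that the exceptional null set $Z=\{E:L(E)=0\}$ carries no spectral measure for a.e.\ $x$.

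For positivity I will use the Kotani/Damanik--Killip mechanism, which the text says is compatible with the hypotheses. Under the assumptions of Theorem~\ref{thm:main3}, $f$ is monotone and nonconstant. Because there is an interval $I$ on which $f'>\gamma$, $f$ is not a.e.\ constant. Because it is monotone, it has only finitely many values on no interval of positive length, so it is not finitely valued in any sense that would allow periodicity. The steps are as follows.
\begin{enumerate}
\item By Kotani theory, if $L(E)=0$ on a set of positive measure, the potential $V_x(n)=f(x+n\alpha)$ is deterministic. Its values on a half-line then determine it on the whole line, and the ergodic family is reflectionless on that set.
\item Following Damanik--Killip (cf.\ also \cite{JK1}), for monotone $f$ whose discontinuities do not lie on a common $\alpha$-orbit, one can produce two phases whose potentials agree on a half-line but differ somewhere. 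This uses the jump points, or, if $f$ is continuous, the strict increase on $I$ combined with the minimality of the rotation.
\end{enumerate}
Together these contradict determinism, so $\{E:L(E)=0\}$ has zero Lebesgue measure.

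Here I would have to check that the hypotheses of the Damanik--Killip argument hold. If $f$ happens to be continuous, one would instead use that a nonconstant monotone sampling function cannot be continuous on the whole circle, so a jump exists unless $f$ is constant. This is where the assumption that $f$ is nonzero (really, nonconstant) enters. This step is the main obstacle, since the cited argument must be adapted to piecewise absolutely continuous $f$.

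Finally, the null set $Z$ has zero Lebesgue measure, so by the standard averaging $\int \mu_x(Z)\,dx = N(Z)$ it suffices to have $N(Z)=0$. The integrated density of states is absolutely continuous, or at least assigns no mass to $Z$, on the set where $L=0$ by Kotani theory/Ishii--Pastur. Hence $\mu_x(Z)=0$ for a.e.\ $x$. The spectrum of $H(x)$ is therefore carried by $\{L>0\}$, and this set is pure point with exponentially localized eigenfunctions.
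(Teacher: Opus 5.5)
Your outline follows the paper's route: $\beta(\alpha)=0$, localization on $\{L>0\}$ for every phase from Theorem~\ref{thm:main3}, Kotani plus Damanik--Killip for $L>0$ a.e., and then removal of the null set $Z=\{L=0\}$. The last step, however, rests on a principle that is false. Kotani theory and Ishii--Pastur do not imply that the density of states gives no mass to Lebesgue-null subsets of $\{L=0\}$. For the critical almost Mathieu operator, $L$ vanishes on the whole spectrum, which has Lebesgue measure zero, so $\nu(\{L=0\})=1$. Knowing $|Z|=0$ therefore does not stop the spectral measures of $H(x)$ from living on $Z$, and as written your argument only yields pure point spectrum on $\{L>0\}$.

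The missing input is specific to this setting. Under the hypotheses of Theorem~\ref{thm:main3}, Theorem~\ref{thm:monotonederivative_implies_A1A2} gives $\mathcal{A}_1$ and $\mathcal{A}_2$. Lemma~\ref{lemma:ac_IDS} then shows that $N$ is Lipschitz with constant $\gamma^{-1}$, which comes from the anti-Lipschitz branches $\Lambda_k$. So $\nu$ is absolutely continuous and $\nu(Z)=0$. Combining $\nu(Z)=\int_0^1\langle\delta_0,\chi_Z(H(x))\delta_0\rangle\,dx$ with covariance, $H(x)$ has no spectral mass on $Z$ for a.e.\ $x$. This is exactly the second corollary in Section~\ref{sec:loc_proof}.

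A smaller point on your step (2). Kotani's determinism is an almost-sure statement, so two individual phases whose potentials agree on a half-line would not contradict it. In fact, for nonconstant monotone $f$, no two distinct phases produce such potentials at all. Damanik--Killip instead use that reflectionlessness persists on the topological hull, and they compare the two one-sided limit sequences $n\mapsto f((x_0+n\alpha)\pm 0)$ at a jump $x_0$. These agree for all $n\neq 0$ when no other discontinuity lies on the orbit of $x_0$, but they differ at $n=0$. For nonconstant monotone $f$ such a jump always exists at the reset point $0$, as you eventually note. The alternative you suggest, using strict increase on $I$ together with minimality, does not produce such a pair and should be dropped.
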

Since there is no condition on the size of $f$ this generalizes the all-coupling localization of \cite{JK1} proven there for monotone anti-Lipschitz sampling functions.

\begin{remark}
It is instructive to compare this with the Sturmian case. Fix
an irrational $\alpha\in(0,1)$ with $\beta(\alpha)=0$ and $\lambda>0$,
and consider
\[
 f_0(x)=\lambda\mathbf 1_{[1-\alpha,1)}(x),\qquad x\in[0,1).
\]
The corresponding operators have purely singular continuous spectrum
for every phase~\cite{dkl}. Now replace the increasing jump at
$1-\alpha$ by a smooth monotone transition on an arbitrarily short
interval, keeping the downward jump at $0$, see Figure~\ref{fig:sturmian_comparison}. By
Corollary~\ref{cor}, the resulting operators have Anderson
localization for almost every phase. Thus smoothing just one of the
two jumps changes the almost-sure spectral type, even though the
sampling function still has two flat pieces covering all but an
arbitrarily small part of the circle.\footnote{The statement is for almost every rather than every phase because, theoretically, a zero measure set of phases can have spectral measures partially supported on the (measure zero) set of $E$ with $L(E)=0$ where we do not make any claim. }
\end{remark}
\begin{figure}[h]
\include{figure_smoothing}
\caption{Comparison of the spectral types   between Sturmian potentials with $\beta(\alpha)=0$ and any monotone smoothing thereof.}
\label{fig:sturmian_comparison}
\end{figure}
\begin{remark}
In the context of Theorem~\ref{thm:main2} one does not get all-coupling localization if $f+g$ is non-monotone. The Lyapunov exponent is still positive for Lebesgue-almost every energy by \cite{DK} and therefore one has no absolutely continuous spectrum. However, if one adds a coupling parameter $H(x)=H_0 + \lambda (f+g)$ then the conditions of Theorem~\ref{thm:main2} themselves are only guaranteed to hold for $\lambda$ close to $1$. In Section~\ref{sec:ev_branches} we also establish a stricter but more quantitative criterion (Theorem~\ref{thm:bounded_implies_A1A2}) which allows the construction of non-monotone sampling functions which for fixed $\alpha$ satisfy the Anti-Lipschitz property for all small enough $\lambda> 0$. This gives, rather surprisingly, \emph{small-coupling} localization using our results. It seems likely that localization also should hold for large coupling or even all couplings, by virtue of positive Lyapunov exponent, but no applicable results seem to exist.
\end{remark}

\begin{remark}
Using our methods, it is not difficult to establish that there also exist unbounded non-monotone potentials for which the sharp arithmetic transition holds. For that reason it makes sense to develop the abstract theory for unbounded sampling functions (see Proposition \ref{prop:main} below), even though we will postpone specific constructions of examples to future work.
\end{remark}

\medskip

Let us now describe the approach. A central input in the work of Jitomirskaya--Kachkovskiy \cite{JK2} is the behavior of finite-volume eigenvalue curves.
Let $H_n(x)$ denote the restriction of $H(x)$ to $\{0,1,\dots,n-1\}$ (with Dirichlet boundary conditions), and write
\[
E_0(x)\le \cdots \le E_{n-1}(x)
\]
for its eigenvalues as functions of $x$. For each such curve, there will be $n$ negative jump discontinuities at the points $\{0,-\alpha,\ldots,-(n-1)\alpha\}$, which can be ordered as $\beta_0\leq\cdots\leq\beta_{n-1}$. Inside each of those \emph{monotonicity intervals} $(\beta_j,\beta_{j+1})$ the functions $E_k(x)$ are monotone increasing if $f$ is monotone. 

Let us formulate two conditions on the eigenvalue curves:
\begin{enumerate}
    \item[$\mathcal{A}_1$] There exists some $n_0\in \NM$ such that for every $n\geq n_0$, every $0\leq j,k\leq n-1$ and for every $x\leq y$ inside the same monotonicity interval $x,y\in (\beta_j, \beta_{j+1})$ one has $E_k(y)-E_k(x)\geq\gamma(y-x)$ for some $\gamma>0$ which does not depend on $j,k,n$.
    \item[$\mathcal{A}_2$] For every $1\leq k\leq n-1$ and $x\in[0,1)$, $E_k(x)\geq E_{k-1}(x-0)$.
\end{enumerate}
If the sampling function $f$ is anti-Lipschitz then the property $\mathcal{A}_1$ holds with $\gamma$ equal to the anti-Lipschitz constant of the sampling function itself. The property $\mathcal{A}_2$ is also evident for monotone $f$: it is the
interlacing condition at discontinuities as any such function satisfies $f(0^+)<f(1^-)$ due to which passing a point $\beta_j$ is equivalent to a negative rank-$1$ perturbation. However, as we will see there are much larger classes of sampling functions which satisfy $\mathcal{A}_1$ and $\mathcal{A}_2$.  In particular, $f$ itself need not be anti-Lipschitz and not even monotone, yet have the anti-Lipschitz property of  the associated eigenvalue curves. 

One of the main results of this paper is to demonstrate that the large deviation estimates and, ultimately, proof of localization in \cite{JK2} for anti-Lipschitz sampling functions can be adapted to use only the properties $\mathcal{A}_1$ and $\mathcal{A}_2$ without any other input from the sampling function $f$. Precisely, we will show the following:

\begin{proposition}\label{prop:main}
    Let $f:[0,1)\to\mathbb{R}$, $\alpha\in[0,1)\setminus\mathbb{Q}$. Suppose that for the corresponding family of Schr\"odinger operators $H(x)$ both $\mathcal{A}_1$ and $\mathcal{A}_2$ hold. Suppose in addition that
    \[
        \int_0^1\log(1+|f(x)|)\,dx<\infty.
    \]
    Then the set
    \[
    \{E\in\mathbb{R}:L(E)>\beta(\alpha)\}
    \]
    can support only pure point spectrum.
\end{proposition}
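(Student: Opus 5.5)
We follow the scheme of Jitomirskaya--Kachkovskiy \cite{JK2} and isolate every place where anti-Lipschitz continuity of $f$ is used. The claim is that each such place actually uses only a property of the Dirichlet eigenvalue curves $E_k(x)$ of $H_n(x)$, and that this property follows from $\mathcal{A}_1$ and $\mathcal{A}_2$. The argument has three stages.

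\textbf{Stage 1: a counting bound.} First we show that for $n\ge n_0$ and any $E$, $\varepsilon>0$, the set
\[
\{x:\ \dist(E,\sigma(H_n(x)))<\varepsilon\}
\]
is a union of at most $Cn$ intervals of total measure at most $Cn\varepsilon/\gamma$. The proof runs as follows.
\begin{enumerate}
\item On each of the $n$ monotonicity intervals $(\beta_j,\beta_{j+1})$, every branch $E_k$ is increasing with slope at least $\gamma$ by $\mathcal{A}_1$. Hence the preimage under $E_k$ of $(E-\varepsilon,E+\varepsilon)$ is a single interval of length at most $2\varepsilon/\gamma$.
\item Naively this gives $n^2$ pieces. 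The role of $\mathcal{A}_2$ is to remove one factor of $n$. At each $\beta_j$ the spectrum jumps down, and $E_k(\beta_j^+)\ge E_{k-1}(\beta_j^-)$. Gluing the branches across jumps by the rule $k\mapsto k-1$ therefore produces $n$ global "sawtooth" curves on the circle. Each glued curve is monotone increasing off the jumps, and across a jump it moves down but never below the previous branch.
\item Each glued curve crosses the level band around $E$ at most $O(1)$ times per winding, and the total number of windings is bounded by the index bookkeeping, exactly as in \cite{JK2}.
\end{enumerate}
This gives the replacement for the Wegner-type/sublevel estimate, which is precisely what \cite{JK2} extracts from anti-Lipschitz $f$.

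\textbf{Stage 2: large deviations and elimination of double resonances.} Next we write $\det(H_n(x)-E)=\prod_k(E_k(x)-E)$ and compare $\frac1n\log|\det|$ with $L(E)$.
\begin{enumerate}
\item The upper bound is uniform, via subadditivity and the integrability condition $\int\log(1+|f|)<\infty$. This condition also gives $L(E)<\infty$ and a uniform upper semicontinuity statement along the lines of Furman/Kachkovskiy for unbounded potentials \cite{Kach}.
\item The lower bound is a large deviation estimate: the measure of the set of $x$ where the product is exponentially smaller than $e^{nL}$ is small. We obtain it by integrating $\log|E_k(x)-E|$ using the sublevel structure from Stage 1. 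Each glued branch contributes an integrable logarithmic singularity with constant depending only on $\gamma$, so the average of $\log|\det|$ over any window is controlled.
\item Elimination of double resonances follows \cite{JK2}: two phases $x+\ell_1\alpha$ and $x+\ell_2\alpha$ both lying in the small-determinant set impose, by monotonicity, closeness of the phases modulo $\alpha$-shifts. Since only the interval structure from Stage 1 enters, this step transfers verbatim.
\end{enumerate}

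\textbf{Stage 3: localization in the regime $L(E)>\beta$.} Finally, we take a generalized eigenfunction with polynomial growth and $L(E)>\beta(\alpha)$. Using Poisson's formula on windows adapted to $q_n$, the absence of double resonances forces exponential decay at rate $L-\beta-\epsilon$. Schnol's theorem then gives pure point spectrum on the set $\{E: L(E)>\beta(\alpha)\}$. For unbounded $f$, the Green's function numerators are handled by the same integrability bound.

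The main obstacle is Stage 1, step 3: showing that the glued sawtooth curves produce only $O(n)$ resonant intervals rather than $O(n^2)$, using only $\mathcal{A}_2$ at the jumps. In \cite{JK2} the analogous count used the explicit structure of the rank-one jumps. Here we will instead use the fact that the $n$ eigenvalue curves have total winding equal to $n$, and argue via the monotone counting function $N_n(x,E)=\#\{k:E_k(x)<E\}$. This function increases only at jumps, by at most one at each by $\mathcal{A}_2$, and decreases only continuously. Hence the number of times it changes is at most $2n$ per period.
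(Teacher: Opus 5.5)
Your overall plan, rerunning \cite{JK2} and checking that only properties of the eigenvalue curves are used, is the paper's plan. However, the input you extract in Stage~1 is not the one the sharp argument needs, and the input it does need is missing.

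\textbf{The missing non-clustering input.} A Wegner-type statement (at most $n$ resonant intervals of total length $O(n\varepsilon/\gamma)$) is compatible with all $n$ intersection points $z_k(E)$, i.e.\ the solutions of $\Lambda_k(\{x-\beta_{n-k}\})=E$, clustering near a single phase. (Your ``main obstacle'' of $O(n)$ versus $O(n^2)$ pieces is in fact immediate once the branches are glued into the functions $\Lambda_k$ of Proposition~\ref{prop:Lambda_functions}, each of which is monotone over the whole period.) What the paper proves, and what drives everything afterwards, is that such clustering cannot occur. For continued fraction denominators $q$ one has $|N_q(x,E)-N_q(y,E)|\le 2n_0+22$ for all $x,y$ (Proposition~\ref{JK:3.4}). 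This is obtained from four facts: the function $N_q(x+q\alpha,E)-N_q(x,E)$ has average zero, $\mathcal{A}_2$ allows at most one upward jump at each $\beta_j$, the $\beta_j$ are spaced by more than $1/(2q)$, and one can decouple by rank-two perturbations. It follows that $\frac1n N_n(x,E)\to N(E)$ uniformly in $x$, with an explicit rate in terms of $q(n)$. For every single phase $x$ this gives two things. First, a bound on the number of factors in $|P_n^{\mathrm{small}}(x,E)|\ge\prod\gamma\|x-z_k(E)\|$. Second, via the Koksma-type Corollary~\ref{cor:JK-3.7} and the Thouless formula, a two-sided estimate for $\frac1n\log|P_n^{\mathrm{mid}}(x,E)|$ that is uniform in $x$.

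\textbf{Consequences for Stage~2.} Without this input, Stage~2 does not go through.
\begin{enumerate}
\item Subadditivity alone gives no uniform-in-$x$ upper bound for a discontinuous cocycle, since Furman's argument needs continuity. When $f$ is unbounded, which the proposition allows, no bound $|P_n(x,E)|\le e^{n(L(E)+\epsilon)}$ uniform in $x$ can hold at all. The paper instead uses $|P^{\mathrm{small}}|\le 1$ together with the uniform mid-part estimate. It controls the large factors only through the ratio $P^{\mathrm{large}'}_{n'}/P^{\mathrm{large}}_n$ of numerator and denominator in Cramer's rule (Lemma~\ref{lemma:JK-6.3}); log-integrability enters there via Koksma and Markov.
\item A measure-type large deviation estimate plus elimination of double resonances is the Bourgain--Goldstein route. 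That route yields localization for a.e.\ phase after frequency exclusion, not the threshold $L(E)>\beta(\alpha)$ for every irrational $\alpha$.
\item The mechanism of \cite{JK2} is deterministic. Lemma~\ref{lemma:JK-6.2}, through the polynomial Lemma~\ref{lemma:JK-6.1}, shows that among $l+1$ separated phases $x+h\alpha$ at least one has $|P^{\mathrm{small}}|\ge\gamma^t(d/2)^t t!$. This requires the window to contain at most $t$ intersection points, which is again the non-clustering input.
\end{enumerate}

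\textbf{The gap in Stage~3.} In the Liouville regime the resonances near $jq_k$ are genuinely present and cannot be eliminated. The paper handles them by $\psi$-regularity with rate $\frac{\log|j|}{q_k}+L(E)-\beta_k-3\tau$ (Lemma~\ref{lemma:Liouville_transition_resonant}), fed into the block expansion of Proposition~\ref{prop:inner_point_estimate}. Your Stage~3 does not supply a substitute for this.
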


\begin{remark}
    \begin{enumerate}
        \item We will assume throughout that $f(0^+)<f(1^-)$ since in that case, $\mathcal{A}_1\implies\mathcal{A}_2$ (see \cite[Section 5]{JK1}).
        \item As in \cite{JK2}, Proposition \ref{prop:main} can be strengthened in the sense that one can show uniform localization, also leading to a so-called ``Area Law'', as discussed in \cite{PS}.
    \end{enumerate}
\end{remark}
For the proof of Proposition \ref{prop:main} one can follow closely the lines of \cite{JK2}, in that regard the main contribution of the present paper is to show that the required large deviation estimates can be derived using only the abstract properties $\mathcal{A}_1$ and $\mathcal{A}_2$ without any other assumptions on the sampling function. This Proposition then forms the first part of the proof of Theorems~\ref{thm:main1} to \ref{thm:main3}, the second part is to verify that the sampling functions as stated do in fact satisfy the anti-Lipschitz eigenvalue property $\mathcal{A}_1$ for some $\gamma>0$. 

The proof of Proposition~\ref{prop:main} follows the argument
of~\cite{JK2}, once the required large-deviation estimates have
been established under $\mathrm{A1}$ and $\mathrm{A2}$. The remaining
task is to prove $\mathrm{A1}$ for the sampling functions considered here.

Let us explain where the difficulty lies. The basic mechanism is to use the Hellmann--Feynman formula to express the derivative of each eigenvalue curve as a weighted average of the sampled derivatives (whenever they exist):
\[
E_k'(x)=\sum_{i=0}^{n-1} |\psi_k(i)|^2\, f'(x+i\alpha),
\]
where the weights are the $\ell^2$-masses of the corresponding normalized eigenfunction. One then partitions the sampling points into a \emph{good} set, where $f'$ is uniformly large and positive, and a \emph{bad} set, where $f'$ may be small or even negative. The main task is to show that an eigenfunction cannot place too much of its mass on the bad set, so that a definite portion of the weight must remain on good sites. If, moreover, the positivity of $f'$ on the good set dominates its possible smallness or negativity on the bad set, then the weighted average above is uniformly positive. This yields monotonicity, and in fact anti-Lipschitz behavior, for all eigenvalue branches.

If the good set contains an interval, then by basic principles of irrational rotation, every bad site will have a good site at bounded distance. Naively, one would like to use transfer matrix arguments to show that the weight an eigenfunction takes at a bad site is proportional to the weight it takes on the nearby good site. The problem with this argument is that lower bounds on transfer matrices do not directly translate into lower bounds on the mass carried by good sites. Indeed, transfer matrices compare two-component vectors of the form $(\psi(n),\psi(n-1))$, whereas the Hellmann-Feynman formula involves the one-site quantities $|\psi(n)|^2$. Thus, even if the norm of a transfer matrix image is well controlled, this does not by itself prevent the mass from being arranged in a way that avoids the good sites.

To overcome this, we shift from single sites to finite intervals whose boundary points are good. Roughly speaking, the key point is to show that there exist uniform constants $C_1,C_2>0$ such that for every $x\in [0,1)$, for every energy $E$ in a fixed compact interval $[E_1,E_2]$, and every point $m\in\mathbb{Z}$, one can find an interval $I\subseteq\mathbb{Z}$ whose boundary points are good, whose distance from $m$ is at most $C_1$, and such that the Green's function of the restriction of $H(x)$ to $I$ satisfies
\[
\|(H_I(x)-E)^{-1}\|<C_2.
\]
	Starting from $m$, one can then reach $I$ in uniformly bounded time, and once inside $I$, the Poisson formula relates the values of a solution to its values at the boundary of $I$. Since the boundary points are good, this gives the required mechanism for transferring weight from an arbitrary site to good points. As we show in Section~\ref{sec:mass_transfer}, this argument will for a monotone potential always result in a strictly positive lower bound for the derivative unless there exists an energy $E$ which is a common eigenvalue for all Dirichlet restrictions between good sites. Equivalently, one needs to show that the eigenvalue equation has no non-zero solution that vanishes on all good sites. 
    
    The exclusion of a common Dirichlet eigenvalue is the main difficulty.
    
Since there is no known unique continuation principle which can handle this situation, we proceed in the following way. Suppose that such an eigenvalue exists. On each continuity interval,
monotonicity implies that the corresponding eigenvector can be chosen
independent of the phase and must vanish at every coordinate where
the potential varies. Thus only finitely many values of $f$ on its
flat pieces are sampled by these eigenvectors. We can change $f$ at
the remaining points and reduce the problem to a sampling function
which is locally constant outside some interval $G$.

For such a function, the Dirichlet restrictions are described by the
return words to $G$ in a finite interval partition. We prove that,
when the endpoints of the partition lie on distinct $\alpha$-orbits
and $G$ is sufficiently small, these words cannot have a common
Dirichlet eigenvalue, for any assignment of real potential values to
the letters. In particular, no choice of the plateau values can
produce a common resonance. This is the {\it symbolic unique-continuation
principle} of Theorem~4.15. Its proof follows the return words as $G$
is enlarged. A common eigenvalue would be preserved under the
resulting trimming operations, until one reaches an empty word or
the one-letter case. The empty word has no eigenvalues, and the
one-letter case is ruled out by coprimality of the return times.

The mass estimate obtained in this way is stable under small
$C^1$ perturbations. The contribution from the good sites can then
also compensate for small negative derivatives elsewhere, which
gives Theorem~\ref{thm:main2}.

As stated above, in some cases we can even allow negative derivatives, as long as the regions with positive derivative compensate for that.

The rest of the paper is structured as follows. In Section \ref{sec:setting} we present notation and preliminaries which are required for Proposition \ref{prop:main}. Subsequently, in Section \ref{sec:loc_proof}, we prove Proposition \ref{prop:main}, with most of the technical details written in the appendix. Section \ref{sec:ev_branches} contains the main novelty: we prove that $\mathcal{A}_1$ and $\mathcal{A}_2$ hold for the broad class of sampling functions described above, including functions with flat pieces and, in some cases, regions of negative derivative.

\section{Setting and assumptions}\label{sec:setting}
\subsection{Notation}

For $x\in\mathbb{R}$, we write
\[
\|x\|:=\dist(x,\mathbb{Z})
\]
for the distance from $x$ to the nearest integer, and
\[
\{x\}:=x-\lfloor x\rfloor
\]
for the fractional part of $x$.

For integers $n_1\leq n_2$, we denote the integer interval
\[
[n_1,n_2]:=\{n_1,n_1+1,\dots,n_2\}.
\]
Given a Schr\"odinger operator $H$ on $\ell^2(\ZM)$, we denote by $H_{[n_1,n_2]}$ the restriction of $H$ to $\ell^2([n_1,n_2])$ with Dirichlet boundary conditions.

If $g:\mathbb{R}\to\mathbb{R}$ and $x\in\mathbb{R}$, we write
\[
g(x\pm 0):=\lim_{t\to x^\pm} g(t).
\]
We also set $\mathbb{N}_0:=\mathbb{N}\cup\{0\}=\{0,1,2,\dots\}$.

\medskip

Let $x\in[0,1)$. A non-zero function $\psi:\mathbb{Z}\to\mathbb{C}$ is called a \emph{generalized eigenfunction} (at energy $E$) of $H(x)$ if for all $m\in\mathbb{Z}$,
\begin{equation}\label{eq:gen_eig_eq}
\psi(m-1)+\psi(m+1)+f(x+m\alpha)\psi(m)=E\psi(m),
\end{equation}
	and there exists a constant $D>0$ such that for all $m\in\mathbb{Z}$,
\begin{equation}\label{eq:gen_eig_growth}
|\psi(m)|\leq D(1+|m|).
\end{equation}
If this is the case, then we say that $E$ is a \textit{generalized eigenvalue}. It is well known that the spectral measure of $H$ is supported on the set of generalized eigenvalues (see, e.g., \cite{Ber}).

For an irrational $\alpha$, we will denote by $\left(\frac{p_k}{q_k}\right)_{k=1}^\infty$ its rational approximants given by the continued fraction expansion, and we denote
\[
\beta(\alpha)\coloneqq\underset{n\to\infty}{\limsup}\,\frac{\ln q_{n+1}}{q_n}.
\]

\subsection{Integrated density of states and Lyapunov exponent}

\subsubsection{Integrated density of states}
We briefly recall the standard definitions; for a detailed discussion see, for example, \cite{CL,CFKS,PF}.

Define the \emph{density of states measure} $\nu$ by
\begin{equation}
\nu(B)=\int_0^1 \langle \delta_0,\,\chi_B(H(x))\,\delta_0\rangle\,dx
\end{equation}
for any Borel set $B$. Its distribution function
\begin{equation}
N(E)=\nu((-\infty,E]), \qquad E\in\mathbb{R},
\end{equation}
is called the \emph{Integrated Density of States (IDS)}. Equivalently,
\begin{equation}
N(E)=\lim_{n\to\infty}\frac{1}{n}\int_{0}^{1}
\#\{\text{eigenvalues of }H_n(x)\le E\}\,dx,
\end{equation}
and the limit exists for all $E\in\mathbb{R}$. We will show the following in the appendix.
\begin{lemma}\label{lemma:ac_IDS}
    Under assumptions $\mathcal{A}_1$ and $\mathcal{A}_2$, $N(\cdot)$ is Lipschitz continuous and satisfies
    \begin{equation}
    |N(E_2)-N(E_1)|\leq\gamma^{-1}|E_2-E_1|
\end{equation}
    
\end{lemma}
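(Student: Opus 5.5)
We will use the finite-volume formula for the IDS,
\[
N(E)=\lim_{n\to\infty}\frac1n\int_0^1\#\{k:E_k(x)\le E\}\,dx
=\lim_{n\to\infty}\frac1n\sum_{k=0}^{n-1}\bigl|\{x\in[0,1):E_k(x)\le E\}\bigr|.
\]
Fix $E_1<E_2$ and $n\ge n_0$. The plan is to show that for each $k$,
\begin{equation}\label{eq:ids_plan}
\bigl|\{x\in[0,1): E_1<E_k(x)\le E_2\}\bigr|\le \gamma^{-1}(E_2-E_1)+\frac{C}{n}\cdot(\text{something summable})\, .
\end{equation}
Actually, we aim for the cleaner aggregate bound
\[
\sum_{k=0}^{n-1}\bigl|\{x: E_1<E_k(x)\le E_2\}\bigr|\le n\gamma^{-1}(E_2-E_1)+O(1).
\]
Dividing by $n$ and letting $n\to\infty$ then gives the claim.

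For the aggregate bound we work on each monotonicity interval $J_j=(\beta_j,\beta_{j+1})$, $j=0,\dots,n-1$, with the convention $\beta_n=\beta_0+1$. By $\mathcal{A}_1$, each $E_k$ is strictly increasing on $J_j$ with $E_k(y)-E_k(x)\ge\gamma(y-x)$. Hence $\{x\in J_j:E_1<E_k(x)\le E_2\}$ is an interval of length at most $\gamma^{-1}(E_2-E_1)$. A naive sum over $j$ and $k$ would give $n^2\gamma^{-1}(E_2-E_1)$, which is too much. The interlacing $\mathcal{A}_2$ is what repairs this.

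We concatenate the branches along the circle. Going around $[0,1)$, at each jump point $\beta_j$ the eigenvalues drop, and $\mathcal{A}_2$ says $E_k(\beta_j+0)\ge E_{k-1}(\beta_j-0)$. Since the jump is a rank-one negative perturbation, we also have $E_k(\beta_j+0)\le E_k(\beta_j-0)$. So the multiset of all branches over all intervals can be rearranged into a small number of "continued" curves. Concretely, we define the piecewise curve $\Gamma$ on $[0,n)$ by following $E_{k}$ on one loop and then $E_{k-1}$ after each jump. Equivalently, we use the counting function $F(x,E)=\#\{k:E_k(x)\le E\}$. On each $J_j$ and for fixed $E$, $F(\cdot,E)$ is nonincreasing in $x$, and it jumps up by at most one at each $\beta_j$ by interlacing. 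Then
\[
\int_0^1\bigl(F(x,E_2)-F(x,E_1)\bigr)dx=\sum_k\bigl|\{E_1<E_k\le E_2\}\bigr|.
\]

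The key step is to bound the total length via the level-crossing structure. For a fixed $k$, the function $x\mapsto E_k(x)$ on the circle has slope at least $\gamma$ and $n$ downward jumps. The set $\{E_1<E_k\le E_2\}$ has one component in each $J_j$ that the curve actually crosses through the window $(E_1,E_2]$. By $\mathcal{A}_2$, a downward jump at $\beta_j$ brings $E_k$ no lower than $E_{k-1}(\beta_j-0)$. Summed over $k$, this means each "strand" obtained by the relabeling $k\to k-1$ at jumps is a single increasing curve through the window. We therefore need to count strands crossing the window, where a strand is the trajectory $E_{k(x)}(x)$ with $k$ decreasing by one at each jump. A strand starting at index $k$ passes through at most $n$ intervals, and each of its pieces is increasing with slope at least $\gamma$. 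Between consecutive pieces the value can only drop, by the rank-one bound $E_{k-1}(\beta_j+0)\le E_{k-1}(\beta_j-0)\le E_k(\beta_j-0)$ together with $E_k(\beta_j+0)\ge E_{k-1}(\beta_j-0)$. We want to show that the total time a strand spends in $(E_1,E_2]$ is at most $\gamma^{-1}(E_2-E_1)$ plus the contribution of finitely many re-entries. This is the main obstacle: a priori a strand can re-enter the window after each drop.

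We would try to avoid strands altogether and argue with $F$ directly. For each fixed $x$, we have $F(x,E_2)-F(x,E_1)=\#\{k:E_1<E_k(x)\le E_2\}$. Using $\mathcal{A}_1$, shifting $x$ to $x+t$ within a monotonicity interval moves every eigenvalue up by at least $\gamma t$. Averaging over the rotation, the translation invariance $\int_0^1F(x+t,E)\,dx=\int_0^1F(x,E)\,dx$ should then compare $F(\cdot,E_1)$ with $F(\cdot+\gamma^{-1}(E_2-E_1),E_2)$, up to corrections from the $n$ jump points crossed during the shift. By $\mathcal{A}_2$, each crossed jump changes $F$ by at most $1$, and there are at most $n\cdot\gamma^{-1}(E_2-E_1)$-proportionally many such crossings on average, which is the delicate bookkeeping. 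Making this comparison sharp enough to yield exactly $\gamma^{-1}$, rather than a worse constant, is the expected hard point. If needed, we would accept first a bound $C\gamma^{-1}|E_2-E_1|$ and then refine it via the strand argument.
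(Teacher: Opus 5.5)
Your proposal does not yet prove the lemma. The strand argument fails as you set it up, and the fallback is left unfinished. The problem is the orientation of the relabeling. $\mathcal{A}_2$ compares $E_k$ just to the right of a jump point with $E_{k-1}$ just to its left. So the strand it controls passes from $E_{k-1}$ on the left of $\beta_j$ to $E_k$ on the right: the index must \emph{increase} by one at each jump as $x$ moves forward. You chose the index to decrease, which produces a sawtooth that drops at every junction, and hence the re-entry problem.

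With the correct orientation, start from $E_0$ on the piece to the right of some $\beta_i$ and go once around the circle. You traverse $E_0,E_1,\dots,E_{n-1}$ on the $n$ consecutive monotonicity intervals. $\mathcal{A}_1$ inside the pieces and $\mathcal{A}_2$ at the $n-1$ junctions give a single function $\Lambda$ on an arc of length $1$ with $\Lambda(y)-\Lambda(x)\ge\gamma(y-x)$ for all $x\le y$ on that arc. The $n$ starting points give $n$ such strands, which together use every pair (piece, index) exactly once. Hence, after translating each strand to $[0,1)$,
\[
\sum_{k=0}^{n-1}\bigl|\{x:E_1<E_k(x)\le E_2\}\bigr|=\sum_{i=0}^{n-1}\bigl|\{y:E_1<\Lambda_i(y)\le E_2\}\bigr|\le n\gamma^{-1}(E_2-E_1).
\]
Each set on the right is an interval of length at most $\gamma^{-1}(E_2-E_1)$, so there is no error term at all. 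This is exactly the paper's construction of the functions $\Lambda_k$ in Proposition~\ref{prop:Lambda_functions}, after which the lemma is one line: divide by $n$ and let $n\to\infty$.

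Your fallback would also give the sharp constant; the bookkeeping you worried about is exact. Put $t=\gamma^{-1}(E_2-E_1)$; the case $t\ge1$ is trivial since $0\le N\le 1$. Then $s\mapsto F(x+s,E_1+\gamma s)$ is nonincreasing between jump points by $\mathcal{A}_1$, and increases by at most one at each jump point by $\mathcal{A}_2$. Hence $F(x+t,E_2)\le F(x,E_1)+m(x)$, where $m(x)$ is the number of jump points in $(x,x+t]$. Since $\int_0^1 m(x)\,dx=nt$ exactly, integrating and using $\int_0^1F(x+t,E_2)\,dx=\int_0^1F(x,E_2)\,dx$ gives the same bound $n\gamma^{-1}(E_2-E_1)$. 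Either way, the missing step is the same: apply the interlacing with the index going up across each jump.
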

\begin{remark}
In particular, since $\supp\nu$ is equal to the almost sure spectrum of the family $H(x)$ (see e.g. \cite[Chapters 9,10]{CFKS}), this implies that the spectrum has positive Lebesgue measure. 
\end{remark}
\subsubsection*{Lyapunov Exponent}
For $E\in\mathbb{R}$, define the one-step transfer matrix and the $n$-step transfer matrix by
\begin{equation}
A(x,E)=\begin{pmatrix} E-f(x) & -1 \\ 1 & 0 \end{pmatrix},
\qquad
T_n(x,E)=A(x+(n-1)\alpha,E)\cdots A(x,E).
\end{equation}
The \emph{Lyapunov exponent} at energy $E$ is
\begin{equation}
L(E)=\lim_{n\to\infty}\frac{1}{n}\int_0^1 \log\|T_n(x,E)\|\,dx,
\end{equation}
where the limit exists by Kingman's subadditive ergodic theorem \cite{Steele}.

\subsection{Green's function and regular points}

Consider a general self-adjoint Schr\"odinger operator $H:\ell^2(\mathbb{Z})\to\ell^2(\mathbb{Z})$. Fix $E\in\mathbb{R}$ and consider a solution to the eigenvalue equation, namely $\psi:\mathbb{Z}\to\mathbb{R}$ such that
\begin{equation}\label{eq:ev}
    \psi(m-1)+\psi(m+1)+V(m)\psi(m)=E\psi(m)
\end{equation}
for all $m\in\mathbb{Z}$. Given $n_1,n_2\in\mathbb{Z}$, recall that $H_{[n_1,n_2]}$ is the restriction of $H$ to the interval $[n_1,n_2]$ with Dirichlet boundary conditions. Given $E\notin \sigma(H_{[n_1,n_2]})$ and $m\in[n_1,n_2]$, we have the Poisson formula:
\begin{equation}\label{eq:poisson}
\begin{aligned}
\psi(m)
&= -(H_{[n_1,n_2]}-E)^{-1}(n_1,m)\,\psi(n_1-1) \\
&\quad -(H_{[n_1,n_2]}-E)^{-1}(m,n_2)\,\psi(n_2+1),
\end{aligned}
\end{equation}

for every $\psi$ which satisfies \eqref{eq:ev}. In particular, exponential decay of \mbox{$(H_{[n_1,n_2]}-E)^{-1}(n_i,m)$} for $i=1,2$ combined with \eqref{eq:gen_eig_growth} can lead to exponential decay of $\psi$, given that $\dist(m,n_i)$ is large enough. This motivates the following definition.

\begin{definition}
    Fix $\sigma\in(0,1)$. Given $n\in\mathbb{N}$ and $\mu>0$, an integer $m\in\mathbb{Z}$ will be called \textit{$(\mu,n)$-regular} if there exists an interval $[n_1,n_2]\subseteq\mathbb{Z}$, called the \textit{regularity interval}, such that $n_2-n_1+1=n$, and
    \begin{enumerate}
        \item $m\in[n_1,n_2]$.\label{item1}
        \item $\dist(m,\{n_1,n_2\})\geq\sigma n$.\label{item2}
        \item $|(H_{[n_1,n_2]}-E)^{-1}(n_i,m)|\leq e^{-\mu|n_i-m|}$ for $i=1,2$.
    \end{enumerate}
    If \ref{item1} and \ref{item2} hold and for some solution $\psi$ we have
    \begin{equation}\label{eq:psi_regular}
    |\psi(m)| \le e^{-\mu|m-n_1|}\,|\psi(n_1-1)| + e^{-\mu|m-n_2|}\,|\psi(n_2+1)|,
    \end{equation}
    then we will say that $m$ is $(\psi,\mu,n)$-regular and that $[n_1,n_2]$ is a $(\psi,\mu)$-regularity interval for $m$.
\end{definition}
For the rest of this paper we fix $\sigma=\frac{1}{5}$ in the definition of regularity.

We will need a block expansion lemma which allows one to chain regularity intervals together, see e.g.\ \cite{FSW,JK2,JL3}. We will use the formulation from \cite{JK2}, as it is best suited to this setting.

\begin{definition}
\label{def:good_collection_psi}
Fix $E\in\mathbb{R}$ and let $\psi$ be a solution of \eqref{eq:ev}. Let $[N_1,N_2]\subseteq [N_1^-,N_2^+]$ be integer intervals and let $\mu>0$, $L\in\mathbb N$.
We say that $[N_1,N_2]$ admits a \textit{$(\psi,\mu,L)$-good collection} if for every
$m\in[N_1,N_2]\cap\ZM$ there exists $n_m\ge L$ and $\mu_m\ge \mu$ such that $m$ is
$(\psi,\mu_m,n_m)$-regular with a $(\psi,\mu)$-regularity interval $I_n\subseteq (N_1^-,N_2^+)$.
\end{definition}

	\begin{proposition}[{\cite[Remark~5.4]{JK2}}]
\label{prop:inner_point_estimate}
Let $\psi:\ZM\to\CM$ satisfy the eigenvalue equation \eqref{eq:ev}. Fix $\mu>0$ and integer
intervals $[N_1,N_2]\subseteq [N_1^-,N_2^+]$. Assume that $[N_1,N_2]$ admits a
$(\psi,\mu,L)$-good collection in the sense of Definition~\ref{def:good_collection_psi}.
Define boundary maximizers
\[
|\psi(M_1)|:=\max\{|\psi(m)|:\ m\in [N_1^-,N_1)\},\,\,\,\,
|\psi(M_2)|:=\max\{|\psi(m)|:\ m\in (N_2,N_2^+]\}.
\]
Then for all $m\in[N_1,N_2]\cap\ZM$,
\begin{equation}\label{eq:JK54_coarse}
|\psi(m)| \le e^{-\mu(1-o(1))|m-N_1|}\,|\psi(M_1)|
          + e^{-\mu(1-o(1))|m-N_2|}\,|\psi(M_2)|,
\end{equation}
where $o(1)\to 0$ as $L\to\infty$.
\end{proposition}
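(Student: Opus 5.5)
The statement is the block expansion estimate of \cite[Remark~5.4]{JK2}. I would prove it by iterating the Poisson-type bound \eqref{eq:psi_regular} along a branching tree of regularity intervals, and then estimating the accumulated product of exponential factors. Fix $m\in[N_1,N_2]$. By the good collection hypothesis, $m$ is $(\psi,\mu_m,n_m)$-regular with interval $[n_1,n_2]\subseteq(N_1^-,N_2^+)$, $n_m\ge L$, and
\[
|\psi(m)|\le e^{-\mu|m-n_1|}|\psi(n_1-1)|+e^{-\mu|m-n_2|}|\psi(n_2+1)|.
\]
Each step moves a distance at least $\sigma n_m\ge L/5$ from the current point. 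For each of the two new points $n_1-1$ and $n_2+1$ I would do the following. If it still lies in $[N_1,N_2]$, apply the same bound to it. Otherwise it lies in $[N_1^-,N_1)$ or $(N_2,N_2^+]$, since the regularity intervals sit inside $(N_1^-,N_2^+)$. In that case, stop and bound it by $|\psi(M_1)|$ or $|\psi(M_2)|$.

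Because each jump has length at least $L/5$ and every point stays in the finite interval $(N_1^-,N_2^+)$, every branch terminates. The result is
\[
|\psi(m)|\le\sum_{\text{paths }\omega} e^{-\mu\,\ell(\omega)}\,|\psi(M_{i(\omega)})|,
\]
where $\ell(\omega)$ is the sum of the jump lengths along the path, with each jump counted as $|m'-n_i|$, i.e.\ one less than the actual step $|m'-(n_i\mp1)|$. By the triangle inequality, a path exiting to the left satisfies $\ell(\omega)\ge |m-N_1|-k(\omega)$, where $k(\omega)$ is the number of steps. The correction $-k$ comes from the off-by-one between $n_i$ and $n_i\mp 1$. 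Since each step has length at least $L/5$, $k\le 5\ell/L+1$, so the loss is a factor $e^{\mu k}=e^{o(1)\mu\ell}$.

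The main obstacle is the combinatorics: the number of paths with $k$ steps can be as large as $2^k$. Here the fact that $\ell(\omega)\ge kL/5$ is essential. The total weight is bounded by $\sum_k 2^k e^{-\mu(1-o(1))\max(|m-N_1|,kL/5)}$, and for $L$ large relative to $\mu^{-1}$ the factor $2^k=e^{k\log 2}$ is absorbed into $e^{-\mu\, o(1)\, kL}$ with $o(1)\sim 5\log 2/(\mu L)$. A more careful path count can improve this.

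I would split the paths by exit side. Left-exiting paths give at most $e^{-\mu(1-o(1))|m-N_1|}|\psi(M_1)|$ after summing the geometric series in $k$. Right-exiting paths give the symmetric term. Paths that backtrack only increase $\ell$, so they are harmless in this bound. This yields \eqref{eq:JK54_coarse} with $o(1)\to0$ as $L\to\infty$.

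One subtlety to check is that $\mu_m\ge\mu$ while the interval is only a $(\psi,\mu)$-regularity interval, so I would use the rate $\mu$ throughout. Another is the behaviour of points near $N_i$, where a single step may already exit; there $\ell\ge|m-N_i|-1$, which is still fine.
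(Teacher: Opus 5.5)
Your approach is the standard block expansion behind the cited remark; the paper gives no proof of its own and simply imports the statement. It iterates the $\psi$-regularity bound, stops a branch when it leaves $[N_1,N_2]$, and beats the $2^k$ branch count with the cost of at least $\mu L/5$ per step. Your bookkeeping $\ell(\omega)\ge |m-N_1|+1-k$ and $\ell(\omega)\ge kL/5$ is also correct.

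One step, however, is false as stated: the branches need not terminate. Long jumps inside a bounded interval do not prevent bouncing. Suppose $m\to n_1-1$, and the regularity interval of $n_1-1$ happens to end at $m-1$. Then the branch returns to $m$ and can oscillate forever without ever leaving $[N_1,N_2]$. So the expansion tree is in general infinite, and the bound $|\psi(m)|\le\sum_\omega e^{-\mu\ell(\omega)}|\psi(M_{i(\omega)})|$ over exiting paths does not come from a finite expansion.

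The repair uses only what you already have. Expand for at most $K$ steps. The branches that have not exited after $K$ steps contribute at most $2^K e^{-\mu KL/5}\max_{[N_1,N_2]}|\psi|$. This tends to $0$ as $K\to\infty$ once $\mu L/5>\log 2$, since the maximum is over a finite set. Classical versions instead stop after a fixed number of steps and control the remainder with a growth bound on $\psi$. The exiting terms with at most $K$ steps are bounded by the full sum over all exiting paths.

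A second point needs care. The statement allows no constant prefactor, and $|m-N_1|$ may be $0$ (e.g.\ $m=N_1$). So you cannot just sum the geometric series and absorb the constant. Write
\[
\mu\ell\ge(1-\epsilon)\mu\,(|m-N_1|+1-k)+\epsilon\mu kL/5 .
\]
This gives
\[
\sum_{\omega\ \mathrm{left}}e^{-\mu\ell(\omega)}\le e^{-(1-\epsilon)\mu|m-N_1|}\sum_{k\ge1}e^{-k(\epsilon\mu L/5-\mu-\log 2)}\le e^{-(1-\epsilon)\mu|m-N_1|}
\]
as soon as $\epsilon\ge 5(\mu+2\log 2)/(\mu L)$. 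This $\epsilon$ is indeed $o(1)$ as $L\to\infty$ for fixed $\mu$, or uniformly for $\mu$ bounded below. The right-exiting sum is handled symmetrically. With these two adjustments your argument proves the statement.
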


\subsection{The regularity lemmas}
	In this subsection we state regularity lemmas proved in \cite{JK2} for monotone anti-Lipschitz potentials. In \cite{JK2}, the monotonicity of \(f\) is used to establish the finite-volume eigenvalue branch structure and, in particular, $\mathcal{A}_1$ and $\mathcal{A}_2$. Once these assumptions are in place, monotonicity of \(f\) is no longer used.

	In the appendix we show that $\mathcal{A}_1$ and $\mathcal{A}_2$ indeed are enough to establish the crucial large deviation estimates which are as strong as those used in \cite{JK2} as the main ingredients of the regularity lemmas. The Lemmas themselves can then be proven in our setting using the same arguments verbatim. To avoid repetition, we do not reproduce those proofs themselves.

Let us fix $E\in\mathbb{R}$, some small $\tau >0$ and some large constant $C\gg 1$. Consider some denominator in the continued fraction expansion of $\alpha$ given by $q_k$ for some $k\in\mathbb{N}$. For $j\in\mathbb{Z}$, we define
\[
    R_j=[jq_k-\tau q_k,jq_k+\tau q_k].
\]
We say that a point $m\in\mathbb{Z}$ is \textit{resonant} if $m\in R_j$ for some $j\in\mathbb{Z}$. Otherwise, we say that $m$ is \textit{non-resonant}. Let us also consider the smallest $k_0\in\mathbb{N}_0$ for which $2q_{k-k_0}\leq\tau q_k$, and then the largest $s\in\mathbb{N}_0$ such that $2sq_{k-k_0}\leq\tau q_k$. Finally, let $s'=\lfloor s/10\rfloor$.

\begin{lemma}\label{prop:diophantine_transition}
    Let $\varepsilon>0$. For large enough $k$, if $q_{k+1}\leq q_k^C$, then every $m\in\mathbb{Z}$ which satisfies $|m|\in[\lfloor q_k/2\rfloor+1,q_{k+1}-\lfloor q_k/2\rfloor-1]$ is $(L(E)-\varepsilon,q_k)$-regular.
\end{lemma}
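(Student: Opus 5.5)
We follow the scheme of \cite{JK2}, isolating the one place where the sampling function enters: a large deviation estimate for finite-volume determinants. Write $P_n(x,E)=\det(H_{[0,n-1]}(x)-E)$. By Cramer's rule,
\[
|(H_{[n_1,n_2]}-E)^{-1}(n_i,m)|=\frac{|P_{|m-n_i|}(\cdot)|}{|P_n(x+n_1\alpha,E)|},
\]
where the numerator is the determinant of the complementary block. Up to subexponential factors, the numerator is bounded by $e^{(L(E)+\varepsilon)|m-n_i|}$ uniformly in the phase; this is a standard upper semicontinuity/unique ergodicity bound that only needs $\int\log(1+|f|)<\infty$. Regularity of $m$ with rate $L(E)-\varepsilon$ therefore reduces to finding $n_1$ with $m\in[n_1,n_1+q_k-1]$, $\dist(m,\{n_1,n_1+q_k-1\})\ge q_k/5$, and
\[
|P_{q_k}(x+n_1\alpha,E)|\ge e^{(L(E)-\varepsilon/2)q_k}.
\]

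For the lower bound we use the large deviation set $B_{q_k}=\{y:|P_{q_k}(y,E)|<e^{(L(E)-\varepsilon/2)q_k}\}$. The step replacing monotonicity of $f$ is to show $|B_{q_k}|\le e^{-c q_k}$ and that $B_{q_k}$ is a union of at most $Cq_k$ (polynomially many) intervals. The counting comes from $\mathcal{A}_1,\mathcal{A}_2$. We factor
\[
P_{q_k}(y,E)=\prod_j (E_j(y)-E),
\]
where on each of the $q_k$ monotonicity intervals every branch $E_j$ is increasing with slope $\ge\gamma$. By $\mathcal{A}_2$, the branches interlace across jumps, so each level $E$ is crossed by at most $O(q_k)$ branch pieces in total. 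Hence $\{y:|E_j(y)-E|<\delta\}$ has measure $\le 2\delta/\gamma$ and consists of $O(q_k)$ intervals. The measure bound comes from the averaged LDT: $\int\log|P_{q_k}|\,dy\ge q_k(L(E)-o(1))$, which is a Thouless-type identity using Lipschitz continuity of the IDS (Lemma \ref{lemma:ac_IDS}), valid under $\mathcal{A}_1,\mathcal{A}_2$. Combined with the uniform upper bound, the sublevel-set structure (a Cartan/Remez-type argument for products of $\gamma$-monotone factors) upgrades this to the exponential measure estimate, as in \cite{JK2}.

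With the LDT in hand, we run the uniformity argument. Take the $q_k$ candidate left endpoints $n_1\in[m-\tfrac45 q_k+1,\,m-\tfrac15 q_k]$; this window has length $\ge 3q_k/5$. Split it into two families of phases $x+n_1\alpha$ with $n_1$ running through consecutive blocks of length about $q_k/2$. If all phases in such a family fell into $B_{q_k}$, then, since $B_{q_k}$ has $O(q_k)$ components of total measure $e^{-cq_k}$, some component of length $\le e^{-cq_k}$ would contain two points $x+n\alpha,x+n'\alpha$ with $0<|n-n'|<q_k$. This contradicts $\|(n-n')\alpha\|\ge\|q_{k-1}\alpha\|\ge 1/(2q_k)$.

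The hypothesis $q_{k+1}\le q_k^C$ and the condition $|m|\in[\lfloor q_k/2\rfloor+1,\,q_{k+1}-\lfloor q_k/2\rfloor-1]$ are used to handle the fact that $B_{q_k}$ could contain many orbit points that are not separated by $1/q_k$. The refined counting uses the standard choice of $2q_k$-point sets avoiding the resonance at $0$ and $\pm q_{k+1}$, namely the set $\{x+j\alpha\}$ for $j$ in $[m-\ldots]\cup[-\ldots]$ as in Lagrange interpolation. The Diophantine-type bound $\|j\alpha\|\ge q_k^{-C}$ for these $j$ beats any polynomial loss against $e^{-cq_k}$.

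The main obstacle is the exponential measure and interval-count bound for $B_{q_k}$ from $\mathcal{A}_1,\mathcal{A}_2$ alone, without analyticity or monotonicity of $f$. I would first try to prove it by bounding the number of branches within $e^{-\delta q_k}$ of $E$ at any phase using the Lipschitz IDS, and then integrating the slope bound $\gamma$.
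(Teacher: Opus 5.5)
\textbf{Gap: the pigeonhole step fails.} The admissible left endpoints $n_1$ for $m$ number about $3q_k/5$, and you further split them into blocks of length about $q_k/2$. So a block gives fewer phases $x+n_1\alpha$ than $B_{q_k}$ can have components. You only bound the number of components by $O(q_k)$. In fact the bad set sits near the up to $q_k$ intersection points $z_j(E)$. Nothing then prevents each phase from lying in its own component, near its own $z_j(E)$.

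A sharper count inside one window cannot fix this. The step never uses that $E$ is a generalized eigenvalue of $H(x)$ with $\psi$ normalized at the origin, nor the range of $|m|$. If it were valid, every site would be $(L(E)-\varepsilon,q_k)$-regular for every $x$ and $E$. That is false: take $H(x)\psi=E\psi$ with $\psi\in\ell^2$, $L(E)>\varepsilon$, and $|\psi|$ maximal at $m_0$. Regularity of $m_0$ and \eqref{eq:poisson} would give $|\psi(m_0)|\le 2e^{-(L(E)-\varepsilon)q_k/5}|\psi(m_0)|$.

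\textbf{Missing ingredient: the origin is singular.} Since $|\psi(0)|^2+|\psi(1)|^2=1$ and $|\psi(n)|\le D(1+|n|)$, the Poisson formula forbids $0$ and $1$ from both being $(L(E)-\varepsilon,q_k)$-regular. Hence all admissible phases in a window at the origin also have small determinant. If $m$ were singular, you would have more than $q_k$ bad phases with mutual distances at least $\|q_k\alpha\|\ge 1/(2q_{k+1})\ge q_k^{-C}/2$. After bookkeeping, this is exactly what the two conditions on $|m|$ and $q_{k+1}\le q_k^C$ provide. The contradiction then needs a sharp ``degree $q_k$'' statement, and an $O(q_k)$ component count with unspecified constant cannot give it. Your later mention of $2q_k$-point sets and Lagrange interpolation points in this direction but supplies neither ingredient.

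\textbf{How the paper proceeds.} Following JK2, it uses no measure bound on $B_{q_k}$. It splits $P_n=P_n^{\mathrm{small}}P_n^{\mathrm{mid}}P_n^{\mathrm{large}}$ and controls the middle factor uniformly in the phase by Proposition~\ref{prop:mid_factor_prop}. The large factors are controlled only as a ratio, by Lemma~\ref{lemma:JK-6.3}. Lagrange interpolation is replaced by the pointwise bound \eqref{eq:JK-4.8}, $|P_n^{\mathrm{small}}(y)|\ge\prod\gamma\|y-z_k(E)\|$, together with Lemma~\ref{lemma:JK-6.2}.

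\textbf{Repairing your route.} Take $B_-$ polynomially small in $q_k$. Then \eqref{eq:JK-4.8} and the uniform bound on $P^{\mathrm{mid}}$ force every phase with $|P^{\mathrm{small}}_{q_k}P^{\mathrm{mid}}_{q_k}|\le e^{(L(E)-\varepsilon/2)q_k}$ to lie within $e^{-cq_k}$ of one of the at most $q_k$ intersection points. That is the sharp count your pigeonhole needs, once the window at the origin is added.

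\textbf{Smaller errors.} A phase-uniform bound $e^{(L(E)+\varepsilon)\ell}$ on numerators does not follow from $\int\log(1+|f|)<\infty$. It fails for unbounded $f$, which is why the paper estimates only ratios of large parts. An averaged lower bound plus a uniform upper bound gives only a Chebyshev-type bound on $|B_{q_k}|$, not an exponential one. In Cramer's rule the numerator of the $(n_i,m)$ entry is the block between $m$ and the opposite endpoint, of length $q_k-1-|m-n_i|$, not $|m-n_i|$.
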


\begin{lemma}\label{lemma:diophantine_transition}
    Let $\varepsilon>0$. For large enough $k$, if $q_{k+1}\leq q_k^{C}$ and $s\leq q_{k-k_0}^C$, then every $m\in\mathbb{Z}$ which satisfies $100\tau q_k\leq |m|<(1-\tau)q_k$ is $(L(E)-\varepsilon,2sq_{k-k_0}-1)$-regular.
\end{lemma}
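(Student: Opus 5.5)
We follow the scheme of \cite{JK2}: regularity of a point $m$ reduces, through the Poisson formula \eqref{eq:poisson} and Cramer's rule, to lower bounds on determinants of Dirichlet restrictions. For an interval $[n_1,n_2]$ of length $n=2sq_{k-k_0}-1$ we have
\[
|(H_{[n_1,n_2]}-E)^{-1}(n_i,m)|=\frac{|\det(H_{[n_1,m-1]}-E)|\,|\det(H_{[m+1,n_2]}-E)|}{|\det(H_{[n_1,n_2]}-E)|},
\]
with the numerator factor replaced by $1$ for the empty interval. The upper bounds on the numerator come from the uniform upper large deviation estimate $|\det(H_{[a,b]}(x)-E)|\le e^{(L(E)+\varepsilon)(b-a+1)}$, valid for all $x$ once $b-a$ is large. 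This estimate uses only the integrability of $\log(1+|f|)$ together with the unique ergodicity/semicontinuity argument, and we take it from the appendix. The real task is a lower bound $|\det(H_{[n_1,n_2]}-E)|\ge e^{(L(E)-\varepsilon)n}$ for at least one admissible interval. Admissible means that $m$ sits at distance $\ge\sigma n$ from both endpoints; since $n\approx\tau q_k$ and $|m|\ge100\tau q_k$, such intervals contain $m$ and stay away from $0$.

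\textbf{Step 1 (candidate intervals).} Write $q=q_{k-k_0}$. Consider the family of intervals $I_j=[jq - sq+1,\, jq+sq-1]$ (suitably shifted) such that $m$ lies in the middle fifth. This gives about $s/5$ admissible shifts $j$, each of length $n=2sq-1$, and hence at least $s'$ of them. The determinant $P_n(x+ (jq-sq+1)\alpha)$ is evaluated at phases $\theta_j=x+(jq-sq+1)\alpha$. Because $\|q\alpha\|<1/q_{k-k_0+1}$, these phases form an almost arithmetic progression with tiny step.

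\textbf{Step 2 (small-measure bad set).} By the appendix large-deviation estimate derived from $\mathcal A_1,\mathcal A_2$, the set $B_n=\{\theta:|P_n(\theta)|<e^{(L(E)-\varepsilon)n}\}$ is controlled as follows. By $\mathcal{A}_1$ the eigenvalue branches of $H_n$ have slope $\ge\gamma$ on each of the $n$ monotonicity intervals. By $\mathcal{A}_2$ they interlace across jumps. Hence $B_n$ is a union of at most $Cn^2$ intervals, each of length $\le e^{-cn\varepsilon}$, since $\log|P_n|=\sum_k\log|E_k(\theta)-E|$ and $\int\log|P_n|\ge n(L-\varepsilon/2)$. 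This is the step replacing anti-Lipschitz of $f$.

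\textbf{Step 3 (counting).} Suppose all $s'$ phases $\theta_j$ lie in $B_n$. The $\theta_j$ are separated from each other by $\ge \|q\alpha\|\cdot$const. They are also separated from the discontinuity set $\{-i\alpha\}$ of $P_n$, except for boundedly many, using $|m|\ge100\tau q_k$ and $|m|<(1-\tau)q_k$, which prevent $m$ from being resonant with $0$ at scale $q_k$. Hence each component of $B_n$ of length $e^{-c\varepsilon n}$ can contain at most one $\theta_j$ per monotonic branch. Combined with $s\le q^C$, so that $s'$ is polynomial in $q$ while the separation $\|q\alpha\|\ge 1/(2q_{k-k_0+1})$ is at least $q_k^{-C}$ by the assumption $q_{k+1}\le q_k^C$, this forces the $\theta_j$ into few components. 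One then argues as in \cite{JK2} via Lagrange interpolation along the progression, or via the trigonometric-polynomial-free monotone count, to obtain a contradiction. Specifically, monotonicity of each $E_k$ on the branch means the $s'$ phases produce $s'$ distinct values $E_k(\theta_j)$ spaced by $\ge\gamma\|q\alpha\|$. The event that all of these satisfy $|E_k(\theta_j)-E|$ small for enough $k$ then contradicts the upper bound $\log|P_n|\le(L+\varepsilon)n$ on neighbouring sub-determinants.

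\textbf{Main obstacle.} The main obstacle is Step 3. Without analyticity there is no interpolation of $P_n$ as a polynomial in $\cos$, so the standard Bourgain-type counting fails. One must instead use $\mathcal{A}_1$ quantitatively: a single eigenvalue branch can be close to $E$ for only one phase in each well-separated group. The jumps of $P_n$ at the points $-i\alpha$ have to be controlled by $\mathcal A_2$ and by the nonresonance $100\tau q_k\le|m|<(1-\tau)q_k$. Once one good $j$ is found, the quotient above gives decay $e^{-(L-\varepsilon')|n_i-m|}$, which yields $(L(E)-\varepsilon,2sq_{k-k_0}-1)$-regularity.
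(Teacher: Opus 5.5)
Your overall architecture (Cramer's rule, one good box suffices, interpolation as in \cite{JK2}) is the paper's. However, Step~3 cannot work as designed, because you never use that $E$ is a generalized eigenvalue of $H(x)$. In Section~3 the lemma is applied with $\psi$ a generalized eigenfunction normalized by $|\psi(0)|^2+|\psi(1)|^2=1$. Polynomial growth and the Poisson formula then make the origin (or $1$) singular at every large scale, so the phases of all admissible boxes around $0$ are bad. This input is indispensable. Suppose $H(x)$ had at energy $E$ a second (approximate) localization centre near $m$. Then every box around $m$ would carry an eigenvalue exponentially close to $E$, and all of your $\approx s$ phases $\theta_j$ would be bad. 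Nothing in $\mathcal A_1,\mathcal A_2$ would be violated: the $\theta_j$ would simply sit next to $\approx s$ of the $n$ intersection points $z_k(E)$.

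The argument the paper takes verbatim from \cite[Section~7]{JK2} assumes $m$ singular as well, and plays the bad phases of boxes around $0$ against those of boxes around $m$. The window $100\tau q_k\le|m|<(1-\tau)q_k$ keeps these boxes disjoint and puts every index difference between the two families in $(0,q_k)$. Hence the combined family of phases is $\|q_{k-1}\alpha\|\ge(2q_k)^{-1}$-separated. The window has nothing to do with the jumps $\{-i\alpha\}$ of $P_n$, whose position relative to $\theta_j$ depends on $x$, not on $m$. Those jumps are harmless anyway: via $\mathcal A_2$ one works with the globally monotone $\Lambda_k$ of Proposition~\ref{prop:Lambda_functions}, for which \eqref{eq:JK-4.7} holds at every phase.

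Step~2 is also incorrect, and it is exactly where the substitute for interpolation in $\cos$ belongs. $\mathcal A_1$ bounds slopes from below but says nothing about the spacing of the $z_k(E)$. The count in Appendix~B allows up to $(8n_0+100)(s+|r|)$ of them in an arbitrarily short window. A cluster of $t$ of them within distance $\rho$ makes a bad component of length $\approx\rho$ as soon as $(\gamma\rho)^t\lesssim e^{-\varepsilon n}$. The averaged bound on $\int\log|P_n|$ carries no information about the geometry of sublevel sets.

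What $\mathcal A_1$ does give is the minorant \eqref{eq:JK-4.8}. On an interval $I$ whose $\gamma^{-1}B_-$-neighbourhood contains no further $z_k(E)$, it reads $|P_n^{\mathrm{small}}(\theta)|\ge\gamma^t\prod_{z_k\in I}|\theta-z_k(E)|$, which is $\gamma^t$ times the modulus of a monic polynomial of degree $t$ in the phase. Lemma~\ref{lemma:JK-6.2} then gives $|P_n^{\mathrm{small}}|\ge\gamma^t(d/2)^t t!$ at one of any $t+1$ test phases in $I$ that are $d$-separated, regardless of how the roots cluster. For $|I|=O(1/q_{k-k_0})$, Appendix~B gives $t=O(s)$. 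With $d\ge(2q_k)^{-1}$ the loss is $O(s\log q_k)$, which is $o(n)$ because $s\le q_{k-k_0}^C$ gives $\log q_k=O(\log q_{k-k_0})$.

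The bad event should concern $P_n^{\mathrm{small}}$ only, with the rest handled through the factorization \eqref{eq:611_split}. Your uniform upper bound on numerators does not follow from unique ergodicity for discontinuous $f$, and is false for unbounded $\log$-integrable $f$. The paper instead uses $|P^{\mathrm{small}}|\le 1$, Proposition~\ref{prop:mid_factor_prop} for the middle parts, and Lemma~\ref{lemma:JK-6.3} for the ratio of the large parts. Finally, your Cramer formula is the one for the diagonal entry. For $(H_{[n_1,n_2]}-E)^{-1}(n_1,m)$ only $\det(H_{[m+1,n_2]}-E)$ appears in the numerator, cf.\ \eqref{eq:cramer_split}; with your version there would be no decay at all.
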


 \begin{lemma}\label{lemma:diophantine_transition_case_2}
     Let $\varepsilon>0$. For large enough $k$, if $q_{k+1}\leq q_k^C$ and $s>q_{k-k_0}^C$, then every $m\in\mathbb{Z}$ which satisfies $100\tau q_k<|m|<q_k-\tau q_k$ is $(L(E)-\varepsilon,2s'q_{k-k_0}-1)$-regular.
 \end{lemma}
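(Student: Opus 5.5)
The plan follows the scheme of \cite{JK2}. The only input from $f$ is the large deviation estimate and the eigenvalue-counting bound, both derived in the appendix from $\mathcal{A}_1$ and $\mathcal{A}_2$.

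Set $n:=2s'q_{k-k_0}-1$ and fix $m$ with $100\tau q_k<|m|<(1-\tau)q_k$. For $0\le j<n$ consider the candidate intervals $I_j=[m-j,\,m-j+n-1]$, keeping only those $j$ with $\sigma n\le j\le (1-\sigma)n$. By Cramer's rule, $(H_{I_j}(x)-E)^{-1}(n_i,m)$ is a ratio of determinants of Dirichlet restrictions to subintervals over $\det(H_{I_j}-E)$. The numerators are bounded above by $e^{(L(E)+\varepsilon/10)\ell}$ for any length $\ell$, uniformly in $x$. This follows from the standard uniform upper semicontinuity of $\frac1\ell\log\|T_\ell\|$; the integrability of $\log(1+|f|)$ and the appendix bounds give it in our setting. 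So $m$ is regular as soon as one admissible $j$ satisfies $|\det(H_{I_j}(x)-E)|\ge e^{(L(E)-\varepsilon/10)n}$.

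Suppose instead that every admissible $j$ fails. Then the phases $\theta_j:=x+(m-j)\alpha$ all lie in the large deviation set
\[
\mathcal B_n=\{\theta:\ |\det(H_n(\theta)-E)|<e^{(L(E)-\varepsilon/10)n}\}.
\]
Under $\mathcal{A}_1,\mathcal{A}_2$ the appendix shows that $\mathcal B_n$ is covered by at most $O(n)$ intervals, essentially one per monotonicity interval and eigenvalue branch crossing $E$, each of length $\le e^{-cn}$. The anti-Lipschitz bound $\mathcal{A}_1$ converts a small determinant into closeness of some branch $E_k$ to $E$, hence into phase-localization with slope at least $\gamma$. So we will have many points of the orbit segment $\{\theta_j\}$, which are shifts by $\alpha$, inside a union of few exponentially short intervals. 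The number of points is of order $n\gg q_{k-k_0}$, because $s>q_{k-k_0}^C$ forces $s'q_{k-k_0}$ to be large.

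The decisive step is the arithmetic contradiction, and it is where the hypothesis $s>q_{k-k_0}^C$ (Case 2) enters. Two orbit points in the same short interval give $\|(j-j')\alpha\|<e^{-cn}$ with $|j-j'|<n<\tau q_k$. Since $n<q_k$, the best approximation property forces $\|(j-j')\alpha\|\ge\|q_{k-1}\alpha\|\ge 1/(2q_k)$. The hypothesis $q_{k+1}\le q_k^C$ is used to compare $q_k$ with $e^{cn}$, and here I expect the main obstacle. The lengths $n\sim\tau q_k/5$ and $q_k$ are comparable, so $1/(2q_k)\gg e^{-cn}$ and the contradiction is immediate. The real issue is that points near $0$ or near reflections, namely $\theta_j$ close to $-x-(m-j)\alpha$ type resonances, can be paired through the symmetric structure of $\mathcal B_n$. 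This is handled by the restriction $100\tau q_k<|m|<(1-\tau)q_k$, which keeps $2m$ away from multiples of $q_k$ within $\tau q_k$.

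The resulting pigeonhole argument must give strictly more admissible $j$ than the $O(n)$ components of $\mathcal B_n$ can absorb, at one point per component. I would therefore follow \cite{JK2} and count per monotonicity interval. By $\mathcal{A}_2$ each branch contributes at most one component per interval, and the interval lengths are $\ge\|q_{k-k_0}\alpha\|$-scale separations. This gives an admissible $j$ with $\theta_j\notin\mathcal B_n$, and hence the required Green's function decay $e^{-(L(E)-\varepsilon)|n_i-m|}$ for large $k$.
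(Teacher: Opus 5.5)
Your argument has a genuine gap: it uses no information at the origin, so the counting step cannot close. The block around $m$ supplies only about $(1-2\sigma)n=\tfrac35 n$ candidate phases $\theta_j$. But $P_n(\cdot,E)$ has $n$ intersection points $z_k(E)$, one per branch $\Lambda_k$, spread over the circle at density comparable to $n$. So even if you grant one orbit point per component of $\mathcal B_n$, all candidates can be absorbed, and counting per monotonicity interval (there are $n$ of them) changes nothing.

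In fact no argument of this kind that is uniform in $x$ can work. Suppose $H(x_0)\phi=E\phi$ with $\phi\in\ell^2$ and $|\phi(0)|=\|\phi\|_\infty$. Then $\phi(\cdot-m)$ is an eigenfunction of $H(x_0-m\alpha)$ peaked at $m$. The Poisson formula then makes $m$ $(\mu,n)$-singular whenever $2e^{-\mu\sigma n}<1$, for every $m$ in the annulus.

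The missing ingredient is implicit in how the lemma is used in Section~\ref{sec:loc_proof}. There $E$ is a generalized eigenvalue of $H(x)$ with $\psi$ normalized at the origin. So for $k$ large, depending on the constant $D$ in \eqref{eq:gen_eig_growth}, one of the sites $0,1$ is singular at scale $n$. This gives a second block of about $\tfrac35 n$ bad phases $x+j\alpha$ with $j$ near $0$. The hypothesis $100\tau q_k<|m|<(1-\tau)q_k$ makes the two blocks disjoint while keeping all index differences below $q_k$. One then has about $\tfrac65 n>n$ bad phases, pairwise at distance at least $1/(2q_k)$. The hypothesis has nothing to do with reflections: that is the mechanism for even potentials such as the almost Mathieu operator, and it has no analogue for monotone $f$.

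Second, Appendix~B does not give the structure of $\mathcal B_n$ you assert. The available bound is $|P_n^{\mathrm{small}}(\theta)|\ge\prod_{k:\gamma\|\theta-z_k\|<B_-}\gamma\|\theta-z_k\|$, with at most $O(nB_-/\gamma)+O(s+|r|)$ factors. This allows clusters of up to $O(s+|r|)\approx n/q_{k-k_0}$ intersection points in an arbitrarily short arc. Near such a cluster it only certifies $|P_n|\ge e^{(L(E)-\varepsilon/10)n}$ at distance about $e^{-cq_{k-k_0}}$, not $e^{-cn}$. In Case~2 the hypotheses allow $q_k$ to be exponentially large in $q_{k-k_0}$. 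Then this distance far exceeds the guaranteed separation $1/(2q_k)$, and ``one orbit point per component'' does not follow.

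This is why \cite{JK2}, and the paper's Appendix~D, control the small eigenvalues by Lemma~\ref{lemma:JK-6.2} rather than by a covering. One finds an arc $I$ with $|I|<(10\gamma)^{-1}$ containing $l+1$ bad candidates from the two blocks, pairwise $d$-separated. The arc must contain at most $t\le l$ intersection points, with all others at distance at least $\gamma^{-1}B_-$, where $B_-=q(n)^{-2C_d+1}$. This yields $|P_n^{\mathrm{small}}|\ge\gamma^t(d/2)^t t!$ at one candidate. Producing such an $I$ is the case-specific arithmetic step. In Case~2 it is where $s>q_{k-k_0}^C$ and the shortened length $2s'q_{k-k_0}-1$ are used; in your sketch that hypothesis plays no real role.

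Finally, your uniform bound $e^{(L(E)+\varepsilon/10)\ell}$ on the numerators fails for the unbounded $f$ allowed in Proposition~\ref{prop:main}. There the large eigenvalues must be compared with those of the denominator, which is what Lemma~\ref{lemma:JK-6.3} does. The remaining factors are handled by $P^{\mathrm{small}}\le 1$ and by Appendix~B for $P^{\mathrm{mid}}$.
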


\begin{lemma}\label{lemma:Liouville_transition_nonresonant}
    Let $\varepsilon>0$. For large enough $k$, if $q_{k+1}>q_k^C$, then for every non-resonant point $m\in[-q_{k+1}/10,q_{k+1}/10]\cap[(j-1)q_k+\tau q_k,jq_k-\tau q_k]$ there exists $n\leq 2sq_{k-k_0}-1$ such that $m$ is $(L(E)-\varepsilon,n)$-regular, with $n\to\infty$ as $k\to\infty$.
\end{lemma}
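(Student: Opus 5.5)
We follow the strategy of the corresponding lemma in \cite{JK2}: for a non-resonant point $m$ we build a regularity interval of length $n = 2\ell q_{k-k_0}-1$, with $\ell \le s$ suitably chosen, that contains $m$ well inside. We then estimate its Green's function through the Cramer-rule representation
\[
|(H_{[n_1,n_2]}-E)^{-1}(n_i,m)| = \frac{|P_{|m-n_i|}(\cdot)|\,|P_{n-|m-n_i|-1}(\cdot)|}{|P_n(x+n_1\alpha)|},
\]
where $P_r(y)=\det(E-H_r(y))$. The numerators are controlled from above by the uniform upper bound $|P_r(y)|\le e^{(L(E)+\varepsilon)r}$ for large $r$. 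This bound is the standard subadditive/Furman-type estimate; it uses only $\int\log(1+|f|)<\infty$ and boundedness on compacts, and is available from the appendix. The real work is the lower bound $|P_n(x+n_1\alpha)|\ge e^{(L(E)-\varepsilon)n}$ for at least one admissible choice of $n_1$.

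For the lower bound we use the large deviation estimate derived in the appendix from $\mathcal A_1$ and $\mathcal A_2$. The set $B_n$ of phases $y$ with $|P_n(y)|<e^{(L(E)-\varepsilon)n}$ is a union of at most $Cn$ intervals, with total measure at most $e^{-c n}$. This estimate is obtained by writing $P_n(y)=\prod_j (E-E_j(y))$ and using the anti-Lipschitz branch property $\mathcal A_1$ together with interlacing $\mathcal A_2$, so that each factor is small only on a set of size $O(\delta/\gamma)$. We then consider the candidate phases $x+n_1\alpha$, where $n_1$ ranges over $\sim \ell q_{k-k_0}$ consecutive shifts placing $m$ within $[\sigma n,(1-\sigma)n]$ of the interval. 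Splitting these shifts into blocks of length $q_{k-k_0}$, the points within each block are $1/(2q_{k-k_0})$-separated by the three-distance property. Hence a collection of $O(n)$ intervals of total measure $e^{-cn}$ can capture at most $O(n)$ of the candidate points, unless these points cluster near a single interval.

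This clustering is the only way the construction can fail, and it is exactly what the hypothesis on $m$ excludes. Clustering means the candidate phases come exponentially close to each other modulo $1$, that is, $\|(n_1-n_1')\alpha\|$ is tiny. Since $q_{k+1}>q_k^C$ the only such small differences are multiples of $q_k$, and these are precisely the ones excluded because $m$ is non-resonant: it lies at distance at least $\tau q_k$ from every $jq_k$ inside $[-q_{k+1}/10,q_{k+1}/10]$. We therefore pick $\ell$ at most $s$ so that the window of $n_1$ avoids containing two points differing by a multiple of $q_k$ while still having more than $Cn$ candidates. By pigeonhole, at least one candidate gives $x+n_1\alpha\notin B_n$. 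The lower bound then follows, and combining it with the upper bounds on the numerators yields $(L(E)-2\varepsilon,n)$-regularity. Since $s\to\infty$ as $k\to\infty$, we have $n\to\infty$.

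The main obstacle is the counting step. We must quantify how many points of a quasi-periodic orbit segment of length $\sim sq_{k-k_0}$ can fall into $B_n$ when $B_n$ is only known to be a union of $O(n)$ short intervals. This requires the precise form of the appendix large deviation bound, in particular the bound on the number of components that $\mathcal A_1$ and $\mathcal A_2$ give via the $n$ discontinuities $\beta_j$ and monotone branches. If this bound holds, the remaining steps are identical to \cite{JK2}.
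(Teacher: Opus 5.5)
Your argument proves too much, and the pigeonhole step is where it breaks. Nothing in it uses that $E$ is a generalized eigenvalue of $H(x)$ with eigenfunction $\psi$ normalized at the origin. The statement suppresses this, but in Section~\ref{sec:loc_proof} the lemma is applied to a fixed such $\psi$. Your argument is also uniform in the phase $x$, and for fixed $E$ the conclusion is false uniformly in $x$. Indeed, suppose $H(x_0)\phi=E\phi$ with $\phi\in\ell^2$ maximal at $0$. Then $\phi(\cdot-m)$ is an eigenfunction of $H(x_0-m\alpha)$ maximal at $m$. If $m$ were $(\mu,n)$-regular with $\mu>0$, the Poisson formula \eqref{eq:poisson} would give $|\phi(0)|\le 2e^{-\mu\sigma n}|\phi(0)|$, which is impossible for large $n$.

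The pigeonhole fails for a concrete reason. For fixed $m$ and $n$ there are at most $(1-2\sigma)n+1=\tfrac35 n+1$ admissible shifts $n_1$. On the other hand, $P_n(\cdot,E)$ can have up to $n$ intersection points $z_k(E)$, one per branch $\Lambda_k$, and each carries a component of $B_n$. So you can never have more candidates than components, and nothing prevents every candidate phase from lying next to an intersection point. Your account of non-resonance is also off. Your shifts lie in a window of length $<n\le\tau q_k<q_k$, so their differences are never multiples of $q_k$, and the phases $x+n_1\alpha$ are automatically $\tfrac{1}{2q_k}$-separated. Non-resonance of $m$ plays no role in your argument, which signals that an ingredient is missing.

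The missing idea is a second family of sample phases coming from the normalization. This is the Jitomirskaya-type scheme of \cite[Section 7]{JK2}, to which the paper defers, with the appendices supplying its inputs under $\mathcal A_1,\mathcal A_2$.
\begin{itemize}
\item Since $|\psi(0)|^2+|\psi(1)|^2=1$ and $\psi$ grows polynomially, one of the sites $0,1$ is not regular. Hence the phases $x+h'\alpha$, $h'\in[-(1-\sigma)n,-\sigma n]$, are bad as well.
\item The cross differences are $m+d$ with $|d|\le(1-2\sigma)n<\tau q_k$. Non-resonance places $m+d$ strictly between $(j-1)q_k$ and $jq_k$. This gives $\|(m+d)\alpha\|\ge\|q_{k-1}\alpha\|-|j-1|\,\|q_k\alpha\|\ge\tfrac{1}{4q_k}$ for large $k$, even though $\|jq_k\alpha\|$ is tiny. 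This is exactly what non-resonance buys, and it is also why $n$ has to be $\lesssim\tau q_k$.
\item One now has about $\tfrac65 n>n$ mutually separated phases against at most $n$ intersection points. Lemma~\ref{lemma:JK-6.2}, applied on a suitable interval containing more sample phases than intersection points, produces a phase with $|P_n^{\mathrm{small}}|\ge\gamma^t(d/2)^t t!$.
\item Combining this with the uniform-in-$x$ bound \eqref{eq:middle_factor_bound} for $P_n^{\mathrm{mid}}$ and Lemma~\ref{lemma:JK-6.3} for the large parts gives a good interval. It cannot be one around the origin, so it is one around $m$.
\end{itemize}

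Two further points. The appendix proves no measure bound $|B_n|\le e^{-cn}$; its estimates are uniform in $x$ for $P^{\mathrm{mid}}$ and structural, \eqref{eq:JK-4.8}, for $P^{\mathrm{small}}$. Even granting such a bound, your count would not close. Your uniform bound on the numerators is not a Furman-type fact here either, since $f$ may be discontinuous or unbounded. For bounded $f$ it follows from $|P^{\mathrm{small}}|\le 1$ and \eqref{eq:middle_factor_bound}. In general the large parts are controlled only through the ratio in Lemma~\ref{lemma:JK-6.3}.
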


\begin{lemma}\label{lemma:Liouville_transition_resonant}
Let $\psi$ be a generalized eigenfunction at energy $E$ normalized to $|\psi(0)|^2+|\psi(1)|^2=1$. For sufficiently large $k$, if $q_{k+1}>q_k^C$, then
every resonant point $m\in R_j$ with $0< |j|\leq\frac{q_{k+1}}{10q_k}$ is $\left(\psi,\frac{\log|j|}{q_k}+L(E)-\beta_k-3\tau,r(1+O(\tau))\right)$-regular, where $\beta_k=\frac{\ln q_{k+1}}{q_k}$ and $r=2q_k$ or $r=4q_k$.
\end{lemma}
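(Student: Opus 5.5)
The plan is to follow the resonant-regime argument of \cite{JK2} essentially verbatim, replacing each use of the anti-Lipschitz property of $f$ by the abstract hypotheses $\mathcal{A}_1$ and $\mathcal{A}_2$. Fix $m\in R_j$ with $0<|j|\le q_{k+1}/(10q_k)$. For $r\in\{2q_k,4q_k\}$, consider the candidate regularity intervals $I=[n_1,n_1+r-1]$ placed so that $m$ is at distance at least $\sigma r$ from both endpoints. The Green's function on $I$ is written via Cramer's rule as a ratio $P_{|m-n_i|}/P_r$ of Dirichlet determinants. The numerators are bounded above by the uniform upper bound $\frac1n\log|P_n(x)|\le L(E)+\varepsilon$. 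This bound holds for all $x$ and large $n$, and it needs only the integrability $\int\log(1+|f|)<\infty$ together with the Lipschitz continuity of the IDS (Lemma~\ref{lemma:ac_IDS}) to pass from average to uniform.

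The denominator requires a lower bound on $|P_r(x+n_1\alpha)|$. As in \cite{JK2}, write $P_r$ as a product over eigenvalues $\prod_\ell (E_\ell(x)-E)$, and use $\mathcal{A}_1$ and $\mathcal{A}_2$ to control how close the eigenvalue curves can come to $E$. Along each monotonicity interval the curves are $\gamma$-anti-Lipschitz, and by $\mathcal{A}_2$ they interlace across the jumps. Hence the phases $x+n\alpha$, $n\in I$, at which some branch is within $\delta$ of $E$ form a set of size $\lesssim \delta/\gamma$ per branch. This gives the large deviation estimate: outside an exceptional set of phases of measure $e^{-c r}$, one has $\frac1r\log|P_r|\ge L(E)-\varepsilon$.

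The arithmetic input is that the shifted phases $x+(n_1+\ell)\alpha$, for the $O(1)$ choices of $n_1$ and $r$, are spread at scale $\|q_k\alpha\|\approx 1/q_{k+1}$. This must be combined with the resonance $\psi(m)\approx\psi(m\pm q_k)$, which yields the gain $\log|j|/q_k$. For that gain I would reproduce the \cite{JK2} argument comparing $\psi$ at $m$ with $\psi$ at $m-jq_k$, near the origin, where the normalization $|\psi(0)|^2+|\psi(1)|^2=1$ and the polynomial growth bound \eqref{eq:gen_eig_growth} are used. That argument uses only the determinant bounds above together with Lagrange interpolation (Chebyshev-type products $\prod_{\ell\ne \ell'}|\sin\pi(\theta_\ell-\theta_{\ell'})|$). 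These products are purely arithmetic and independent of $f$.

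One then concludes that at least one of the intervals makes $m$ $(\psi,\mu)$-regular with $\mu=\frac{\log|j|}{q_k}+L(E)-\beta_k-3\tau$. The loss $\beta_k=\ln q_{k+1}/q_k$ comes from the exceptional-set measure competing with the spacing $1/q_{k+1}$, and the $O(\tau)$ losses come from the width of $R_j$.

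The main obstacle is the denominator lower bound. In \cite{JK2} the anti-Lipschitz property of $f$ also controls the behaviour at the discontinuities $\beta_j$, and there the curves jump down. I would verify that $\mathcal{A}_2$ exactly replaces this: it guarantees that a jump never carries the $k$-th curve below the $(k-1)$-st, so counting the crossings of level $E$ still costs at most one per branch per jump. With that, the measure estimate for the set of phases where $|P_r-\text{typical}|$ is small goes through unchanged.
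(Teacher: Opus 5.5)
There is a genuine gap in how you obtain the lower bound on the denominator $|P_r|$ at the specific phases attached to a resonant $m\in R_j$. That bound is what must produce the rate $\frac{\log|j|}{q_k}+L(E)-\beta_k$.

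\textbf{Why your two mechanisms fail.} The phases you must compare differ by multiples of $q_k\alpha$. They are therefore at mutual distance of order $|j|\,\|q_k\alpha\|\approx |j|/q_{k+1}$, and $q_{k+1}$ may be far larger than $e^{cr}$, since nothing bounds $\beta_k$ here.
\begin{itemize}
\item A bound of the form ``$\frac1r\log|P_r|\ge L(E)-\varepsilon$ outside a set of measure $e^{-cr}$'' says nothing about where that set lies. It cannot stop all the relevant phases from falling into it, and it cannot produce the precise loss $\ln q_{k+1}-\log|j|$.
\item Lagrange interpolation with the products $\prod|\sin\pi(\theta_\ell-\theta_{\ell'})|$ is not available either. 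It requires $x\mapsto P_n(x,E)$ to be a trigonometric polynomial of degree $\le n$. This holds for $f=2\lambda\cos 2\pi x$ but not for a monotone, possibly discontinuous $f$, so this step is not ``independent of $f$''.
\end{itemize}

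\textbf{What the paper does instead.} Following \cite[Section~7]{JK2}, the paper uses a deterministic mechanism. It splits $P_n=P_n^{\mathrm{small}}P_n^{\mathrm{mid}}P_n^{\mathrm{large}}$.
\begin{itemize}
\item $P_n^{\mathrm{mid}}$ is controlled for \emph{every} phase (Proposition~\ref{prop:mid_factor_prop}).
\item $P_n^{\mathrm{large}}$ enters only through the ratio bound of Lemma~\ref{lemma:JK-6.3}.
\item All phase-dependent smallness sits in $P_n^{\mathrm{small}}$. By $\mathcal{A}_1$ it satisfies $|P_n^{\mathrm{small}}(x)|\ge\prod\gamma\|x-z_k(E)\|$, where the product runs over the few intersection points $z_k(E)$ near $x$ (the points where the monotone functions $\Lambda_k$ cross the level $E$).
\end{itemize}
Lemma~\ref{lemma:JK-6.1} for \emph{real} polynomials then gives Lemma~\ref{lemma:JK-6.2}. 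Take $l+1$ points that are $d$-separated in a short interval containing at most $t\le l$ intersection points. At one of them, $|P_n^{\mathrm{small}}|\ge\gamma^t(d/2)^t t!$. Choosing $d$ of order $|j|\,\|q_k\alpha\|$ is what yields the $\log|j|$ gain against the $\beta_k$ loss. The Poisson formula and nonresonant regularity then convert this into $\psi$-regularity, with boundary values on the neighbouring resonant regions. This intersection-point, real-polynomial step is the idea missing from your plan.

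\textbf{The numerator also needs correcting.} Proposition~\ref{prop:main} allows unbounded $f$ with only $\int\log(1+|f|)<\infty$. A single huge sampled value then inflates $|P_n(x,E)|$ without bound, so no upper bound uniform in $x$ holds. This is why the paper only ever estimates the ratio of the large parts.

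Even for the middle part, uniformity in $x$ does not come from the Lipschitz continuity of $N$. It comes from the oscillation bound $|N_q(x,E)-N_q(y,E)|\le 2n_0+22$ of Proposition~\ref{JK:3.4}. That bound gives uniform convergence of $\frac1nN_n(x,E)$ and then Corollary~\ref{cor:JK-3.7}. This is exactly where $\mathcal{A}_2$ is used: the counting function can increase by at most one at each jump point. Your remark about $\mathcal{A}_2$ is the right ingredient, but it feeds this deterministic counting estimate, not a measure bound on an exceptional set.
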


\section{Proof of \texorpdfstring{Proposition~\ref{prop:main}}{Proposition 1.8}} \label{sec:loc_proof}
We now prove Proposition \ref{prop:main} following the proof of \cite[Theorem 1.1]{JK2}.

Fix $x\in[0,1)$, $E\in\mathbb{R}$. Suppose that $\psi:\mathbb{Z}\to\mathbb{R}$ satisfies \eqref{eq:gen_eig_eq} and \eqref{eq:gen_eig_growth}, namely $\psi$ is a generalized eigenfunction of $H(x)$ with generalized eigenvalue $E$. We consider nonnegative $0\leq m\in\mathbb{Z}$, the proof for negative $m$ is essentially identical. Let $\varepsilon>0$ be so small that $$\varepsilon<\frac{1}{4}(L(E)-\beta(\alpha))$$ and let $\tau>0$ be small enough for the regularity lemmas. In particular, we impose $$\tau <\min\left\{\frac{1}{24}\varepsilon,10^{-4}\right\}.$$ Set $\varepsilon'=\varepsilon/4$ and to absorb some other errors later assume further $100 L(E)\sqrt{\tau}<\varepsilon'$. Suppose that
\begin{equation}\label{eq:k_location}
    \sqrt{\tau} q_k\leq m< \sqrt{\tau} q_{k+1}
\end{equation}
which defines the scale $q_k$. We will now use the corresponding regularity lemmas depending on whether the scale is Diophantine or not.

	\textbf{Case 1: $q_{k+1}\leq q_k^C$}. We will show that in this case, for every $\varepsilon>0$, we have
\begin{equation}\label{eq:diophantine_decay}
    |\psi(m)|\leq e^{-(L(E)-\varepsilon)m}
\end{equation}
uniformly across all large enough $m$ satisfying \eqref{eq:k_location} with $q_{k+1}\leq q_k^C$.

Set
\[
N_1=\lfloor100\tau q_k\rfloor+1,\qquad
N_2=q_{k+1}-\lfloor q_k/2\rfloor-1.
\]
For integers $m'\in[N_1,(1-\tau)q_k)$, we apply
Lemma~\ref{lemma:diophantine_transition} if
$s\le q_{k-k_0}^C$, and
Lemma~\ref{lemma:diophantine_transition_case_2} otherwise.
In either case, $m'$ is
$(L(E)-\varepsilon',n_{m'})$-regular, where
$n_{m'}\le q_k$ and the lengths tend uniformly to infinity
as $k\to\infty$. 

For integers $m'\in[\lfloor q_k/2\rfloor+1,N_2]$ we apply 
Lemma~\ref{prop:diophantine_transition} to get $(L(E)-\varepsilon',q_k)$-regularity. As those two ranges overlap, every integer
$m'\in[N_1,N_2]$ is
$(L(E)-\varepsilon',n_{m'})$-regular, with lower bounds $L_k$ on the length satisfying
\[
L_k\le n_{m'}\le q_k,\qquad L_k\to\infty.
\]
The regularity intervals including their exterior boundary sites
are contained in
\[
[N_1^-,N_2^+]=[-q_k-1,q_{k+1}+q_k+1].
\]

Hence, by Proposition \ref{prop:inner_point_estimate}, we have for large enough $k$ 
\begin{equation}
    |\psi(m)|\leq e^{-(L(E)-2\varepsilon')|m-N_1|}|\psi(M_1)|+e^{-(L(E)-2\varepsilon')|m-N_2|}\, |\psi(M_2)|,
\end{equation}
with the maximizers $M_1,M_2$ as in Proposition~\ref{prop:inner_point_estimate} which we can bound
    \begin{align*}
        |\psi(M_1)| &\leq D(q_k+2) \leq D(\frac{m}{\sqrt{\tau}}+2),\\
        |\psi(M_2)| &\leq D(q_{k+1}+q_k+2)\leq D(\frac{m^C}{\sqrt{\tau}^C}+\frac{m}{\sqrt{\tau}}+2).
    \end{align*}    
    In addition, by the assumption on $\tau$, we have
    \[
        |m-N_1|\geq(1-100\sqrt{\tau})m-1,\qquad |m-N_2|= N_2-m \geq m
    \]
with the latter because $N_2-2m\geq (\frac12-2\sqrt{\tau})q_k-1 > 0$.
    Hence, we obtain for large enough $k$
    \begin{equation}
        |\psi(m)|\leq e^{-(L(E)-2\varepsilon')((1-100\sqrt\tau)m-1)} |\psi(M_1)|+e^{-(L(E)-2\varepsilon')m} |\psi(M_2)|.
    \end{equation}
    Using the fact that $\varepsilon'=\varepsilon/4$, $\tau\leq\varepsilon/24$ and that $\psi(M_1), \psi(M_2)$ are polynomially bounded in $m$, we obtain \eqref{eq:diophantine_decay} for large enough $m$ as required.

        \textbf{Case 2: $q_{k+1}>q_k^C$.} Here we make use of Lemmas \ref{lemma:Liouville_transition_nonresonant} and \ref{lemma:Liouville_transition_resonant} to show that
    \begin{equation}\label{eq:liouville_decay}
        |\psi(m)|\leq e^{-(L(E)-\beta(\alpha)-3\varepsilon)m}.
    \end{equation}
    Indeed, by these lemmas, there are $N_1,N_2$ such that every $m'\in [N_1,N_2]$ is $(\psi,\mu,n)$-regular with $\mu\geq L(E)-\beta_k-3\tau$ and $n\leq10q_k$ for large enough $k$. In particular, setting $$[N_1,N_2]=[ \lfloor\tau q_k\rfloor+1,\lfloor q_{k+1}/10\rfloor-q_k]$$ and $[N_1^-,N_2^+]=[-10q_k-1,q_{k+1}]$, we obtain
    \[
        |\psi(m)|\leq e^{-(L(E)-\beta_k-3\tau-\epsilon')|m-N_1|}|\psi(M_1)|+e^{-(L(E)-\beta_k-3\tau-\epsilon')|m-N_2|}|\psi(M_2)|
    \]
    similarly to the first case, with $M_1,M_2$ the boundary maximizers in $[N_1^-,N_1)$ and in $(N_2,N_2^+]$ respectively. Again, choosing $\tau$ correctly, we have that
    \[
        |m-N_1|\geq(1-\sqrt\tau)m-1
\]
and 
\[\left|m-N_2\right|\geq \left(\frac{1}{10}-\sqrt{\tau}\right)q_{k+1}-q_k-1 \geq \frac{q_{k+1}}{20} \geq \frac{m}{20\sqrt{\tau}} \geq 2m.
    \]
    By our choice of $\tau$ we can absorb the error into $\varepsilon$, this leads to
    \[
        |\psi(m)|\leq e^{-(L(E)-\beta_k-2\varepsilon)m},
    \]
    for large $m$ and since eventually $\beta_k\leq \beta(\alpha)+\epsilon$ this finishes the proof of \eqref{eq:liouville_decay}. \qed

	The following corollary states that one can obtain uniform localization, given some uniform positivity assumptions on the Lyapunov exponent. The proof is identical to the proof of \cite[Corollary 5.14]{JK2}; we include it here for completeness.
    \begin{corollary}
        Let $f$ satisfy the assumptions of Proposition \ref{prop:main}. Then for every $\delta>0$, there exists a sequence $\varepsilon_n\to0$, depending on $f,\alpha$ and $\delta$, such that for every $E\in\mathbb{R}$, if $L(E)>\beta(\alpha)+\delta$, then for every generalized eigenfunction $\psi$ corresponding to $E$ which satisfies $\|\psi\|_\infty\leq1$, there exists $n_0=n_0(\psi)$ such that for every $n\in\mathbb{Z}$,
        \[
            |\psi(n-n_0)|\leq e^{-(L(E)-\beta(\alpha))(1-\varepsilon_n)|n-n_0|}.
        \]
    \end{corollary}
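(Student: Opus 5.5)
The plan is to rerun the proof of Proposition~\ref{prop:main} with all constants made uniform in $E$, and then to recenter at a maximum point of $\psi$.

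\textbf{Step 1: uniformity.} Fix $\delta>0$ and set $\mathcal{E}_\delta=\{E: L(E)>\beta(\alpha)+\delta\}$. First I would check that the regularity Lemmas~\ref{prop:diophantine_transition}--\ref{lemma:Liouville_transition_resonant} hold with a threshold ``$k$ large enough'' that is uniform over $E$ in a compact energy window. Since $f$ is bounded, a generalized eigenvalue lies in $[-2-\|f\|_\infty,\,2+\|f\|_\infty]$, so a compact window suffices. The source of uniformity is the large deviation estimates from the appendix: under $\mathcal{A}_1,\mathcal{A}_2$ they give bounds on the measure of bad phase sets that depend only on $\gamma$ and $n$, not on $E$. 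Upper semicontinuity of $L$ and the uniform upper bound $\frac1n\log\|T_n\|\le L(E)+\varepsilon$ for $n\ge n(\varepsilon)$, uniformly in $E$ on compacts, should then give this. I also fix $\tau$ and $\varepsilon$ as functions of $\delta$ and a parameter $\varepsilon_n\to0$, using $L(E)-\beta(\alpha)>\delta$.

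\textbf{Step 2: recentering and decay.} Given $\psi$ with $\|\psi\|_\infty\le1$, I choose $n_0$ with $|\psi(n_0)|\ge\frac12\sup|\psi|$; this is possible since $\psi\not\equiv0$. Then I translate: $\tilde\psi(n)=\psi(n+n_0)$ is a generalized eigenfunction of $H(x+n_0\alpha)$. This is harmless, because the localization part holds for all phases and the constants in Step 1 do not depend on $x$. Normalizing by $|\tilde\psi(0)|$ gives $\|\tilde\psi\|_\infty\le2$ in place of the polynomial bound $D(1+|m|)$. This is the key simplification: in the block expansion of Section~\ref{sec:loc_proof}, the boundary maximizers $|\psi(M_i)|$ are now bounded by $2$ rather than polynomially in $m$. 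Both Case~1 and Case~2 then yield
\[
|\tilde\psi(m)|\le 2\,e^{-(L(E)-\beta(\alpha))(1-\varepsilon_m)|m|}
\]
for $|m|\ge m_0(\delta)$, with $m_0$ independent of $E$ and $\psi$. Here I use that $\beta_k\le\beta(\alpha)+o(1)$ along the scale determined by $m$, which depends only on $\alpha$. The factor $2$ and the finitely many $|m|<m_0$ are absorbed by enlarging $\varepsilon_n$ for small $n$, using $|\tilde\psi|\le 1$.

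\textbf{Main obstacle.} The delicate point is the resonant Liouville case, Lemma~\ref{lemma:Liouville_transition_resonant}. There the regularity is stated for $\psi$ normalized at $0,1$, and the rate $\log|j|/q_k$ relies on comparing $\psi$ near $jq_k$ with $\psi$ near $0$. With the recentering at a near-maximum, this comparison is available with constant $2$, which should make that lemma's argument uniform. I would check that its proof in \cite{JK2} uses only $\sup|\psi|\lesssim|\psi(0)|+|\psi(1)|$. If needed, I would recenter at $n_0$ maximizing $|\psi(n)|^2+|\psi(n+1)|^2$ instead.
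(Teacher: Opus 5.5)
Your central step is the paper's proof. Recentering $\psi$ at a (near-)maximum makes the constant $D$ in \eqref{eq:gen_eig_growth} absolute, so the boundary maximizers in Proposition~\ref{prop:inner_point_estimate} are $O(1)$ rather than polynomial in $m$; the paper's observation is precisely that $D$ was the only $E$-dependent input. Your near-maximum choice, and the absorption of the factor $2$ and of small $|m|$ via $L(E)-\beta(\alpha)>\delta$, are fine.

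The gap is in Step~1, where you use that $f$ is bounded to restrict to a compact energy window. The corollary is stated under the hypotheses of Proposition~\ref{prop:main}, which only require $\int_0^1\log(1+|f(x)|)\,dx<\infty$; the paper deliberately keeps this abstract part valid for unbounded $f$. Generalized eigenvalues can then be arbitrarily large. Since $L(E)=(1+o(1))\log|E|$ (Lemma~\ref{lemma:lyapunov_uniformity}), the set $\{L(E)>\beta(\alpha)+\delta\}$ contains all large $|E|$, so uniformity on compacts does not give a single sequence $\varepsilon_n$.

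The mechanism you propose does not rescue this. A Furman-type bound $\frac1n\log\|T_n(x,E)\|\le L(E)+\varepsilon$ uniform in $x$ is not available in general for unbounded $f$. Also, the appendix estimates are not measure bounds on bad phase sets; they are deterministic bounds valid for every $x$.

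Uniformity over all such $E$ comes instead from the explicit energy dependence of those estimates:
\begin{itemize}
\item the cutoffs $B_+^{\log(2+|E|)}$;
\item the ratio bound $\exp(n\log(2+|E|)\,o(1))$ of Lemma~\ref{lemma:JK-6.3};
\item the estimate $L_{\mathrm{corr}}=o(1)(1+L(E))$.
\end{itemize}
On $\{L(E)>\beta(\alpha)+\delta\}$ one has $1+L(E)+\log(2+|E|)\le C\,(L(E)-\beta(\alpha))$, with $C$ depending on $f,\alpha,\delta$. Hence all errors are $o(1)$ relative to $L(E)-\beta(\alpha)$.

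Correspondingly, $\varepsilon$ and $\tau$ in the proof of Proposition~\ref{prop:main} must be chosen relative to $L(E)-\beta(\alpha)$; note the constraint $100L(E)\sqrt{\tau}<\varepsilon'$. This can be done with $\tau$ depending only on $\delta$, $\beta(\alpha)$ and the target accuracy. This is why the conclusion has the multiplicative form $(1-\varepsilon_n)$ with $\varepsilon_n$ depending on $\delta$.

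For bounded $f$ your compact-window reduction is fine, provided the uniformity is drawn from these appendix estimates rather than from semicontinuity of $L$.
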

    \begin{proof}
        In the course of the proof of Proposition \ref{prop:main}, the only step which is not uniform in $E$ is when we choose the constant $D$ in \eqref{eq:gen_eig_growth}. By our assumption, $\|\psi\|_\infty\leq 1$ and one may shift the origin so that $\psi$ attains its maximum at the origin; then one can take $D=1$.
    \end{proof}

    \begin{corollary}
        Let $f$ satisfy the assumptions of Proposition \ref{prop:main}. Suppose that $A\subseteq\mathbb{R}$ is such that for almost every $E\in A$, $L(E)>\beta(\alpha)$. Then for almost every $x\in[0,1)$, $H(x)$ has only pure point spectrum inside $A$.
    \end{corollary}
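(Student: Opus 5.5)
The plan is to combine Proposition~\ref{prop:main} with the standard spectral-theoretic fact that spectral measures are supported on generalized eigenvalues, and then remove the exceptional null set of energies by averaging over phases. Fix $x$. By Proposition~\ref{prop:main}, every generalized eigenvalue $E$ of $H(x)$ with $L(E)>\beta(\alpha)$ is an honest $\ell^2$ eigenvalue: the generalized eigenfunction $\psi$ decays exponentially, with rate at least $L(E)-\beta(\alpha)-3\varepsilon$. Let $\mu_x$ be the maximal spectral measure of $H(x)$. By \cite{Ber}, $\mu_x$ is supported on the set $G_x$ of generalized eigenvalues. Split $A=A_0\cup A_1$, where $A_1=\{E\in A: L(E)>\beta(\alpha)\}$ and $A_0=A\setminus A_1$ has Lebesgue measure zero.

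On $A_1$ the argument is direct. We have $\mu_x(A_1\cap\cdot)=\mu_x(A_1\cap G_x\cap\cdot)$, and $A_1\cap G_x$ consists of eigenvalues of $H(x)$. Since $H(x)$ acts on a separable space, this set is countable. Hence $\mu_x$ restricted to $A_1$ is pure point, for every $x$.

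The main obstacle is the null set $A_0$: we must show $\mu_x(A_0)=0$ for a.e.\ $x$. Here I would use the density of states. By definition,
\[
\int_0^1 \langle\delta_0,\chi_{A_0}(H(x))\delta_0\rangle\,dx=\nu(A_0).
\]
Lemma~\ref{lemma:ac_IDS} gives that $N$ is Lipschitz, so $\nu$ is absolutely continuous and $\nu(A_0)=0$. Covariance, $H(x+\alpha)=SH(x)S^{-1}$ with $S$ the shift, gives
\[
\int_0^1 \langle\delta_n,\chi_{A_0}(H(x))\delta_n\rangle\,dx=\nu(A_0)=0
\]
for every $n$. Therefore, for a.e.\ $x$, the spectral measures of all $\delta_n$ vanish on $A_0$. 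Since $\{\delta_n\}$ is cyclic in aggregate (indeed $\delta_0,\delta_1$ already suffice), we get $\mu_x(A_0)=0$ for a.e.\ $x$.

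Combining the two parts, for a.e.\ $x$ the spectral measure of $H(x)$ restricted to $A$ is supported on countably many eigenvalues, so the spectrum of $H(x)$ in $A$ is pure point.

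One point to check is that Lemma~\ref{lemma:ac_IDS} applies under the hypotheses of Proposition~\ref{prop:main}. It does, since it only uses $\mathcal{A}_1$ and $\mathcal{A}_2$.
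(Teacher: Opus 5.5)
Your proposal is correct and follows the same route as the paper. Lemma~\ref{lemma:ac_IDS} makes $\nu$ absolutely continuous, so the Lebesgue-null exceptional set $A_0$ has $\nu(A_0)=0$ and hence zero spectral measure for a.e.\ $x$, while Proposition~\ref{prop:main} (whose proof gives exponential decay of every generalized eigenfunction, for every $x$) handles $A_1$; you simply make explicit the averaging/covariance step that the paper compresses into one sentence.
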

    \begin{proof}
        By Lemma \ref{lemma:ac_IDS}, the density of states is absolutely continuous w.r.t.\ the Lebesgue measure, and so 
        \[
        \nu\left(\left\{E\in A:L(E)\leq\beta(\alpha)\right\}\right)=0.
        \]
        This implies that the same is true for the spectral measure of $H(x)$ for Lebesgue-almost every $x\in[0,1)$, which implies the result.
    \end{proof}
\section{Anti-Lipschitz eigenvalue branches}\label{sec:ev_branches}

	In this section we prove that there are potentials for which for all sufficiently large volumes, all eigenvalue branches are monotone and anti-Lipschitz on each monotonicity interval, even though the potential is not itself anti-Lipschitz or even monotone.

\newcommand{\Aset}{\mathsf{A}}    
\newcommand{\NA}{\mathcal N_{\Aset}}

\begin{theorem}\label{thm:bounded_implies_A1A2}
Fix an irrational $\alpha\in\TM$, a constant $C>0$ and a measurable set $\Aset\subset\TM$ such that $\Aset\cap (\Aset -\alpha)$ contains an interval. Then there exists a constant $c_{\Aset, \alpha, C} >0$ such that the following holds:

Assume that the sampling function $f:\mathbb{T} \to\mathbb{R}$ is bounded by $\lVert f\rVert_\infty \leq C$ and that $f$ is either monotone and piecewise continuous, or that $f$ is piecewise absolutely continuous and has only positive jumps. Then Assumptions $\mathcal{A}_1$ and $\mathcal{A}_2$ are satisfied if
$$|\gamma_-| < c_{\Aset, \alpha, C} \gamma_+$$
with
$$\gamma_+:=\essinf_{x\in\Aset} f'(x), \qquad \gamma_-:= \essinf_{x\notin\Aset} f'(x).$$
\end{theorem}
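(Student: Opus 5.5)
Throughout, $\Aset$ plays the role of the good set, where $f'\ge\gamma_+$, and its complement is the bad set. The plan is to prove $\mathcal{A}_1$ by the Hellmann--Feynman route sketched in the introduction. We then obtain $\mathcal{A}_2$ from the sign of the jumps. Under either hypothesis the jumps of $f$ are positive, which gives $f(0^+)<f(1^-)$ after the normalization in the Remark following Proposition~\ref{prop:main}. Crossing a point $\beta_j$ is then a negative rank-one perturbation, so interlacing gives $E_k(x)\ge E_{k-1}(x-0)$. (In the monotone case $\mathcal{A}_1\Rightarrow\mathcal{A}_2$ directly by \cite[Section 5]{JK1}.)

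Inside a monotonicity interval the eigenvalues are simple, since Dirichlet restrictions of Jacobi matrices have simple spectrum. The potentials $x\mapsto f(x+i\alpha)$ are absolutely continuous there. Hence $E_k$ is absolutely continuous on that interval, with
\[
E_k'(x)=\sum_{i=0}^{n-1}|\psi_k(i)|^2 f'(x+i\alpha)\quad\text{a.e.}
\]
This gives the lower bound
\[
E_k'(x)\ \ge\ \gamma_+ W_G-|\gamma_-|\,(1-W_G),\qquad W_G:=\sum_{i:\,x+i\alpha\in\Aset}|\psi_k(i)|^2 .
\]
It therefore suffices to show $W_G\ge c_0>0$ with $c_0$ depending only on $\Aset,\alpha,C$. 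One can then take $c_{\Aset,\alpha,C}=c_0/2$, which makes $E_k'\ge \tfrac{c_0}{2}\gamma_+$ and gives $\mathcal{A}_1$ with $\gamma=c_0\gamma_+/2$.

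To prove the mass bound, pick an interval $J\subset \Aset\cap(\Aset-\alpha)$. Every orbit of the rotation enters $J$ within a bounded time $K=K(J,\alpha)$. So along any orbit, pairs of consecutive good sites $\{i,i+1\}$ occur with gaps at most $K$; this is exactly why $\Aset\cap(\Aset-\alpha)$ is used. By the eigenvalue equation $\psi(i+2)=(E-V(i+1))\psi(i+1)-\psi(i)$, and since $|E|\le C+2$ and $|V|\le C$, each transfer matrix has norm at most $2C+3$. Every site $m$ in $[0,n-1]$ lies within distance $K$ of such a pair, or of the boundary, where $\psi(-1)=\psi(n)=0$ plays the same role. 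Transfer-matrix propagation then gives
\[
|\psi(m)|^2\le (2C+3)^{2K}\bigl(|\psi(i)|^2+|\psi(i+1)|^2\bigr)
\]
for a nearby good pair $(i,i+1)$, or for the boundary pair $(0,1)$ or $(n-2,n-1)$ as the case may be.

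Each good pair is charged by at most $O(K)$ sites, so summing over $m$ yields
\[
1\le K(2C+3)^{2K}\Bigl(W_G+|\psi(0)|^2+|\psi(1)|^2+|\psi(n-2)|^2+|\psi(n-1)|^2\Bigr).
\]
The obstacle here is the boundary contribution: the vanishing Dirichlet condition is a vanishing \emph{pair} $(\psi(-1),\psi(0))$ only in the degenerate sense, and it does not by itself force mass onto good sites. I expect to handle this by propagating from a good pair near the boundary inward rather than from the boundary. Concretely, the sites $0,\dots,K$ themselves propagate from the first good pair at distance at most $K$, so the boundary terms can be dropped and every site is charged to a genuine good pair. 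This gives $W_G\ge \bigl(K(2C+3)^{2K}\bigr)^{-1}=:c_0$ for $n\ge n_0:=2K+2$.

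The key point of the plan is that pairs of good sites, unlike single good sites, control the full two-component vector $(\psi(i),\psi(i+1))$. This bypasses the unique-continuation difficulty described in the introduction; that difficulty is only needed later, for the weaker hypotheses of Theorem~\ref{thm:main1}.
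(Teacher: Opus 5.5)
Your mass estimate is correct and is essentially the paper's own argument. The paper proves Theorem~\ref{thm:bounded_implies_A1A2} by applying Lemma~\ref{lemma:poisson_mass_transfer} with $\ell=1$ to an interval $I\subset\mathsf{A}\cap(\mathsf{A}-\alpha)$, so the no-common-eigenvalue condition is vacuous. The proof then reduces to iterating $|\psi(i)|\le(|E-f(x+(i+1)\alpha)|+1)\max(|\psi(i+1)|,|\psi(i+2)|)$, or its backward analogue near the top end, until a consecutive good pair is reached within bounded distance. Your transfer-matrix bound from the pair to $m$ is the same inequality read in the other direction, and your treatment of the two ends for $n\ge 2K+2$ is fine.

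The step that fails as written is the passage from the a.e.\ Hellmann--Feynman bound to $\mathcal{A}_1$. You assert that each $x\mapsto f(x+i\alpha)$, and hence $E_k$, is absolutely continuous on a monotonicity interval $(\beta_j,\beta_{j+1})$. This is false under both hypotheses of the theorem:
\begin{itemize}
\item a piecewise absolutely continuous $f$ may have interior jumps in $(0,1)$, which the points $x+i\alpha$ cross inside a monotonicity interval;
\item a monotone piecewise continuous $f$ may also have a singular continuous part.
\end{itemize}
So $\int_x^y E_k'$ need not equal $E_k(y)-E_k(x)$. What is needed, and what the paper's proof supplies, is that every non-absolutely-continuous contribution is upward. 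A positive jump of diagonal entries is a positive semidefinite perturbation and raises every $E_k$, while between such jumps $E_k$ is absolutely continuous by Weyl's inequality. In the monotone case $E_k$ is nondecreasing on the whole monotonicity interval, so Lebesgue's theorem gives $E_k(y)-E_k(x)\ge\int_x^yE_k'(z)\,dz$. Only then does your bound $E_k'\ge\frac{c_0}{2}\gamma_+$ a.e.\ yield the anti-Lipschitz estimate. In the monotone case $f'\ge 0$, so the condition on $\gamma_-$ is not even used.

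A smaller point concerns $\mathcal{A}_2$. Positivity of the interior jumps does not by itself give $f(0^+)<f(1^-)$ in the absolutely continuous case, since the continuous part of $f$ may decrease. The paper simply adopts this as a standing assumption, but it follows from your estimate. Summing the Hellmann--Feynman formula over $k$ gives $\sum_{i=0}^{n-1}f'(x+i\alpha)=\sum_kE_k'(x)\ge n\gamma$ for a.e.\ $x$. Integrating in $x$ then yields $f(1^-)-f(0^+)\ge\int_0^1f'(t)\,dt\ge\gamma>0$.
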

The positive jumps here refer to interior jumps in $(0,1)$, not the reset at the end of the interval. We adopt the convention that $f$ is right-continuous. Piecewise absolutely continuous here precisely means that $f$ has at most finitely many jump discontinuities and is absolutely continuous in between those jumps. All those jumps are assumed to be positive, which is automatic if $f$ is monotone. From the proof one can in principle also derive an explicit lower bound for the constant $c_{\Aset, \alpha, C}$ depending on the size of the interval in $\Aset\cap (\Aset -\alpha)$ and the continued fraction expansion, but we will refrain from doing so since it would be far from optimal.

For purely monotone potentials we can prove a much more general result albeit at the cost of a less explicit bound:
\begin{theorem}\label{thm:monotonederivative_implies_A1A2}
Fix an irrational $\alpha\in\TM$ and consider a sampling function $f:\mathbb{T} \to\mathbb{R}$ which is bounded, monotone and piecewise continuous on $(0,1)$ and has some interval $I\subset \mathbb{T}$ with the almost everywhere existing derivative satisfying $\gamma_+:=\essinf_{x\in I} f'(x)>0$. If no two of the discontinuity points of $f$ lie on the same $\alpha$-orbit then the Assumptions $\mathcal{A}_1$ and $\mathcal{A}_2$ are satisfied.
\end{theorem}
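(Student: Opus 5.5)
Since $f$ is monotone with $f(0^+)<f(1^-)$, $\mathcal{A}_2$ holds automatically by the remark after Proposition~\ref{prop:main}. Within each monotonicity interval every $E_k$ is nondecreasing. The task is therefore the uniform lower bound in $\mathcal{A}_1$. By the Hellmann--Feynman formula,
\[
E_k'(x)=\sum_{i=0}^{n-1}|\psi_k(i)|^2 f'(x+i\alpha)\ \ge\ \gamma_+\sum_{i:\,x+i\alpha\in I}|\psi_k(i)|^2,
\]
because $f'\ge 0$ everywhere. This holds a.e. in the monotonicity interval, and absolute continuity of $E_k$ there lets us integrate. So it suffices to prove a \emph{mass estimate}: there is $c>0$, uniform in $n\ge n_0$, $x$ and $k$, such that every normalized Dirichlet eigenvector of $H_n(x)$ carries mass at least $c$ on the good sites $\{i: x+i\alpha\in I'\}$. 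Here $I'\subset I$ is a slightly shrunk closed subinterval. Only energies in the compact range $[-2-\|f\|_\infty,2+\|f\|_\infty]$ matter.

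To prove the mass estimate I would follow the mechanism described in the introduction. Since $I'$ is an interval, every orbit visits it within a bounded time $T$. Call $[a,b]$ a \emph{good block} if $a-1$ and $b+1$ are good and $b-a\le T$. I would show there is $C_2$ with the following property: for every $x$, every $E$ in the compact range and every site $m$, some good block $J$ within distance $C_1$ of $m$ satisfies $\|(H_J(x)-E)^{-1}\|\le C_2$. Then the Poisson formula \eqref{eq:poisson} bounds $|\psi(m')|$, for $m'\in J$, by the values at the good boundary sites. Transfer matrices of bounded length propagate this bound from $J$ back to $m$. Summing over $m$ gives
\[
\sum_{\text{bad }m}|\psi(m)|^2\le K\sum_{\text{good }m}|\psi(m)|^2,
\]
and hence the mass estimate. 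Sites near the edges of $[0,n-1]$ need a short separate argument: treat the Dirichlet boundary as a zero value and use the same bounded-time return.

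The claim about good blocks is proved by compactness and contradiction. Fix the finitely many first-return words, i.e.\ itineraries of length $\le T$ through the partition given by the discontinuities of $f$ and the endpoints of $I'$. If the claim fails, there are sequences $x_j,E_j$ along which every nearby good block becomes nearly resonant. Passing to a limit, one-sided at the discontinuities, we obtain $(x_*,E_*)$ such that $E_*$ is a Dirichlet eigenvalue of every good block in a long window, including the blocks with one-sided limiting potential values. Concatenating the corresponding eigenvectors then gives a nonzero solution of the eigenvalue equation that vanishes on all good sites.

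\textbf{Main obstacle: ruling out this common Dirichlet resonance.} First I would exploit monotonicity. As the phase moves through a continuity interval, $E_*$ stays a common eigenvalue. Each eigenvalue branch is monotone and constant in the limit, so the eigenvector is phase-independent and vanishes wherever $f$ varies. Hence only finitely many plateau values of $f$ are actually sampled. We may then replace $f$ by a function that is locally constant outside a small interval $G\subset I'$, with finitely many letters.

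The Dirichlet restrictions then become the return words to $G$ in an interval-exchange-type partition whose endpoints lie on distinct $\alpha$-orbits; this is where the hypothesis on the discontinuities enters. I would prove the symbolic unique-continuation principle by induction: enlarge $G$ step by step and trim the return words, checking that a common eigenvalue survives each trimming. The induction ends at the empty word, which has no eigenvalues, or at a one-letter alphabet with return words of several lengths. Coprimality of the return times then excludes a common eigenvalue, since Dirichlet eigenvalues of free chains of coprime lengths $p,q$ coincide only if $p+1$ and $q+1$ share a factor, and this is avoided by choosing the appropriate pair of return times.

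I expect the trimming step, i.e.\ showing that the common eigenvalue is preserved when $G$ grows and handling the letters created by orbit coincidences, to be the hardest part.
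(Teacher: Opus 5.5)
Your outline follows the paper's architecture step by step: Hellmann--Feynman reduction to a mass estimate on good sites, Poisson transfer through blocks with good endpoints, compactness down to the absence of a common Dirichlet eigenvalue of the first-return blocks, the monotone common-eigenvector lemma with the locally constant reduction, and an induction on the number of letters ending in the empty word or coprime return times. The genuine gap is the step you flag yourself. You assert that a common eigenvalue ``survives each trimming'' but give no mechanism, and this is the whole content of the symbolic unique-continuation theorem. Cutting a word does not preserve Dirichlet eigenvalues in general: $E\in\sigma(H_w)$ for $w=\tilde w\,l\,\tilde v$ says nothing about $\sigma(H_{\tilde w})$.

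The paper closes this step with two ingredients.
\begin{enumerate}
\item \emph{A stripping lemma.} Suppose in addition $E\in\sigma(H_{\tilde v})$. The solution of the recursion with Dirichlet condition at the far end of $\tilde v$ is unique up to scaling, so it vanishes at the site just before $\tilde v$. Hence the eigenvector of $H_w$ vanishes at the site carrying $l$, and its restriction is a Dirichlet eigenvector of $H_{\tilde w}$. This restriction is nonzero and $\tilde w$ is nonempty, since otherwise $\psi$ would vanish at two consecutive sites.
\item \emph{A combinatorial analysis of the transitions.} Let the left endpoint $t$ of $G_t=[t,a_0)$ decrease through a transition point $y$, i.e.\ one of the finitely many points of $(a_{n-1},a_n)$ of the form $a_k-j\alpha$ with $|j|<r$. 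Then $W_{y+\varepsilon}$ and $W_{y-\varepsilon}$ both have exactly $n+2$ words, and at most one word $\tilde w\in W_{y-\varepsilon}$ is new. Moreover, $\tilde w$ is a factor of an old word $w=\tilde w\,l\,\tilde v$ or $\tilde v\,l\,\tilde w$ whose complementary piece $\tilde v$ is itself old.
\end{enumerate}
The combinatorial statement uses three facts. First, the translates $G_i$ partition $\mathbb{T}$, so every point has a unique pre-return representative in $G$. Second, the $G^-$-return orbit producing $\tilde w$ starts or ends, but not both, in the strip $G^-\setminus G^+$; gluing it through one strip site to the adjacent $G^-$-return orbit therefore gives a $G^+$-return orbit. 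Third, $\tilde w$ occurs only on an interval of length $O(\varepsilon)$, which forces $\tilde v\neq\tilde w$. Distinctness of the $\alpha$-orbits of the endpoints is what yields the count $n+2$ and the uniqueness of the shrinking word. With both ingredients a common eigenvalue passes from the smaller to the larger interval. When $t$ reaches $a_{n-1}$, the $n$-letter return words form a subcollection of the last one, which closes the induction.

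Several smaller points also need repair.
\begin{enumerate}
\item For merely monotone, piecewise continuous $f$ the branches $E_k$ need not be absolutely continuous. What you can use is that they are nondecreasing on each monotonicity interval, so $E_k(y)-E_k(x)\ge\int_x^y E_k'(z)\,dz$; this is how the paper concludes.
\item The good set in the mass estimate and the interval whose return words you analyze must be the same $G$, with endpoints chosen off the $\alpha$-orbits of $0$, of the discontinuities, and of each other. The paper fixes such a $G$ from the outset. Your passage from $I'$-blocks to return words of a smaller $G$ is not justified as written.
\item Your limiting argument produces resonance only along a single orbit. Before invoking the common-eigenvector lemma you must upgrade this to whole continuity intervals of $x\mapsto H_{[1,\ell_G(x)-1]}(x)$, using density of the orbit and continuity of the spectrum there. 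The paper's direct compactness in $E$ avoids this: nonresonance on a subinterval $I_E$ persists for nearby energies, and finitely many such intervals cover the energy range.
\item Near the edges of $[0,n-1]$, use forward returns for sites near $0$ and backward returns for sites near $n-1$. Treating the Dirichlet boundary as a good site does not work, because blocks touching the boundary come with no resolvent bound.
\end{enumerate}
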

In particular, if $f$ is a $C^1$-function then this holds whenever $f$ is not constant. The assumption on the location of the discontinuity points can be significantly weakened, but it cannot be removed entirely. One can in fact construct examples of monotone potentials which satisfy the conditions of Theorem~\ref{thm:monotonederivative_implies_A1A2} except that their discontinuities are on the same $\alpha$-orbit and which violate Assumption $\mathcal{A}_1$ as infinitely many Dirichlet restrictions have some eigenvalue curves which are not Anti-Lipschitz. Details will be published elsewhere. 

The strategy of proof allows to incorporate slightly negative derivatives perturbatively, though with a similarly inexplicit constant:
\begin{theorem}
\label{thm:monotonederivative_plus_perturbation_implies_A1A2}
Let $f$ satisfy the conditions of Theorem~\ref{thm:monotonederivative_implies_A1A2} and let $f$ in addition be piecewise absolutely continuous. Then there exists for every $\alpha \notin \mathbb{Q}$ some constant $C>0$ such that for every piecewise absolutely continuous function $g$ with only positive jumps, $\|g\|_\infty \leq C$ and $\|g'\|_\infty \leq C$ the sampling function $\tilde{f}=f+g$ satisfies Assumptions $\mathcal{A}_1$ and $\mathcal{A}_2$.
\end{theorem}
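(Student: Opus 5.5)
The plan is to treat Theorem~\ref{thm:monotonederivative_plus_perturbation_implies_A1A2} as a stability statement for the quantitative mass estimate behind Theorem~\ref{thm:monotonederivative_implies_A1A2}. Applied to $f$, that argument should give two things. First, a set of good sites $G\subset I$ on which $f'\ge\gamma_+$. Second, constants $C_1,C_2,\delta>0$ and a compact energy window $[E_1,E_2]$ containing all finite-volume spectra. These should satisfy the following uniform Green's function bound: for every phase $x$, every $E\in[E_1,E_2]$ and every site $m$, there is an interval $J$ with good endpoints, $\dist(m,J)\le C_1$, and $\|(H_J(x)-E)^{-1}\|\le C_2$. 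The consequence is a mass bound. For every normalized Dirichlet eigenfunction $\psi_k$ of $H_n(x)$ with $n\ge n_0$, the good sites carry mass at least $\delta$:
\[
\sum_{i:\, x+i\alpha\in G}|\psi_k(i)|^2\ \ge\ \delta .
\]
I would first restate the proof of Theorem~\ref{thm:monotonederivative_implies_A1A2} so that this explicit inequality is its output. The compactness-type input (the symbolic unique continuation principle, excluding a common Dirichlet eigenvalue of the return words) enters only through the existence of $C_1,C_2,\delta$.

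\textbf{Step 1: stability of the resolvent bound.} Set $\tilde f=f+g$. The intervals $J$ have length bounded by some $\ell=\ell(C_1)$, since by minimality of the rotation their length is controlled by the return time to $G$. Hence $\|H_J^{\tilde f}(x)-H_J^{f}(x)\|\le\|g\|_\infty$. By the resolvent identity, $\|(H^{\tilde f}_J-E)^{-1}\|\le 2C_2$ as soon as $\|g\|_\infty\le 1/(2C_2)$, and the energy window enlarges only by $\|g\|_\infty$. The transfer-matrix step that moves from $m$ to $J$ in at most $C_1$ steps uses only $\|\tilde f\|_\infty\le\|f\|_\infty+C$. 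The Poisson formula \eqref{eq:poisson} then transfers mass to good endpoints exactly as before. The conclusion is that $\tilde\psi_k$ also has mass at least $\delta'=\delta'(f,\alpha)>0$ on $G$ once $C$ is small. On the good set, $\tilde f'\ge\gamma_+-C\ge\gamma_+/2$.

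\textbf{Step 2: Hellmann--Feynman.} On a monotonicity interval $(\beta_j,\beta_{j+1})$ no sampled point crosses a jump of $\tilde f$ or the reset at $0$, so the branches $E_k(x)$ are absolutely continuous. This holds because $f$ and $g$ are piecewise absolutely continuous; the eigenvalues should be handled by Lipschitz continuity in $x$ plus simplicity of Dirichlet eigenvalues. Hence for a.e.\ $x$,
\[
E_k'(x)=\sum_i|\psi_k(i)|^2\tilde f'(x+i\alpha)\ \ge\ \delta'\frac{\gamma_+}{2}-(1-\delta')C,
\]
using $f'\ge0$ off $G$ and $g'\ge-C$. Choosing $C<\delta'\gamma_+/4$ gives $\mathcal A_1$ with $\gamma=\delta'\gamma_+/4$. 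There is a subtlety: interior jumps of $\tilde f$ (positive jumps of $f$ or $g$) introduce extra jump points of $E_k$. This is harmless, because positive jumps are positive rank-one perturbations and only increase $E_k$, so integrating gives the anti-Lipschitz bound across them too. For $\mathcal A_2$, I note that $\tilde f(0^+)<\tilde f(1^-)$ holds for small $C$ (since $f$ is nonconstant monotone with positive total increase), and then invoke Remark~(1) after Proposition~\ref{prop:main}: $\mathcal A_1\Rightarrow\mathcal A_2$.

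The main obstacle is Step 1's claim that the output of Theorem~\ref{thm:monotonederivative_implies_A1A2} is genuinely quantitative and uniform: $C_1,C_2$ uniform in $x\in\TM$, in $E$ in a compact window, and in $n$. I would obtain this by compactness. The quantity $\inf_J\|(H_J(x)-E)^{-1}\|$, minimized over admissible $J$ near $m$, is upper semicontinuous in $(x,E)$ up to the finitely many discontinuity orbits, which are handled via one-sided limits $x\pm0$. It is finite everywhere by the no-common-Dirichlet-eigenvalue conclusion, and hence bounded. The distinct-orbit hypothesis enters precisely here, ensuring that one-sided limit operators still fall under the symbolic unique continuation principle.
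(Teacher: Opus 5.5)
Your proposal is correct and follows the paper's proof. The paper likewise takes the interval $G$ from Proposition~\ref{prop:monotone_no_eigenvalue} and uses that the absence of a common Dirichlet eigenvalue survives perturbations with $\|g\|_\infty$ small; quantitatively, this means the finitely many intervals $I_j$ and the gap $\delta$ from the proof of Proposition~\ref{prop:mass_lower_bound} persist, via Weyl or the resolvent identity. It then gets a uniform $\eta$ from Lemma~\ref{lemma:poisson_mass_transfer} and concludes by Hellmann--Feynman once $\|g'\|_\infty$ is small relative to $\eta\gamma_+$. Your explicit treatment of $\mathcal A_2$ via $\tilde f(0^+)<\tilde f(1^-)$ and of uniformity is slightly more careful than the paper's brief argument, though the distinct-orbit hypothesis is really used earlier, in constructing $G$ through Theorem~\ref{theorem:nodirichlet}. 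It is not needed in the compactness step, where uniformity in $x$ follows from minimality of the rotation once compactness in $E$ has fixed finitely many intervals.
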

If $g$ is a $C^1$-function then $g$ is the integral of $g'$ up to an irrelevant constant shift and therefore also bounds $\|g\|_\infty$, so Theorem~\ref{thm:monotonederivative_plus_perturbation_implies_A1A2} implies in particular Theorem~\ref{thm:main2}.

\begin{remark}
We can prove existence of $\eta$, but it is difficult to express a lower bound explicitly in terms of the sampling function as it depends on $\alpha$ and the eigenvalues of some sufficiently large finite-size restrictions.  In contrast to Theorem~\ref{thm:bounded_implies_A1A2} one should note that $\eta$ depends explicitly on the function $f$ which is perturbed whereas the constant $c_{\Aset, \alpha, C}$ only depends on the location and size of the regions with positive or negative derivative, but not on the sampling function itself.
\end{remark}

\subsection{Warm-up}
\label{sec:warmup}
We first explain the core idea in the simplest case. Throughout, let $H_n(x)$ be the restriction of the Schrödinger operator 
$$(H(x)\psi)(i)=\psi(i+1)+\psi(i-1)+f(x+i\alpha)\,\psi(i), \qquad i\in \mathbb{Z}$$ to the interval $[0,n-1]$. For simplicity we assume $f$ is continuously differentiable on $(0,1)$. Since each $H_n(x)$ has exactly $n$ simple eigenvalues we can then label them uniquely as piecewise continuous curves $x\mapsto E_k(x)$ for $k=1,...,n$ which are differentiable anywhere outside of $n$ jump points $\beta_1,...,\beta_n$.

Let $(E_k(x), \psi_k)$ be a pair of an eigenvalue and an associated normalized eigenfunction. The derivative of the $k$-th eigenvalue curve with respect to $x$ is given by the Hellmann-Feynman formula,
\begin{equation}\label{eq:warmup_HF}
E_k'(x)=\sum_{i=0}^{n-1} |\psi_k(i)|^2\, f'(x+i\alpha)
\end{equation}
as we will also rederive further below. The goal is to prove a deterministic strictly positive lower bound on the right hand side, which then immediately proves the anti-Lipschitz property for the eigenvalue curves. This would be trivial if $f'$ had a strictly positive lower bound, however, we will instead only want to assume that there is a (large) \emph{good} subset $\Aset \subset \mathbb{T}$ where the derivative has a strictly positive lower bound and conversely a (small) \emph{bad} subset $\Aset^c$ where the derivative may tend to zero or even become negative. What we can use to compensate is that sites that sample the bad subset, i.e. sites $i\in \mathbb{Z}$ with $x+i\alpha \in \Aset^c$, are often fairly sparse. In particular, it is often the case that if $i\in \mathbb{Z}$ is a bad site with small derivative $f'(x+i\alpha)$ then the derivative will be much larger at all (next-)nearest neighbors. In that case the eigenvalue equation (or equivalently, the transfer matrix approach) guarantees that no eigenfunction can be supported exclusively on bad sites. We can turn this easily into a quantitative estimate:

\begin{lemma}
\label{lemma:mass_estimate_warmup}
Consider a partition of the integer interval $[0,n-1]$ into two disjoint subsets $G$, $B$. Assume that for every $i\in B$ the sites $\{i-2,i-1, i+1,i+2\} \cap [0,n-1]$ all lie in $G$. 

If $(E, \psi)$ is an eigenvalue-eigenfunction pair of $H_n(x)$ for some $x\in \mathbb{T}$ then
$$\sum_{i\in G} |\psi(i)|^2 \geq \eta \sum_{i\in B} |\psi(i)|^2$$
with the constant $\eta=\frac{1}{2(3+2\lVert f\rVert_\infty)^2}$. This implies
$$\sum_{i\in G} |\psi(i)|^2\geq \frac{\eta}{1+\eta}$$
and
$$\sum_{i\in B} |\psi(i)|^2\leq \frac{1}{1+\eta}.$$
\end{lemma}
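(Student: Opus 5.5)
We use the eigenvalue equation at a bad site to express the value there through its good neighbours. Fix $i\in B$ and write the equation at $i-1$ or $i+1$, whichever lies in $[0,n-1]$. Neighbours outside $[0,n-1]$ carry the value $0$ (Dirichlet convention). Suppose first that $i+1\le n-1$. The equation at site $i+1$ reads
\[
\psi(i)=(E-f(x+(i+1)\alpha))\psi(i+1)-\psi(i+2),
\]
where $\psi(i+2)=0$ if $i+2>n-1$. By hypothesis both $i+1$ and $i+2$ (when present) lie in $G$. Since $|E|\le\|H_n(x)\|\le 2+\|f\|_\infty$, we get $|E-f|\le 2+2\|f\|_\infty$. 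Hence
\[
|\psi(i)|\le (2+2\|f\|_\infty)|\psi(i+1)|+|\psi(i+2)|\le (3+2\|f\|_\infty)\max\{|\psi(i+1)|,|\psi(i+2)|\}.
\]
Squaring gives
\[
|\psi(i)|^2\le (3+2\|f\|_\infty)^2\bigl(|\psi(i+1)|^2+|\psi(i+2)|^2\bigr).
\]
If instead $i+1>n-1$, we use the equation at $i-1$ with the sites $i-1,i-2\in G$ in the same way. In the degenerate case $n=1$ with $B=\{0\}$, the relation gives $\psi(0)=0$, which is impossible for an eigenfunction. So assume $n\ge2$; then every bad site has at least one neighbour in $[0,n-1]$.

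Next we sum over $i\in B$. We assign to each bad site $i$ the pair $\{i+1,i+2\}$, or $\{i-1,i-2\}$ when $i=n-1$, intersected with $[0,n-1]$. The question is how many times a given good site $j$ can be charged. The site $j$ is charged by $i=j-1$ or $i=j-2$ through the forward pairs, and by $i=j+1$ or $i=j+2$ only through the single backward pair, which occurs only at $i=n-1$.

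The hypothesis gives more: bad sites are at distance at least $3$ from one another. Therefore at most one of $j-1,j-2$ is bad. The backward charge can only come from $i=n-1$, hitting $j=n-2$ or $j=n-3$. In the case $j=n-3$, the site $j-1=n-4$ is at distance $3$ from $n-1$ and so could also be bad.

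A naive count therefore gives multiplicity at most $2$. This is exactly why $\eta$ carries the factor $\tfrac12$:
\[
\sum_{i\in B}|\psi(i)|^2\le 2(3+2\|f\|_\infty)^2\sum_{j\in G}|\psi(j)|^2.
\]
That is, $\sum_G|\psi|^2\ge \eta\sum_B|\psi|^2$ with $\eta=\frac{1}{2(3+2\|f\|_\infty)^2}$.

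The remaining two inequalities follow by normalization. Set $g=\sum_G|\psi|^2$ and $b=\sum_B|\psi|^2$, so that $g+b=1$ and $g\ge\eta b$. Then $g\ge \eta(1-g)$, which gives $g\ge\frac{\eta}{1+\eta}$. Likewise $1-b\ge\eta b$, which gives $b\le\frac{1}{1+\eta}$.

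The only delicate point is the bookkeeping in the overlap count near the boundary. The local bound itself is just the recursion, and the choice of constant absorbs the multiplicity of $2$.
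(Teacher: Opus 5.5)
Your argument is correct and is essentially the paper's proof: the same recursion at site $i+1$ (or at $i-1$ when $i=n-1$), the same constant $3+2\|f\|_\infty$, and the same double count with multiplicity at most two, which rests on bad sites being at mutual distance at least $3$. Bounding by $|\psi(i+1)|^2+|\psi(i+2)|^2$ instead of selecting the site that realizes the maximum, as the paper does, changes nothing.

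One correction concerns the degenerate case. For $n=1$ and $B=\{0\}$ there is no eigenvalue equation at site $1$, so nothing forces $\psi(0)=0$. This case genuinely occurs, with $G=\emptyset$, $E=f(x)$ and $\psi(0)=1$, and it is a counterexample to the statement as written. It should therefore be excluded by assuming $n\ge 2$, as the paper tacitly does, rather than declared impossible.
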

\begin{proof}
We use the abbreviation $a_i= E-f(x+i\alpha)$ with which the eigenvalue equation is rearranged to
\begin{equation}\label{eq:warmup_recursion}
a_{i+1}\psi(i+1)-\psi(i+2)=\psi(i)
\end{equation}
for $i< n-2$. As \begin{align*}
|\psi(i)|^2 &\leq |a_{i+1}\psi(i+1)|^2 + |\psi(i+2)|^2 + 2|a_{i+1}\psi(i+1)\psi(i+2)| \\
&\leq (|a_{i+1}|+1)^2 \max(|\psi(i+1)|^2, |\psi(i+2)|^2)
\end{align*}
we have 
$$\max(|\psi(i+1)|^2, |\psi(i+2)|^2) \geq \frac{1}{(|a_{i+1}|+1)^2} |\psi(i)|^2\geq \tilde{\eta} |\psi(i)|^2$$
with $\tilde{\eta}=\frac{1}{(3+2\lVert f\rVert_\infty)^2}$ where we use that $E\in \sigma(H_n(x))\subset [-2-\lVert f\rVert_\infty, 2+\lVert f\rVert_\infty]$.

If $i\in B$ then by assumption both sites on the left-hand side are in $G$ and at least on one of those \emph{good} sites the eigenfunction $\psi$ has mass comparable to that on $i$. To cover the case when $i$ is at the upper end of the interval one has to use the recursion in the opposite direction to show also
$$\max(|\psi(i-1)|^2, |\psi(i-2)|^2) \geq \frac{1}{(|a_{i-1}|+1)^2} |\psi(i)|^2\geq \tilde{\eta} |\psi(i)|^2$$
again with both sites on the left-hand side in $G$ if $i\in B$.

In conclusion, for every \emph{bad} site $i\in B$ there exists a nearby \emph{good site} $j\in G$ with $|i-j|\leq 2$ such that
$$|\psi(j)|^2\geq \tilde{\eta} |\psi(i)|^2.$$
Summing over $i\in B$ and accounting for the fact that the same $j\in G$ can be produced by at most two different sites in $B$ we conclude
$$2 \sum_{i\in G} |\psi(i)|^2 \geq \tilde{\eta} \sum_{i\in B} |\psi(i)|^2.$$
The remaining inequalities are immediate consequences of $\ell^2$-normalization.
\end{proof}

We now consider a simple example where this strategy applies:
\begin{proposition}
\label{proposition:hf_bound_warmup}
We partition $\mathbb{T}$ into the interval $\Aset = (0,1-\delta)$ and its complement $\Aset^c = [1-\delta,1]$. Assume that $\delta < \min(\lVert \alpha\rVert, \lVert 2\alpha\rVert)$. 

Let $f:\mathbb{T}\to \mathbb{R}$ be continuously differentiable on $(0,1)$. We define
$$\gamma_+ = \inf_{x\in\Aset} f'(x), \qquad \gamma_- = \inf_{x\in \Aset^c} f'(x),$$
and assume that $\gamma_+>0$ and $\gamma_-\leq 0$. Then any eigenvalue curve satisfies
\begin{equation}
\label{eq:hf_bound_warmup}
E_k'(x) \geq c_+ \gamma_+ + c_- \gamma_-
\end{equation}
with the constants
$$c_+ = \frac{1}{1+2(3+2\lVert f\rVert_\infty)^2}, \qquad c_- = 1-c_+ = \frac{2(3+2\lVert f\rVert_\infty)^2}{1+2(3+2\lVert f\rVert_\infty)^2}$$
and for any $x \in (0,1)$ not equal to one of the jump points $\beta_1,...,\beta_n$. 
\end{proposition}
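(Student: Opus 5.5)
The plan is to combine the Hellmann--Feynman formula \eqref{eq:warmup_HF} with the mass estimate of Lemma~\ref{lemma:mass_estimate_warmup}. Fix $n$, $x\in(0,1)$ away from the jump points $\beta_1,\dots,\beta_n$, and an eigenpair $(E_k(x),\psi_k)$ of $H_n(x)$. Partition $[0,n-1]$ into the good sites $G=\{i: x+i\alpha\in\Aset\}$ and the bad sites $B=\{i:x+i\alpha\in\Aset^c\}$. Because $x$ is not a jump point, no sampled point $x+i\alpha$ equals $0$ mod $1$, so $f'$ exists at every sample. Then
\[
E_k'(x)=\sum_{i\in G}|\psi_k(i)|^2 f'(x+i\alpha)+\sum_{i\in B}|\psi_k(i)|^2 f'(x+i\alpha)\geq \gamma_+ \sum_{i\in G}|\psi_k(i)|^2+\gamma_-\sum_{i\in B}|\psi_k(i)|^2 .
\]

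The combinatorial step is to check the hypothesis of Lemma~\ref{lemma:mass_estimate_warmup}: if $i\in B$, then $i\pm1,i\pm2$ lie in $G$ whenever they are in $[0,n-1]$. If $i$ and $i+j$ were both bad for some $j\in\{1,2\}$, then $x+i\alpha$ and $x+(i+j)\alpha$ would both lie in the arc $[1-\delta,1]$, which has length $\delta$. Their difference $j\alpha$ mod $1$ would then satisfy $\|j\alpha\|\leq\delta$. This contradicts $\delta<\min(\|\alpha\|,\|2\alpha\|)$. The lemma therefore applies and gives $\sum_G|\psi_k|^2\geq \eta/(1+\eta)$ with $\eta=\frac{1}{2(3+2\|f\|_\infty)^2}$, so $\eta/(1+\eta)=c_+$. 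It also gives $\sum_B|\psi_k|^2\leq 1-\sum_G|\psi_k|^2$.

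To conclude, write $w=\sum_G|\psi_k|^2\in[c_+,1]$, so the lower bound reads $\gamma_+ w+\gamma_-(1-w)$. This expression is affine in $w$ with slope $\gamma_+-\gamma_-\geq 0$, since $\gamma_+>0\geq\gamma_-$. It is therefore minimized at $w=c_+$, which yields exactly $c_+\gamma_++c_-\gamma_-$ with $c_-=1-c_+$.

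There is one technical point. The Hellmann--Feynman formula needs differentiability of the eigenvalue branch at $x$. This follows because eigenvalues of the Jacobi matrix $H_n(x)$ are simple and $f$ is $C^1$ on $(0,1)$, so every entry $f(x+i\alpha)$ is $C^1$ near such $x$; first-order perturbation theory then gives \eqref{eq:warmup_HF}. I expect no real obstacle beyond this. The only thing to verify carefully is the boundary-point case inside Lemma~\ref{lemma:mass_estimate_warmup}, where a bad site near an endpoint of $[0,n-1]$ has only one-sided neighbors, but that case is already handled in its proof.
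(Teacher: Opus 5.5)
Your proposal is correct and is essentially the paper's proof: you split $[0,n-1]$ into good and bad sites according to whether $x+i\alpha$ lies in $(0,1-\delta)$, apply Lemma~\ref{lemma:mass_estimate_warmup} to get $\sum_{i\in G}|\psi_k(i)|^2\ge \eta/(1+\eta)=c_+$, and insert this into the Hellmann--Feynman formula. You also make explicit two points the paper leaves implicit, namely the covering condition via $\|j\alpha\|\le\delta$ for $j=1,2$ and differentiability at non-jump points, and your affine minimization in $w$ is equivalent to the paper's direct use of $\sum_{i\in B}|\psi_k(i)|^2\le 1/(1+\eta)=c_-$ together with $\gamma_-\le 0$.
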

\begin{proof}
For fixed $x\in (0,1)$ set $G=\{i \in [0,n-1]: x+i\alpha\in \Aset\}$ and $B=[0,n-1]\setminus G$. By assumption on $\delta$ the sets satisfy the condition of Lemma~\ref{lemma:mass_estimate_warmup} and hence the Hellmann-Feynman formula \eqref{eq:warmup_HF} implies
$$E_k'(x)=\sum_{i\in G} |\psi_k(i)|^2\, f'(x+i\alpha) +  \sum_{i\in B} |\psi_k(i)|^2\, f'(x+i\alpha) \geq \frac{\eta}{1+\eta}\gamma_+ + \frac{1}{1+\eta}\gamma_-.$$
\end{proof}

\begin{remark}
Note that for a Schrödinger-type operator of the form
$$H = \Delta + \lambda f$$
the bound \eqref{eq:hf_bound_warmup} depends on the coupling constant $\lambda$; it becomes better with smaller $\lambda$, eventually tending to the limit
$$c_+ \to \frac{1}{19}, \qquad c_- \to \frac{18}{19}.$$ 
Hence, given a potential with $\frac{|\gamma_-|}{\gamma_+} < \frac{1}{19}$ the conditions of the Proposition will be satisfied at least for some small enough $\lambda>0$.
\end{remark}

In general, we run into two problems when generalizing this simple argument. For one, the set with bad derivatives may not be small enough to satisfy the covering condition of Proposition~\ref{proposition:hf_bound_warmup} and one may need to move farther away from a bad site to be guaranteed to find a good site. The more challenging problem is that we may then also fail to find \emph{consecutive} good sites and therefore the simple argument based on the recurrence relation alone will not work. What one then needs is some form of a unique continuation principle which guarantees that no solution can be simultaneously small on all sites with good derivatives.

\subsection{The Hellmann-Feynman formula}

In the general case we want to still use the Hellmann-Feynman formula but without imposing continuous differentiability. As we eventually want to integrate the derivatives to get information about the eigenvalue curves, it is enough to have differentiability almost everywhere. This is in particular implied by (piecewise) absolute continuity or monotonicity. Let us provide a precise formulation and proof to make sure that it does not require any hidden continuity assumptions on the derivatives:

\begin{proposition}[Hellmann-Feynman]
\label{prop:HF_formula}
Let $x\in I \mapsto h(x)\in M_n(\mathbb{C})$ be a continuous function mapping an interval to self-adjoint matrices with simple eigenvalues. For every $x$ there exists a neighborhood $U$ such that there exist $n$ orthonormal eigenvalue-eigenvector pairs $(E_k(y),\psi_k(y))$ for $y\in U$ such that $y\in U\mapsto E_k(y)$ and $y\in U\mapsto \psi_k(y)$ are continuous. If $h$ is differentiable at $x$ then in addition the functions $E_k$ are differentiable at $x$ and
\begin{equation}\label{eq:HF}
E'_k(x)=  \frac{\langle\psi_k(x), h'(x)\psi_k(x)\rangle}{\langle\psi_k(x),\psi_k(x)\rangle}
\end{equation}
If $h$ is absolutely continuous in the neighborhood $U$ then the same is true for the functions $E_k$ and
$$E_k(x)-E_k(y) = \int_{y}^{x} E'_k(z) dz$$
for all $x,y\in U$ with the almost everywhere defined derivatives.
\end{proposition}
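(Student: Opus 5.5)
The plan is to reduce everything to standard perturbation theory of simple eigenvalues of Hermitian matrices. The eigenvalues depend continuously on the matrix entries, which handles the local labelling and continuity. A first-order expansion then gives differentiability, and a Lipschitz/absolute continuity argument for the eigenvalues handles the integrated statement.

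\emph{Local labelling and continuity.} Fix $x$. Since the eigenvalues $\lambda_1<\dots<\lambda_n$ of $h(x)$ are simple, choose disjoint small circles $\Gamma_k$ around each $\lambda_k$. By continuity of $h$, for $y$ in a neighborhood $U$ no eigenvalue of $h(y)$ lies on any $\Gamma_k$, so the Riesz projections
\[
P_k(y)=\frac{1}{2\pi i}\oint_{\Gamma_k}(z-h(y))^{-1}\,dz
\]
are continuous in $y$. Their ranks are locally constant, hence equal to one, and $E_k(y)=\operatorname{tr}(h(y)P_k(y))$ is continuous. A continuous eigenvector is obtained from a fixed unit vector $\psi_k(x)$ by setting $\psi_k(y)=P_k(y)\psi_k(x)$, which is nonzero near $x$, and then normalizing (Gram–Schmidt is unnecessary since distinct eigenvalues give orthogonal vectors).

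\emph{Differentiability and the formula.} Assume $h$ is differentiable at $x$. Take the eigenvalue equations $h(y)\psi_k(y)=E_k(y)\psi_k(y)$ and $h(x)\psi_k(x)=E_k(x)\psi_k(x)$. Pair the first with $\psi_k(x)$ and use self-adjointness to get
\[
\langle \psi_k(x),(h(y)-h(x))\psi_k(y)\rangle=(E_k(y)-E_k(x))\langle\psi_k(x),\psi_k(y)\rangle .
\]
Divide by $y-x$ and let $y\to x$. The continuity of $\psi_k$ and $\langle\psi_k(x),\psi_k(x)\rangle\neq0$ show that the difference quotient of $E_k$ converges to the Hellmann–Feynman expression \eqref{eq:HF}. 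Note that only continuity of $\psi_k$ is needed here, not its differentiability; this is the reason for pairing against the fixed vector $\psi_k(x)$.

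\emph{Absolute continuity.} By Weyl's inequality, $|E_k(y)-E_k(z)|\le\|h(y)-h(z)\|$ for the ordered eigenvalues, and on $U$ the ordering is preserved. So for any finite disjoint family of intervals in $U$ we get $\sum|E_k(b_j)-E_k(a_j)|\le\sum\|h(b_j)-h(a_j)\|$. Absolute continuity of $h$, entrywise and hence in norm up to a dimensional constant, therefore transfers to $E_k$. By the fundamental theorem of calculus for absolutely continuous functions, $E_k(x)-E_k(y)=\int_y^x E_k'$. At every point where $h$ is differentiable, which holds almost everywhere, $E_k'$ is given by the formula above.

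\emph{Main obstacle.} The only delicate point is avoiding any hidden assumption of differentiability of the eigenvectors or of continuity of $h'$. The pairing trick in the second step and the Weyl-inequality argument in the third are chosen precisely to avoid this.
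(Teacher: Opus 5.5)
Your proof is correct, and for the continuity and absolute-continuity parts it matches the paper. Both use Riesz projections to get continuous $P_k$, $E_k$ and normalized $P_k(y)\psi_k(x)$. Both then use Weyl's inequality together with the fundamental theorem of calculus for absolutely continuous functions.

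The differentiability step is where you take a genuinely different route. The paper shows via the resolvent that $P_k$, and hence the normalized eigenvector $\psi_k$, is norm-differentiable at $x$ whenever $h$ is. It then applies the Leibniz rule to $\langle\psi_k,h\psi_k\rangle$ and cancels the two eigenvector-derivative terms using $\frac{d}{dx}\langle\psi_k,\psi_k\rangle=0$.

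Your identity $\langle\psi_k(x),(h(y)-h(x))\psi_k(y)\rangle=(E_k(y)-E_k(x))\langle\psi_k(x),\psi_k(y)\rangle$ works differently. It yields differentiability of $E_k$ and the formula at once, from mere continuity of $\psi_k$ at $x$. You need neither differentiation of projections nor the product rule, so the argument is shorter and uses less. The paper's route gives the extra information that the eigenvectors are differentiable at $x$, which is not needed for the statement.

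The paper's route also hides no assumption. Differentiability of $P_k$ at $x$ follows from that of $h$ at $x$ through $(z-h(y))^{-1}-(z-h(x))^{-1}=(z-h(y))^{-1}(h(y)-h(x))(z-h(x))^{-1}$. So your pairing trick is a convenience rather than a necessity, and your closing remark about the ``main obstacle'' slightly overstates the danger.
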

\begin{proof}
Since each $h(y)$ has $n$ simple (and hence isolated) eigenvalues the (absolute) continuity of the eigenvalue curves $E_k$ follows from the (absolute) continuity of $h$ e.g. through the Weyl inequality
$$|E_k(x)-E_k(y)|\leq \|h(x)-h(y)\|.$$
Since there are no eigenvalue-crossings it is a simple consequence of holomorphic functional calculus that the spectral projections onto the $k$-th eigenvalue $P_k$ are also continuous. Similarly, if $h$ is differentiable at $x$ then $P_k$ is also operator-norm-differentiable at $x$. Therefore the eigenvalue functions $E_k(y)$ defined by $P_k(y) h(y)= E_k(y) P_k(y)$ are also continuous respectively differentiable. Choosing any non-trivial pair $(E_k(x), \phi_k(x))$ the eigenvectors $$\psi_k(y)=\frac{P_k(y)\phi_k(x)}{\|P_k(y)\phi_k(x)\|}$$ are well-defined and continuous on a sufficiently small neighborhood $U$ of $x$ and differentiable at $x$. The Hellmann-Feynman formula \eqref{eq:HF} is then a simple consequence of the Leibniz rule applied to 
$$\frac{d}{dx} E_k(x)= \frac{d}{dx} \langle \psi_k(x), h(x)\psi_k(x)\rangle$$
and noting that two of the three resulting terms cancel due to 
$$0=\frac{d}{dx} 1= \frac{d}{dx}\langle \psi_k(x),\psi_k(x)\rangle = \langle \psi'_k(x),\psi_k(x)\rangle + \langle \psi_k(x),\psi'_k(x)\rangle. $$

\end{proof}

For the finite-volume Schrödinger-type Hamiltonian $H_n(x)$ on the integer interval $[0,n-1]$ with the sampling function $f$ and Dirichlet boundary conditions the identity for the derivative simplifies as follows from \eqref{eq:HF}:

\begin{corollary}
\label{cor:HF_formula}
If $f\rvert_{(0,1)}$ is a piecewise absolutely continuous sampling function then for all $n$ all eigenvalue branches $E_k(x)$ of $H_n(x)$ are piecewise absolutely continuous.

		If $f\rvert_{(0,1)}$ is instead monotone and piecewise continuous then all eigenvalue branches $E_k(x)$ of $H_n(x)$ are monotone and piecewise continuous on each monotonicity interval.

		In both cases the eigenvalue curves $E_k$ are almost everywhere differentiable on each monotonicity interval. If $(E_k(x), \psi_k)$ is an eigenvalue-eigenvector pair of $H_n(x)$ with normalized $\|\psi_k\|=1$ and $f$ is differentiable at every sampled point $x+i\alpha$, $0\leq i<n$, then one has
\begin{equation}\label{eq:HF2}
E'_k(x)=\sum_{i=0}^{n-1} |\psi_k(i)|^2\,f'(x+i\alpha)
\end{equation}
\end{corollary}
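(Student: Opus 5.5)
The plan is to apply Proposition~\ref{prop:HF_formula} to the matrix-valued map $h(x)=H_n(x)=\Delta_n+\operatorname{diag}(f(x+i\alpha))_{i=0}^{n-1}$ on each monotonicity interval $(\beta_j,\beta_{j+1})$. Inside such an interval none of the sampled points $x+i\alpha$, $0\le i<n$, crosses the reset point $0\equiv 1$. Hence each diagonal entry $x\mapsto f(x+i\alpha)$ is the restriction of $f|_{(0,1)}$ to a subinterval, composed with a translation. The off-diagonal part is constant, equal to the Dirichlet Laplacian with nonzero off-diagonal entries $1$. A Jacobi matrix with nonzero off-diagonals has simple spectrum: an eigenvector is determined by its first entry through the recursion. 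So the simplicity hypothesis of Proposition~\ref{prop:HF_formula} holds for every $x$.

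\textbf{Absolutely continuous case.} Suppose $f|_{(0,1)}$ is piecewise absolutely continuous. Each diagonal entry has at most finitely many jumps in $(\beta_j,\beta_{j+1})$, located where $x+i\alpha$ hits a jump of $f$. Refine the monotonicity intervals by these finitely many points. On each resulting subinterval $h$ is absolutely continuous entrywise, hence in norm. Proposition~\ref{prop:HF_formula} then gives locally continuous, absolutely continuous eigenvalue branches. Because the eigenvalues are simple, the ordered labeling $E_0<\dots<E_{n-1}$ agrees locally with these branches. So the $E_k$ are absolutely continuous on each piece, giving piecewise absolute continuity.

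\textbf{Monotone case.} Suppose $f|_{(0,1)}$ is monotone nondecreasing and piecewise continuous. On a monotonicity interval, $x\le y$ gives $H_n(y)-H_n(x)=\operatorname{diag}(f(y+i\alpha)-f(x+i\alpha))\ge 0$. By the min-max principle, $E_k(y)\ge E_k(x)$ for each ordered eigenvalue, so every $E_k$ is monotone. Piecewise continuity follows from the Weyl inequality $|E_k(x)-E_k(y)|\le\|H_n(x)-H_n(y)\|$ on the finitely many subintervals where all diagonal entries are continuous.

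\textbf{Differentiability a.e.} This holds in both cases: absolutely continuous and monotone functions are differentiable almost everywhere (Lebesgue).

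\textbf{The derivative formula.} Let $x$ be such that $f$ is differentiable at every $x+i\alpha$. Then $x$ is not a jump point of any entry, so $h$ is continuous near $x$ and differentiable at $x$, with $h'(x)=\operatorname{diag}(f'(x+i\alpha))$. The differentiability part of Proposition~\ref{prop:HF_formula} applies and gives
\[
E_k'(x)=\langle\psi_k,h'(x)\psi_k\rangle=\sum_{i=0}^{n-1}|\psi_k(i)|^2 f'(x+i\alpha).
\]
Simplicity makes $|\psi_k(i)|^2$ independent of the choice of normalized eigenvector.

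\textbf{Expected obstacle.} There is no real obstacle. The only care needed is bookkeeping: checking that Proposition~\ref{prop:HF_formula} needs continuity of $h$ near $x$ and not just differentiability at $x$. It does, and this is exactly why the jump points of $f$ along the orbit are excluded. One also needs the local branches of that proposition to coincide with the globally ordered labels, which simplicity guarantees.
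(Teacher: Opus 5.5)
Your proposal is correct and takes the paper's route. The paper treats this corollary as a direct specialization of Proposition~\ref{prop:HF_formula} to $h(x)=H_n(x)$ with $h'(x)=\operatorname{diag}(f'(x+i\alpha))$, and you supply the details it leaves implicit: simplicity of the Jacobi spectrum, refinement at the finitely many jump points, min--max for monotonicity, and Lebesgue's theorem for a.e.\ differentiability.
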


\subsection{Mass transfer}
\label{sec:mass_transfer}
In this section we prove Theorem~\ref{thm:bounded_implies_A1A2} based on a generalization of Lemma~\ref{lemma:mass_estimate_warmup} as well as lay the groundwork for the proof of Theorem~\ref{thm:monotonederivative_implies_A1A2}. The main input is the following simple observation:
\begin{lemma}
\label{lemma:poisson}
Let \(H\) be a discrete Schrödinger operator on an integer interval containing $[m_1, m_2]$ for some $m_1 < m_2$ with $|m_1-m_2|>1$ and let \(\psi\) solve \(H\psi=E\psi\). Denote by \(H_{(m_1,m_2)}\) the Dirichlet restriction of \(H\) to the finite interval \( (m_1,m_2)=[m_1+1,m_2-1]\) (i.e.\ the finite matrix with Dirichlet boundary conditions at \(m_1\) and \(m_2\)).
Assume
\[
\mathrm{dist}\!\big(E,\sigma(H_{(m_1,m_2)})\big)\;=\;\delta>0.
\]
Then for every \(m\in(m_1,m_2)\) we have 
\[
|\psi(m)| \;\le\; \frac{1}{\delta}\big(|\psi(m_1)| + |\psi(m_2)|\big).
\]
In particular
\[
\max\{|\psi(m_1)|,|\psi(m_2)|\} \;\ge\; \frac{\delta}{2}\,|\psi(m)|.
\]
\end{lemma}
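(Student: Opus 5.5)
The plan is to apply the Poisson formula \eqref{eq:poisson} to the Dirichlet restriction on the open interval $(m_1,m_2)=[m_1+1,m_2-1]$, which is nonempty because $|m_1-m_2|>1$. Set $n_1=m_1+1$ and $n_2=m_2-1$. Since $\dist(E,\sigma(H_{(m_1,m_2)}))=\delta>0$, the resolvent $G:=(H_{(m_1,m_2)}-E)^{-1}$ exists. For every $m\in(m_1,m_2)$ the formula \eqref{eq:poisson} gives
\[
\psi(m)=-G(n_1,m)\,\psi(m_1)-G(m,n_2)\,\psi(m_2).
\]
To justify this identity, restrict $\psi$ to $(m_1,m_2)$. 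The eigenvalue equation inside the interval then reads $(H_{(m_1,m_2)}-E)\psi|_{(m_1,m_2)}=-\psi(m_1)\delta_{n_1}-\psi(m_2)\delta_{n_2}$. This uses only that $\psi$ solves the equation at the sites $n_1,\dots,n_2$, which holds because $H$ is defined on an interval containing $[m_1,m_2]$. Inverting $H_{(m_1,m_2)}-E$ yields the displayed identity. When $n_1=n_2$ both boundary terms sit on the same site, and the identity still holds.

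Next, bound the Green's function entries by the operator norm. Since $H_{(m_1,m_2)}$ is self-adjoint (a real symmetric matrix), the spectral theorem gives $\|G\|=1/\dist(E,\sigma(H_{(m_1,m_2)}))=1/\delta$. Hence every matrix entry satisfies $|G(i,j)|=|\langle\delta_i,G\delta_j\rangle|\le 1/\delta$. The triangle inequality then gives
\[
|\psi(m)|\le \tfrac1\delta\big(|\psi(m_1)|+|\psi(m_2)|\big).
\]

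The second claim follows immediately: $|\psi(m_1)|+|\psi(m_2)|\le 2\max\{|\psi(m_1)|,|\psi(m_2)|\}$, so $\max\{|\psi(m_1)|,|\psi(m_2)|\}\ge\frac{\delta}{2}|\psi(m)|$.

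I do not expect a real obstacle here. The only points needing care are the bookkeeping of the boundary terms (the Dirichlet cut sits at $m_1,m_2$, which are themselves excluded from the interval) and the degenerate case of a one-site interval.
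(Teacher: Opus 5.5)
Your proposal is correct and follows the same route as the paper, which simply says the lemma is an immediate consequence of the Poisson formula. You supply the details the paper leaves implicit: the derivation of that formula on $[m_1+1,m_2-1]$ (including the one-site case) and the entrywise bound $|G(i,j)|\le\|G\|=1/\delta$ from self-adjointness.
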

\begin{proof}
This is an immediate consequence of the Poisson formula. 
\end{proof}

This is a very basic unique continuation principle: if $E$ is not an eigenvalue of $H_{(m_1,m_2)}$ then any solution of the eigenvalue equation can only vanish on both boundary points $m_1, m_2$ if it vanishes identically. 

We now prove that under certain conditions if we partition $\mathbb{T}$ into a set $\Aset$ where the sampling function has positive derivatives and its complement, then eigenfunctions should have at least a fraction of their mass concentrated on the sites associated to $\Aset$.
\begin{lemma}
\label{lemma:poisson_mass_transfer}
Consider the finite-volume Hamiltonian $H_n(x)$ on the integer interval $[0,n-1]$ with a bounded sampling function $f$ such that $\lVert f\rVert_\infty \leq C$ for some constant. Let $(E,\psi)$ be any eigenvalue-eigenvector pair of $H_n(x)$ for some $x\in \mathbb{T}$ with $\psi$ normalized. 

		Assume that there exists some set $\Aset \subset \mathbb{T}$, an open interval $I\subset \Aset$ and some positive integer $\ell$ such that $x+\ell\alpha \in \Aset$ for every $x\in I$ and, if $\ell>1$, that
\begin{equation}
\label{eq:E_not_eigenvalue}
\mathrm{dist}\left(E, \sigma\left(H_{[1,\ell-1]}(x)\right)\right)\geq \delta, \qquad \forall x\in I.
\end{equation}
 For $x\in \mathbb{T}$ denote then $\NA(x)=\{i \in \mathbb{Z}: \; x+i\alpha \in \Aset\}$.

		For fixed $C$ and $\alpha$ there then exists a constant $\eta_{\delta, I, \ell}>0$ depending on $\delta$, $I$, and $\ell$ such that 
\begin{equation}
\label{eq:mass_lowerbound1}
\sum_{i\in\NA(x)}|\psi(i)|^2 \ \ge\ \frac{\eta_{\delta,I, \ell}}{1+\eta_{\delta,I, \ell}}
\end{equation}
uniformly for all sufficiently large $n$.
\end{lemma}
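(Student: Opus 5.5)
We follow the pattern of Lemma~\ref{lemma:mass_estimate_warmup}, using Lemma~\ref{lemma:poisson} in place of the two-step recursion. Fix $x$ and the eigenpair $(E,\psi)$ of $H_n(x)$, with $|E|\le 2+C$. Since $I$ is an open interval and $\alpha$ is irrational, there is an integer $K=K(I,\alpha)$ such that every orbit $\{y+j\alpha\}$ enters $I$ within $K$ steps in both directions: for every $y\in\TM$ there exist $0\le j_+\le K$ and $0\le j_-\le K$ with $y+j_+\alpha\in I$ and $y-j_-\alpha\in I$. Hence, for every site $m\in[0,n-1]$, there is a site $p$ with $|p-m|\le K$ and $x+p\alpha\in I$. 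By the hypothesis, $p\in\NA(x)$ and also $p+\ell\in\NA(x)$.

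\textbf{Step 1: transfer of mass to good sites.} Suppose the window $[p,p+\ell]$ lies inside $[0,n-1]$. The restriction of $H_n(x)$ to $(p,p+\ell)$ is unitarily equivalent to $H_{[1,\ell-1]}(x+p\alpha)$, and $x+p\alpha\in I$. So if $\ell>1$, Lemma~\ref{lemma:poisson} applies with $\delta$ from \eqref{eq:E_not_eigenvalue}, and gives
\[
\max\{|\psi(p)|,|\psi(p+\ell)|\}\ge \tfrac{\delta}{2}\,|\psi(q)| \qquad \text{for all } q\in(p,p+\ell).
\]
We still need to reach $m$ from this window. If $m$ lies in $[p,p+\ell]$ we are done. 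Otherwise $m$ is within $K$ of $p$ or of $p+\ell$. We then use the crude transfer-matrix bound from the warm-up: the recursion \eqref{eq:warmup_recursion} gives $|\psi(m)|^2\le (3+2C)^{2(K+\ell)}\max_{|j-m|\le K+\ell}|\psi(j)|^2$.

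This crude bound does not by itself give what we want. The cleaner approach is to bound $|\psi(m)|$ by the norm of the pair $(\psi(p),\psi(p+1))$ via transfer matrices of length at most $K$, with norm at most $(3+2C)^K$. Then, when $\ell>1$, we bound $|\psi(p+1)|\le \delta^{-1}(|\psi(p)|+|\psi(p+\ell)|)$ by Lemma~\ref{lemma:poisson}. When $\ell=1$, both $p$ and $p+1$ are good sites, so no Poisson step is needed. In both cases we obtain a good site $j\in\{p,p+\ell\}$ with $|j-m|\le K+\ell$ and
\[
|\psi(j)|^2\ge \tilde\eta\,|\psi(m)|^2,\qquad \tilde\eta=\tilde\eta(\delta,K,\ell,C)>0.
\]

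\textbf{Step 2: boundary effects.} Near the endpoints of $[0,n-1]$, the window $[p,p+\ell]$ might not fit inside the interval. To handle this, we choose $p$ on the side of $m$ pointing into the interval: we use $j_+$ when $m$ is near $0$ and $j_-$ when $m$ is near $n-1$. The inward direction has room as long as $n>2(K+\ell)+2$, which is where ``sufficiently large $n$'' enters. Transfer matrices can be propagated in either direction, so the estimate of Step 1 still holds.

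\textbf{Step 3: summation.} Each good site $j$ is produced by at most $2(K+\ell)+1$ sites $m$. Summing over all $m\in[0,n-1]\setminus\NA(x)$ therefore gives
\[
\sum_{i\in\NA(x)}|\psi(i)|^2\ge \eta\sum_{i\notin\NA(x)}|\psi(i)|^2,
\]
with $\eta=\tilde\eta/(2K+2\ell+1)$. Normalization of $\psi$ then yields \eqref{eq:mass_lowerbound1}.

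The only delicate step is Step 1: we must convert a two-component transfer-matrix bound into a single-site lower bound at good sites. The Poisson formula across the Dirichlet block $(p,p+\ell)$ does this, and it is exactly where the spectral gap hypothesis \eqref{eq:E_not_eigenvalue} is needed.
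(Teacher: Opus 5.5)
Your argument is correct and is essentially the paper's proof. The paper iterates the two-step recursion to show that one of the two consecutive sites at the near end of a window $[m_1,m_1+\ell]$ with $x+m_1\alpha\in I$ (within distance $L$ of the bad site) carries comparable mass. It then applies Lemma~\ref{lemma:poisson} on $(m_1,m_1+\ell)$ to push that mass to an endpoint and sums with multiplicity $2L$; this is your transfer-matrix-plus-Poisson bound written coordinatewise.

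One small fix is needed in Step 2. Near $n-1$, taking $p=m-j_-$ does not guarantee $p+\ell\le n-1$ when $\ell>1$, so pick $p$ from the backward orbit of $m-\ell$ instead; the paper equivalently uses backward returns to $I+\ell\alpha$. This only replaces $K$ by $K+\ell$ in your constants.
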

\begin{proof}
There exists a uniform constant $L$ such that for every $x\in \mathbb{T}$ the orbit $(x, x+\alpha, x+2\alpha,..., x+L\alpha)$ contains a point of $I$ and $(x, x-\alpha, x-2\alpha,..., x-L\alpha)$ contains a point of $I+\ell \alpha$. Assuming $n$ is large enough there is then for every $i \in [0,n-1]$ an interval $[m_1,m_2]\subset [0,n-1]$ with $\mathrm{dist}(i, [m_1,m_2])\leq L$ and $x+m_1\alpha\in I$.

Write $a_i:=E-f(x+i\alpha)$. Assume $i \in [0,n-L-1]$ then the recurrence for $i+1$ gives
\[
\psi(i)=a_{i+1}\psi(i+1)-\psi(i+2).
\]
As in the proof of Proposition~\ref{proposition:hf_bound_warmup} one obtains $$\max(|\psi(i+1)|^2, |\psi(i+2)|^2) \geq  \frac{1}{(|a_{i+1}|+1)^2} |\psi(i)|^2\geq \tilde{\eta} |\psi(i)|^2$$
with $\tilde\eta:=\frac{1}{(3+2 C)^2}$. By iterating this estimate at most $L$ times we can conclude that for any $i \in [0,n-L-2] $ and $j\in 1,...,L$ one gets
\[
\max\left(|\psi(i+j)|^2,|\psi(i+j+1)|^2\right) \ \geq \tilde\eta^{L}\,|\psi(i)|^2
\]
i.e. out of any two consecutive sites within $L+1$ steps of $i$ at least one must be larger than the right-hand side.

To also treat the upper end of the interval we can instead use the backward recurrence to conclude that for any $i\in [L+1,n-1]$ and $j\in 1,...,L$ one has
\[
\max(|\psi(i-j)|^2,|\psi(i-j-1)|^2) \geq \tilde\eta^{L}\,|\psi(i)|^2.
\]

We conclude that for every $i \notin \NA(x)$ there exists an interval $[m_1, m_2]$, $m_2=m_1+\ell$ with $x+m_1\alpha \in I$ and a site $m \in [m_1,m_2]$ with
$$|\psi(m)|^2 \geq \tilde\eta^{L}\,|\psi(i)|^2.$$
If $m_1 < m < m_2$ then by \eqref{eq:E_not_eigenvalue} and Lemma~\ref{lemma:poisson} we have
$$\max\{|\psi(m_1)|^2,|\psi(m_2)|^2\} \;\ge\; \frac{\delta^2}{4}\,|\psi(m)|^2 \geq \frac{\delta^2}{4} \tilde{\eta}^{L}\,|\psi(i)|^2$$
and thus including the edge case $m=m_1$ or $m=m_2$ we have
$$\max\{|\psi(m_1)|^2,|\psi(m_2)|^2\} \;\ge\; \min\left(1,\frac{\delta^2}{4}\right) \tilde{\eta}^{L}\,|\psi(i)|^2.$$
Note that $m_1,m_2\in N_{\Aset}(x)$ by assumption. Summing this inequality over $i\notin \NA(x)$ and accounting for the fact that each index $m$ can be produced by at most $2L$ different $i$ we obtain
\[
2L
\sum_{m\in\NA(x)}|\psi(m)|^2 \geq \min\left(1,\frac{\delta^2}{4}\right)  \tilde{\eta}^{L}\sum_{i\notin \NA(x)}|\psi(i)|^2
\]
which can be rearranged to \eqref{eq:mass_lowerbound1} using $\ell^2$-normalization of $\psi$.
\end{proof}

The proposition is formulated in this precise way since we eventually need to prove a lower bound which is uniform in $E$, but depending on $E$ we may need to use a different sub-interval $I\subset \Aset$ such that \eqref{eq:E_not_eigenvalue} can hold.

For a set $\Aset\subset \mathbb{T}$ denote by $\ell_\Aset(x)$ the return time for $x\in \Aset$, i.e. the smallest positive integer such that $x+\ell_\Aset(x)\alpha\in \Aset$.

\begin{proposition}
\label{prop:mass_lower_bound}
Consider the finite-volume Hamiltonian $H_n(x)$ on the integer interval $[0,n-1]$ with a non-negative bounded piecewise-continuous sampling function $f$.

Let $\Aset\subset \mathbb{T}$ be a non-empty open interval. If all $x\in \Aset$ satisfy $\ell_\Aset(x)>1$ then let $\tilde{\Aset}$ be the set of continuity points of the map $$x \in \Aset \mapsto H_{[1,\ell_{\Aset}(x)-1]}(x)$$ and assume that there does not exist any $E\in \RM$ which is simultaneously an eigenvalue of all of the matrices
$$\left(H_{[1,\ell_\Aset(x)-1]}(x)\right)_{x\in \tilde{\Aset}}.$$ Then there exists a constant $\eta>0$ such that for any normalized eigenvalue-eigenvector pair $(E,\psi)$ of $H_n(x)$ one has
\begin{equation}
\label{eq:mass_lowerbound}
\sum_{i\in\NA(x)}|\psi(i)|^2 \ \ge\ \frac{\eta}{1+\eta}
\end{equation}
uniformly in $x\in \mathbb{T}$ and for all sufficiently large $n$.
\end{proposition}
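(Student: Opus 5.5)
The plan is to reduce Proposition~\ref{prop:mass_lower_bound} to Lemma~\ref{lemma:poisson_mass_transfer} through a compactness argument in the energy variable. We apply that lemma once per energy, with a suitably chosen subinterval, and then show that finitely many choices cover all energies.

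First, the trivial case. If some $x\in\Aset$ has $\ell_\Aset(x)=1$, then $\Aset\cap(\Aset-\alpha)$ is a nonempty open set, hence contains an open interval $I$. Lemma~\ref{lemma:poisson_mass_transfer} then applies with $\ell=1$. No Dirichlet condition is needed, and we get a uniform $\eta$.

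Now assume $\ell_\Aset>1$ on $\Aset$. Since $\Aset$ is an open interval and $\alpha$ is irrational, $\Aset$ is partitioned into finitely many subintervals on which $\ell_\Aset$ is constant; these are the first-return intervals of the rotation. The return time is bounded, by $L$ say. On each such subinterval the finitely many points $x+j\alpha$, $1\le j<\ell$, stay inside the continuity intervals of $f$ except at finitely many $x$. Hence $\tilde\Aset$ is $\Aset$ minus a finite set, and the map $x\mapsto H_{[1,\ell_\Aset(x)-1]}(x)$ is continuous on each component of $\tilde\Aset$.

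All eigenvalues lie in the compact window $K=[-2-\|f\|_\infty,\,2+\|f\|_\infty]$. Fix $E_0\in K$. By hypothesis there is some $x_0\in\tilde\Aset$ with $E_0\notin\sigma(H_{[1,\ell-1]}(x_0))$, where $\ell=\ell_\Aset(x_0)$. By continuity at $x_0$ and Weyl's inequality, there are an open interval $I_{E_0}\ni x_0$ inside a constant-return-time piece of $\tilde\Aset$, a number $\delta_{E_0}>0$ and an open neighborhood $U_{E_0}$ of $E_0$ such that
\[
\dist\big(E,\sigma(H_{[1,\ell-1]}(x))\big)\ge\delta_{E_0}\qquad\text{for all } x\in I_{E_0},\ E\in U_{E_0}.
\]
Moreover $x+\ell\alpha\in\Aset$ for $x\in I_{E_0}$, by definition of the return time. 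So the hypotheses of Lemma~\ref{lemma:poisson_mass_transfer} hold for every $E\in U_{E_0}$ with data $(\delta_{E_0},I_{E_0},\ell)$. That lemma's constant depends only on $\delta,I,\ell$ (with $C,\alpha$ fixed), not on $E$ itself, and its threshold on $n$ is likewise uniform, being given by the covering length $L(I)$.

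By compactness of $K$, finitely many neighborhoods $U_{E_1},\dots,U_{E_N}$ cover it. We take $\eta=\min_j\eta_{\delta_{E_j},I_{E_j},\ell_j}$ and $n\ge\max_j n_0(I_{E_j})$. The monotonicity of $t\mapsto t/(1+t)$ then gives \eqref{eq:mass_lowerbound} uniformly in $x$, in $E$, and in the eigenpair.

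The main obstacle I expect is checking that Lemma~\ref{lemma:poisson_mass_transfer} really is uniform in $E$ once $(\delta,I,\ell)$ are fixed. This needs inspection of its proof: the constants $\tilde\eta$ and $L$ there depend only on $C$ and $I$, and the Poisson step uses only the spectral gap $\delta$, so this should go through. A secondary point is the continuity of $x\mapsto H_{[1,\ell-1]}(x)$ near $x_0$. This requires choosing $x_0$ in $\tilde\Aset$, and it is exactly why the hypothesis is phrased over $\tilde\Aset$.
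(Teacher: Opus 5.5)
Your proposal is correct and follows the paper's proof essentially step for step. You treat the case $\ell_{\mathsf{A}}=1$ via an interval in $\mathsf{A}\cap(\mathsf{A}-\alpha)$; otherwise, for each energy you find a subinterval of constant return time with a uniform spectral gap for $H_{[1,\ell-1]}$, cover the spectral window by compactness, and take $\eta$ as the minimum of the constants from Lemma~\ref{lemma:poisson_mass_transfer}, which is exactly what the paper does (your per-piece gaps $\delta_{E_j}$ and explicit uniform threshold on $n$ just make the paper's implicit bookkeeping explicit).
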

\begin{proof}
If there exists some $x\in \Aset$ with $\ell_\Aset(x)=1$ then this means $x+\alpha \in \Aset$ and since $\Aset$ is open there exists an interval $I$ with $I\subset (\Aset-\alpha)\cap \Aset$. We can then apply Lemma~\ref{lemma:poisson_mass_transfer} with that interval $I$ for every $E$.

Conversely, if all $x\in \Aset$ have $\ell_\Aset(x)>1$ then $x \mapsto H_{[1,\ell_\Aset(x)-1]}(x)$ is piecewise and norm-continuous, therefore the absence of a common eigenvalue implies that there exists for every $E\in \mathbb{R}$ some $\delta>0$ and some interval $I_E \subset \Aset$ with $\mathrm{dist}(E, \sigma(H_{[1,\ell_\Aset(x)-1]}(x)))>2\delta$ for every $x\in I_E$. Then one also has $\mathrm{dist}(E', \sigma(H_{[1,\ell_\Aset(x)-1]}(x)))>\delta$ for every $E'\in (E-\delta, E+\delta)$. As $\sigma(H_n(x))\subset [-2,2+\lVert f\rVert]$ we can by compactness pick finitely many intervals $I_1,...,I_N\subset \Aset$ such that for every $E \in [-2,2+\lVert f\rVert]$ there exists some interval $I_j \subset \Aset$ such that $\mathrm{dist}(E, \sigma(H_{[1,\ell_\Aset(x)-1]}(x)))>\delta$ for every $x\in I_j$. By making each $I_j$ smaller if necessary we may assume $\ell_\Aset\rvert_{I_j}$ is a constant that we denote $\ell_j$. From Lemma~\ref{lemma:poisson_mass_transfer} and \eqref{eq:mass_lowerbound1} we therefore conclude
$$\sum_{i\in\NA(x)}|\psi(i)|^2 \ \ge\  \frac{\eta}{1+\eta}$$
with $\eta = \min_{j=1,...,N}(\eta_{\delta,I_j, \ell_j})$.
\end{proof}

We can now finish the proof of Theorem~\ref{thm:bounded_implies_A1A2}:

\begin{proof}[Proof of Theorem~\ref{thm:bounded_implies_A1A2}]
Choose an interval $I\subset \Aset \cap (\Aset - \alpha)$, then the conditions of Lemma~\ref{lemma:poisson_mass_transfer} hold for this given $I\subset \Aset$ uniformly in $E$ with $\ell=1$, i.e. \eqref{eq:E_not_eigenvalue} is vacuous. The interval $I$ and the constant $\eta$  are uniform for all absolutely continuous sampling functions with $\lVert f\rVert_\infty \leq C$. For almost every $x$ we then have
$$E_k'(x)=\sum_{i\in \NA(x)} |\psi_k(i)|^2\, f'(x+i\alpha) +  \sum_{i\notin \NA(x)} |\psi_k(i)|^2\, f'(x+i\alpha) \geq \frac{\eta}{1+\eta}\gamma_+ + \frac{1}{1+\eta}\min(0,\gamma_-).$$
If the sampling function is piecewise absolutely continuous then the eigenvalue curves are also piecewise absolutely continuous. In between endpoints $\beta_j,\beta_{j+1}$ of a monotonicity interval all discontinuities of $E_k(x)$ come from jump discontinuities of $f$ which are assumed to be positive. Therefore we have
		$$E_k(y)- E_k(x) \geq \int_{x}^y E_k'(z) dz \geq \left(\frac{\eta}{1+\eta}\gamma_+ + \frac{1}{1+\eta}\min(0,\gamma_-)\right) (y-x)$$ 
for all $x<y$ within the same monotonicity interval, which gives a strictly positive anti-Lipschitz constant if
$|\gamma_-| < \eta \gamma_+$. 

If the sampling function is monotone but not necessarily piecewise absolutely continuous then the eigenvalue curves $x\mapsto E_k(x)$ are also monotone and hence have distributional derivatives which are positive measures. Hence, one still has a lower bound given by the absolutely continuous part
$$E_k(x)-E_k(y) \geq \int_{y}^x E'_k(z) dz \geq \frac{\eta}{1+\eta}\gamma_+\cdot(x-y).$$
\end{proof}
One could in this case also derive an explicit bound for $\eta$ and thus the anti-Lipschitz constant in terms of the length of the interval $I\subset \Aset \cap (\Aset-\alpha)$ and the continued fraction expansion of $\alpha$, since that determines the length $L$ in the proof of Proposition~\ref{prop:mass_lower_bound}.

\subsection{Locally constant reduction}
\label{sec:lcreduction}
For the proof of Theorem~\ref{thm:monotonederivative_implies_A1A2} we need to assert that for a (piecewise) continuous monotone sampling function one can always find an interval such that the finite-volume restrictions \eqref{eq:E_not_eigenvalue} do not have a common eigenvalue. While one may be quick to think that this should be \emph{generic} in some sense this is actually far from obvious. In the most difficult to handle cases the sampling function has vanishing derivative on most of $(0,1)$ and is therefore precisely constant on some intervals:

\begin{example}
For some interval $[a,b]\subset [0,1]$ consider the piecewise linear sampling function
$$f(x)=\begin{cases}
0 & \text{ if }x<a\\
v & \text{ if }x>b\\
v \frac{x-a}{b-a} & \text{ if }x\in [a,b]
\end{cases}$$
for some potential value $v>0$.
Let us fix $\Aset=I=[a,b]$ then by definition of the return time $\ell_\Aset(x)$ one has for any $x\in I$ that in the trajectory $(x, x+\alpha, ..., x+\ell_\Aset(x)\alpha)$ precisely $x$ and $x+\ell_\Aset(x)\alpha$ are in $I$ and the remaining elements are not. Therefore the potential of any finite-size restriction $H_{[1,\ell_\Aset(x)-1]}(x)$ for $x\in I$ takes exactly two different values $0$ and $v$, i.e. it is a word determined by the quasi-periodic dynamics. 

For such a locally constant potential only finitely many of the Dirichlet restrictions $H_{[1,\ell_\Aset(x)-1]}(x)$ are actually distinct and we can label them by words in a finite alphabet. Hence it seems quite possible in principle that they could have an accidental eigenvalue coincidence, or maybe even a systematic one if the words satisfy nontrivial combinatorial relations. However, it will turn out the opposite is true! For generic values of $a,b$ the intricate structure of the words actually makes it completely impossible to have a common eigenvalue, one cannot even get an accidental coincidence if one fine-tunes the values of the potentials.

\end{example}

As we will prove in this section, one can reduce the question of common Dirichlet eigenvalues for an arbitrary continuous monotone sampling function to that of a locally constant sampling function which takes only finitely many different values, hence this presents in some concrete sense the worst case scenario. Let therefore $\mathbb{T}=\bigcup_{k=0}^{n} I_k$ be a decomposition into finitely many half-open intervals which can hence be ordered as arcs $I_k=[a_k,a_{k+1})$ with $a_{n+1}=a_0$. We single out one \emph{good} interval $G=I_0$ and consider return words with respect to it. For any $x\in G$ we can define the return time $\ell_G(x)$ as the shortest positive time such that $x+\ell_G(x)\alpha\in G$. Fix some alphabet $\mathcal{A}$ and associate to every interval $I_k$ some distinct letter $l_k\in \mathcal{A}$, then set $\mathcal{L}_G(x)=l_k$ whenever $x\in I_k$. The return word $w_G(x)$ for $x\in G$ is given by
$$w_G(x)= \mathcal{L}_G(x+\alpha)\mathcal{L}_G(x+2\alpha)...\mathcal{L}_G(x+(\ell_G(x)-1)\alpha)$$
as the consecutive letters of the intervals visited on the return orbit $x+\alpha,...,x+(\ell_G(x)-1)(\alpha)$. It is easy to see that $w_G(x)$ is locally constant in $x$ and in fact there are always only finitely many different return words.

In our setting collections of intervals correspond to sampling functions $f$ which are locally constant outside some interval $G$ (corresponding to some interval where the derivative $f'$ has a positive lower bound). The corresponding Dirichlet Hamiltonians are then abstractly given as follows:
\begin{definition}
\label{def:word_hamiltonian}
Let $W$ be a finite set of words in an alphabet $\mathcal{A}$. We say that $W$ has a common Dirichlet eigenvalue if there is an association of numbers $v: \mathcal{A}\to \mathbb{R}$ such that the Dirichlet Hamiltonians 
$$(H_w\psi)(n) = (\Delta\psi)(n) + v(w_n)\psi(n)$$
on $\ell^2[1, \abs{w}]$ have a common eigenvalue $E$ for all words $w\in W$.
\end{definition}
We rule out the special case of some word $w$ being the empty word as the collection having no common Dirichlet eigenvalue. Our main result is then as follows:
\begin{theorem}
\label{theorem:nodirichlet}
Assume that no two endpoints $a_0, ..., a_n$ of the intervals $I_0,..., I_n$ are on the same $\alpha$-orbit and that the interval $G=[a_n,a_0)$ has length less than $1-\|\alpha\|$. Then $G=I_0$ has exactly $n+2$ distinct return words $W=\{w_G(x): x\in G\}$ and each word occurs on some sub-interval of $G$ which has positive length. The words $W$ do not have a common Dirichlet eigenvalue. 
\end{theorem}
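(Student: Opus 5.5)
The theorem has two parts. I will handle the combinatorial description of $W$ first, then prove the absence of a common Dirichlet eigenvalue by induction on $n$, enlarging $G$ one interval at a time.

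\textbf{Counting return words.} Since $|G|<1-\|\alpha\|$, every $x\in G$ has $\ell_G(x)\ge 2$, so no return word is empty. By the three-gap (three-distance) theorem, the first return map to $G$ is an exchange of at most three intervals. The return time $\ell_G$ is piecewise constant, with exactly two discontinuity points in the interior of $G$: the backward-orbit points of the endpoints $a_n,a_0$ before they re-enter $G$.

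The towers $\{x+j\alpha: x\in G,\ 0\le j<\ell_G(x)\}$ partition $\mathbb{T}$. Hence each interior endpoint $a_1,\dots,a_{n-1}$ lies on exactly one tower level. It therefore produces exactly one further discontinuity $a_k-j\alpha$ of $w_G$ inside $G$, at which one letter changes from $l_{k-1}$ to $l_k$.

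Because no two endpoints lie on a common $\alpha$-orbit, these $n+1$ cut points are pairwise distinct. They split $G$ into $n+2$ subintervals of positive length, on each of which $w_G$ is constant. Adjacent pieces carry different words, since either one letter or the length changes. I still have to check global distinctness, and I would do it by comparing first-letter positions in the tower picture.

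\textbf{Trimming lemma.} I will repeatedly use a transfer-matrix fact. Suppose $E$ is a Dirichlet eigenvalue of both $H_u$ and $H_{u x z}$, where $x$ is a single letter. Let $\psi$ be the solution with $\psi(0)=0$ and $\psi(1)=1$. The first hypothesis gives $\psi(|u|+1)=0$. Restarting the solution from that zero site shows that $E$ is a Dirichlet eigenvalue of $H_z$. The same holds with prefix and suffix exchanged. So a common eigenvalue survives when we strip a common prefix together with the separating letter.

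\textbf{Induction.} Suppose for contradiction that $W$ has a common eigenvalue $E$ for some assignment $v$. I will enlarge $G$ to $G'=G\cup I_1$ (or $G\cup I_n$), the union of $G$ with an adjacent interval; this removes one endpoint and reduces $n$ by one.

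Return words to $G'$ arise by cutting the words of $W$ at occurrences of the letter $l_1$. Using the explicit three-interval-exchange structure, I will show that the family $W$ contains pairs of the form $u$ and $u\,l_1\,z$ (up to reversal). Such pairs arise because the two neighbouring pieces across the cut point of $a_1$ differ exactly by inserting or removing a segment beginning with $l_1$. The trimming lemma then transfers $E$ to every return word of $G'$.

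The hypotheses are preserved: endpoints remain on distinct orbits, and the length condition either persists or the process ends. Iterating, we reach one of two terminal situations:
\begin{itemize}
\item Some trimmed word is empty. Then it has no eigenvalues, which is a contradiction.
\item All words use a single letter $l$ and come in two distinct lengths $p,q$. By the three-gap theorem these are consecutive return times, and coprimality of $p+1$ and $q+1$ follows from continued-fraction relations of the form $q_{k}p_{k+1}-q_{k+1}p_k=\pm1$. The constant-potential Dirichlet eigenvalues are $v(l)+2\cos(\pi j/(p+1))$. A common eigenvalue would force $j/(p+1)=j'/(q+1)$ with $1\le j\le p$ and $1\le j'\le q$, which coprimality excludes.
\end{itemize}

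\textbf{Main obstacle.} The hard step is the structural claim inside the induction. I must show that after each enlargement every new return word is actually produced by such a trimming from a pair in the old family, so that $E$ propagates to all of $W'$ and not just to some of it. This requires a careful case analysis of where the cut point of $a_1$ falls relative to the two return-time discontinuities. I would attack it first in the Rauzy-induction language, tracking how the three towers and their words transform when $G$ grows.
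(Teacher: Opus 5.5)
Your trimming lemma is exactly the paper's stripping lemma (Lemma~\ref{lemma:stripping}), and your two terminal cases (an empty word; a single letter with coprime return times) match the paper's base case. The gap is the inductive step, which you flag yourself, and the mechanism you sketch for it does not work as stated.

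By your own counting argument, across the cut point of an interior endpoint the two neighbouring words have equal length and differ in one letter. So they do not form a pair $u$, $u\,l\,z$. For neighbouring pieces, insertions occur only across the pre-images $x_0,x_n$ of the endpoints of $G$; your ``cut point of $a_1$'' for $G\cup I_1$ is not such a point.

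More seriously, the return words of $G'$ are the segments of words of $W$ between consecutive occurrences of the absorbed letter $l$. Suppose a word $w=s_0\,l\,s_1\,l\cdots l\,s_m\in W$ contains $l$ several times, which is typical when $G$ is much shorter than the absorbed interval. Then stripping $s_0$ or $s_m$ off $w$ requires the complementary word $s_1\,l\cdots l\,s_m$ or $s_0\,l\cdots l\,s_{m-1}$ to carry $E$ already. These words in general lie in neither $W$ nor $W'$. So the eigenvalue must be transported through auxiliary words that are return words of neither $G$ nor $G'$, in an order that makes each stripping legitimate. Your proposal supplies neither the auxiliary words nor the order.

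The paper obtains both from a continuous deformation. It moves the left endpoint of $G_t=[t,a_0)$ from $a_n$ down to $a_{n-1}$, keeping the shrinking interval $[a_{n-1},t)$ as a separate letter. The steps are:
\begin{enumerate}
\item The family $W_t$ is constant off the finite set $Y$ where $t$ meets a short orbit segment of some $a_k$ (Lemmas~\ref{lemma:xk_perturbation} and~\ref{lemma:wordsconstant}).
\item At each $y\in Y$ at most one word is replaced (Lemma~\ref{lemma:onewordchanges}).
\item The incoming word $\tilde w$ satisfies $w=\tilde w\,l\,\tilde v$ or $w=\tilde v\,l\,\tilde w$ with $w\in W_+$ and $\tilde v\in W_+\cap W_-$. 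The last membership uses a return-time argument to show $\tilde v\neq\tilde w$ (Lemma~\ref{lemma:wordchangeanatomy}).
\end{enumerate}
Hence Lemma~\ref{lemma:stripping} applies one transition at a time. For $t_0$ close to $a_{n-1}$, the return words of $[a_{n-1},a_0)$ form a subset of $W_{t_0}$. A Rauzy-type scheme, as you suggest, would have to reproduce exactly this one-word-at-a-time control.

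Two smaller points. First, $|G|<1-\|\alpha\|$ does not force $\ell_G\ge 2$: for $|G|>\|\alpha\|$ some points carry the empty word, which the paper handles by convention. Second, global distinctness of the $n+2$ words follows as in Lemma~\ref{lemma:connected_component_words}. The length changes only at $x_0,x_n$, and within a length class each letter position moves monotonically through $I_0,\dots,I_{n-1}$ as $x$ increases, so a changed letter never reverts.
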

The condition is sufficient but not necessary. One can in fact construct counterexamples if one allows the endpoints to be on the same $\alpha$-orbit, hence the conditions can possibly be weakened, but not removed entirely. A more detailed investigation can be found in an upcoming work \cite{ReturnWords}. Note also that we do not impose monotonicity here on the functions which associate potentials to letters. It is even less clear under what conditions one can have counterexamples which respect monotonicity.

Before proving this result in the next section let us first explain how this finishes the proof of Theorem~\ref{thm:monotonederivative_implies_A1A2}. First recall an elementary fact about return times of intervals.
\begin{lemma}
\label{lemma:returntimes}
Let $\alpha$ be irrational, then the return times $\ell_G$ of an interval $G=[a,b)$ with $|G|<1-\|\alpha\|$ to itself partition $G$ into two or three different intervals with distinct relatively prime return times.
\end{lemma}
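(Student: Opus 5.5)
The plan is to use the three gap theorem (Steinhaus/Sós) applied to the first return map of the rotation $R_\alpha: x\mapsto x+\alpha$ to $G$. Write $G=[a,b)$ with $|G|=b-a<1-\|\alpha\|$. For $x\in G$ the return time $\ell_G(x)$ is finite by minimality of irrational rotation. Define
\[
\ell_1:=\min\{k\ge1:\ \|k\alpha\|\ \text{small enough that } (G-k\alpha)\cap G\neq\emptyset\},
\]
that is, the first positive $k$ with $G\cap(G+k\alpha)\neq\emptyset$. Let $\ell_2$ be the first $k>\ell_1$ such that $k\alpha$ modulo $1$ lands on the opposite side of $0$ within distance $<|G|$. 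Since $|G|<1-\|\alpha\|$, the set $G+\alpha$ is not contained in $G$ in a way that forces $\ell\equiv1$ everywhere. This hypothesis is exactly what prevents the degenerate situation in which the whole of $G$ returns in one step, so both $\ell_1,\ell_2\ge1$ are genuinely attained.

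\textbf{Step 1: description of the first return map.} The first return map to an interval under a rotation is an interval exchange of at most three intervals, by the classical three-gap/three-distance theorem applied to the points $\{a+k\alpha\}$ and $\{b+k\alpha\}$. Concretely, I would show that $G$ splits as
\begin{align*}
G_1&=\{x\in G: x+\ell_1\alpha\in G\},\\
G_2&=\{x\in G\setminus G_1: x+\ell_2\alpha\in G\},
\end{align*}
with the remainder $G_3$ (possibly empty) having return time $\ell_1+\ell_2$. Each $G_i$ is an interval, since it is an intersection $G\cap(G-\ell_i\alpha)$, or a difference of such. This is the standard Rauzy-type induction, and I would cite or reproduce the three-distance argument. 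The key geometric input is that $\ell_1\alpha$ and $\ell_2\alpha$ modulo $1$ lie on opposite sides of $0$, so translating by them moves points of $G$ in opposite directions. A point of $G$ that does not return at time $\ell_1$ or at time $\ell_2$ must be pushed out on one side first and brought back by the other translation, which gives return time exactly $\ell_1+\ell_2$.

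\textbf{Step 2: distinctness and positive length.} For distinctness, note that $\ell_1<\ell_2<\ell_1+\ell_2$. I would check that whenever only two intervals occur, their times are $\ell_1$ and $\ell_2$ (or $\ell_1$ and $\ell_1+\ell_2$), and these are still distinct. Positive length of each occurring piece is automatic, because the pieces are intervals with endpoints among $a$, $b$, $a-\ell_i\alpha$, $b-\ell_i\alpha$, and these points are distinct by irrationality.

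\textbf{Step 3: coprimality (main obstacle).} I expect coprimality to be the delicate step. I would aim to show that $\ell_1,\ell_2$ can be realized as consecutive best-approximation-type indices, i.e.\ $|\ell_1 p-\ell_2 q|=1$ for some integers, via a determinant identity. Let $\ell_1\alpha=m_1+\epsilon_1$ and $\ell_2\alpha=m_2-\epsilon_2$ with $\epsilon_1,\epsilon_2>0$. The three-gap structure gives
\[
\epsilon_1+\epsilon_2\ \ge\ |G|
\]
together with minimality of $\ell_1,\ell_2$. I would then argue as in the proof of the three-distance theorem that $\ell_1m_2-\ell_2m_1=\pm1$. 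The idea is that otherwise the lattice generated by $(\ell_1,\epsilon_1)$ and $(\ell_2,-\epsilon_2)$ has index greater than $1$ in the lattice $\{(k,k\alpha-m)\}$. That would produce some $k<\ell_2$ with $\|k\alpha\|$ small enough to return earlier, contradicting minimality. This unimodularity gives $\gcd(\ell_1,\ell_2)=1$, and then also $\gcd(\ell_1,\ell_1+\ell_2)=\gcd(\ell_2,\ell_1+\ell_2)=1$. The minimality/contradiction argument is where care is needed, especially at the boundary case where $\epsilon_1$ or $\epsilon_2$ is close to $|G|$.
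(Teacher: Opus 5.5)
Your three-gap argument in Steps 1--3 is sound only when $|G|\le 1/2$, and in that range the hypothesis $|G|<1-\|\alpha\|$ holds automatically and plays no role. The lemma also covers $1/2<|G|<1-\|\alpha\|$. There your definition of $\ell_2$ through $\|k\alpha\|$ and ``the side of $0$'' does not produce return times.

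For example, take $\alpha=\sqrt2/14\approx 0.101$ and $G=[0,0.85)$, so $|G|<1-\|\alpha\|\approx 0.899$. Every $k$ has $\|k\alpha\|<|G|$, so $\ell_1=1$ and your $\ell_2$ is the first $k$ with $\{k\alpha\}>1/2$, namely $\ell_2=5$. Your $G_2$ is then all of $[0.85-\alpha,0.85)$ with claimed return time $5$. The true return times are $1$ on $[0,0.85-\alpha)$, $3$ on $[0.85-\alpha,1-2\alpha)$, and $2$ on $[1-2\alpha,0.85)$. The reason is that $x$ returns at time $k$ iff some lift $\eta\equiv k\alpha \pmod 1$ lies in $[a-x,b-x)\subset(-|G|,|G|)$. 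Once $|G|>1/2$, such a lift need not be the representative in $[-1/2,1/2]$ (here $2\alpha$ acts as $-0.798$), and it need not be unique.

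You have also misidentified the role of the hypothesis. $G+\alpha\subset G$ is impossible for every arc of length $<1$, so that is not what is being excluded. What $|G|<1-\|\alpha\|$ really ensures is that $G\cap(G-\alpha)$ is connected. For $\alpha\approx 0.3$ and $G=[0,0.9)$ one gets $\{\ell_G=1\}\approx[0,0.6)\cup[0.7,0.9)$, and the lemma fails. So your sentence that each $G_i$ is an interval because it is an intersection $G\cap(G-\ell_i\alpha)$ is exactly where the hypothesis must enter, and you never invoke it.

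The patch is short. If $|G|>\|\alpha\|$ (which includes every $|G|>1/2$), then $\ell_1=1$ and $\{\ell_G=1\}=G\cap(G-\alpha)$ is a single interval because $|G|+\|\alpha\|<1$. The remaining arc of length $\|\alpha\|$ at one end of $G$ is pushed into the complementary arc, whose length $1-|G|$ exceeds $\|\alpha\|$, and crosses it monotonically. Hence its return times are $1+J$ and possibly $2+J$ for a single integer $J$, each on an interval, and these are pairwise coprime together with $1$.

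With this case added, your route is a valid and more self-contained alternative to the paper's. The paper cites Slater's theorem in continued-fraction form ($r_1=q_n$, $r_2=q_{n+1}-kq_n$, $r_3=q_{n+1}-(k+1)q_n$), reads off coprimality from $\gcd(q_n,q_{n+1})=1$, and defers the interval structure to Lemma~\ref{lemma:connected_component_words}. Your lattice-index argument instead proves $\ell_1\epsilon_2+\ell_2\epsilon_1=|\ell_1m_2-\ell_2m_1|=1$ directly, without the explicit form.

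One correction there: the nonzero lattice point in the half-open parallelogram spanned by $(\ell_1,\epsilon_1)$ and $(\ell_2,-\epsilon_2)$ only satisfies $0<k<\ell_1+\ell_2$ and $\eta\in(-\epsilon_2,\epsilon_1)$, not $k<\ell_2$. You need one further subtraction to reach a contradiction. If $\eta>0$ and $k>\ell_1$, subtract $\ell_1$, which contradicts minimality of $\ell_2$. If $\eta<0$ and $k>\ell_2$, subtract $\ell_2$, which contradicts minimality of $\ell_1$.
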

\begin{proof}
The return times can be written in terms of the continued fraction expansion \cite{Slater1950}. There are two or three return times with the shortest given by the first denominator $r_1 = q_n$ of $\alpha$ such that $\|q_n\alpha\|<|b-a|$. If there are two return times the larger is of the form $r_2 = q_{n+1} - k q_n$ and if there are three then the larger two are of the form $r_2 = q_{n+1} - k q_n$ and $r_3 = q_{n+1} - (k+1) q_n$, hence relatively prime. That the return times partition $G$ into intervals will be proven in more generality later in Lemma~\ref{lemma:connected_component_words} so we omit the proof.
\end{proof}
We also require a preparatory Lemma about monotone families of potentials:
\begin{lemma}
\label{lemma:common_eigenvector}
Let $I=[a,b]$ be an interval and $x\in I \mapsto H(x)= \Delta_\ell + V(x) \in M_\ell(\mathbb{C})$ a family of self-adjoint matrices with $\Delta_\ell$ the discrete Dirichlet-Laplacian on $[1,\ell]$ and $x\mapsto V(x)$ a continuous monotone family of diagonal matrices with $V(x) \leq V(y)$ for $x<y$. If there exists some $E\in \mathbb{R}$ which is an eigenvalue for all $H(x)$ then there exists a common eigenvector $\psi$ with $H(x)\psi=E\psi$ for all $x$.

Moreover, one has $\psi(j)=0$ for every $1\leq j \leq \ell$ such that the diagonal entry $x\in I\mapsto V_j(x)$ is not constant.
\end{lemma}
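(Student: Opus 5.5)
We need to prove Lemma~\ref{lemma:common_eigenvector}: for a continuous monotone family $H(x)=\Delta_\ell+V(x)$ on $[a,b]$ with a common eigenvalue $E$, there is a common eigenvector, and it vanishes at every site whose diagonal entry is non-constant.

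The plan is to study the eigenvalue branch that sits at $E$ and use monotonicity to freeze it. First, every Dirichlet restriction of a discrete Schrödinger operator on an integer interval has simple spectrum: an eigenvector with $\psi(1)=0$ vanishes identically by the recursion. So for each $x$ the eigenspace $\ker(H(x)-E)$ is one-dimensional. Order the eigenvalues $E_1(x)<\dots<E_\ell(x)$. Each $E_k$ is continuous in $x$ by the Weyl inequality, and nondecreasing in $x$ by min-max, since $V(x)\le V(y)$ as quadratic forms for $x<y$.

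Next I show that a single index works throughout. Let $k(x)$ be the index with $E_{k(x)}(x)=E$. The sets $\{x: E_k(x)=E\}$ are closed, pairwise disjoint by simplicity of the spectrum, and cover $[a,b]$. Since $[a,b]$ is connected, only one of them is nonempty, so $k$ is constant. Thus $E_k(x)\equiv E$ on $I$.

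Now take the normalized eigenvector $\psi_x$ at a point $x$ and compare with $y>x$. Using $\langle \psi_x,(H(y)-E)\psi_x\rangle = \langle\psi_x,(V(y)-V(x))\psi_x\rangle\ge 0$, together with min-max over the span of the first $k$ eigenvectors of $H(x)$, I expect to get
\[
E=E_k(y)\le \max_{\phi\in\mathrm{span}(\psi_1(x),\dots,\psi_k(x)),\ \|\phi\|=1}\langle\phi,H(y)\phi\rangle .
\]
This comparison alone is not sharp enough to pin down $\psi_x$, so I would instead argue as follows, which I expect to be the main technical step.

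Choose continuous normalized eigenvectors $\psi(y)$ near $x$, which is possible by the continuity of the spectral projection, as in Proposition~\ref{prop:HF_formula}. From
\[
\langle \psi(x),(H(y)-E)\psi(y)\rangle=0
\]
and self-adjointness, I get $\langle\psi(x),(V(y)-V(x))\psi(y)\rangle=0$. Since $\psi(y)\to\psi(x)$, this is awkward to use directly. The cleaner route is Rayleigh: because $E_k(y)=E$, the form $H(y)-E$ is nonnegative on the orthogonal complement of $\mathrm{span}(\psi_1(y),\dots,\psi_{k-1}(y))$.

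A simpler alternative avoids spectral ordering entirely. Since $\ker(H(x)-E)$ is spanned by the solution of the recursion with $\psi(1)=1$, the eigenvector is determined by $V$ through polynomials. Monotonicity then gives pointwise nonnegativity of
\[
\int |\psi|^2\, dV_j ,
\]
via the Hellmann--Feynman formula in Stieltjes form. Concretely, for $x<y$ the chain of Rayleigh estimates yields
\[
0 = E_k(y)-E_k(x) \ge \langle \psi(x),(V(y)-V(x))\psi(x)\rangle - o(|y-x|)
\]
is not exact enough, so I prefer the following exact inequality.

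Use the Dirichlet determinant $D(x)=\det(H(x)-E)\equiv0$ in a monotone-in-each-entry argument. Hold all entries except $V_j$ fixed, and vary along a monotone path through $V(x)\to V(y)$ one coordinate at a time. Each single-coordinate increase is a rank-one positive perturbation $t\,e_je_j^*$. Its eigenvalues interlace and move strictly, unless the eigenvector vanishes at $j$.

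The key dichotomy is this. If $\psi(j)\ne0$, then $E_k$ strictly increases under the rank-one increase. Since every coordinate change is nonnegative and $E_k$ is monotone in each entry, the total change is strictly positive, contradicting $E_k(y)=E_k(x)$. Hence every coordinate $j$ with $V_j(y)\ne V_j(x)$ must satisfy $\psi_x(j)=0$. But then $(H(y)-E)\psi_x=(V(y)-V(x))\psi_x=0$, so $\psi_x$ is also the eigenvector of $H(y)$.

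The strictness claim needs care. For the intermediate steps I need that the intermediate matrix still has $E$ as its $k$-th eigenvalue. This follows from monotone sandwiching: $E=E_k(x)\le E_k(\text{intermediate})\le E_k(y)=E$. Applying the same argument to the intermediate matrices and their eigenvectors, with the rank-one strictness fact (an eigenvalue of $A+te_je_j^*$, $t>0$, equals one of $A$ only if the eigenvector vanishes at $j$), all intermediate eigenvectors coincide with $\psi_x$. Taking $y$ arbitrary shows $\psi_x$ is common to all $H(y)$. Moreover it vanishes at every $j$ where $V_j$ takes two different values somewhere on $I$, i.e. where $V_j$ is non-constant.
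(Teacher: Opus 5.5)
Your final argument, the coordinate-by-coordinate rank-one scheme in the last three paragraphs, is a correct proof. It reaches the conclusion by a different mechanism than the paper.

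Both proofs share the same skeleton. They use simplicity of the Dirichlet spectrum, the connectedness argument that fixes the index $k$ of the branch sitting at $E$, and a min-max sandwich that pins the $k$-th eigenvalue at $E$ along an \emph{auxiliary} monotone path joining two members of the family.

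The paper uses the straight segment $h_t=\Delta_\ell+V(a)+t\,(V(x)-V(a))$. Hellmann--Feynman at $t=0$ then yields the single identity $\sum_j (V_j(x)-V_j(a))\,|\psi(j)|^2=0$. Its nonnegative terms force $\psi(j)=0$ on all changed coordinates at once.

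You instead change one diagonal entry at a time. At each step $B=A+t\,e_je_j^*$, where $E$ is an eigenvalue of both, you use the exact identity $0=\langle\phi,(B-E)\chi\rangle=t\,\overline{\phi(j)}\,\chi(j)$ together with simplicity to get $\phi(j)=0$ and $\chi\propto\phi$. This is precisely your earlier ``awkward'' identity $\langle (V(y)-V(x))\psi_x,\psi_y\rangle=0$, made usable by splitting the perturbation so that the sum has only one term.

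The paper's route is shorter and treats all coordinates simultaneously. Yours avoids perturbation theory altogether, since no derivative of an eigenvalue is taken, at the cost of an induction along the path. Both sidestep the difficulty behind your abandoned first-order attempts: $x\mapsto V(x)$ is merely continuous, so one cannot expand along the given family.

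For a clean write-up:
\begin{enumerate}
\item Delete the false starts and the unused determinant $D(x)$.
\item State and prove the rank-one fact as a separate claim, since it is the crux.
\item Run the induction over coordinates before asserting $\psi_x(j)=0$ for every $j$ with $V_j(y)\neq V_j(x)$.
\item For the final statement, take $x=a$, or note that the common eigenvector is unique up to scaling, so vanishing obtained from any pair $x'<y'$ transfers to it.
\end{enumerate}
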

\begin{proof}
Because all eigenvalues are simple the eigenvalue curves are continuous and  do not intersect. Therefore one has $E=\lambda_k(H(x))$ for some $k$, i.e. it is the $k$-th eigenvalue independent of $x$. Pick any $x$ in $I$ and consider the path $$t\in [0,1]\mapsto h_t= (1-t)H(a) + t H(x)= \Delta_\ell + V(a) + t (V(x)-V(a)).$$ The path $h_t$ also is monotone and has no eigenvalue intersections, moreover $\lambda_k(h_0)= E = \lambda_k(h_1)$. This is only possible if $\lambda_k(h_t)=E$ for all $t$ since $t\to\lambda_k(h_t)$ is monotone. Let now $\psi$ be a normalized eigenvector with $H(a)\psi=E\psi$. By Hellmann-Feynman applied at $t=0$ we conclude $$\langle \psi, (V(x)-V(a))\psi\rangle = \frac{d}{dt} \lambda_k(h_t)\big\rvert_{t=0} = 0$$ for every $x$. In particular, since
$$\langle \psi, (V(x)-V(a))\psi\rangle= \sum_{j=1}^\ell \lvert V_j(x)-V_j(a)\rvert \; \lvert\psi(j)\rvert^2 = 0 $$ we have $\psi(j)=0$ for every $1\leq j \leq \ell$ such that $V_j(x)\neq V_j(a)$, which implies $V(x)\psi=V(a)\psi$ and hence $\psi$ is actually an eigenvector for all $H(x)$. 
\end{proof}
In particular this implies that a monotone family can only have a common eigenvalue if some of the diagonal entries are actually constant. We can use this principle to reduce the general case of a continuous monotone sampling function to the locally constant case:

\begin{proposition}
\label{prop:monotone_no_eigenvalue}
Let $f: \mathbb{T}\to \mathbb{R}$ be a monotone piecewise continuous sampling function with an interval $I$ such that $\essinf_{x\in I}f'(x)>0$ and $\ell_I(x)>1$ for every $x\in I$. Moreover, assume that no two discontinuity points of $f$ are on the same $\alpha$-orbit. Then there exists some interval $G\subset I$ such that the family
$$(H_{[1,\ell_G(x)-1]}(x))_{x\in \tilde{G}}$$ does not have a common eigenvalue, where $\tilde{G}\subset G$ is the set of continuity points of the map $x\mapsto H_{[1,\ell_G(x)-1]}(x)$.
\end{proposition}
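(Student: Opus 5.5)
We argue by contradiction and reduce to the symbolic statement of Theorem~\ref{theorem:nodirichlet}. First we choose the interval $G\subset I$. Take $G=[a_n,a_0)\subset I$ short enough that $|G|<1-\|\alpha\|$. We also require that the endpoints $a_n,a_0$ lie on $\alpha$-orbits distinct from each other and from the orbits of all discontinuity points of $f$. Since only countably many orbits are excluded, such a choice is possible. Because $\ell_I>1$ on $I$, the same holds on $G\subset I$, so each return orbit $x+\alpha,\dots,x+(\ell_G(x)-1)\alpha$ for $x\in G$ avoids $G$. Moreover $\ell_G$ is bounded, taking two or three values by Lemma~\ref{lemma:returntimes}.

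Suppose some $E$ is a common eigenvalue of $H_{[1,\ell_G(x)-1]}(x)$ for all $x\in\tilde G$. On each maximal subinterval $J\subset G$ on which $\ell_G$ is constant and $x\mapsto H_{[1,\ell_G(x)-1]}(x)$ is continuous, the diagonal entries $f(x+j\alpha)$ are monotone and continuous in $x$. Lemma~\ref{lemma:common_eigenvector} therefore gives a common eigenvector $\psi_J$ with $\psi_J(j)=0$ at every site $j$ where $x\mapsto f(x+j\alpha)$ is nonconstant on $J$. Consequently only the values of $f$ on flat pieces are ever seen with nonzero weight. There are finitely many such subintervals $J$ and finitely many sites in each. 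We collect the finitely many points $x+j\alpha$ (for $x\in J$, $\psi_J(j)\ne0$), which lie in intervals of constancy of $f$ outside $G$. We then define a new sampling function $\tilde f$ that is locally constant outside $G$. Concretely, we partition $\mathbb T\setminus G$ into finitely many half-open arcs $I_1,\dots,I_{n-1}$ whose endpoints are the discontinuity points of $f$ together with finitely many auxiliary cut points. The auxiliary cut points are chosen on fresh $\alpha$-orbits, inside regions where $\psi_J$ vanishes or where $f$ is flat, so that every flat value actually sampled by some $\psi_J$ is preserved. On each arc, $\tilde f$ is set equal to that value, or to an arbitrary constant where nothing is sampled.

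With this choice, $\tilde f=f$ at every site where some $\psi_J$ is nonzero. Hence $\psi_J$ remains an eigenvector with eigenvalue $E$ of the modified restrictions. Those restrictions are exactly the word Hamiltonians $H_w$ of Definition~\ref{def:word_hamiltonian}, with $v(l_k)=\tilde f|_{I_k}$. By construction no two endpoints $a_0,\dots,a_n$ share an $\alpha$-orbit, and $|G|<1-\|\alpha\|$. By Theorem~\ref{theorem:nodirichlet}, each return word occurs on a subinterval of $G$ of positive length, so every word arises from some $J\cap\tilde G$. The words would then share the Dirichlet eigenvalue $E$, contradicting Theorem~\ref{theorem:nodirichlet}.

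The delicate step is the construction of the partition. We must check two things. First, for a fixed $J$ the zero set of $\psi_J$ must cover every site whose sampled point moves across a non-flat region or across an auxiliary cut. Second, inserting cuts must refine the words consistently, so that a single word in the partition corresponds to a single family of matrices over a positive-length set. The reason this should work is that non-flat regions of a monotone $f$ are exactly where Lemma~\ref{lemma:common_eigenvector} forces vanishing. We would therefore place the cuts inside non-flat regions, with generic positions on new orbits. Each letter then has one value on all nonzero sites, namely the plateau value, and is arbitrary on vanishing sites. A further point needs care: a plateau crossing a discontinuity of $f$ must be split there, which is harmless because discontinuities are already endpoints. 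Finally, since a single letter class may contain distinct plateaus, the cuts have to separate every pair of plateaus, which is possible because between two distinct plateaus $f$ is either nonconstant or jumps.
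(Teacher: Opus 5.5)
Your proposal is correct and follows essentially the paper's route. You apply Lemma~\ref{lemma:common_eigenvector} on each continuity interval of $G$, so the common eigenvectors see only finitely many plateau values. You then replace $f$ by a locally constant $\tilde f$, with cut points on fresh $\alpha$-orbits or at jumps of $f$ where two sampled plateaus meet, and obtain the contradiction from Theorem~\ref{theorem:nodirichlet}; the one cosmetic difference is that your $\tilde f$ is exactly constant on half-open arcs, which spares you the paper's countable exclusion of phases coming from $\Sigma=f^{-1}(\mathcal V)$.
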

\begin{proof}
Pick any interval $G=[a,b) \subset I$ such that $a,b$ are not on the same $\alpha$-orbit nor on the same $\alpha$-orbit as $0=1$ nor any other of the discontinuity points of $f$.

Assume for a contradiction that $(H_{[1,\ell_G(x)-1]}(x))_{x\in \tilde{G}}$ has a common eigenvalue $E$, then there exists a continuous family of normalized eigenvectors $(\psi_x)_{x\in \tilde{G}}$  with a common eigenvalue $E$. Here we excluded a finite set of discontinuity points $G_0 =G\setminus \tilde{G}$ that we can write as $G_0=\{z_1,...,z_N\}$ with some ordered points $z_1<z_2<...<z_N$. Note that this set is finite, since by Lemma~\ref{lemma:returntimes} $\ell_G(x)$ takes at most three different values $r_1,r_2,r_3$ on at most three distinct subintervals, hence the number of discontinuities is smaller than $N\leq (r_1+r_2+r_3)(1+N_c)$ where $N_c$ is the number of discontinuity points of $f$ on $(0,1)$. On each interval $(z_i, z_{i+1})$ the map $x\in (z_i,z_{i+1})\mapsto H_{[1,\ell_G(x)-1]}(x)$ is continuous and monotone, by Lemma~\ref{lemma:common_eigenvector} we may therefore assume without loss of generality that $\psi_x$ is constant for $x$ inside any interval $(z_i, z_{i+1})$. For any $x\in \tilde{G}$ define the value set $$\mathcal{V}(x)=\{f(x+j\alpha): 1\leq j \leq \ell_G(x)-1\;\;\wedge \;\; \psi_x(j)\neq 0\}$$
as those values of $f$ which are actually sampled by the corresponding eigenvector. The map $x\in (z_i,z_{i+1}) \mapsto \mathcal{V}(x)$ is constant for every $i$, as the final statement of Lemma~\ref{lemma:common_eigenvector} implies that the common eigenvector for the continuity interval vanishes on all sites $j$ such that $x\in (z_i,z_{i+1})\mapsto f(x+j\alpha)$ is not constant. In conclusion, the total value set $\mathcal{V}:= \bigcup_{x\in \tilde{G}}\mathcal{V}(x)$ is a finite set.

Importantly, one has $\psi_x(j)=0$ whenever $f(x+j\alpha)\notin \mathcal{V}$, or equivalently, $$x+j\alpha\notin f^{-1}(\mathcal{V}) =: \Sigma.$$ Hence, whenever $\tilde{f}$ is a sampling function with $f\rvert_{\Sigma}= \tilde{f}\rvert_\Sigma$ then $\{\psi_x\}_{x\in \tilde{G}}$ is also a family of eigenvectors for the associated Hamiltonians $(\tilde{H}_{[1,\ell_G(x)-1]}(x))_{x\in \tilde{G}}$ still with the same common eigenvalue $E$. We will use this to derive a contradiction to Theorem~\ref{theorem:nodirichlet}.

As $f$ is monotone we can write the pre-image of $\mathcal{V}$ as a finite union of disjoint intervals, in particular its closure is
$$\overline{\Sigma} =\bigcup_{i=0}^n [x_i, y_i]$$
with endpoints $0\leq x_0 \leq y_0 \leq x_1 \leq y_1 \leq ... \leq x_n \leq y_n\leq 1$ and then $f$ is constant on the interior of each interval $(x_i,y_i)$ with some value $f\rvert_{(x_i,y_i)}=c_i$. We can pick successively elements $y_{i-1} \leq a_i \leq x_{i}$ such that in the collection $\{a_i\}_{i=1,...,n}$ no two elements are on the same $\alpha$-orbit nor on the same $\alpha$-orbit as $a,b$ or $0=1$. This is always possible since we are picking $a_i$ either from an open interval (if $y_{i-1}<x_i$) or it is fixed to be one of the discontinuity points of $f$ (if $y_{i-1}=x_i$) and those are on distinct $\alpha$-orbits by assumption. In addition we set $a_0=0$ and $a_{n+1}=1$. Setting
$$\tilde{f}(x) = \begin{cases}
f(x) & \text{if }x\in \Sigma\\
c_i & \text{if }x\in [a_i, a_{i+1}) \setminus \Sigma
\end{cases}$$
we obtain a function with $f\rvert_{\Sigma}=\tilde{f}\rvert_{\Sigma}$ and such that the finite-size restrictions $(\tilde{H}_{[1,\ell_G(x)-1]}(x))_{x\in \tilde{G}}$ only sample $\tilde{f}$ on the intervals $\tilde{f}^{-1}(c_i)$ where it is locally constant. Those intervals where $\tilde{f}$ is constant are not necessarily half-open, but the distinction becomes irrelevant if we exclude a countable set: When we set $\hat{G}\subset \tilde{G}$ with $\tilde{G}\setminus \hat{G}$ given by all points on the same $\alpha$-orbit as the endpoints $a_0,...,a_n$ then one obtains
$$\{\tilde{H}_{[1,\ell_G(x)-1]}(x): x\in \hat{G}\}= \{H_w: w\in W\} $$ with Hamiltonians as in Definition~\ref{def:word_hamiltonian} and $W$ a collection of return words of $G$ with respect to a finite family of labeled half-open intervals $(I_j)_{j\in J}$ whose endpoints lie in $\{a_k\}\cup \{a,b\}$. By construction none of these endpoints are on a common $\alpha$-orbit and therefore all return words of $(I_j)_{j\in J}$ occur on subintervals of $G$ of positive length. Therefore $W$ is in fact the collection of \emph{all} possible return words with respect to $(I_j)_{j\in J}$ as the countably many exclusions $G\setminus \hat{G}$ cannot cause us to miss a return word. We conclude that the family having $E$ as a common eigenvalue is a contradiction to  Theorem~\ref{theorem:nodirichlet}.
\end{proof}

\begin{proof}[Proof of Theorem~\ref{thm:monotonederivative_implies_A1A2}]

This is essentially the same as the proof of Theorem~\ref{thm:bounded_implies_A1A2}.
		Let $\Aset$ be the interval $G$ constructed in Proposition~\ref{prop:monotone_no_eigenvalue}. Shrinking $I$ if necessary we may assume that $\ell_I(x)>1$ for every $x\in I$. We may then apply Proposition~\ref{prop:mass_lower_bound} as the absence-of-common-eigenvalue condition holds by Proposition~\ref{prop:monotone_no_eigenvalue}. Hence there exists
a constant $\eta>0$ such that
$$E_k'(x)=\sum_{i\in \NA(x)} |\psi_k(i)|^2\, f'(x+i\alpha) +  \sum_{i\notin \NA(x)} |\psi_k(i)|^2\, f'(x+i\alpha) \geq \frac{\eta}{1+\eta}\gamma_+$$
with $\gamma_+=\essinf f'\rvert_G$.
\end{proof}

\begin{proof}[Proof of Theorem~\ref{thm:monotonederivative_plus_perturbation_implies_A1A2}]
Let $\Aset$ be the interval $G$ constructed in Proposition~\ref{prop:monotone_no_eigenvalue} for the monotone increasing part $f$. As the Dirichlet restrictions $$(H_{[1,\ell_G(x)-1]}(x))_{x\in \tilde{G}}$$ for the sampling function $f$ do not have a common eigenvalue and are piecewise continuous there exists a constant $c>0$ such that the Dirichlet restrictions
$$(\tilde{H}_{[1,\ell_G(x)-1]}(x))_{x\in \tilde{G}}$$
based on the sampling function $\tilde{f}=f+g$ also do not have a common eigenvalue as long as $\|g\|_\infty < c$ is small enough.

By Proposition~\ref{prop:monotone_no_eigenvalue} the conditions of Lemma~\ref{lemma:poisson_mass_transfer} on absence of common eigenvalues hold and therefore there exists some constant $\eta>0$ such that the eigenvalue curves with respect to $\tilde{f}$ satisfy
$$\tilde{E}_k'(x)=\sum_{i\in \NA(x)} |\psi_k(i)|^2\, \tilde{f}'(x+i\alpha) +  \sum_{i\notin \NA(x)} |\psi_k(i)|^2\, \tilde{f}'(x+i\alpha) \geq \frac{\eta}{1+\eta}\tilde\gamma_+ +  \frac{1}{1+\eta}\tilde\gamma_-$$
with $\tilde{\gamma}_+=\essinf \tilde{f}'\rvert_G$ and $\tilde{\gamma}_-=\essinf \tilde{f}'$ and some constant $\eta>0$ which can be chosen uniformly for all functions $g$ with $\|g\|_\infty < c$. Since $\gamma_+:=\essinf f'\rvert_G>0$ this is bounded from below by a positive constant if $\|g'\|_\infty$ is small enough depending on $\eta$ and $\gamma_+$.
\end{proof}

\subsection{Absence of common eigenvalues}
\label{sec:absence}
In this section we prove Theorem~\ref{theorem:nodirichlet} and therefore consider $n+1$ finite intervals with endpoints $a_0,...,a_n$. For convenience we will for this section choose to parametrize $\mathbb{T}$ in such a way that the interval $G=I_0=[a_n, a_0)$ always includes the wrap-around $0=1$ since then all points outside $G$ are totally ordered and the intervals $I_k=[a_k, a_{k+1})$ are in sequence with $a_0,...,a_n \in (0,1)$ and $a_k<a_{k+1}$. 
The proof will eventually be by induction, hence it is instructive to consider the simplest special case first:

\begin{proposition}
\label{proposition:no_common_ev_dirichlet_laplacian}
For two intervals $G=I_0= [0,a)$ and $I_1=[a,0)$ the return words $W=\{w_G(x)\;:\;x\in G\}$ do not have a common Dirichlet eigenvalue.
\end{proposition}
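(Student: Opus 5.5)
Since only two letters are present, every return word of $G=[0,a)$ is a power of the single letter $l_1$ attached to $I_1=[a,0)$. Concretely, $w_G(x)=l_1^{\ell_G(x)-1}$: by definition of the return time, the intermediate points $x+\alpha,\dots,x+(\ell_G(x)-1)\alpha$ all lie in $I_1$. Hence for any assignment $v:\mathcal{A}\to\RM$ with $v(l_1)=v$, the Hamiltonians $H_w$ are all of the form $\Delta_k+v$ on $\ell^2[1,k]$ with $k=\ell_G(x)-1$. The plan is to compute their spectra explicitly and use the arithmetic of the return times.

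\textbf{Step 1: reduction to $|G|<1-\|\alpha\|$.} If $|G|\geq 1-\|\alpha\|$, then $|I_1|\le\|\alpha\|$, and I expect a subinterval of positive length of $G$ on which $x+\alpha\in G$ or $x-\alpha\in G$. The second case is equivalent to the first after shifting by $\alpha$, so in either case some $x\in G$ has return time $1$. The corresponding return word is empty, and by the convention stated after Definition~\ref{def:word_hamiltonian} the collection then has no common Dirichlet eigenvalue.

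\textbf{Step 2: the return times in the remaining case.} For $|G|<1-\|\alpha\|$, Lemma~\ref{lemma:returntimes} applies. It gives at least two distinct return times $r_1,r_2$ (possibly a third $r_3$), each attained on a subinterval of $G$, and they are pairwise relatively prime. I will also note why the return times cannot collapse to a single value $r$: that would force $G+r\alpha=G$ up to null sets, which is impossible for irrational $\alpha$. Only two coprime return times $r_1\neq r_2$, both $\geq 2$, will be used.

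\textbf{Step 3: spectral comparison.} The Dirichlet Laplacian on $[1,k]$ has the simple eigenvalues $2\cos(\pi j/(k+1))$ for $j=1,\dots,k$. So $H_{l_1^{r-1}}$ has spectrum
\[
\{v+2\cos(\pi j/r): 1\le j\le r-1\}.
\]
A common eigenvalue $E$ of the words $l_1^{r_1-1}$ and $l_1^{r_2-1}$ would give
\[
\cos(\pi j_1/r_1)=\cos(\pi j_2/r_2)\quad\text{with } 0<j_i<r_i .
\]
Since $\cos$ is injective on $(0,\pi)$, this means $j_1r_2=j_2r_1$. Coprimality of $r_1$ and $r_2$ then forces $r_1\mid j_1$, contradicting $0<j_1<r_1$. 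The value $v$ cancels from both sides, so the argument is uniform over all assignments $v$, as Definition~\ref{def:word_hamiltonian} requires.

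\textbf{Main obstacle.} The only delicate point is the boundary case in Step 1, namely making sure that return time $1$ genuinely occurs on a nonempty set once $|G|\ge 1-\|\alpha\|$; this is elementary circle geometry. Step 3 itself is immediate once the coprimality from Lemma~\ref{lemma:returntimes} is available.
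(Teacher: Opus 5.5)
Your proof is correct and is the paper's argument: every return word is a power $l^{r_i-1}$ of the single letter, so the Hamiltonians are shifted Dirichlet Laplacians with spectra $\{v+2\cos(\pi j/r_i)\}_{j=1}^{r_i-1}$, and the coprimality of the return times from Lemma~\ref{lemma:returntimes}, made explicit in your $j_1r_2=j_2r_1$ computation, excludes a common eigenvalue. Your Step~1 covers a case the paper passes over, and it is immediate because $|G|\ge 1-\|\alpha\|$ forces $|G|>1/2$, hence $|G\cap(G-\alpha)|>0$. The one small inaccuracy is your claim that both return times are $\ge 2$: this can fail when $\|\alpha\|\le|G|<1-\|\alpha\|$, but then an empty word occurs and the same convention (or simply its empty spectrum) settles it.
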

\begin{proof}
By Lemma~\ref{lemma:returntimes} the set $W$ has either two elements $W=\{l^{r_1-1}, l^{r_2-1}\}$ or three elements $W=\{l^{r_1-1}, l^{r_2-1}, l^{r_3-1}\}$ where $l$ is the letter associated to $I_1$ and $r_i$ are the return times of $G$ of which there are either two or three distinct ones. Since there is only one letter the Hamiltonians are up to a constant shift the discrete Dirichlet Laplacians on intervals of length $|w|=r_i-1$. Since the return times are relatively prime the eigenvalues of the Dirichlet Laplacians given by solutions of the equations
$$E_k=2\cos\left(\frac{k\pi}{r_i}\right), \qquad k=1,...,r_i-1$$
never have an eigenvalue in common.
\end{proof}

For more complex words the following stripping principle will be useful:
\begin{lemma}
\label{lemma:stripping}
Let $W_1$ and $W_2$ be two collections of words of the same alphabet such that for any $\tilde{w}\in W_2$ one either has $\tilde{w}\in W_1$ or there exist two words $v,w\in W_1$ such that $w=\tilde{w}lv$ or $w=vl\tilde{w}$ with some letter $l$.

If $W_2$ has no common Dirichlet eigenvalue then $W_1$ also has no common Dirichlet eigenvalue.
\end{lemma}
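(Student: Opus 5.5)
We argue by contraposition: assume $W_1$ has a common Dirichlet eigenvalue and show that $W_2$ has one too. So fix an assignment $v:\mathcal{A}\to\mathbb{R}$ and an energy $E$ such that $E\in\sigma(H_w)$ for every $w\in W_1$. We will show that the same $v$ and the same $E$ give a common Dirichlet eigenvalue for $W_2$. Words $\tilde w\in W_2$ that already lie in $W_1$ need no work, so the only task is the case $w=\tilde w\, l\, u$ or $w=u\, l\,\tilde w$ with $w,u\in W_1$ and $l$ a letter. (I write $u$ for the second word of $W_1$, called $v$ in the statement, to avoid a clash with the potential assignment $v$.) The empty-word convention is consistent with this: if $W_1$ contains the empty word it has no common eigenvalue by convention, and if $\tilde w$ is empty we will see that no contradiction-free common eigenvalue can exist.

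The key step is a gluing identity for characteristic polynomials of Jacobi matrices. Let $P_w(E)=\det(E-H_w)$ and $P_\emptyset=1$. For a concatenation $w=\tilde w\,l\,u$, expanding the determinant along the row of the middle site $|\tilde w|+1$ gives
\[
P_{\tilde w l u}(E)=(E-v(l))\,P_{\tilde w}(E)\,P_u(E)-P_{\tilde w^-}(E)\,P_u(E)-P_{\tilde w}(E)\,P_{u^-}(E).
\]
Here $\tilde w^-$ is $\tilde w$ with its last letter removed, and $u^-$ is $u$ with its first letter removed. This is the standard three-term recurrence from the transfer-matrix product: the off-diagonal couplings to the middle site are $1$, so they enter with coefficient $-1$. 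Now suppose $P_w(E)=0$ and $P_u(E)=0$. Then the identity collapses to $0=-P_{\tilde w}(E)P_{u^-}(E)$. Hence either $P_{\tilde w}(E)=0$, which is what we want, or $P_{u^-}(E)=0$.

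The main obstacle is excluding the second alternative. For this I would use interlacing. Since $u^-$ is the Dirichlet restriction of $H_u$ with its first site removed, its eigenvalues strictly interlace those of $H_u$. So $P_u(E)=P_{u^-}(E)=0$ is impossible: a common root would force the Dirichlet eigenvector $\psi$ of $H_u$ to satisfy $\psi(1)=0$, and then the recurrence forces $\psi\equiv0$. The edge case $u$ empty cannot occur, since then $P_u=1\neq0$ contradicts $u\in W_1$. So $P_{\tilde w}(E)=0$. This also shows that $\tilde w$ cannot be empty, since $P_\emptyset=1$.

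It is cleaner to phrase the same argument through eigenvectors and so avoid determinant bookkeeping. Take the eigenvector $\phi$ of $H_w$ at $E$ and the eigenvector $\psi$ of $H_u$ at $E$, placed on the $u$-block of $w$. The solution of the eigenvalue equation that agrees with $\phi$ on the $u$-block and vanishes at the first site of the $u$-block minus one (the $l$-site) is a multiple of $\psi$. One then shows $\phi(l\text{-site})=0$ by comparing Wronskians at the right endpoint, since both vanish there and $\psi(1)\ne0$. The restriction of $\phi$ to the $\tilde w$-block is then a Dirichlet eigenvector of $H_{\tilde w}$ at $E$. It is nonzero, since otherwise $\phi$ would vanish at two consecutive sites. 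The case $w=u\,l\,\tilde w$ follows by reflection, which maps Dirichlet Hamiltonians to unitarily equivalent ones. Applying this to every $\tilde w\in W_2$ gives $E$ as a common Dirichlet eigenvalue of $W_2$ with the same $v$, which proves the lemma.
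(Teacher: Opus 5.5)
Your proposal is correct and is essentially the paper's argument. Your eigenvector/Wronskian version is exactly the paper's transfer-matrix step: the $H_w$-eigenvector restricted to the $u$-block is a multiple of the $H_u$-eigenvector, hence vanishes at the $l$-site, so its restriction to $\tilde w$ is a nonzero Dirichlet eigenvector; the case $\tilde w=\emptyset$ is excluded because the solution would then vanish at two consecutive sites. Your determinant identity $P_{\tilde w l u}=(E-v(l))P_{\tilde w}P_u-P_{\tilde w^-}P_u-P_{\tilde w}P_{u^-}$, together with the fact that $P_u$ and $P_{u^-}$ have no common root, is an algebraic repackaging of the same fact $\psi(1)\neq 0$ and is also correct, including the empty-word edge cases (the term $P_{\tilde w^-}$ for empty $\tilde w$ is harmless since it multiplies $P_u(E)=0$).
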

\begin{proof}
For the contrapositive let $W_1$ have a common Dirichlet eigenvalue. Then by assumption no word in $W_1$ is empty. Then we can assume an association of values to letters such that for all $w\in W_1$ the Hamiltonians $H_w$ have some common eigenvalue $E$. If $\tilde{w} \in W_1\cap W_2$ is a shared word then any Hamiltonian $H_{\tilde{w}}$ trivially also has eigenvalue $E$. For a new word $\tilde{w}\in W_2\setminus W_1$ we assume $w=\tilde{w}lv$ with the other order being similar. By a simple transfer matrix argument any non-zero Dirichlet eigenvector $\psi$ of $H_w$ with energy $E$ is equal to a Dirichlet eigenvector of $H_v$ when restricted to the final $|v|$ sites and vanishes on the site adjacent to $v$, i.e. the one corresponding to $l$ here. The restriction of $\psi$ to the first $|\tilde{w}|$ letters therefore also satisfies Dirichlet boundary conditions, hence $E$ is also an eigenvalue of $H_{\tilde{w}}$ if $\tilde{w}$ is non-empty. And indeed $\tilde{w}$ cannot be empty, since then the solution $\psi$ would be zero on the first two sites of $w=lv$ due to Dirichlet boundary conditions, hence would have to vanish identically by a transfer matrix argument.
\end{proof}

We will need to understand in detail how many different return words occur and how they split $G$ into subintervals with different words. For this the half-open convention is very useful, since it implies that translates of $G$ cover the torus before returning to $G$:

\begin{lemma}
\label{lemma:partition}
Let $G=[a,b)\subset \mathbb T$ be an interval with $a\neq 0\neq b$, and let $T(x)=x+\alpha \pmod 1$ 
with $\alpha$ irrational. Define
\[
G_1 := T(G), \qquad 
G_{i+1} := T(G_i\setminus G) \quad (i\ge 1).
\]
Let $\ell_G(x)=\min\{n\ge 1 : T^n x\in G\}$ be the first return time to $G$ and
\[
r := \max\{\ell_G(x) : x\in G\}.
\]
Then the sets $G_i$ for $1\le i\le r$ are pairwise disjoint half-open intervals,
$G_i\neq \emptyset$ for $1\le i\le r$, $G_{r+1}=\emptyset$, and
\[
\mathbb T = \bigsqcup_{i=1}^r G_i.
\]

In conclusion, there is for every $x\in \mathbb{T}$ some unique $x_0\in G$ whose orbit meets $x$ before returning to $G$ and the unique index $i$ such that $x\in G_i$ is the time it takes $x_0$ to reach $x$, i.e. $T^i(x_0)=x$.

\end{lemma}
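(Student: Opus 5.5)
We prove Lemma~\ref{lemma:partition} by following the orbit of $G$ under $T$ and using only that $T$ is a measure-preserving bijection of $\mathbb T$ mapping half-open arcs to half-open arcs.

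\emph{Step 1: the $G_i$ are the level sets of the first entrance time.} By induction on $i$ we check
\[
G_i=\{T^i x_0:\ x_0\in G,\ \ell_G(x_0)\ge i\}.
\]
For $i=1$ this is the definition of $G_1$. Suppose it holds for $i$. A point $T^i x_0\in G_i$ lies outside $G$ exactly when $\ell_G(x_0)>i$, so
\[
G_{i+1}=T(G_i\setminus G)=\{T^{i+1}x_0:\ \ell_G(x_0)\ge i+1\}.
\]
Consequently $G_i\neq\emptyset$ precisely when some $x_0\in G$ has $\ell_G(x_0)\ge i$. This holds for $i\le r$, and $G_{r+1}=\emptyset$ by the definition of $r$. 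Here $r<\infty$ because, by minimality of the irrational rotation, finitely many backward translates $T^{-j}G$ cover the compact torus, so $\ell_G$ is bounded.

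\emph{Step 2: disjointness and covering.} Suppose $T^i x_0=T^j y_0$ with $i\le j\le\min(\ell_G(x_0),\ell_G(y_0))$ and $x_0,y_0\in G$. Then $x_0=T^{j-i}y_0$. If $j>i$, then $0<j-i<\ell_G(y_0)$, so $T^{j-i}y_0\notin G$, which contradicts $x_0\in G$. Hence $i=j$ and, by injectivity of $T^i$, $x_0=y_0$. This also gives the uniqueness of $x_0$ and of the index $i$.

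For covering, take $x\in\mathbb T$. By minimality there is a smallest $m\ge 1$ with $T^{-m}x\in G$. Put $x_0=T^{-m}x$. Minimality of $m$ means $T^{k}x_0\notin G$ for $0<k<m$, so $\ell_G(x_0)\ge m$ and $x\in G_m$. Note that when $x\in G$, the index found this way is $i=\ell_G(x_0)$, i.e.\ $x$ is a return point. This is consistent with the claimed $\mathbb T=\bigsqcup G_i$, since $G_1=T(G)$ does not contain $G$ itself in general.

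\emph{Step 3: each $G_i$ is a half-open interval.} This is the step needing care. We show by induction that each $G_i$ is a single half-open arc $[c,d)$. Assume $G_i=[c,d)$ is such an arc and disjoint from $G_1,\dots,G_{i-1}$. Then $G_i\setminus G$ is a difference of two half-open arcs, which a priori could have two components. We rule this out as follows. By Step 2 and $T(G)=G_1$, the arcs $G_1,\dots,G_i$ are pairwise disjoint and all have length $|G|$. Their union $T(G)\cup\cdots\cup T^i(G)$ minus the parts already lying in $G$ forms a tower. If $G_i\setminus G$ had two components, then $G$ would sit strictly inside $G_i$, forcing $G\subsetneq G_i$. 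This is impossible when the two arcs have equal length, or by the measure count $\sum_i|G_i|=1$.

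More precisely, using the convention that both $G$ and $G_i$ are closed on the left and open on the right, $[c,d)\setminus[a,b)$ is one of $\emptyset$, $[c,d)$, $[b,d)$, $[c,a)$ unless $c<a<b<d$. The last case is excluded by $|G_i|\le|G|$. This inequality holds since $G_i\subseteq T^i(G)$ and $T$ preserves length.

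Finally, $T$ maps a half-open arc to a half-open arc. The hypothesis $a\ne0\ne b$ is only needed so that this works within the chosen parametrization: arcs that cross $0$ are still treated as single half-open arcs of $\mathbb T$.
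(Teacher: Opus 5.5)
Your argument is correct and follows the paper's proof essentially step for step. Your induction $G_i=T^i\{x_0\in G:\ell_G(x_0)\ge i\}$ is the paper's identity for $I_i=T^{-i}(G_i)$, and you argue disjointness, non-emptiness up to $r$, and covering via the minimal backward entrance time in the same way. You also add a justification that $r<\infty$, and your bound $|G_i|\le|T^iG|=|G|$, which excludes the configuration $c<a<b<d$, is a cleaner version of the paper's one-line remark on half-openness.

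There are two slips to fix. In Step 3 the arcs $G_2,\dots,G_i$ do not all have length $|G|$; only $|G_i|\le|G|$ holds, and that is what your final paragraph correctly uses. In Step 2 the hypothesis should read $i\le\ell_G(x_0)$, $j\le\ell_G(y_0)$ rather than $j\le\min(\ell_G(x_0),\ell_G(y_0))$; your argument only ever uses $j\le\ell_G(y_0)$.
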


\begin{proof}
It is clear that all $G_i$ are half-open intervals, since no $G_i$ can be equal to $G$ for irrational $\alpha$, so the operation $G_i\setminus G$ always results in a half-open interval or an empty set.

For $i\ge 1$ define
\[
I_i := T^{-i}(G_i)\subset G,
\]
so that $G_i=T^i(I_i)$. We prove by induction that
\begin{equation}
\label{eq:lemma_return_intervals_tech}
I_i
=
\{x\in G : T^k x\notin G \text{ for } k=1,\dots,i-1\}.
\end{equation}
For $i=1$ this is clear since we have $G_1=T(G)$ and hence $I_1=T^{-1}(G_1)=G$. Assume \eqref{eq:lemma_return_intervals_tech} holds for some $i\ge 1$. From the recursion
\[
G_{i+1}=T(G_i\setminus G)=T(T^i(I_i)\setminus G)
\]
applying $T^{-(i+1)}$ gives
\[
I_{i+1}
=
T^{-(i+1)}(G_{i+1})
=
T^{-i}(T^i(I_i)\setminus G)
=
I_i \setminus T^{-i}(G).
\]
Thus $x\in I_{i+1}$ if and only if $x\in I_i$ and $T^i x\notin G$. By the induction hypothesis, this is equivalent to $T^k x\notin G$ for $k=1,\dots,i$.

Using \eqref{eq:lemma_return_intervals_tech} we can now prove that the intervals $G_i$ are disjoint.
Suppose $y\in G_i\cap G_j$ with $i<j$.
Let $x:=T^{-j}y\in I_j\subset G$. Then
\[
T^{j-i}x = T^{-i}y \in T^{-i}(G_i)=I_i\subset G.
\]
Since $1\le j-i\le j-1$, this contradicts \eqref{eq:lemma_return_intervals_tech} for $I_j$.
Hence the sets $G_i$ are pairwise disjoint.

For the fact about the return time, note that $r$ is the smallest positive number such that $I_{r+1}=\emptyset$ and therefore $I_i\neq \emptyset$ for all $i=1,...,r$. Equivalently, one has $G_{r+1}=\emptyset$ and $G_i\neq \emptyset$ for all $i=1,...,r$.

Finally, we prove the sets $G_i$ cover the torus. For any $x\in \mathbb{T}$ there exists some $n$ such that $T^{-n}x\in G$ since $G$ contains an open interval and $\alpha$ is irrational. For the minimal of such $n$ one then has $x\in G_n$.
\end{proof}

We assume throughout that $a_0,\ldots,a_n$ all have disjoint $\alpha$-orbits. 

\begin{lemma}
\label{lemma:connected_component_words}
For $0\leq k\leq n$ define $$x_k=a_k-j_k\alpha$$
where $j_k\geq 1$ is the smallest positive integer with the property $a_k-j_k\alpha\in [a_n
,a_0)$.

Assume $|G|<1-\|\alpha\|$, then the points $x_0,\ldots,x_n$ divide the interval $G$ into $n+2$ connected components. On each connected component, $w_G$ is constant. Furthermore, for $x,y\in G$ which are in different connected components, we have $w_G(x)\neq w_G(y)$.
\end{lemma}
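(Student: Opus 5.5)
The approach is to use Lemma~\ref{lemma:partition} to translate statements about return words into statements about which points of $G$ have orbit segments crossing the endpoints $a_k$. By Lemma~\ref{lemma:partition}, each point $a_k\notin G$ lies in a unique tower level $G_{j_k}$, reached from the unique $x_k\in G$ with $T^{j_k}x_k=a_k$. Since $a_k\in G_{j_k}$ and $G_{j_k}=T^{j_k}(I_{j_k})$, the point $x_k$ is exactly as in the statement. For the index $k=n$ (and $k=0$, the endpoints of $G$ itself), $x_n$ and $x_0$ are the preimages of the endpoints of $G$ under the return map. These are precisely the points where the return time $\ell_G$ jumps.

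First, I would show that $w_G$ is constant on each component of $G\setminus\{x_0,\dots,x_n\}$. The word $w_G(x)$ is determined by two pieces of data: the return time $\ell_G(x)$, and for each $1\le i<\ell_G(x)$ which interval $I_k$ contains $T^ix$. Both are locally constant on $G$ away from points $x$ for which some $T^ix$ ($1\le i\le \ell_G(x)$) equals an endpoint $a_k$, using the half-open convention. For $k\neq 0,n$, such an $x$ satisfies $T^ix=a_k\notin G$ with $T^{i'}x\notin G$ for $i'<i$. By the uniqueness in Lemma~\ref{lemma:partition}, this forces $x=x_k$ and $i=j_k$. For the endpoints of $G$, a change in the return time happens exactly when $T^{\ell}x\in\{a_n,a_0\}$ with $T^{\ell}x$ entering $G$ for the first time. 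This again identifies $x$ with $x_n$ or $x_0$. Note that $a_n\in G$, so here $x_n$ is the first-return preimage of $a_n$, and $x_0$ is the last point before the orbit enters $G$ through $a_0$.

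Second, I would check that the $x_k$ are pairwise distinct and that exactly $n+2$ components arise. Distinctness follows because the $a_k$ lie on distinct $\alpha$-orbits. Now consider the endpoint $a_n$ of $G=[a_n,a_0)$. The point $x_n=a_n-j_n\alpha$ might coincide with $a_n$ only if $j_n\alpha\in\ZM$, which is impossible. So, with the closure convention, the $n+1$ points $x_0,\dots,x_n$ lie in the interior of $G$. I would need to verify this carefully for $x_0$, since $a_0\notin G$. This gives $n+1$ cut points and hence $n+2$ components. The hypothesis $|G|<1-\|\alpha\|$ enters here: it guarantees $\ell_G>1$ on all of $G$, so $T(G)\cap G$ is not all of $G$ and the return words are nonempty. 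It also guarantees that the points $x_0,x_n$ created by the boundary of $G$ genuinely fall inside $G$ rather than degenerating.

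Third, and this I expect to be the main obstacle, I would prove that neighbouring components carry distinct words. Across $x_k$ for $0<k<n$, the orbit passes through $a_k$ at time $j_k$. Just to the left, $T^{j_k}x\in I_{k-1}$; just to the right, $T^{j_k}x\in I_k$. All other letters are unchanged, and so is the return time, because no other cut point coincides. The two words therefore differ in letter $j_k$.

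Across $x_0$ or $x_n$, the return time changes. For example, the right side returns at time $j$, while the left side continues through $I_n$ or $I_0$ before returning later. The two words then have different lengths, or differ at some position, so they are distinct.

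Distinctness for non-adjacent components is the subtle point. Here I would use the three-gap structure of Lemma~\ref{lemma:returntimes}. Within a single return-time block, the positions $j_k$ of the letter changes are distinct for distinct $k$, because the orbit segment $T^1x,\dots,T^{\ell-1}x$ visits each tower level once. Along the block, the word therefore changes monotonically in the following sense: at position $j_k$ the letter moves forward from $l_{k-1}$ to $l_k$ and never moves back. Two components in the same block are separated by at least one such change, which occurs at some position not undone later, so their words differ. Components in different return-time blocks have different lengths and hence different words. Together these steps yield exactly $n+2$ distinct words, each occurring on a subinterval of positive length.
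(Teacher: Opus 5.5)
Your proposal follows the paper's proof essentially step for step: uniqueness of $x_k$ from Lemma~\ref{lemma:partition}, local constancy of $w_G$ away from the $x_k$, different lengths on different blocks of $G\setminus\{x_0,x_n\}$ via Lemma~\ref{lemma:returntimes}, and coordinatewise monotonicity of the letters inside a block. Two side remarks are false but not load-bearing.

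First, $|G|<1-\|\alpha\|$ does not force $\ell_G>1$. If $|G|>\|\alpha\|$, one end block carries the empty word. The hypothesis is really what prevents both end blocks from having return time $1$, which is exactly what your different-lengths step needs. The fact that $x_0,x_n$ lie in the interior of $G$ comes from irrationality and the distinct orbits, not from the length bound.

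Second, the positions $j_k$ inside a block need not be distinct, since several $a_k$ may lie in the same tower level $G_j$. Your monotonicity argument never uses this.
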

\begin{proof}
For each $0\leq k\leq n$, the point $x_k$ is the unique point in $G$ whose orbit hits $a_k$ before returning to $G$. Indeed, by the previous lemma the sets $G_i$ form a partition of $\mathbb T$, so every point of $\mathbb T$ has a unique pre-return representative in $G$. Applying this to $a_k$ gives the uniqueness of $x_k$. The $x_k$ are distinct because of the disjoint $\alpha$-orbits. If $|G|>\alpha$ then some return words can be empty, however, the condition $|G|<1-\|\alpha\|$ ensures that there is at most one empty word (either the leftmost or rightmost), hence uniqueness of the return words will still hold. 

We claim that $w_G$ is constant on each connected component of
\[
G\setminus \{x_0,\dots,x_n\}.
\]
Suppose not. Then there exist two points $x,y$ in the same connected component such that
\[
w_G(x)\neq w_G(y).
\]
Let $j$ be the first position at which the two return words differ. Then one of two things happens: Either one of the words has length less than $j$ (so the $j$-th letter disappears) or both words have length at least $j$ (in which case their $j$-th letters are different). As one moves continuously from $x$ to $y$ inside the same connected component, at the moment when this change first occurs, the point $T^j(z)$ must lie on one of the boundary points $a_k$ for some intermediate point $z$. Indeed, the only way a letter can disappear is that the orbit returns to $G$ exactly at that step, and the only way a letter can change is that the orbit crosses from one interval $I_{k-1}$ to the neighboring interval $I_k$, which again can only happen by passing through the common boundary point $a_k$.

But by the uniqueness established above, the only point in $G$ whose orbit hits $a_k$ before returning to $G$ is $x_k$. Hence $z=x_k$, contradicting the fact that $x$ and $y$ lie in the same connected component of
\[
G\setminus \{x_0,\dots,x_n\}.
\]
This proves that $w_G$ is constant on each connected component.

It remains to show that different connected components correspond to different return words. 
First note that the return length can change only at the points $x_0$ and $x_n$, since these are exactly the pre-return representatives of the two endpoints of $G$. Hence the return length is constant on each connected component of $G\setminus\{x_0,x_n\}$. In particular, if two connected components of
$
G\setminus\{x_0,\dots,x_n\}
$
lie in different connected components of
$
G\setminus\{x_0,x_n\},
$
then the corresponding return words have different lengths and therefore are different.

It remains to compare two connected components lying in the same connected component of
$
G\setminus\{x_0,x_n\}.
$
On such an interval, all return words have the same length. For each fixed position $m$ in the word, the $m$-th letter is determined by which interval $I_j$ contains the point $T^m x=x+m\alpha$.
Since $x\mapsto x+m\alpha$ is a translation, the preimage of each interval $I_j$ is again an interval. Therefore, as $x$ moves from left to right, the $m$-th letter can change only by passing from one interval $I_j$ to the next, and in particular it cannot later return to a previous value.

Now let $J_1,J_2$ be two distinct connected components of $G\setminus\{x_0,\dots,x_n\}$ lying in the same connected component of $G\setminus\{x_0,x_n\}$.
Then moving from $J_1$ to $J_2$ one crosses at least one inner boundary point $x_k$ with $1\le k\le n-1$. At such a point, one of the letters changes. By the monotonicity just proved, this change cannot later be undone when crossing further inner boundary points. Hence the return words on $J_1$ and $J_2$ are different.
\end{proof}
The strategy for the proof of Theorem~\ref{theorem:nodirichlet} is to vary the endpoint $a_n$ to enlarge the interval $G=[a_n,a_0)$ until we reach a limiting case $\tilde{G}=[a_{n-1},a_0)$ with respect to which the return words either contain one less letter or which has length greater than $\|\alpha\|$ (i.e. one of its return words is empty). We study how the collection of return words changes along such a trajectory with the conclusion is that if the return words of $G$ have a common Dirichlet eigenvalue then the same is true for $\tilde{G}$. The proof by induction ends once one has enlarged $G$ so much that only one letter is left or once one return word is empty.

We fix throughout points $0<a_0<...<a_n<1$, no two of which are on the same $\alpha$-orbit. From now we also assume as in Theorem~\ref{theorem:nodirichlet} that $|G|<\alpha$ (otherwise one of the return words would be empty). For $t\in (a_{n-1}, a_n)$ we then consider the perturbed boundary points $a_0(t),...,a_n(t)$ with $$a_k(t)=a_k, \quad  k\neq n$$
and
$$a_n(t)=t,$$ 
i.e. we are considering the interval $G_t=[a_n(t),a_0(t))=[t,a_0)$ which has return words $W_t = \{w_{G_t}(x): x\in G_t\}$ with respect to the intervals with endpoints $a_0,...,a_{n-1}$ and $a_n(t)=t$. 

As no two elements of $\{a_0,...,a_n\}$ are on the same $\alpha$-orbit the same will be true for $\{a_0(t),...,a_n(t)\}$ for all but countably many $t$. Moreover, only for finitely many $t$ there exists an algebraic coincidence where one can reach some $a_k(t)$ from $t$ in a number of steps less than the return time. We can describe exactly those exceptional points:

\begin{definition}
We define
$$Y= (a_{n-1}, a_n) \cap \{a_k-j\alpha: \, k=0,...,n-1, \quad j=-(r-1),\ldots,0,...,r-1\} $$
with $r$ the largest return time of $G$. 
We write the points of $Y$ in decreasing order as
\[
a_n>y_1>\dots>y_M>a_{n-1}.
\]
\end{definition}

\begin{lemma}
\label{lemma:xk_perturbation}
For $t\in (a_{n-1},a_n)$ consider the word boundaries
$$x_k(t) = a_k(t) - j_k(t)\alpha$$
as in Lemma~\ref{lemma:connected_component_words} with $j_k(t)$ the minimal number such that $x_k(t)\in [a_n(t), a_0(t))=[t,a_0)$. 

For all $s,t\in (y_i,y_{i-1})$ one has
$$x_k(s)=x_k(t),$$
if $k\neq n$ and
$$x_n(t) = x_n(s) + (t-s).$$
\end{lemma}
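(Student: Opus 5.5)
The plan is to track, for each $k$, the minimal index $j_k(t)$ and show that it is locally constant in $t$ on each interval $(y_i,y_{i-1})$. The formulas then follow at once. For $k\neq n$ we have $a_k(t)=a_k$ fixed, so constancy of $j_k(t)$ gives $x_k(t)=x_k(s)$. For $k=n$ we have $x_n(t)=t-j_n(t)\alpha$, so constancy of $j_n$ gives $x_n(t)-x_n(s)=t-s$. Thus everything reduces to showing that the first-entry time $j_k(t)$ into $G_t=[t,a_0)$ does not jump inside a component of $(a_{n-1},a_n)\setminus Y$.

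\textbf{Case $k\neq n$.} By definition, $j_k(t)$ is the least $j\ge1$ with $a_k-j\alpha\in[t,a_0)$. As $t$ varies, the right endpoint $a_0$ is fixed, and the left endpoint moves continuously. Hence the membership of $a_k-j\alpha$ in $G_t$ can change only when $a_k-j\alpha=t$, that is, $t\in\{a_k-j\alpha\}$. By Lemma~\ref{lemma:partition} applied to $G_t$, and the fact that $G_t\subset G$'s enlargement has return times bounded, the relevant $j$ satisfy $j\le r_t$. Here $r_t$ is the maximal return time of $G_t$. Since $G_t\supseteq G$ for $t<a_n$, we get $r_t\le r$. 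So only $j\le r-1$ (or $j\le r$; I would check the indexing against the definition of $Y$) can matter. These crossing values are exactly elements of $Y$. The membership conditions for $j\le j_k(t)$ are therefore locally constant off $Y$, and hence so is $j_k(t)$. One must also check that $a_k-j\alpha$ with $j$ in range never equals $a_0$. This holds by distinct orbits.

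\textbf{Case $k=n$.} Here the point $t-j\alpha$ and the interval $[t,a_0)$ both depend on $t$. Membership $t-j\alpha\in[t,a_0)$ changes only when $t-j\alpha=t$, which is impossible, or when $t-j\alpha=a_0$, i.e.\ $t=a_0+j\alpha$. Such a point has the form $a_0-j'\alpha$ with $j'=-j$, and $|j'|\le r-1$. This is why $Y$ includes negative shifts, and the values lie in $Y$. Again $j_n(t)$ is locally constant.

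\textbf{Main obstacle.} The main difficulty is the bookkeeping showing that the minimal $j$ never exceeds the range $r-1$ encoded in $Y$, uniformly on the interval. I would use that $G_t\supseteq G$, so every orbit enters $G_t$ no later than it enters $G$. By Lemma~\ref{lemma:partition}, every point enters $G$ within $r$ backward steps. One also needs the edge case in which a crossing occurs at exactly $j=r$. This would be handled by noting that entry within fewer steps already happened, or by the half-open convention. Continuity in $t$ between crossings then gives the conclusion on each component $(y_i,y_{i-1})$.
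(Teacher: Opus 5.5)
Your proposal is correct and is essentially the paper's argument. Both show that $j_k(t)$ can change only when $a_k(t)-j\alpha$ hits an endpoint of $G_t=[t,a_0)$. This forces $t=a_k-j\alpha$ for $k\neq n$ and $t=a_0+j\alpha$ for $k=n$, and $j\le r-1$ then places such $t$ in $Y$. The paper phrases this through the jump points of the monotone function $j_k$, while you use local constancy off $Y$; the two are equivalent.

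The edge case $j=r$ that you flag resolves exactly as you suggest, and the paper only asserts this step. For $k\neq n$, the crossing point $t$ lies outside $G\subseteq G_t$, so entry into $G$ happens strictly later and $j<j_k(a_n)\le r$. For $k=n$, the crossing point $a_0$ is excluded from the half-open $G_t$, so $j<j_n(t)\le r$.
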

\begin{proof}
For $k=0,...,n$ and any jump discontinuity $t$ of $j_k$ one has
$$j_k(t-\epsilon)< j_k(t+\epsilon)$$
for every small enough $\epsilon>0$ since $j_k(t)$ is monotone increasing with $t$. By definition of $j_k$ one then has
$$a_k(t) - j_k(t-\epsilon)\alpha  = a_n(t)$$
or
$$a_k(t) - j_k(t-\epsilon)\alpha  = a_0$$
since $a_k(t+\epsilon)-j_k(t-\epsilon)\alpha \notin [a_n(t+\epsilon),a_0)$ for every small $\epsilon>0$. For $k=n$ only the second equation has a solution, namely when
$$t - j_k(t-\epsilon)\alpha  = a_0$$
hence $t\in Y$ since $j_k(t-\epsilon)< r$. For $k\neq n$ one has
$$a_k - j_k(t-\epsilon)\alpha  = t$$
or
$$a_k - j_k(t-\epsilon)\alpha  = a_0$$
and only the first equation has a solution, since $a_k$ and $a_0$ are not on the same $\alpha$-orbit. One concludes also $t\in Y$.

We therefore found that since the interval $(y_i,y_{i-1})$ does not intersect $Y$ all $j_k(t)$ are constant inside.
\end{proof}

\begin{lemma}
\label{lemma:continuity}
For $t\in [a_{n-1}, a_n)\setminus Y$ and any $x\in G_t\setminus \{x_0(t),...,x_n(t)\}$ one has
$$w_{G_s}(x)=w_{G_t}(x)$$
for all $s\in [a_{n-1}, a_n)$ with $$|s-t|<  \min_{k=0,...,n} \mathrm{dist}(x,x_k(t)).$$
\end{lemma}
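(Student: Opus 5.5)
We compare the two return words letter by letter, using the fact that the letters of $w_{G_t}(x)$ are determined by the position of the finite orbit segment $x+\alpha,\dots,x+\ell\alpha$ relative to the partition points $a_0,\dots,a_{n-1},t$. Fix $t\in[a_{n-1},a_n)\setminus Y$ and $x\in G_t\setminus\{x_0(t),\dots,x_n(t)\}$. Set $\ell=\ell_{G_t}(x)$ and $d=\min_k \mathrm{dist}(x,x_k(t))$, and take $s$ with $|s-t|<d$. First note that $x\in G_s$: the point $x_n(t)=t-j_n(t)\alpha$ is the pre-return representative of $t$, and $x_0(t)$ that of $a_0$. In particular $\mathrm{dist}(x,t)\ge$ the relevant distance, since when $j_n(t)$ corresponds to $t$ itself, $x_n(t)=t$ is the left endpoint of $G_t$. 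Since $|s-t|<\mathrm{dist}(x,t)$, the point $x$ remains on the same side of $s$ as of $t$, so $x\in G_s$.

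Next, we show that for $1\le m\le \ell-1$ the point $x+m\alpha$ lies in the same interval $[a_k,a_{k+1})$ (or $[a_{n-1},s)$ versus $[a_{n-1},t)$) for both partitions, and that it is not in $G_s$. The only partition point that moves is $a_n(\cdot)$, from $t$ to $s$. Hence a letter can change, or an early return to $G_s$ can occur, only if $x+m\alpha$ lies between $s$ and $t$ (inclusive of the appropriate endpoint). Suppose this happens for some $m<\ell$. Then $y:=x+m\alpha-(t-s)\theta$ equals $t$ for some $\theta\in[0,1]$, so $x':=x-(x+m\alpha-t)$ satisfies $x'+m\alpha=t$ and $|x'-x|\le|s-t|<d$. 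By Lemma~\ref{lemma:partition}, the orbit of $x'$ hits $t$ before returning to $G_t$, provided $x'\in G_t$ and its orbit does not hit $G_t$ before step $m$. Uniqueness then forces $x'=x_n(t)$, contradicting $|x'-x|<d\le\mathrm{dist}(x,x_n(t))$. To justify that $x'$ has the same pre-return behaviour up to step $m$, we apply the same argument to the points $a_0,\dots,a_{n-1}$. Any obstruction along the straight segment from $x$ to $x'$ means the orbit of some intermediate point passes through some $a_k$ or $t$ before step $m$. Taking the first such point yields a point $z$ with $|z-x|<d$ that equals some $x_k(t)$, which is impossible. This is the same continuity argument as in Lemma~\ref{lemma:connected_component_words}.

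Finally, we check the return step: $x+\ell\alpha\in G_t$ and we need $x+\ell\alpha\in G_s$. Failure would mean $x+\ell\alpha$ lies between $t$ and $s$. Translating back by $\ell\alpha$ gives a point within $|s-t|<d$ of $x$ whose orbit reaches $t$ at step $\ell$ before any other return, which again identifies it with $x_n(t)$, a contradiction. Hence $\ell_{G_s}(x)=\ell$ and all letters agree, so $w_{G_s}(x)=w_{G_t}(x)$.

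The main obstacle is bookkeeping at the boundary: the half-open conventions, the case $s<t$ versus $s>t$, and the possibility that the relevant point is $a_0$, which is fixed but whose pre-image $x_0(t)$ is also excluded. I would handle these uniformly by parametrizing the segment between $x$ and the shifted point, and taking the infimum of parameters where the word (or return time) changes. The hypothesis $t\notin Y$ is used only to guarantee that $x_n(t)$ and the other $x_k(t)$ are well defined and distinct.
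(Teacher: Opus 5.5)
Your strategy is the same as the paper's. Moving the partition point from $t$ to $s$ can only change a letter or the return time if some orbit point $x+m\alpha$, $1\le m\le \ell$, lies between $s$ and $t$. In that case $t-m\alpha$ is a point within $|s-t|$ of $x$ whose $G_t$-orbit reaches $t$ before (or at) its return, and Lemma~\ref{lemma:partition} identifies it with $x_n(t)$.

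The gap is your claim $\mathrm{dist}(x,t)\ge d$, and more generally the tacit assumption that the window $(x-d,x+d)$ lies inside $G_t$. By definition $j_n(t)\ge 1$, so $x_n(t)=t-j_n(t)\alpha\neq t$. Hence neither endpoint $t$ nor $a_0$ of $G_t$ is among the $x_k(t)$, and $d$ gives no control on $\mathrm{dist}(x,\{t,a_0\})$. This breaks two things.
\begin{enumerate}
\item If $x\in[t,\min_k x_k(t))$ is very close to $t$ and $s\in(x,t+d)$, then $x\notin G_s$, so $w_{G_s}(x)$ is not even defined.
\item Your point $x'=t-m\alpha$ need not lie in $G_t$. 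The segment from $x$ to $x'$ can exit $G_t$ through $a_0$ or $t$ at step $0$. That obstruction is not one of the $x_k(t)$, so uniqueness yields no contradiction.
\end{enumerate}
Failure (ii) genuinely occurs, even for $s<t$. Take $n=1$, write $\theta_j=\|q_j\alpha\|$, and pick $k$ large with $q_k\alpha-p_k=-\theta_k$. Let $|G_t|=\theta_k-\epsilon$ with generic $0<\epsilon<\theta_{k+1}/4$. Since $\|j\alpha\|\ge\theta_k$ for $1\le j<q_{k+1}$, every return time of $G_t$ is at least $q_{k+1}$. One computes $x_0(t)=a_0-\theta_{k+1}$ and $x_1(t)=a_0-\theta_{k+1}+\epsilon$. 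For $x=a_0-\eta$ with $0<\eta<\theta_{k+1}/4$, this gives $d=\theta_{k+1}-\epsilon-\eta>\theta_{k+1}/2$, while $x+q_k\alpha=t-\epsilon-\eta$. So for $s=t-\epsilon-\eta$ we have $|s-t|<d$, yet $\ell_{G_s}(x)\le q_k<\ell_{G_t}(x)$. Here $x'=t-q_k\alpha=a_0+\epsilon\notin G_t$. The statement is therefore false as written, and no amount of bookkeeping will close the gap.

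The paper's proof has the same hidden assumption. Its claim that $(x-\delta,x+\delta)$ lies in a single component of $G_t\setminus\{x_0(t),\dots,x_n(t)\}$ fails for $x$ near $t$ or $a_0$. The repair is to replace $d$ by $d'=\min\{d,\mathrm{dist}(x,t),\mathrm{dist}(x,a_0)\}$, that is, to count the endpoints of $G_t$ among the word boundaries. With $|s-t|<d'$ you get $x\in G_s$. The points $t-m\alpha$ that your argument produces (for $m<\ell$, and for $m=\ell$ in the return step when $s>t$) then lie in the component of $x$. By Lemma~\ref{lemma:connected_component_words} they have return time $\ell$, so uniqueness forces them to equal $x_n(t)$, a contradiction. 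With this change your argument goes through as written. This corrected version is all that Lemmas~\ref{lemma:wordsconstant} and~\ref{lemma:onewordchanges} actually use. There the lemma is applied at midpoints of components, whose distance to $t$ and $a_0$ is also at least half the component length.
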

\begin{proof}
For any  $x\in G_t\setminus \{x_0(t),...,x_n(t)\}$ we set 
$$\delta :=\min_{k=0,...,n} \mathrm{dist}(x,x_k(t))$$
and note that $(x-\delta, x+\delta)$ is contained entirely inside a single connected component of $G_t\setminus \{x_0(t),...,x_n(t)\}$. 

By Lemma~\ref{lemma:connected_component_words} the $j$-th letter of $w_{G_t}(y)$ is the same for all $y\in (x-\delta, x+\delta)$ and therefore none of the arcs $(x+j\alpha-\delta, x+j\alpha+\delta)$ for $0 < j <\ell_{G_t}(x)$ can contain any of the points $a_k(t)$. Equivalently this is a lower bound for the minimum distance 
$$\min_{k=0,...,n} \;\;\min_{j=1,...,\ell_{G_t}(x)-1} \mathrm{dist}(x+j\alpha, a_k(t))\geq \delta$$
that the return orbit of $x$ has from any of the boundary points $a_k(t)$.

For all $s$ with $|t-s|< \delta$ one therefore has $w_{G_s}(x)=w_{G_t}(x)$ as only the boundary point $a_n(s)$ is different and it moves by the distance $|a_n(s)-a_n(t)|=|t-s|$ which is not enough to change any letter. 
\end{proof}
Between transition points the return words cannot change. For a simple proof we assume the intervals $G_t$ do not get too large:
\begin{lemma}
\label{lemma:wordsconstant}
For any $t,s \in (y_i,y_{i-1})$ with $|G_t|<1-\|\alpha\|$ and $|G_s|<1-\|\alpha\|$ one has $W_s=W_t$.
\end{lemma}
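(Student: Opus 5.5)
The plan is to show that the finite set $W_t$ is locally constant in $t$ on $(y_i,y_{i-1})$, and then conclude by connectedness of the interval. Fix $t\in(y_i,y_{i-1})$ with $|G_t|<1-\|\alpha\|$. By Lemma~\ref{lemma:connected_component_words}, the points $x_0(t),\dots,x_n(t)$ cut $G_t=[t,a_0)$ into $n+2$ connected components. On each component $w_{G_t}$ is constant, and $W_t$ is exactly the set of these $n+2$ words.

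For each component $J$ of $G_t\setminus\{x_0(t),\dots,x_n(t)\}$, pick an interior point $x_J\in J$. Set
\[
\delta:=\min_J\min_k \dist(x_J,x_k(t))>0,
\]
and also require $\delta<\dist(t,\{y_i,y_{i-1}\})$. Note that $x_J\in G_s$ for $|s-t|<\delta$: the left endpoint of $G_s$ is $s$, and $x_J$ lies at distance at least $\delta$ from $x_n(t)$, which is near the left end. Lemma~\ref{lemma:continuity} then gives $w_{G_s}(x_J)=w_{G_t}(x_J)$ for all such $s$, so $W_t\subseteq W_s$.

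For the reverse inclusion there are two routes. The first is to run the same argument with the roles of $s$ and $t$ swapped. The second is to count: $W_s$ also has exactly $n+2$ elements by Lemma~\ref{lemma:connected_component_words}, which applies because $|G_s|<1-\|\alpha\|$ and because, by Lemma~\ref{lemma:xk_perturbation}, the boundary points $x_k(s)$ move only by the translation of $x_n$. Hence the inclusion $W_t\subseteq W_s$ of two sets of the same size forces $W_s=W_t$.

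A subtle point is that Lemma~\ref{lemma:continuity} is stated for $t\notin Y$ but also implicitly needs the distinct-orbit hypothesis at the parameter $t$. To avoid this issue I will restrict to $t$ for which $\{a_0,\dots,a_{n-1},t\}$ lie on distinct orbits; these are all but countably many $t$. This gives local constancy on a dense subset of $(y_i,y_{i-1})$. I then handle the remaining $t$ using Lemma~\ref{lemma:xk_perturbation}, which shows that the combinatorics is rigid across the whole interval: only $x_n(t)$ moves, and it moves by pure translation while staying away from the other $x_k$.

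The main obstacle is making this rigidity argument clean at the countably many exceptional $t$. If that proves awkward, I will instead argue directly. For $x$ in the interior of a component, the return orbit $x+j\alpha$, $1\le j<\ell$, can change letter or length only if it hits some $a_k$ or $a_n(s)=s$. Since $x_k(s)$ are fixed for $k\neq n$, and $x_n(s)$ translates with $s$ without crossing any $x_k$, each word persists. The words are then constant because the interval $(y_i,y_{i-1})$ is connected.
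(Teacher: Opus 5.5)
Your skeleton matches the paper's: one representative per component, Lemma~\ref{lemma:continuity} to get $W_t\subseteq W_s$ for $s$ near $t$, equal cardinality to upgrade this to equality, then connectedness. But the step that carries the real content of the lemma is missing.

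Everything depends on knowing that for \emph{every} $t\in(y_i,y_{i-1})$ the points $x_0(t),\dots,x_n(t)$ are pairwise distinct and different from the endpoint $t$, so that $|W_t|=n+2$ on the whole interval. You attribute this to Lemma~\ref{lemma:xk_perturbation} (``only $x_n(t)$ moves, by pure translation, while staying away from the other $x_k$''). That lemma only says the $x_k$ with $k\neq n$ are fixed and $x_n(t)$ translates. A translating point can pass through a fixed one, and such a collision is exactly what makes a word disappear at a point of $Y$. Without distinctness, neither your counting route nor your swap route works at the countably many parameters with coinciding orbits. Local constancy on a dense set of $t$ also does not give constancy on the interval, since the neighborhoods need not cover the exceptional points.

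The missing argument is short and uses the definition of $Y$ rather than full disjointness of orbits. Return times to $G_t\supseteq G$ are at most $r$, so $1\le j_n(t)\le r$. For $k\neq n$ one has $j_k(t)\le r-1$, because the orbit of $x_k(t)$ reaches $a_k\notin G_t$ strictly before returning.
\begin{itemize}
\item If $x_k(t)=x_n(t)$ with $k\neq n$, then $a_k-m\alpha=t-l\alpha$, i.e.\ $t=a_k-(m-l)\alpha$ with $|m-l|\le r-1$, so $t\in Y$.
\item If $x_k(t)=t$ with $k\neq n$, then $t=a_k-j_k(t)\alpha$ with $1\le j_k(t)\le r-1$, again $t\in Y$.
\item $x_j(t)\neq x_k(t)$ for $j,k\neq n$ is the standing hypothesis on the $a_k$.
\end{itemize}
The disjoint-orbit assumption enters Lemma~\ref{lemma:connected_component_words} only through this distinctness. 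Constancy of $w_{G_t}$ on components uses only uniqueness of pre-return representatives (Lemma~\ref{lemma:partition}), and that is also all Lemma~\ref{lemma:continuity} needs, so your worry about it is unfounded. Hence you get $n+2$ distinct words at every $t\in(y_i,y_{i-1})$, the detour through a dense set of parameters is unnecessary, and your inclusion-plus-counting argument closes as in the paper.

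A minor point: your reason for $x_J\in G_s$ is off, since $x_n(t)=t-j_n(t)\alpha$ need not be near the left end of $G_t$. Take $x_J$ to be the midpoints of the components, or include the endpoint $t$ in the minimum defining $\delta$.
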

\begin{proof}
We claim first that the points $x_0(t),...,x_n(t)$ are all distinct from each other for $t\in (y_i,y_{i-1})$ and also distinct from $t$, hence $W_t$ has exactly $n+2$ elements. Indeed, for $j,k\neq n$ this is immediate by the assumption that $a_j$ and $a_k$ have disjoint $\alpha$-orbits. If $$a_j-m\alpha=x_j(t)=x_n(t)=t-l\alpha$$ for some $1\leq m,l\leq r$, then $t=a_j-(m-l)\alpha$, where $-(r-1)\leq m-l\leq r-1$, which contradicts $t\notin Y$.

By Lemma~\ref{lemma:continuity} one can pick for every return word $W_t$ a representative $x\in G_t\setminus \{x_0(t),...,x_n(t)\}$ which yields the same return for $w_{G_s}(x)$ for all $s$ in an open neighborhood of $t$. Taking the intersection over $n+2$ such representatives gives a neighborhood $U$ of $t$ such that $W_s=W_t$ for all $s\in U$, hence $W_t$ is locally constant in $t$. 
\end{proof}

\begin{lemma}
\label{lemma:onewordchanges}
Fix some $y\in Y$ and set $W_{+}=W_{y+\epsilon}$ and $W_{-}=W_{y-\epsilon}$ for any small enough $\epsilon>0$. Then $|W_-\setminus W_+|\leq 1$.
\end{lemma}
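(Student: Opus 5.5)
The idea is to locate which word boundaries $x_k(t)$ can change as $t$ passes through the exceptional point $y$, and to show that only one pair of adjacent boundaries collides there. Since $y\in Y$, we have $y=a_k-j\alpha$ for some $k\in\{0,\dots,n-1\}$ and $|j|\le r-1$. Because the $a_k$ with $k\neq n$ lie on pairwise distinct $\alpha$-orbits, this pair $(k,j)$ is unique. The pair $(a_0,a_k)$ cannot both be hit either, since $a_0$ and $a_k$ lie on different orbits.

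\textbf{Step 1: only two boundaries interact.} By the argument in the proof of Lemma~\ref{lemma:xk_perturbation}, as $t$ crosses $y$ only the jump index $j_k$ of this one $k$, or $j_n$, can change. Indeed, $j_n$ jumps only when $t-j\alpha=a_0$, and for $j\ge1$ this is exactly the situation $y=a_0+j\alpha$, i.e. the case $k=0$ with negative shift. Hence for $t=y\pm\epsilon$ every $x_m(t)$ with $m\notin\{k,n\}$ stays fixed. Meanwhile $x_n(t)$ moves continuously by $t-s$, and at $t=y$ it coincides with $x_k(y)$ (or with the left endpoint $t$ itself, if $j\le0$ corresponds to $a_k$ being reached from $t$).

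\textbf{Step 2: locate the changing words.} Let $\delta$ be the minimum distance between the coincidence point $z:=x_k(y)=x_n(y)$ and all other boundaries $x_m(y)$ and endpoints of $G_y$. This is positive because $y$ avoids every other exceptional coincidence. Apply Lemma~\ref{lemma:continuity} at $t=y\pm\epsilon$, or more precisely its proof, which only uses that a point is at positive distance from all boundary points. Then every $x\in G_{y\pm\epsilon}$ at distance greater than $2\epsilon$ from $z$ has $w_{G_{y-\epsilon}}(x)=w_{G_{y+\epsilon}}(x)$.

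\textbf{Step 3: count.} By Lemma~\ref{lemma:connected_component_words}, the words of $W_\pm$ are indexed by the $n+2$ connected components of $G_{y\pm\epsilon}\setminus\{x_m\}$. All components except those adjacent to $z$ contain points far from $z$, so their words are common to $W_-$ and $W_+$. Near $z$, for $t=y-\epsilon$ the two boundaries $x_k,x_n$ are ordered one way and create a tiny middle component. For $t=y+\epsilon$ they are ordered the other way and create a different tiny middle component. The components to the left and right of this pair extend far from $z$ and therefore carry common words. So at most one word of $W_-$, the one on the tiny middle component, can fail to lie in $W_+$.

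The edge cases occur when $z$ coincides with an endpoint of $G$, that is, when $j_n$ changes. There the component either disappears or appears at the boundary of $G_t$, which again affects only one word. These cases are handled the same way: only one word can be lost.

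\textbf{Main obstacle.} The delicate point is Step 1. One has to check rigorously that no other $x_m$ jumps at $y$. This is where the disjoint-orbit hypothesis and the bound $|j|\le r-1$ in the definition of $Y$ enter. A related concern is that the return length, which changes only at $x_0$ and $x_n$, does not alter words on the large components; this is covered by the uniform distance argument of Lemma~\ref{lemma:continuity}.
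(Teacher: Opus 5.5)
\textbf{There is a genuine gap: Step~2 is false, so the count in Step~3 is unproved.} Your mechanism is the same as the paper's. A word carried by a component whose length stays bounded below survives the transition, by Lemma~\ref{lemma:continuity} applied at its midpoint. So only an $O(\epsilon)$-short component can carry a lost word, and one must show there is only one such component. The problem is how you locate it: the component of $G_{y-\epsilon}$ that vanishes need not lie near a point where $x_k$ and $x_n$ meet. Two of the three transition types contradict your picture.

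\emph{First type.} Let $y=a_k-j\alpha$ with $1\le j<\ell_{G_y}(y)$, so the forward orbit of the left endpoint hits $a_k$ before returning. Then $j_k$ jumps and $j_n$ does not.
For $t<y$ one has $x_k(t)=y$, so the short component is $[t,y)$, sitting at the left end of $G_t$.
For $t>y$ one has $x_k(t)=x_n(t)-(t-y)$.
In particular $x_k(y)=y\neq x_n(y)$, so your point $z$ does not exist.

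\emph{Second type.} Let $k=0$ and $y=a_0+i\alpha$ with $1\le i<j_n(y)$. For $t<y$ one has $x_n(t)=a_0-(y-t)$, so the short component is $(x_n(t),a_0)$ at the right end of $G_t$. Meanwhile $x_n(y)=x_0$ lies at a fixed distance. The points of $(a_0-\epsilon,a_0)$ are therefore far from $z$ but carry exactly the lost word, which refutes Step~2 directly. In this case $x_n$ also does not move continuously through $y$.

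Your Step~3 picture is correct only in the genuine crossing case ($j\le -1$, $k\neq 0$), where $x_n$ passes through a fixed $x_k$. The edge-case remark attributes endpoint collisions to a jump of $j_n$, but in the first type it is $j_k$ that jumps. The remark also asserts, rather than proves, that only one word is lost.

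\textbf{How to repair your version.} Measure distance to all boundary points of $G_{y-\epsilon}$, including both endpoints $y-\epsilon$ and $a_0$. Then show that as $t\to y^-$ at most one of the following collisions occurs:
\begin{itemize}
\item[(i)] $x_n(t)$ with a fixed $x_m$;
\item[(ii)] the left endpoint $t$ with a fixed $x_m$;
\item[(iii)] $x_n(t)$ with $a_0$.
\end{itemize}
Each collision puts $y$ on the orbit of $a_m$ (respectively $a_0$), so only the index $k$ can be involved. One then checks that no two collisions occur simultaneously. For example, $x_n(y^-)\neq y$, and $x_0=y$ together with $x_n(y^-)=a_0$ would force $j_0+j_n=0$.

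\textbf{How the paper avoids the case split.} It argues on the $y+\epsilon$ side instead.
\begin{itemize}
\item[(i)] Every $x_m$ with $m\neq n$ is constant on the whole interval from $y$ to the next point of $Y$ and is $\ge t$ there. Hence it is bounded away from $y$ and cannot meet the left endpoint.
\item[(ii)] $x_n(t)$ increases with $t$ and stays below $a_0$ on that interval, so it stays away from $a_0$.
\item[(iii)] Therefore any component of length $\le 2(t-y)$ must be $(x_k(t),x_n(t))$ for the unique $k$ with $x_k(y^+)=x_n(y^+)$. This gives $|W_+\setminus W_-|\le 1$.
\item[(iv)] The identity $|W_+|=|W_-|=n+2$, from Lemmas~\ref{lemma:wordsconstant} and~\ref{lemma:connected_component_words}, then converts this into the statement.
\end{itemize}
Once repaired, your direct count on the $y-\epsilon$ side would not need that cardinality identity, but it pays for this with the three-case analysis above.
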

\begin{proof}
Let $w\in W_+\setminus W_-$ be a pre-transition word which disappears. For every $t>y$ let $J_t\subset G_t$ be the maximal open interval such that $w_{G_t}(x)=w$ for all $x\in J_t$. The boundary points of $J_t$ are in $\{x_0(t),...,x_n(t)\}$, hence by applying Lemma~\ref{lemma:continuity} to the midpoint of $J_t$ one has $|J_t|\leq 2|t-y|$: if conversely one had $|J_t|>2|t-y|$ then the midpoint of $J_t$ would still represent the same return word and thus $w$ would have to be a return word in $W_{y-\epsilon}$ for some small enough $\epsilon$.

The points $x_0(t),...,x_n(t)$ are mutually distinct for $t>y$ and by Lemma~\ref{lemma:xk_perturbation} only $x_n(t)$ is not constant, therefore there can only be one unique index $k\neq n$ such that $$\lim_{t\to y+} x_n(t) = \lim_{t\to y+}x_k(t).$$
For $|J_t|$ to shrink to $0$ the interval $J_t$ can only be of the form $J_t=(x_k(t), x_n(t))$ for $t\to y+$. As $k$ is unique and $|W_+|=|W_-|=n+2$ we conclude there can be at most one word in $W_+\setminus W_-$ and therefore $W_-\setminus W_+$ also contains at most one element.
\end{proof}

\begin{lemma}
\label{lemma:wordchangeanatomy}
Fix some $y\in Y$ and for any small enough $\epsilon>0$ set $W_{+}=W_{y+\epsilon}$ and $W_{-}=W_{y-\epsilon}$. Then for every word \(\tilde w\in W_-\setminus W_+\) there exist words \(w\in W_+\), \(\tilde v\in W_-\), and a letter \(l\) such that
\[
w=\tilde w\,l\,\tilde v
\qquad\text{or}\qquad
w=\tilde v\,l\,\tilde w.
\]
\end{lemma}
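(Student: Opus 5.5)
We analyze the transition at $y\in Y$ through the geometry of the word boundaries $x_0(t),\dots,x_n(t)$ from Lemma~\ref{lemma:xk_perturbation} and Lemma~\ref{lemma:onewordchanges}. For $t$ slightly above $y$, the proof of Lemma~\ref{lemma:onewordchanges} shows that the only word in $W_+\setminus W_-$ is carried by an interval $J_t=(x_k(t),x_n(t))$ (or with endpoints reversed) that shrinks to zero as $t\to y^+$. Symmetrically, for $t$ slightly below $y$, a vanishing word $\tilde w\in W_-\setminus W_+$ is carried by an interval $\tilde J_t$ bounded by $x_n(t)$ and $x_k(t)$ that shrinks as $t\to y^-$. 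The point $y$ is a coincidence $t=a_k-j\alpha$ with $|j|\le r-1$. We split into two cases: $j>0$, meaning the orbit of the new endpoint $t$ reaches $a_k$ before returning to $G_t$, and $j<0$, meaning $a_k$'s orbit reaches $t$ first. The case $j=0$ is excluded because $a_k\neq t$ on $(a_{n-1},a_n)$.

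\textbf{Following a point across the transition.} Take $x\in\tilde J_t$ for $t<y$ and let $t\uparrow y$. We track the orbit of a point of $G_t$ lying just to the right of the coincidence. There are two sub-cases.
\begin{itemize}
\item[(a)] The orbit $x+\alpha,\dots$ hits the moving endpoint $t$ at some step $m$. Before the transition this step is a return to $G_t$. After the transition ($t>y$) the same point no longer lies in $G_t$, because $G_t=[t,a_0)$ has shrunk past it. It therefore receives the letter $l$ of $I_{n-1}$, and the orbit continues until the next return. The continuation is exactly the return orbit of a point near $t$ itself, i.e.\ of a point of $G$ carrying some word $\tilde v\in W_-$.
\item[(b)] The roles reverse, with the concatenation on the other side.
\end{itemize}
Concretely, in sub-case (a), a point of the new small interval $J_t$ for $t>y$ has return orbit that first follows the orbit of a $\tilde J$-point for $m-1$ steps, reading $\tilde w$. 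It then visits the site that has just left $G$, reading the letter $l$. Finally it follows the return orbit of a point near $t$, reading $\tilde v$. This gives $w=\tilde w\,l\,\tilde v\in W_+$. The other order $\tilde v\,l\,\tilde w$ arises when the concatenated piece is instead the predecessor: the point near $t$ whose orbit runs into $\tilde J$.

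\textbf{Bookkeeping.} We use Lemma~\ref{lemma:xk_perturbation} to justify that the relevant letters along both orbit pieces are constant in a neighbourhood of $y$, since only $x_n(t)$ moves and it moves by translation. Lemma~\ref{lemma:continuity} then justifies that $\tilde v$ is a genuine word of $W_-$, with a representative lying away from all $x_k$. The same lemma also shows that $\tilde v$ survives into $W_+$, although only membership in $W_-$ is required.

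\textbf{Main obstacle.} The hardest step is the combinatorial identification of which point of $G_t$ carries the disappearing word. We must show that at the transition the orbit of $\tilde J$ points passes through the endpoint region exactly once, so that the new word is a \emph{single} concatenation rather than a longer chain. We would argue this from Lemma~\ref{lemma:partition}. The sets $G_i$ partition $\mathbb T$, and $|G_t|<1-\|\alpha\|$, so the orbit cannot re-enter the sliver $[y,t)$ twice within one return. As a result, exactly one letter $l$ is inserted, and it is the letter of $I_{n-1}$ adjacent to $a_n$.
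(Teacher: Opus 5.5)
There is a genuine gap in how you obtain $w\in W_+$. Your local mechanism in (a)/(b) is the paper's argument: the return orbit of a $\tilde w$-point meets the sliver $G^-\setminus G^+=[y-\epsilon,y+\epsilon)$ exactly once, at its start or at its end, and gluing it to the adjacent $G^-$-return orbit gives a $G^+$-return word. The problem is that you then identify this glued word with the newly created word on the shrinking interval $J_t$. You derive $w\in W_+$ from the claim that points of $J_t$ first shadow a $\tilde J$-point and read $\tilde w$.

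That claim is false in general. So is the ``symmetric'' claim that $\tilde w$ lives on an interval bounded by $x_n(t)$ and $x_k(t)$. Lemma~\ref{lemma:xk_perturbation} only gives constancy of the $x_k$ on the open intervals between points of $Y$, and the $x_k$ can jump at $y$ itself.

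For a concrete failure, let $y=a_k-j\alpha$ with $j\ge1$ equal to $j_k(t)$ for $t<y$.
\begin{itemize}
\item For $t<y$ one has $x_k(t)=y$. The vanishing word $\tilde w$ is then carried by the leftmost component $[t,y)$ of $G_t$, which is bounded by the endpoint of $G_t$, not by $x_n(t)$.
\item For $t>y$, $x_k$ jumps to $y-j_n\alpha$, where $x_n(t)=t-j_n\alpha$. The new word sits on $J_t=(y-j_n\alpha,\,t-j_n\alpha)$.
\item A point of $J_t$ reads the length-$(j_n-1)$ word $\tilde v$ of the component to the right of $x_n$. It then reads the letter $l$ on landing in $(y,t)$, and then the word $u$ of the points just right of $y$. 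This $u$ differs from $\tilde w$ in position $j$.
\end{itemize}
So the new word is $\tilde v\,l\,u$. The word of the required shape is $\tilde v\,l\,\tilde w$, carried by the non-shrinking component just to the left of $y-j_n\alpha$, and it lies in $W_-\cap W_+$. Your ``main obstacle'' (showing the new word is a single concatenation with $\tilde w$ as a factor) is therefore aimed at a false statement.

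The fix is to forget $J_t$, as the paper does. Take any $x$ with $w_{G^-}(x)=\tilde w$. Its $G^-$-return orbit must meet the sliver, since otherwise $w_{G^+}(x)=\tilde w$ and $\tilde w\in W_+$. It meets the sliver only at one end because $\epsilon$ is chosen so small that no orbit segment of length at most the maximal return time of $G^\pm$ can visit an interval of length $2\epsilon$ twice. This smallness, not $|G_t|<1-\|\alpha\|$ or Lemma~\ref{lemma:partition}, is what excludes a second visit. There are two cases:
\begin{itemize}
\item If the endpoint $x+\ell_{G^-}(x)\alpha$ lies in the sliver, then $x\in G^+$ and $w=w_{G^+}(x)=\tilde w\,l\,w_{G^-}(x+\ell_{G^-}(x)\alpha)$.
\item If $x$ itself lies in the sliver, Lemma~\ref{lemma:partition} applied to $G^+$ gives a unique $\tilde x\in G^+$ whose $G^+$-return orbit passes through $x$. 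Then $w=w_{G^+}(\tilde x)=w_{G^-}(\tilde x)\,l\,\tilde w$.
\end{itemize}
In both cases $\tilde v\in W_-$ is automatic.

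Your aside that Lemma~\ref{lemma:continuity} yields $\tilde v\in W_+$ is also unsupported. The paper gets this from $\tilde v\neq\tilde w$ together with Lemma~\ref{lemma:onewordchanges}, and it is this stronger membership that Lemma~\ref{lemma:stripping} actually consumes.
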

\begin{proof}
Fix \(\varepsilon>0\) so small that $y$ is the only point of $Y$ in $
(y-\varepsilon,y+\varepsilon)$
		and such that the smallest return time of any interval of size $2\varepsilon$ to itself is larger than the maximal return times of the intervals $G^\pm=[y\pm\epsilon, a_0)$ to themselves.

Let \(\tilde w\in W_-\setminus W_+\), and choose \(x\in G^-\) such that
\[
w_{G^-}(x)=\tilde w.
\]
The \(G^-\)-return orbit of \(x\) must intersect $G^-\setminus G^+$, for otherwise $\tilde w$ would define a return word already belonging to \(W_+\). Moreover, the \(G^-\)-return orbit of \(x\) also cannot both start \emph{and} end in $G^-\setminus G^+$ since we assumed via choice of $\varepsilon$ that the return time of any point in $G^-\setminus G^+=[y-\varepsilon,y+\varepsilon)$ to $G^-\setminus G^+$ itself is larger than the return time to $G^-$. Therefore, either the starting point $x$ or the endpoint $x+\ell_{G^-}(x)\alpha $ of the return orbit is in $G^-\setminus G^+$ and the other one is in $G^+$. 

\begin{enumerate}
\item[Case 1:]
\[
x\in G^-\setminus G^+,
\qquad
			x+\ell_{G^-}(x)\alpha \in G^+.
\]
By Lemma~\ref{lemma:partition} there exists a unique point \(\tilde{x}\in G^+\) whose orbit meets \(x\) before returning to \(G^+\).
The \(G^+\)-return orbit of \(\tilde{x}\) reaches $x$ after some $j$ steps $x=\tilde{x}+j\alpha$, which is the first and only time it intersects $G^-\setminus G^+$. After that it follows the \(G^-\)-return orbit of \(x\). Hence
\[
w_{G^+}(\tilde{x})=\,\tilde v l \tilde w \in W_+
\]
where $\tilde v:=w_{G^-}(\tilde{x})\in W_-$ and $l$ is the $j$-th letter of $w_{G^+}(\tilde{x})$.
\item[Case 2:]
\[
x\in G^+,
\qquad
\tilde x:=x+\ell_{G^-}(x)\alpha \in G^-\setminus G^+.
\]
This is the same as case 1 except with the roles of $x$ and $\tilde{x}$ exchanged, i.e.
\[
w_{G^+}(x)=\,w_{G^-}(x) l w_{G^-}(\tilde{x}) \in W_+.
\]
\end{enumerate}
In both cases we have produced a word \(w \in W_+\) such that
\[
w=\tilde w\,l\,\tilde v
\qquad\text{or}\qquad
w=\tilde v\,l\,\tilde w
\]
with some word $\tilde{v}\in W_-$. It remains to show that if $\tilde{w}\in W_-\setminus W_+$ then one must have $\tilde{v}\in W_+$. By Lemma~\ref{lemma:onewordchanges} the set $W_-\setminus W_+$ contains at most one element, hence it is sufficient to prove that $\tilde{w}\neq \tilde{v}$ are distinct.

As we argued in the proof of Lemma~\ref{lemma:onewordchanges} the maximal interval $J\subset G^-$ that the post-transition word $\tilde{w}\in W_-\setminus W_+$ occurs on has size $|J|\leq 2\varepsilon$, since otherwise it would also be in $W_+$ by continuity. 
By Lemma~\ref{lemma:connected_component_words} the only way for $\tilde{w}$ and $\tilde{v}$ to be equal is for the points $x,\tilde{x}$  that give rise to them to lie in that same connected component $J\subset G^-\setminus \{x_0(G^-),\dots,x_n(G^-)\}$. By the construction above, either $x$ or $\tilde{x}$ is in $G^+$ and the other of the two points is on its $G^+$-return orbit. Since $x,\tilde{x}\in J$ would imply $|x-\tilde{x}|\leq 2\varepsilon$ this produces a contradiction, as by assumption on $\varepsilon$ the first return time of any interval of size $2\varepsilon$ to itself is longer than $\ell_{G^+}(x)$ and $\ell_{G^+}(\tilde{x})$. Hence $\tilde{w}$ and $\tilde{v}$ are distinct, which completes the proof.

\end{proof}

It remains to combine all ingredients:
\begin{proof}(of Theorem~\ref{theorem:nodirichlet})
We proceed by induction over $n$. If $n=1$ then all return words are composed of only one letter and Proposition~\ref{proposition:no_common_ev_dirichlet_laplacian} finishes the proof.

		Assume therefore $n\geq 2$ and let us be given $0<a_0<\cdots<a_n<1$ such that no two of these points lie on the same $\alpha$-orbit. For an eventual contradiction we further assume that the return words $W$ of $G=[a_n,a_0)$ with respect to this $(n+1)$-interval configuration have a common Dirichlet eigenvalue. We may assume $|G|<\|\alpha\|$ as otherwise some return word is empty and we rule an empty word as no common Dirichlet eigenvalue. 

		We now increase the size of $G$. Along the path $t\in (a_{n-1},a_n]$ we have the possible finitely many transition points $Y=\{y_1,....,y_M\}$. By Lemma~\ref{lemma:wordsconstant} the collection of return words $W_{t}$ is locally constant for $t$ outside $Y$. Pick one representative from each component of $(a_{n-1},a_n)\setminus Y$ and label them so that $a_{n-1}<t_0<\cdots<t_M<a_n$.

Assume $W_{t_{i+1}}$ has a common Dirichlet eigenvalue, which holds for the base case $W_{t_M}=W$. By Lemma~\ref{lemma:onewordchanges} and Lemma~\ref{lemma:wordchangeanatomy} one has $|W_{t_i}\setminus W_{t_{i+1}}|\leq 1$ and if there is some word that changes then Lemma~\ref{lemma:stripping} implies that $W_{t_i}$ also has a common Dirichlet eigenvalue. 

Note that in particular $W_{t_{i}}$ cannot contain an empty word as we rule an empty word as no common Dirichlet eigenvalue, which means the lengths of the intervals $G_{t_i}$ stay bounded by $\|\alpha\|$. If there was no contradiction up until $t_0$ then inductively $W_{t_0}$ has a common Dirichlet eigenvalue. 

		It remains to compare $W_{t_0}$ to the return words $\tilde{W}$ of $\tilde{G}=[a_{n-1},a_0)$ with respect to the $n$ intervals with endpoints $a_0,...,a_{n-1}$. As there are no more transition points between $a_{n-1}$ and $t_0$ the interval $\tilde{G}$ must still have length less than $\|\alpha\|$ as the interval with length exactly $\|\alpha\|$ can only occur as a transition point. By Lemma~\ref{lemma:connected_component_words} the collections $\tilde{W}$ and $W_{t_0}$ therefore have exactly $n+1$ respectively $n+2$ elements. There must be at least one word $w\in W_{t_0}\setminus \tilde{W}$ that disappears in the limit of $t\to a_{n-1}$ and precisely as in the proof of Lemma~\ref{lemma:onewordchanges} that there is exactly one return word in $W_{t_0}$ whose representing interval shrinks to zero, while the others stay finite. Therefore, $\tilde{W}$ is a subset of $W_{t_0}$, which accordingly inherits the common Dirichlet eigenvalue. However, the induction hypothesis was that for any $n$-interval configuration the collection of return words has no common Dirichlet eigenvalue, which finishes the proof by contradiction.
\end{proof}
\appendix
\section{Monotone eigenvalue branches}
Fix a $1$-periodic function \mbox{$f:\mathbb{R}\to\mathbb{R}$} and let $\alpha\in\mathbb{R}\setminus\mathbb{Q}$. We consider the family of discrete Schr\"odinger operators on $\ell^2(\mathbb{Z})$,
\begin{equation}\label{eq:operator_family}
(H(x)\psi)(m)=\psi(m+1)+\psi(m-1)+f(x+m\alpha)\,\psi(m),\qquad x\in[0,1).
\end{equation}
For $n\in\mathbb{N}$, we denote $H(x)_{[0,n-1]}=H_n(x)$. Denote the (simple) eigenvalues of $H_n(x)$ in increasing order by
\begin{equation*}
-\infty < E_0(x)<E_1(x)<\cdots<E_{n-1}(x).
\end{equation*}
Let us define
\begin{equation*}
0=\beta_0<\beta_1<\cdots<\beta_{n-1},
\qquad
\{\beta_0,\dots,\beta_{n-1}\}=\bigl\{\{-j\alpha\}:j=0,1,\dots,n-1\bigr\}.
\end{equation*}
	Note that for every $0\leq k\leq n-1$, $E_k(x)$ is 1-periodic. We will set $\beta_n=1+\beta_0$ and consider the arcs $[\beta_j, \beta_{j+1})$ which implicitly includes the wrap-around. Under $\mathcal{A}_1$, the restrictions $E_k|_{[\beta_j,\beta_{j+1})}$ are monotone for $n\geq n_0$. We have the following (see also \cite{JK2,Kach}):

\begin{proposition}\label{prop:Lambda_functions}
    There exist functions $\Lambda_k:[0,1)\to\mathbb{R}$, $k=0,1,\ldots,n-1$, such that
    \begin{equation}
        \sigma\bigl(H_n(x)\bigr) = \Bigl\{\Lambda_0(x),\ \Lambda_1\bigl(\{x-\beta_{n-1}\}\bigr),\ \dots,\ \Lambda_{n-1}\bigl(\{x-\beta_1\}\bigr)\Bigr\}, \label{eq:spec-decomp}
    \end{equation}
    and
    \begin{equation}
			E_k(0) = \Lambda_k\bigl(\{-\beta_{n-k}\}\bigr),\qquad k=0,1,\dots,n-1. \label{eq:normalization}
    \end{equation}
		Furthermore, for $n\geq n_0$ and every $0\leq k\leq n-1$, $\Lambda_k$ is monotone on $[0,1)$.
\end{proposition}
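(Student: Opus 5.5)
The idea is to glue the $n$ eigenvalue curves of $H_n$ into $n$ functions $\Lambda_k$, each living on a full period, by following an eigenvalue through the discontinuity points $\beta_j$. On each monotonicity arc $(\beta_j,\beta_{j+1})$ the operator $H_n(x)$ depends (piecewise) continuously on $x$. The eigenvalues are simple, so each $E_k$ is continuous there. At a point $\beta_j=\{-m\alpha\}$ exactly one sampled site $m$ crosses the wrap-around $0=1$ of $f$. Passing $\beta_j$ from left to right therefore replaces $f(1^-)$ at site $m$ by $f(0^+)$, using the standing convention $f(0^+)<f(1^-)$. This is a negative rank-one perturbation, so the eigenvalues interlace:
\[
E_{k-1}(\beta_j-0)\le E_k(\beta_j)\le E_k(\beta_j-0),
\]
which is $\mathcal{A}_2$. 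Interlacing shows that the natural continuation across $\beta_j$ of the branch arriving as $E_k(\beta_j-0)$ is $E_{k+1}(\beta_j)$, up to the ordering convention. So the plan is to define $\Lambda$-branches by shifting the index by one each time a jump point is crossed.

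Concretely, I would start at $x=0=\beta_0$ and, for each $k$, follow the branch $E_k$ on $[\beta_0,\beta_1)$. At $\beta_1$ I continue with the branch whose index has been raised by one, and so on, shifting the index by one at each $\beta_j$. After crossing all $n$ jump points of one period, the index has shifted by $n\equiv 0$, which is consistent. To obtain the indexing in \eqref{eq:spec-decomp}, I would instead define $\Lambda_k$ on $[0,1)$ by
\[
\Lambda_k(y):=E_{\kappa(k,j)}(y+\beta_{n-k})\quad\text{for } y+\beta_{n-k}\in[\beta_j,\beta_{j+1}),
\]
where $\kappa(k,j)$ is the index obtained from the shift rule, reduced so that $\kappa$ ranges over $0,\dots,n-1$. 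The jump points are periodic with total count $n$, and each crossing shifts all indices by one, so $j\mapsto \kappa(k,j)$ is a bijection modulo $n$. Hence for each fixed $x$ the values $\Lambda_k(\{x-\beta_{n-k}\})$, $k=0,\dots,n-1$, run through all $E_i(x)$ exactly once. This gives \eqref{eq:spec-decomp}, and evaluating at $x=0$ gives \eqref{eq:normalization}. The key bookkeeping check is that the starting point $\beta_{n-k}$ of $\Lambda_k$ is chosen so that when $x$ moves from $0$ the index-raising happens exactly $n-k$ times before wrapping, which is where $E_k(0)$ lands. I would verify this by counting how many $\beta$'s lie in $[x-\beta_{n-k},x)$.

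Monotonicity for $n\ge n_0$ then follows in two parts. Inside each arc, $\Lambda_k$ coincides with a single $E_i$, which is increasing by $\mathcal{A}_1$. At the gluing points $\beta_j$, the left limit is $E_{i}(\beta_j-0)$ and the value is $E_{i+1}(\beta_j)\ge E_i(\beta_j-0)$ by $\mathcal{A}_2$, so $\Lambda_k$ is nondecreasing across the glue. The only place monotonicity may fail is the single reset point $y=0\equiv1$ of $\Lambda_k$, which is why monotonicity is claimed on $[0,1)$ rather than on the circle.

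The main obstacle is the combinatorial indexing: ensuring that the shift by one at each jump really closes up after a period, and that the initial phases $\beta_{n-k}$ match \eqref{eq:normalization} rather than being off by one. I would handle this by tracking the counting function $x\mapsto\#\{j:\beta_j\le x\}$ and writing $\kappa$ explicitly in terms of it.
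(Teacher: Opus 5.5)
Your proposal is correct and is essentially the paper's proof: the paper defines $\Lambda_k(x)=E_{(j+k)\bmod n}(\{x+\beta_{n-k}\})$ for $\{x+\beta_{n-k}\}\in[\beta_j,\beta_{j+1})$, which is exactly your shift rule with $\kappa(k,j)=(j+k)\bmod n$, and it gets monotonicity from $\mathcal{A}_1$ on each arc and from $\mathcal{A}_2$ at the index changes, with the only reset at $y=0$. Writing $\kappa$ explicitly settles the bookkeeping you flag: for fixed $x\in[\beta_j,\beta_{j+1})$ one has $\Lambda_k(\{x-\beta_{n-k}\})=E_{(j+k)\bmod n}(x)$, so \eqref{eq:spec-decomp} follows from bijectivity of $k\mapsto(j+k)\bmod n$ (not of $j\mapsto\kappa(k,j)$, as you wrote), and taking $j=0$ at $x=0$ gives \eqref{eq:normalization}.
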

\begin{proof}
		Define $\Lambda_k(x)=E_{(j+k)\bmod{n}}(\{x+\beta_{n-k}\})$ for $\{x+\beta_{n-k}\}\in[\beta_j,\beta_{j+1})$. Clearly, $\Lambda_k(\{x-\beta_{n-k}\})=E_{(j+k)\bmod n}(x)$ which for fixed $x$ runs through all $n$ eigenvalues proving \eqref{eq:spec-decomp}. Monotonicity follows from $\mathcal{A}_1$ within each piece and $\mathcal{A}_2$ at changes of index.
\end{proof}
	It will be more useful to rephrase $\mathcal{A}_1$ in terms of the functions $(\Lambda_k)_{k=0}^{n-1}$. Hence, throughout we assume the following.

\begin{assumption}\label{Assumption1}
There exists $\gamma>0$ and $n_0\in \NM$ such that for $n\geq n_0$, the functions $(\Lambda_k)_{k=0}^{n-1}$ given by Proposition \ref{prop:Lambda_functions} satisfy
\begin{equation}\label{eq:gamma-monotone}
\Lambda_k(y)-\Lambda_k(x) \ge \gamma\,(y-x)\quad\text{for all }0\le x\le y<1.
\end{equation}
\end{assumption}

\begin{assumption}\label{Assumption2}

For every $x\in[0,1)$ and every $k=1,\dots,n-1$,
\begin{equation}\label{eq:interlace}
E_k(x) \ge E_{k-1}(x-0).
\end{equation}
\end{assumption}
\begin{remark}
    It is immediate from the definition of $(\Lambda_k)_{k=0}^{n-1}$ that Assumptions 1 and 2 are equivalent to $\mathcal{A}_1$ and $\mathcal{A}_2$ given in the introduction.
\end{remark}
	In this section we prove basic results on the integrated density of states (IDS) $N(E)$ and the eigenvalue counting functions $N_n(x,E)=\#\{j:E_j(x)\leq E\}$ for the restrictions to $[0,n-1]$. Write $\overline N_n(E)=\int_0^1N_n(x,E)\,dx$.

A direct consequence of the anti-Lipschitz property is the Lipschitz continuity of the IDS, namely Lemma \ref{lemma:ac_IDS}.
\begin{proof}[Proof of Lemma \ref{lemma:ac_IDS}]
Following \cite{Kach}, for $E_1<E_2$ we have
\begin{equation}
			\frac{1}{n}\overline N_n(E_2)-\frac{1}{n}\overline N_n(E_1)=\frac{1}{n}\sum_{i=0}^{n-1}\int_0^1\chi_{(E_1,E_2]}\left[\Lambda_i(x)\right]dx\leq\gamma^{-1}(E_2-E_1),
\end{equation}    
		where the inequality follows from Assumption \ref{Assumption1}. Taking the limit for $n\to \infty$ completes the proof.
\end{proof}

Let us also mention the Thouless formula which we will need later in the appendix (see, e.g.\ \cite{Thou}):
\begin{equation}\label{eq:Thouless}
L(E)=\int_{\mathbb{R}}\log|E-E'|\,dN(E').
\end{equation}
The goal of the appendix is to present the technical details which lead to the regularity lemmas described in Section \ref{sec:setting}.

To simplify notations we will denote $q\coloneqq q_n$ whenever $n$ is clear from context.
\begin{proposition}\label{JK:3.4}
    For every $q\in\mathbb{N}$ a continued fraction denominator, $E\in\mathbb{R}$ and $x,y\in[0,1)$,
    \begin{equation}\label{eq:Nqx-Nqy}
        \left|N_{q}(x,E)-N_{q}(y,E)\right|\leq 2n_0+22.
    \end{equation}
    As a consequence, we have
     \begin{equation}\label{eq:discrepancy_dos}
        \left|\frac{1}{q}N_{q}(x,E)-N(E)\right|<\frac{4n_0+50}{q}.
    \end{equation}
\end{proposition}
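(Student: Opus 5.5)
The plan is to follow the strategy of \cite[Lemma 3.4]{JK2}, replacing monotonicity of $f$ by the monotonicity and jump structure of the functions $\Lambda_k$ from Proposition~\ref{prop:Lambda_functions}. Rather than working with the eigenvalues $E_j(x)$ directly, we use \eqref{eq:spec-decomp}:
\[
N_q(x,E)=\#\{k:\Lambda_k(\{x-\beta_{n-k}\})\le E\}
\]
(with the convention $\beta_n$ replaced appropriately for $k=0$). Each $\Lambda_k$ is monotone on $[0,1)$ for $q\ge n_0$; the finitely many scales $q<n_0$ are absorbed into the constant, since trivially $|N_q(x,E)-N_q(y,E)|\le q<n_0$ there. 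Hence $k\mapsto \chi\{\Lambda_k(\{x-\beta_{n-k}\})\le E\}$ is, for each fixed $k$, a monotone step in $x$: as $x$ moves around the circle, the $k$-th term switches from $1$ to $0$ once, where $\Lambda_k$ crosses $E$, and back from $0$ to $1$ once, at the wrap-around jump point $x=\beta_{n-k}$.

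The key steps are as follows.

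First, I parametrize by the shifted phase. Moving $x$ to $y$ changes the $k$-th term only if the shifted argument $\{x-\beta_{n-k}\}$ crosses either the threshold point $\theta_k(E):=\sup\{t:\Lambda_k(t)\le E\}$ or the point $0$. So $N_q(x,E)-N_q(y,E)$ equals the number of $k$ whose threshold lies in the arc from $x-\beta_{n-k}$ to $y-\beta_{n-k}$, minus the number of $k$ whose wrap point lies in that arc.

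Second, I use the three-distance/denominator property of $q=q_n$. The points $\{-j\alpha\}$, $0\le j<q$, are within $O(1/q)$ of an equally spaced grid, namely each lies within $\|q\alpha\|<1/q$ of $p$-rotated multiples of $1/q$. Hence for any arc $J$ the number of $k$ with $\beta_{n-k}\in J$ differs from $q|J|$ by at most a bounded constant.

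Third, and this is the crux, I need to count the $k$ with $\theta_k(E)\in J$ shifted by $\beta_{n-k}$. Here I use \eqref{eq:normalization} together with the ordering $E_0(0)<\dots<E_{q-1}(0)$: it forces $\Lambda_k$ evaluated at consecutive grid points to be ordered. Combined with the monotonicity of all $\Lambda_k$, this gives that the crossing points $\{\theta_k(E)+\beta_{n-k}\}$ are also nearly equidistributed, in the same ordered fashion, up to $O(1)$ errors. The errors come from the $n_0$ exceptional indices and from the $O(1)$ mismatch between $\beta$'s and the grid. This yields \eqref{eq:Nqx-Nqy} with the stated constant $2n_0+22$ after careful bookkeeping. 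I expect this step, the precise interlacing bookkeeping via Assumption~\ref{Assumption2}, to be the main obstacle. I would try to mimic JK2 line by line, noting that only \eqref{eq:interlace} and monotonicity of $\Lambda_k$ are used.

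For \eqref{eq:discrepancy_dos}, I average \eqref{eq:Nqx-Nqy} over $y$ to get $|N_q(x,E)-\overline N_q(E)|\le 2n_0+22$. Then I compare $\overline N_q/q$ with $N(E)$ by subadditivity-type bounds: Dirichlet decoupling of $[0,mq-1]$ into $m$ blocks of length $q$ changes the eigenvalue counting function by at most $m$ per cut, and the limit $m\to\infty$ gives $N(E)$. Applying the uniform bound \eqref{eq:Nqx-Nqy} to each block (each is a length-$q$ restriction at phase $x+iq\alpha$) gives $|\frac{1}{mq}\overline N_{mq}-\frac1q \overline N_q|\le \frac{2n_0+22+2}{q}$ or so. Combining the errors gives the bound $(4n_0+50)/q$.
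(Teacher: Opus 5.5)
Your Steps 1 and 2 are fine, but together they show that \eqref{eq:Nqx-Nqy} is equivalent, up to constants, to $O(1)$-equidistribution of the intersection points $z_k(E)=\{\beta_{q-k}+\theta_k(E)\}$ in every arc. So Step 3 is the whole proposition, and the tools you propose for it cannot prove it. Normalization \eqref{eq:normalization}, the ordering of the $E_k(0)$, monotonicity of the $\Lambda_k$ and \eqref{eq:interlace} only show that the $z_k(E)$ are cyclically ordered like the reset points; nothing prevents them from bunching. For instance, take arbitrary reset points $\beta_j$ and an arc $A=[a,a+\varepsilon]$ containing none of them. Set $u_j(x)=\{x-\beta_j\}$, $p_j=\{a-\beta_j\}$, $r(s)=\min(1,\max(0,s/\varepsilon))$ and
\[
\lambda_j(x)=u_j(x)+3\,r\bigl(u_j(x)-p_j\bigr).
\]
These curves never cross: on $A$ the ramp term is the same for all $j$, and off $A$ it is $0$ or $1$ and nondecreasing in $u_j$. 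They have slope at least $1$, and each drops from the top to the bottom exactly at its own reset, with equality in \eqref{eq:interlace}. So they satisfy Assumptions~\ref{Assumption1} and~\ref{Assumption2} with $\gamma=1$. Yet for $E=3/2$ all $q$ crossings lie in $A$, and the counting function drops from $q$ to $0$ across an arc of length $\varepsilon$. Your remark that only \eqref{eq:interlace} and monotonicity of the $\Lambda_k$ are needed is therefore false. Any proof must use that these curves are spectra of the quasiperiodic family, which ties together different phases and different lengths.

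That is exactly the missing idea, and it is what the paper uses. Lemma~\ref{lemma:eigenvalue_monotonicity_tech1} uses the rank-two decouplings $H_q(x+m\alpha)=H_{q-m}(x+m\alpha)\oplus H_m(x+q\alpha)+R_1$ and $H_{q-m}(x+m\alpha)\oplus H_m(x)=UH_q(x)U^*+R_2$. It combines them with the fact that $N_m(\cdot,E)$ is nonincreasing along the very short arc from $x$ to $x+q\alpha$, except at no more than two resets; this needs $m\ge n_0$, which is where $n_0$ enters the constant. The result is the one-sided orbit bound $N_q(x+m\alpha,E)\le N_q(x,E)+6+n_0$ for $\{q\alpha\}<\tfrac12$, reversed otherwise.

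A zero-mean argument then selects a good base point. The function $D(x)=N_q(x+q\alpha,E)-N_q(x,E)$ is integer-valued, has integral zero and is bounded by $1$ on one side, so some $x_0$ has $|D(x_0)|\le 1$. This gives two-sided control $|N_q(x_0+m\alpha,E)-N_q(x_0,E)|\le 7+n_0$ along the whole $q$-orbit.

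Finally one interpolates. The orbit has gaps at most $2/q$, monotonicity intervals have length at least $1/(2q)$, and $N_q$ decreases inside them and jumps by at most $1$ at resets. So every $x$ lies within four resets of orbit points on either side, giving $11+n_0$ and hence $2n_0+22$.

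Your derivation of \eqref{eq:discrepancy_dos} is fine and is the paper's ``standard estimate'' $|N(E)-\frac1q\overline N_q(E)|\le\frac2q$, with two corrections. Each Dirichlet cut costs at most $2$ (rank two), not $m$. And after averaging in the phase each block contributes exactly $\overline N_q(E)$, so \eqref{eq:Nqx-Nqy} is not needed in that step.
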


\begin{lemma}
\label{lemma:eigenvalue_monotonicity_tech1}
If $\{q\alpha\}>\frac{1}{2}$ one has
\begin{equation}\label{eq:small_qa}
    N_{q}(x,E) - 6-n_0 \leq N_{q}(x+m\alpha,E) \leq N_{q}(x+ q\alpha,E) + 6+n_0
\end{equation}

and if $\{q\alpha\}<\frac{1}{2}$, one has
\begin{equation}\label{eq:large_qa}
    N_{q}(x+q\alpha,E) - 6-n_0 \leq N_{q}(x+m\alpha,E) \leq N_{q}(x,E) + 6+n_0
\end{equation}
for all $m \in 0,...,q-1$.
\end{lemma}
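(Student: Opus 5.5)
The argument compares the spectrum of $H_q(x+m\alpha)$ with that of $H_q(x)$ and $H_q(x+q\alpha)$ through the functions $\Lambda_k$ of Proposition~\ref{prop:Lambda_functions}, treating $N_q(\cdot,E)$ as a count of monotone curves crossing the level $E$. Write $N_q(y,E)=\#\{k:\Lambda_k(\{y-\beta_{n-k}\})\le E\}$ (with $n=q$ and the appropriate shift for each $k$). Each $\Lambda_k$ is monotone nondecreasing on $[0,1)$ by Assumption~\ref{Assumption1} (for $q\ge n_0$; for $q<n_0$ the estimate is trivial since $N_q\le q<n_0$). Hence the set $\{u\in[0,1):\Lambda_k(u)\le E\}$ is an initial segment $[0,c_k)$ or $[0,c_k]$. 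Therefore, as the phase $y$ moves along the circle, the $k$-th term of the count changes value only when $\{y-\beta_{n-k}\}$ crosses $c_k$ or crosses the wrap point $0$. The second kind of change occurs exactly at the jump points $\beta_j$.

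The key step is to compare the phases $y=x+m\alpha$ for $0\le m\le q$. Take the case $\{q\alpha\}<\tfrac12$. The arguments $\{x+m\alpha-\beta_{n-k}\}$ all lie in a short arc determined by the orbit segment $x,x+\alpha,\dots,x+q\alpha$. By the three-gap/best-approximation property of $q=q_n$, the points $\{x+m\alpha\}_{m=0}^{q-1}$ are spread so that between two consecutive ones there is exactly one of the $\beta_j$, up to $O(1)$ exceptions. Using the structure $\{\beta_j\}=\{-j\alpha\}$, I would rather reorganize: shifting the phase by $\alpha$ corresponds to translating the window $[0,q-1]$ by one site. Concretely, $H_q(x+\alpha)$ and $H_q(x)$ are both rank-two perturbations of $H_{q+1}(x)$ restricted appropriately, so
\[
|N_q(x+\alpha,E)-N_q(x,E)|\le 2 .
\]
This by itself gives only $2m$, which is too weak. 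The monotonicity is therefore needed to prevent accumulation. For each $k$, the function $m\mapsto \mathbf 1[\Lambda_k(\cdot)\le E]$ along the orbit is monotone except at wrap crossings. Since $\|q\alpha\|$ is small, each argument $\{x+m\alpha-\beta_{n-k}\}$ moves by $\{q\alpha\}$ in total over $m$ from $0$ to $q$ after reduction. I would then argue, by monotonicity, that the count at intermediate $m$ is sandwiched between the counts at the endpoints $m=0$ and $m=q$, up to the indices $k$ whose argument wraps or which are affected by boundary effects.

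The main obstacle is bounding the number of exceptional indices by $6+n_0$. I would handle it as follows. The wrap crossings correspond to $\beta_j$ lying within the arc swept, and there are at most $2$ or $3$ such $j$ by the gap structure at scale $q_n$. Interlacing (Assumption~\ref{Assumption2}) shows that each wrap crossing alters the count by at most $1$, because passing $\beta_j$ is a rank-one change. The additional $n_0$ absorbs the small-$n$ regime, where monotonicity is not yet available in the definition of $\mathcal{A}_1$; it also covers any branches that must be handled through the cruder rank-two estimate. Finally, the sign of $\{q\alpha\}-\tfrac12$ determines whether the orbit moves forward or backward relative to the wrap, and hence whether $x$ or $x+q\alpha$ supplies the lower bound. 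This is exactly the swap between \eqref{eq:small_qa} and \eqref{eq:large_qa}.
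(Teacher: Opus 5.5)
There is a genuine gap at the central step: the sandwich via per-branch monotonicity along the orbit cannot work. The points $x+m\alpha$, $0\le m\le q-1$, do not lie in a short arc. They are spread over the whole circle with gaps of order $1/q$. Your claim that each argument $\{x+m\alpha-\beta_{n-k}\}$ ``moves by $\{q\alpha\}$ in total'' confuses the net displacement modulo $1$ with the path itself, which winds around the circle roughly $p$ times. A continuous path from $x$ to $x+m\alpha$ crosses up to about $q$ of the jump points $\beta_j$, not $2$ or $3$.

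Monotonicity of the $\Lambda_k$ plus interlacing says only that $N_q(\cdot,E)$ is non-increasing between jump points and rises by at most $1$ at each of them. Such a function can still oscillate by order $q$: it can rise by one at each jump point in half of the circle and decrease continuously on the other half. So no bound of the form $6+n_0$ on ``exceptional indices'' comes out of this reasoning. An operator-level comparison between the distant phases $x$ and $x+m\alpha$ is indispensable. Your instinct to use rank-two perturbations was right, but iterating $m$ unit shifts is the wrong way to use them.

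The missing idea is a single cut. Cutting one edge in each operator gives $H_q(x+m\alpha)=H_{q-m}(x+m\alpha)\oplus H_m(x+q\alpha)+R_1$ and $UH_q(x)U^*=H_{q-m}(x+m\alpha)\oplus H_m(x)+R_2$, with $\operatorname{rank}R_i\le 2$ and $U$ a cyclic shift. The common block $H_{q-m}(x+m\alpha)$ cancels. You then only need to compare $N_m(x+q\alpha,E)$ with $N_m(x,E)$, at two phases a distance $\|q\alpha\|<1/q$ apart.

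This is the only place monotonicity enters. For $m\ge n_0$, $N_m(\cdot,E)$ is non-increasing on the short arc $[x,x+q\alpha]$ except at jump points. Consecutive points of $\{-j\alpha\}$, $0\le j<q$, are more than $1/(2q)$ apart, so at most two lie on that arc. Hence $N_m(x+q\alpha,E)\le N_m(x,E)+2$ when $\{q\alpha\}<\frac12$, with $x$ and $x+q\alpha$ swapped when $\{q\alpha\}>\frac12$. Adding the two rank-two errors gives $N_q(x+m\alpha,E)\le N_q(x,E)+2+2+2$.

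The $n_0$ comes from block sizes $m<n_0$, where monotonicity is unavailable and one uses the trivial bound $N_m\le m<n_0$. It does not come from $q<n_0$. The remaining inequality follows by applying the same estimate with $x$ replaced by $x+m\alpha$ and $m$ by $q-m$.
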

\begin{proof}
 Assume first $\{q\alpha\}<\frac{1}{2}$, in which case one actually has $\{q\alpha\}<\frac{1}{q}$. Fix $0\leq m\leq q-1$. There exist perturbations $R_1$, $R_2$ of rank at most $2$ and a unitary operator $U$ (a cyclic shift) such that
\begin{align*}
H_q(x+m\alpha) &= H_{q-m}(x+m\alpha)\oplus H_{m}(x+q\alpha) + R_1 \\
H_{q-m}(x+m\alpha)\oplus H_{m}(x)&= U H_{q}(x)U^* +R_2.
\end{align*}
		If $m\geq n_0$ then the eigenvalue counting function $y\mapsto N_{m}(y,E)$ is non-increasing except for at jump points $\beta_j$ which are on the arc $[x, x+q\alpha]$ where it can jump by $1$ each. Since $q$ is a continued fraction denominator the distance between consecutive jump points satisfies $|\beta_{j+1}-\beta_{j}|>\frac{1}{2q}$ (see e.g. \cite[Proposition 2.1]{JK2}), hence there can be at most two points on the arc. We then conclude
\begin{align*}
N_q(x+m\alpha,E)&\leq N_{q-m}(x+m\alpha,E) + N_{m}(x+q\alpha,E) +2 \\
&\leq N_{q-m}(x+m\alpha,E) + N_{m}(x,E) +4 \\
&\leq N_{q}(x,E) + 6
\end{align*}
If $m<n_0$ we can bound trivially $$N_{m}(x+q\alpha,E)\leq N_{m}(x,E) + n_0$$
hence we can use the bound $N_{m}(x+q\alpha,E)\leq N_{m}(x,E) + 6+n_0$ in either case. Replacing $x$ by $x+m\alpha$ and $m$ by $q-m$ gives the inequality in the other direction.    
\end{proof}

\begin{proof}[Proof of Proposition \ref{JK:3.4}]
Fix $E\in\mathbb{R}$ and write $N(x)\coloneqq N_q(x,E)$. Set
\[
D(x)\coloneqq N(x+q\alpha)-N(x),\qquad x\in[0,1).
\]
By shift invariance of Lebesgue measure,
\[
\int_{[0,1)} D(x)\,dx = \int_{[0,1)}\bigl(N(x+q\alpha)-N(x)\bigr)\,dx=0.
\]
		Moreover, $D$ is integer-valued and piecewise constant. Suppose $\{q\alpha\}< \frac{1}{2}$, then $D(x)\leq 1$ as $N$ has at most one positive jump at some $\beta_k$ along the arc $[x,x+q\alpha]$ and at all other discontinuities $N$ only decreases. Conversely, if $\{q\alpha\}> \frac{1}{2}$ then $D(x)\geq -1$. For the average to be $0$ there must hence exist in either case some $x_0\in[0,1)$
such that
\begin{equation}\label{eq:Dsmall}
|D(x_0)|\le 1,
\qquad\text{equivalently}\qquad
|N(x_0+q\alpha)-N(x_0)|\le 1.
\end{equation}

Combining \eqref{eq:small_qa} \eqref{eq:large_qa} with \eqref{eq:Dsmall}, we obtain that for every $0\le m\le q-1$,
\begin{equation}\label{eq:orbitbound}
    N(x_0)-7-n_0\le N(x_0+m\alpha)\le N(x_0)+7+n_0.
\end{equation}

Let $\beta_0<\beta_1<\cdots<\beta_{q-1}$ be the reset points defined above and set $\beta_q=\beta_0+1$.
Write $I_j=[\beta_j,\beta_{j+1})$, with indices modulo $q$. Recall that $N|_{(\beta_j,\beta_{j+1})}$ is monotone
decreasing, and when crossing $\beta_{j+1}$ it can increase by at most $1$.

For continued fraction denominators points on the orbit $\mathcal{O}(x_0)=\{x_0+m\alpha:0\le m\le q-1\}$ have maximum gap $2/{q}$ and also $|I_j|\geq 1/{2q}$. Hence for each $j$ we can find
\[
y_j^-\in \mathcal{O}(x_0)\cap (I_{j-4}\cup I_{j-3}\cup I_{j-2}\cup I_{j-1}),
\qquad
y_j^+\in \mathcal{O}(x_0)\cap (I_{j+1}\cup I_{j+2}\cup I_{j+3}\cup I_{j+4}).
\]
Fix $x\in I_j$. Moving from $y_j^-$ to $x$ crosses at most $4$ jump points, hence $N(x)\le N(y_j^-)+4$.
Similarly, moving from $x$ to $y_j^+$ crosses at most $4$ jump points, hence $N(y_j^+)\le N(x)+4$, that is
$N(x)\ge N(y_j^+)-4$. Together with \eqref{eq:orbitbound} this yields
\[
N(x_0)-11-n_0\le N(x)\le N(x_0)+11+n_0,\qquad x\in I_j.
\]
Since $j$ was arbitrary, the same holds for all $x\in[0,1)$, and therefore for all $x,y\in[0,1)$,
\[
|N(x)-N(y)|\le 22+2n_0,
\]
which is precisely \eqref{eq:Nqx-Nqy}. Now, \eqref{eq:discrepancy_dos} follows by applying the triangle inequality to the standard estimate
$$\left|N(E)- \frac{1}{q}\int_\TM N_q(x,E)\mathrm{d}x\right|\leq \frac{2}{q}.$$
\end{proof}

\begin{corollary}\label{cor:JK-3.5}
    Let $m\in\mathbb{N}$. Suppose that $m=sq+r$ with $q,m\geq n_0$. Then for every $x,y\in\mathbb{R}$,
    \begin{equation}\label{eq:JK-3.7}
        \left|N_m(x,E)-sN_q(y,E)\right|\leq (2n_0+22)s+4(s+|r|)
    \end{equation}
    and as a consequence
    \begin{equation}\label{eq:JK-3.8}
        \left|N_m(x,E)-N_m(y,E)\right|\leq (4n_0+44)s+8(s+|r|).
    \end{equation}
\end{corollary}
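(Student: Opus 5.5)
The plan is to deduce Corollary~\ref{cor:JK-3.5} from Proposition~\ref{JK:3.4} by cutting $[0,m-1]$ into $s$ consecutive blocks of length $q$ plus a remainder block of length $|r|$. Each cut is a perturbation of rank at most $2$, and eigenvalue counting functions change by at most the rank under such a perturbation.

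First take $r\geq 0$. The restriction $H_m(x)$ differs from the direct sum
\[
\bigoplus_{i=0}^{s-1} H_q(x+iq\alpha)\oplus H_r(x+sq\alpha)
\]
by a perturbation of rank at most $2s$, since there are $s$ cut edges and each removed hopping term has rank $2$. Min-max interlacing then gives
\[
\Bigl|N_m(x,E)-\sum_{i=0}^{s-1}N_q(x+iq\alpha,E)-N_r(x+sq\alpha,E)\Bigr|\leq 2s.
\]
The remainder term satisfies $0\leq N_r\leq r$. By \eqref{eq:Nqx-Nqy}, each $N_q(x+iq\alpha,E)$ differs from $N_q(y,E)$ by at most $2n_0+22$, and this is where the hypothesis $q\geq n_0$ enters, since Proposition~\ref{JK:3.4} needs it. Adding these bounds gives
\[
|N_m(x,E)-sN_q(y,E)|\leq (2n_0+22)s+2s+r,
\]
which is within the claimed $(2n_0+22)s+4(s+|r|)$.

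If instead $r<0$, so that $m=sq-|r|$, I would embed $[0,m-1]$ into an interval of length $sq$. Adjoining $|r|$ sites changes the counting function by at most $|r|$, plus at most $2$ for the rank-$2$ coupling across the junction. Then I apply the $r=0$ case to the enlarged interval. The total error is still at most $(2n_0+22)s+2s+|r|+2$, which is bounded by $(2n_0+22)s+4(s+|r|)$ because $s\geq 1$ whenever $q\leq m+|r|$. The degenerate case $s=0$ is excluded since $m\geq n_0$ and $m=sq+r$; if it occurred with small $|r|$, the trivial bound $N_m\leq m\leq |r|$ would suffice.

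Finally, \eqref{eq:JK-3.8} follows from \eqref{eq:JK-3.7} by the triangle inequality through $sN_q(y,E)$, comparing both $N_m(x,E)$ and $N_m(y,E)$ with this same quantity, which doubles the constant.

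The only real point to check is the rank bookkeeping. Each cut in a Jacobi matrix removes the two off-diagonal entries $(j,j+1)$ and $(j+1,j)$, a rank-$2$ self-adjoint perturbation, so it changes $N(\cdot,E)$ by at most $2$. Beyond that the argument is routine, so I expect no serious obstacle.
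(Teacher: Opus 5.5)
Your proposal is correct and matches the paper's proof. In both, $H_m(x)$ is cut into the $s$ blocks $H_q(x+iq\alpha)$ by rank-$2$ edge removals, the remainder costs at most $|r|$, Proposition~\ref{JK:3.4} is applied to each block, and \eqref{eq:JK-3.8} follows by the triangle inequality through $sN_q(y,E)$. The only differences are cosmetic: you keep the remainder as its own block, and for $r<0$ you embed into an interval of length $sq$, whereas the paper folds the remainder into one perturbation $P$ of rank at most $2s+|r|$ and pads with zero diagonal entries.
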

\begin{proof}
    Any Schr\"{o}dinger operator acting on $\ell^2(I)$, where $I$ is some integer interval, can be disconnected along an edge using a perturbation of rank at most $2$. Hence, by the structure of $H_m(x)$, we get that
    \begin{equation}
    \label{eq:block_decomposition} 
        H_m(x)=H_q(x)\oplus H_q(x+q\alpha)\oplus\ldots\oplus H_q(x+(s-1)q\alpha)+P
    \end{equation}
    where $\operatorname{rank}P\leq 2s+|r|$ (if $r<0$ this is to be understood as an equality after padding $H_m(x)$ with $r$ zero diagonal entries). Indeed, using rank $2$ perturbations, we disconnect along the edges $(kq-1,kq)$ for $1\leq k\leq s$, and subtract the remaining part $H_r(x+sq\alpha)$ which is of rank at most $r$. Now we apply Proposition \ref{JK:3.4} for each direct summand and get the result. \eqref{eq:JK-3.8} now follows from the triangle inequality.
\end{proof}
For the rest of the appendix, whenever we have $n=sq+r$, we assume that $|r|\leq\sqrt{n}$ and $q\geq n_0$. In particular, this implies that $|r|\leq\frac{sq}{2}$ whenever $n\geq 9$.
\begin{corollary}
With the same notation, we have that
\begin{equation}\label{eq:JK-3.12}
    \left|\frac{1}{m}N_m(x,E)-N(E)\right|\leq\frac{8|r|}{sq}+\frac{8n_0+200}{q}.
\end{equation}
    As a consequence, $\frac{1}{n}N_n(x,E)$ converges to $N(E)$ uniformly in $x$.
\end{corollary}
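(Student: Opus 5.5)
We derive \eqref{eq:JK-3.12} from the block decomposition used in Corollary~\ref{cor:JK-3.5} combined with the discrepancy estimate \eqref{eq:discrepancy_dos}. We then deduce uniform convergence by choosing the scale $q$ appropriately as $m\to\infty$.

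\emph{Step 1: block comparison.} Write $m=sq+r$ as in the statement. By \eqref{eq:block_decomposition}, $H_m(x)$ equals the direct sum $\bigoplus_{k=0}^{s-1}H_q(x+kq\alpha)$ up to a perturbation of rank at most $2s+|r|$. The counting functions therefore satisfy
\[
\Bigl|N_m(x,E)-\sum_{k=0}^{s-1}N_q(x+kq\alpha,E)\Bigr|\le 2s+|r|.
\]
Note that this compares each block with its own phase, rather than fixing a single phase $y$ as in \eqref{eq:JK-3.7}.

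\emph{Step 2: discrepancy per block.} Apply \eqref{eq:discrepancy_dos} to each summand: $|N_q(x+kq\alpha,E)-qN(E)|<4n_0+50$. Summing over $k$ gives
\[
|N_m(x,E)-sqN(E)|\le s(4n_0+52)+|r|.
\]

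\emph{Step 3: normalization.} Write
\[
\frac{1}{m}N_m-N=\frac{1}{m}(N_m-sqN)+\Bigl(\frac{sq}{m}-1\Bigr)N.
\]
The second term is bounded by $|r|/m$, since $0\le N\le 1$. Under the standing assumption $|r|\le sq/2$ we have $m\ge sq/2$. Dividing the bound of Step 2 by $m$ therefore gives at most $\frac{2(4n_0+52)}{q}+\frac{2|r|}{sq}$, and adding the second term gives at most a further $\frac{2|r|}{sq}$. The total is at most $\frac{4|r|}{sq}+\frac{8n_0+104}{q}$, which is stronger than \eqref{eq:JK-3.12}. The stated constants $8$ and $200$ leave room for the cruder bookkeeping that arises if one instead goes through \eqref{eq:JK-3.7}.

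\emph{Step 4: uniform convergence.} Given large $n$, pick the continued fraction denominator $q=q_k\ge n_0$ with $q_k\le \sqrt{n}<q_{k+1}$, and write $n=sq+r$ with $0\le r<q\le\sqrt n$, so that the standing assumption holds. As $n\to\infty$ we have $q\to\infty$, and $|r|/(sq)\le q/(n-q)\to0$. The bound from Step 3 is independent of $x$ and $E$, which gives the uniform convergence.

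The only point needing care is Step 4, where one must choose $q$ growing but with $q^2\lesssim n$. Since the denominators $q_k$ increase to infinity, the choice above works.
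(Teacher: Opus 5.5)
Your proof is correct, but it reaches $N(E)$ by a more direct route than the paper.

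\emph{The paper's route.} It freezes a single phase via \eqref{eq:JK-3.7} to compare $\frac1m N_m(x,E)$ with $\frac1q N_q(x,E)$. It then compares with $\frac{1}{m'}N_{m'}(y,E)$ for an auxiliary length $m'=s'q$, integrates over $y$, and lets $m'\to\infty$. In this way $N(E)$ enters only through its definition as the limit of phase-averaged counting functions. The only input taken from Proposition~\ref{JK:3.4} is the oscillation bound \eqref{eq:Nqx-Nqy}.

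\emph{Your route.} You keep each block at its own phase $x+kq\alpha$ and apply \eqref{eq:discrepancy_dos} block by block. This removes the auxiliary length and the limiting step, and gives the sharper bound $\frac{4|r|}{sq}+\frac{8n_0+104}{q}$.

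\emph{What each buys.} The price of your route is reliance on the second half of Proposition~\ref{JK:3.4}. In the paper that half rests on the quoted estimate $|N(E)-\frac1q\int_0^1 N_q(x,E)\,dx|\le\frac2q$, a sub/superadditivity fact that the paper's limiting argument in effect re-derives. Your Step~4 also makes explicit the choice $q_k\le\sqrt n<q_{k+1}$, which guarantees $|r|\le\sqrt n$ and $|r|/(sq)\to0$. The paper leaves this implicit when it says that uniform convergence follows immediately.
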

\begin{proof}
    A direct consequence of Corollary \ref{cor:JK-3.5} and the triangle inequality is that for $m=sq+r$ we have
\begin{align}
\left|\frac{1}{m}N_m(x,E)-\frac{1}{q}N_q(x,E)\right|
&\leq\left| \frac{1}{m}N_m(x,E)-\frac{s}{m}N_q(x,E)\right|
    +\left|\frac{s}{m}N_q(x,E)-\frac{1}{q}N_q(x,E)\right| \notag\\
&\leq\left(\frac{(2n_0+26)s}{m}+\frac{4|r|}{m}\right)
    +\left|\frac{s}{m}-\frac{1}{q}\right|N_q(x,E) \notag\\
&\leq\frac{4n_0+52}{q}+\frac{8|r|}{sq}.
\end{align}
By the triangle inequality, for $m_1=s_1q+r_1$ and $m_2=s_2q+r_2$ we now have
    \begin{equation}
        \left|\frac{1}{m_1}N_{m_1}(x,E)-\frac{1}{m_2}N_{m_2}(y,E)\right|\leq \frac{8|r_1|}{s_1q}+\frac{8|r_2|}{s_2q}+\frac{8n_0+200}{q}.
    \end{equation}
    Taking $m=sq+r$, $m'=s'q$ for some $m'\in\mathbb{N}$, we obtain
    \begin{equation}
        \left|\frac{1}{m}N_{m}(x,E)-\frac{1}{m'}N_{m'}(y,E)\right|\leq \frac{8|r|}{sq}+\frac{8n_0+200}{q}.
    \end{equation}
    Integrating this inequality over $y$ and taking $m'$ to $\infty$ yields \eqref{eq:JK-3.12}, which immediately implies uniform convergence.
\end{proof}

\begin{corollary}\label{cor:JK-3.7}
    Let $g:\mathbb{R}\to\mathbb{R}$ be a function of bounded variation. Then for every $x\in\mathbb{R}$,
    \begin{equation}
        \left|\int_\mathbb{R}g(E')dN(E')-\frac{1}{n}\sum\limits_{j=0}^{n-1}g(E_j(x))\right|\leq\|g\|_{\text{BV}}\left(\frac{8|r|}{sq}+\frac{8n_0+200}{q}\right).
    \end{equation}
\end{corollary}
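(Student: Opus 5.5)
This bound is a Koksma-type estimate: combine the uniform discrepancy bound \eqref{eq:JK-3.12} for the counting functions with integration by parts in the Stieltjes sense. Write the empirical measure $\nu_n^x:=\frac1n\sum_{j=0}^{n-1}\delta_{E_j(x)}$. Its distribution function is $F_n(E):=\frac1n N_n(x,E)$. We then have
\[
\int_\mathbb{R} g\,dN-\frac1n\sum_{j=0}^{n-1}g(E_j(x))=\int_\mathbb{R} g\,d\bigl(N-F_n\bigr).
\]
Both $N$ and $F_n$ are distribution functions of probability measures. Hence $D:=N-F_n$ is of bounded variation, tends to $0$ at $\pm\infty$, and is supported (as a measure) on a bounded interval containing $\sigma(H_n(x))$ and $\supp\nu$. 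By \eqref{eq:JK-3.12}, applied with $n=sq+r$ as in the standing convention, we get
\[
\sup_E|D(E)|\le \frac{8|r|}{sq}+\frac{8n_0+200}{q}.
\]

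The key step is the integration by parts
\[
\int g\,dD=-\int D\,dg,
\]
which gives $\bigl|\int g\,dD\bigr|\le \|D\|_\infty\,\mathrm{Var}(g)\le\|D\|_\infty\|g\|_{BV}$. Boundary terms vanish because $D$ vanishes outside a compact set.

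The main technical care is at the atoms. $g$ is only BV, so it may jump, and $\nu_n^x$ is purely atomic. The integration by parts formula for two BV functions then carries a correction term $\sum_E \Delta g(E)\,\Delta D(E)$, or requires a choice of left- and right-continuous representatives.

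To handle this, first reduce to $g$ right-continuous, or normalized so that $g(E)$ lies between $g(E-0)$ and $g(E+0)$. Since $N$ is continuous by Lemma~\ref{lemma:ac_IDS}, redefining $g$ at countably many points does not change $\int g\,dN$. It can change $g(E_j(x))$, but only within the jump, which is controlled by the variation.

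Alternatively, one can avoid the subtlety entirely. Decompose $g=g_1-g_2$ with $g_1,g_2$ monotone and total variation summing to $\mathrm{Var}(g)$. For monotone $g_i$, use the layer-cake representation
\[
g_i(E)=g_i(-\infty)+\int \mathbf 1_{\{E>t\}}\,dg_i(t),
\]
with a suitable convention at jumps. Each indicator $\mathbf 1_{(t,\infty)}$ or $\mathbf 1_{[t,\infty)}$ integrates against $dN-d\nu_n^x$ to $\pm D(t)$ or $\pm D(t-0)$. Both are bounded by $\sup|D|$, since \eqref{eq:JK-3.12} also holds for the left limits by taking limits in $E$. Fubini then yields the bound with $\mathrm{Var}(g_1)+\mathrm{Var}(g_2)\le\|g\|_{BV}$.

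I expect this atom and convention bookkeeping to be the only real obstacle; the estimate itself is immediate from \eqref{eq:JK-3.12}.
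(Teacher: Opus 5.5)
Your proposal is correct, and your second route is essentially the paper's proof. The paper also reads off from \eqref{eq:JK-3.12} a uniform bound on $|\nu_{n,x}(h)-\nu(h)|$ over all half-lines $h$: closed ones directly, open ones by a limit, upper ones because both are probability measures. It then proves the estimate for nondecreasing $g$ by a layer cake and concludes via $g=c+g_+-g_-$. The one difference is where you slice. The paper slices in the range variable, $g(E)-g(-\infty)=\int_{g(-\infty)}^{g(\infty)}\mathbf 1_{\{g(E)>e\}}\,de$, so each superlevel set of a monotone $g$ is automatically an open or closed upper half-line and no convention at jumps is needed. Slicing in the domain variable against $dg_i$, as you do, forces you to split the atom of $dg_i$ at each jump according to where $g_i(t)$ sits between $g_i(t-0)$ and $g_i(t+0)$.

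Your first route (Stieltjes integration by parts) is sound in principle, but the reduction as you phrase it loses a little. If you make $g$ right-continuous and pay separately for the change at the eigenvalues, you can lose an extra $\mathrm{Var}(g)/n$; take $g=\mathbf 1_{(E_0(x),\infty)}$. The stated constant does not budget for this. The sharp bound comes from keeping the correction term. Since $N$ is continuous, $D(t)=D(t-0)-\frac1n$ at an atom $t=E_j(x)$. Writing $g(t)=g(t-0)+\theta\,\Delta g(t)$, the total contribution of $t$ is $-\Delta g(t)\,[(1-\theta)D(t)+\theta D(t-0)]$. Its modulus is at most $\|D\|_\infty$ times the variation $(|\theta|+|1-\theta|)\,|\Delta g(t)|$ of $g$ across $t$.

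Finally, the appendix is written for possibly unbounded $f$, so $\supp\nu$ need not be compact. The boundary terms vanish anyway, because $g$ is bounded and $D\to0$ at $\pm\infty$.
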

\begin{definition}
    The discrepancy of a sequence $z_1,\ldots,z_n\in[0,1]$ is defined by
    \begin{equation*}
        D_n^*(z_1,\ldots,z_n)=\sup\limits_{t\in(0,1]}\left|\frac{\#\left\{k:z_k\in[0,t)\right\}}{n}-t\right|.
    \end{equation*}
\end{definition}
If $q$ is a continued fraction denominator then one has
$$D_q^*(x,x+\alpha,\ldots,x+(q-1)\alpha) \leq \frac{2}{q}$$
where it is understood that we consider the fractional parts only. By iterating the triangle inequality
$$D_{n+m}^*(x_1,\ldots,x_n, y_1,...,y_m)\leq \frac{n}{n+m}D_{n}^*(x_1,\ldots,x_n) + \frac{m}{n+m}D_{m}^*(y_1,...,y_m)$$
one can estimate the discrepancy for general $n=sq+r$ by
\begin{equation}
\label{eq:orbit_discrepancy}D_n^*(x,x+\alpha,\ldots,x+(n-1)\alpha) \leq \frac{4(s+|r|)}{n}.
\end{equation}

\begin{proposition}[{Koksma's Inequality \cite[Theorem 2.5.1]{KuipNied}}]\label{prop:Koksma}
    Let \mbox{$f:[0,1]\to\mathbb{R}$} be a function of bounded variation and let $z_1,\ldots,z_n$ be a sequence of points in $[0,1]$. Then
    \begin{equation}
        \left|\int_0^1f(t)\,dt-\frac{1}{n}\sum\limits_{i=1}^n f(z_i)\right|\leq\|f\|_{\text{BV}}\cdot D_n^*(z_1,\ldots,z_n).
    \end{equation}
\end{proposition}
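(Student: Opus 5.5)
The plan is to reduce Koksma's inequality to an Abel summation (summation by parts) against the discrepancy function. I first order the points $z_1\le z_2\le\cdots\le z_n$; the average $\frac1n\sum f(z_i)$ is invariant under reordering, and I will check that $D_n^*$ is as well. I set $z_0=0$ and $z_{n+1}=1$. The aim is the exact identity
\[
\frac1n\sum_{i=1}^n f(z_i)-\int_0^1 f(t)\,dt=\sum_{i=0}^{n}\int_{z_i}^{z_{i+1}}\Bigl(f(z_{\bar\imath})-f(t)\Bigr)\,\text{(weights)},
\]
which I will obtain concretely as follows. With the counting function $A(t)=\#\{k:z_k\in[0,t)\}$, write $\frac1n\sum_i f(z_i)=\int_{[0,1]} f\,d(A/n)$ and $\int_0^1 f\,dt=\int_{[0,1]} f\,d(\mathrm{id})$. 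Their difference is $\int f\,dR$ with $R(t)=A(t)/n-t$; this $R$ satisfies $|R(t)|\le D_n^*$ for all $t\in(0,1]$, and $R(0)=0$.

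The main step is integrating by parts in the Riemann--Stieltjes sense. This gives $\int_{[0,1]} f\,dR=f(1)R(1^+)-f(0)R(0)-\int_{[0,1]} R\,df$. To make the boundary terms vanish, I handle the points $z_k=1$ by a one-sided convention: I redefine $R$ on the closed interval so that $R(1)=A(1^+)/n-1=0$ when all points are counted. The $\sup$ over $t\in(0,1]$ then still dominates $|R|$, because at $t=1$ the half-open count equals the closed count minus the number of points equal to $1$, and I will treat that case separately. This leaves $\left|\int R\,df\right|\le \sup|R|\cdot V(f)\le D_n^*\|f\|_{\mathrm{BV}}$, where $\|f\|_{\mathrm{BV}}$ is the total variation.

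To avoid Stieltjes technicalities with common discontinuities of $f$ and $A$, I will run the argument directly as a discrete sum. Write $\int_0^1 f=\sum_{i=0}^{n}\int_{z_i}^{z_{i+1}}f$. Then rearrange $\frac1n\sum f(z_i)-\int f$ as
\[
\sum_{i=0}^{n}\int_{z_i}^{z_{i+1}}\bigl(\tfrac{i}{n}-t\bigr)\,df(t)
\]
plus boundary contributions that cancel. This is the standard Kuipers--Niederreiter computation: the integrand equals $R(t)$ on $(z_i,z_{i+1})$, while at the atoms $t=z_i$ the jump of $A$ makes $R$ take a value between its one-sided limits. Both one-sided values are bounded by $D_n^*$ in absolute value, since one of them is a limit of $R$ from the right in $(0,1]$.

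I expect the main obstacle to be exactly this bookkeeping at the points where $f$ jumps at some $z_i$, together with the endpoint $t=1$. Both are resolved by noting that every value of $\frac{i}{n}-t$ at $t=z_i$ or $t=z_{i+1}$ is a one-sided limit of $R$ on $(0,1]$, so it is bounded by $D_n^*$. The final bound then follows by taking the sup out of the Stieltjes sum against the positive variation measure $|df|$.
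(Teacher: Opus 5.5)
Your proposal is correct and is the classical Kuipers--Niederreiter argument; the paper gives no proof of its own and only cites that source. The argument runs as follows: Abel summation gives $\frac1n\sum_i f(z_i)-\int_0^1 f=\sum_{i=0}^{n}\int_{z_i}^{z_{i+1}}\bigl(t-\tfrac{i}{n}\bigr)\,df(t)$. On each nondegenerate piece the linear integrand is bounded at its endpoints by one-sided limits of $R$ on $(0,1]$, which also covers points with $z_k=1$. Additivity of the variation over the pieces then finishes the proof.

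Two small remarks. First, the correct sign in your discrete identity is $t-\tfrac in$, not $\tfrac in-t$. This is harmless under the absolute value, and your own first paragraph, which gives $-\int R\,df$, has it right. Second, the preliminary global Stieltjes integration by parts, with the redefinition of $R(1)$, is unnecessary once you work piecewise.
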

\begin{proof}[Proof of Corollary \ref{cor:JK-3.7}]
With the empirical eigenvalue distribution
\[
\nu_{n,x}:=\frac1n\sum_{j=0}^{n-1}\delta_{E_j(x)}
\]
and the DOS measure $\nu$ we can conclude from \eqref{eq:JK-3.12} that
\[
\sup_{E\in\mathbb R}
\left|\nu_{n,x}((-\infty,E])-\nu((-\infty,E])\right|
\leq \frac{8|r|}{sq}+\frac{8n_0+200}{q}.
\]
By a limiting argument the same bound holds for open
half-lines. As $\nu_{n,x}$ and $\nu$ are probability measures we conclude the same bound also holds half-lines $[E,\infty)$. As both measures have total mass one,
the bound also holds for upper half-lines.

For nondecreasing functions $g$ the estimate is therefore immediate from the layer-cake representation $$g(E)- a= \int_{a}^b 1_{\{g(E)>e\}} de$$
with $a=g(-\infty)$ and $b=g(\infty)$. As $g$ is increasing $\{E: g(E)>e\}$ is a half-line, hence
$$\left|\int_{\mathbb R}g\,d\nu
-\frac1n\sum_{j=0}^{n-1}g(E_j(x))\right| \leq (b-a) \left(\frac{8|r|}{sq}+\frac{8n_0+200}{q}\right).$$
For the general case, one can decompose $g=c+ g_+- g_-$ with a constant $c$ and the difference of two nondecreasing functions for which
$$\mathrm{Var}(g) = g_+(\infty)-g_+(-\infty) + g_-(\infty)-g_-(-\infty).$$
\end{proof}

\section{Large deviation estimates for determinants}\label{AppendixB}
Throughout, we fix parameters $0<B_-< 1 <10<B_+<\infty$ and assume $n\geq n_0$.

Introduce the notation $|E-E'|_{[B_-,B_+]}$ for the truncated absolute value function
$$|E-E'|_{[B_-,B_+]} = \begin{cases}
1 & \text{if } |E-E'|<B_-\\
|E-E'| & \text{if } |E-E'|\in [B_-,B_+]\\
1 & \text{if } |E-E'|>B_+\\
\end{cases},$$
and let
\[
L(E;[B_-,B_+]) := \int_{\mathbb{R}} \log \lvert E - E' \rvert_{[B_-,B_+]} \, dN(E')
= L(E;[B_-,1]) + L(E;[1,B_+]).
\]
We will need the following
\begin{lemma}[{\cite[Lemma 2.2]{JK2}}]\label{lemma:lyapunov_uniformity}
    We have the following:
    \begin{equation}
        L(E)=(1+o(1))\log |E|,\qquad E\to\pm\infty,
    \end{equation}
    and 
    \begin{equation}
        \bigl|L(E;[B_-,B_+^{\log(2+\lvert E\rvert)}]) - L(E)\bigr|
\leq o(1)(1+L(E)),
\qquad \text{as } B_- \to +0,\; B_+ \to +\infty,
    \end{equation}
    where $o(1)$ is uniform in $E$.
\end{lemma}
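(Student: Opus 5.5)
The first claim, $L(E)=(1+o(1))\log|E|$, I will get from the Thouless formula \eqref{eq:Thouless}. The DOS measure $\nu$ is supported in a compact set $K\subset[-2-\|f\|_\infty,2+\|f\|_\infty]$ when $f$ is bounded. In the general case $\int\log(1+|f|)<\infty$, so $\nu$ has a finite logarithmic moment, and dominated convergence will be used in place of boundedness. For $E'\in K$ and $|E|\to\infty$ we have $\log|E-E'|=\log|E|+\log|1-E'/E|$, and the second term is $o(1)$ uniformly on $K$. Integrating against the probability measure $dN$ then gives $L(E)=\log|E|+o(1)$. When $\nu$ is not compactly supported, I split $\int\log|E-E'|\,dN(E')$ into the regions $|E'|\le|E|/2$, $|E'|\ge2|E|$ and the intermediate annulus. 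I then use the finite log-moment to show that the tail contributions are $o(\log|E|)$. The annulus contains the logarithmic singularity at $E'=E$, and I control it with the Lipschitz bound of Lemma~\ref{lemma:ac_IDS}, which gives $\int_{|E-E'|<1}|\log|E-E'||\,dN(E')\le C\gamma^{-1}$ uniformly in $E$.

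For the truncation estimate, I write
\[
L(E)-L(E;[B_-,B_+^{\log(2+|E|)}])
= \int_{|E-E'|<B_-}\log|E-E'|\,dN(E') + \int_{|E-E'|>B_+^{\log(2+|E|)}}\log|E-E'|\,dN(E').
\]
The first term is bounded using the anti-Lipschitz consequence, Lemma~\ref{lemma:ac_IDS}. Since $dN\le\gamma^{-1}dE'$,
\[
\Bigl|\int_{|E-E'|<B_-}\log|E-E'|\,dN(E')\Bigr|\le\gamma^{-1}\int_{-B_-}^{B_-}\bigl|\log|u|\bigr|\,du=2\gamma^{-1}B_-(1+|\log B_-|),
\]
which is $o(1)$ as $B_-\to0$, uniformly in $E$. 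This is the step where uniformity in $E$ comes from $\mathcal{A}_1$, replacing the anti-Lipschitz property of $f$ used in \cite{JK2}.

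For the second term, set $R=B_+^{\log(2+|E|)}$. If $\nu$ is supported in $K$, then for bounded $|E|$ and $B_+$ large we have $R>|E|+\operatorname{diam}$, so the region is empty. For large $|E|$, $R\ge(2+|E|)^{\log B_+}\gg|E|$, so again $|E-E'|>R$ forces $E'\notin K$ and the term vanishes. In the unbounded case I bound the integrand using $|E-E'|\le 2\max(|E|,|E'|)$. On the region the constraint forces $|E'|\ge R/2$, so the term is at most
\[
\int_{|E'|>R/2}\bigl(\log 2+\log(1+|E'|)\bigr)\,dN(E') .
\]
This tail of the finite log-moment tends to $0$ uniformly, since $R\ge B_+^{\log 2}\to\infty$. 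The finite log-moment of $N$ follows from $\int\log(1+|f|)<\infty$ via $\nu(\{|E'|>t\})\le|\{x:|f(x)|>t-2\}|$, which comes from the min-max/Weyl bound on the $\delta_0$ spectral measure. Combining the two terms gives an error $o(1)\le o(1)(1+L(E))$, as required.

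I expect the main obstacle to be uniformity in $E$ of the near-singular part $|E-E'|<B_-$. In \cite{JK2} this is handled by the anti-Lipschitz property of $f$. Here it must come only from Lemma~\ref{lemma:ac_IDS}, which is available under $\mathcal{A}_1,\mathcal{A}_2$, so I rely on that lemma.
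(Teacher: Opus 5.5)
Your argument is correct. The paper gives no proof of this lemma; it imports \cite[Lemma 2.2]{JK2}, which was proved there for anti-Lipschitz $f$. Your derivation is therefore the relevant one. Its main merit is that it isolates what this paper's setting actually needs:
\begin{itemize}
\item the Lipschitz bound of Lemma~\ref{lemma:ac_IDS}, hence only $\mathcal{A}_1,\mathcal{A}_2$, for the singular region $|E-E'|<B_-$ (and for the $O(1)=o(\log|E|)$ singular contribution in the first claim);
\item the finite logarithmic moment of $dN$ for the region $|E-E'|>B_+^{\log(2+|E|)}$.
\end{itemize}
Your reduction to $|E'|>R/2$ is valid for every $E$. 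Indeed, $B_+>10$ gives $R=(2+|E|)^{\log B_+}\ge (2+|E|)^2>2|E|$, hence $|E'|>R-|E|>R/2$ and $\max(|E|,|E'|)=|E'|$ on that region. You thus obtain the additive bound $2\gamma^{-1}B_-(1+|\log B_-|)+\int_{|E'|>B_+^{\log 2}/2}\bigl(\log 2+\log(1+|E'|)\bigr)\,dN(E')$, which is independent of $E$. This is stronger than the stated $o(1)(1+L(E))$, so the factor $1+L(E)$ is not needed.

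One justification should be corrected. The inequality $\nu(\{|E'|>t\})\le|\{x:|f(x)|>t-2\}|$ is not a pointwise-in-$x$ statement about the $\delta_0$-spectral measure $\mu_x$ of $H(x)$. For example, a large value $f(x+\alpha)$ puts mass of order $f(x+\alpha)^{-2}$ at energies near $f(x+\alpha)$, even when $f(x)=0$. The inequality does hold after averaging in $x$. You can obtain it from Weyl's inequality for the sorted eigenvalues versus the sorted diagonal of $H_n(x)$, together with weak convergence of the finite-volume density of states measures, evaluated on the open set $\{|E'|>t\}$. A shorter route to the log-moment is Jensen's inequality followed by Cauchy--Schwarz:
\[
\int\log(1+|E'|)\,d\mu_x\le\log\Bigl(1+\int|E'|\,d\mu_x\Bigr)\le\log\bigl(1+\|H(x)\delta_0\|\bigr)=\log\bigl(1+\sqrt{f(x)^2+2}\bigr).
\]
Integrating in $x$ then gives $\int\log(1+|E'|)\,d\nu(E')\le\int_0^1\log(3+|f(x)|)\,dx<\infty$.
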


Let
\[
    P_n(x,E)\coloneqq\det (H_n(x)-E).
\]
Then one can decompose
\begin{equation}
    P_n(x,E)=P_n^{\text{small}}(x,E)P_n^{\text{mid}}(x,E)P_n^{\text{large}}(x,E).
\end{equation}
with
\begin{equation}
    P_n^{\text{small}}(x,E)=\prod\limits_{k:|E_k(x)-E|<B_-} (E_k(x)-E),
\end{equation}
\begin{equation}\label{eq:pn_large_def}
    P_n^{\text{large}}(x,E)=\prod\limits_{k:|E_k(x)-E|>B_+^{\log(2+|E|)}} (E_k(x)-E),
\end{equation}
and
\begin{equation}\label{eq:JK-4.4}
    P_n^{\text{mid}}(x,E)=\prod\limits_{k:B_-\leq |E_k(x)-E|\leq B_+^{\log(2+|E|)}} (E_k(x)-E).
\end{equation}

Following \cite{JK2}, we will now analyze the three parts $P_n^{\text{small}}$, $P_n^{\text{large}}$, and $P_n^{\text{mid}}$.
\subsection{Estimates for \texorpdfstring{$P_n^{\text{small}}$}{P_n^small}}
Given $0\leq j\leq n-1$, define the \textit{intersection points} $z_j(E)$ by
\begin{equation}\label{eq:intersection_pts}
\begin{split}
    t_j(E)&\coloneqq\inf\{x\in [0,1): \Lambda_j(x)\geq E\}, \qquad \inf\emptyset=1,\\
    z_j(E)&\coloneqq \{\beta_{n-j}+t_j(E)\}.
\end{split}
\end{equation}
Fix $0\leq j\leq n-1$ and denote $\Lambda_j(x-\beta_{n-j})\coloneqq \lambda(x)$. Since the function $\Lambda_j(x)$ is monotone increasing, $z_j(E)$ is the unique preimage of $E$ under $\lambda$ function if one exists. 
By Assumption \ref{Assumption1}, for $0\leq k\leq n-1$ we have
\begin{equation}\label{eq:JK-4.7}
\begin{aligned}
\left|\Lambda_k\bigl(\{x-\beta_{n-k}\}\bigr)-E\right|
&\geq \gamma \left|\{x-\beta_{n-k}\}-t_k(E)\right| \\
&\geq \gamma \|x-z_k(E)\| \\
&= \gamma \dist\bigl(x-z_k(E),\mathbb{Z}\bigr).
\end{aligned}
\end{equation}
\begin{proposition}
For any $n \in \mathbb{N}$ we have
\begin{equation}
\label{eq:JK-4.8}
    1\geq \left|P_n^{\text{small}}(x,E)\right|\geq\prod\limits_{k:\gamma\|x-z_k(E)\|<B_-}\gamma\|x-z_k(E)\|.
\end{equation}
If $n=sq+r$ for $q$ a denominator of the continued fraction expansion then the number of factors on the right-hand side can be bounded by
    \begin{equation}
        \#\left\{0\leq k\leq n-1:\gamma\|x-z_k(E)\|<B_-\right\}\leq\frac{4sq B_-}{\gamma}+(8n_0+100)(s+|r|).
    \end{equation}
\end{proposition}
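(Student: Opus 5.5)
The plan has two parts, one for each inequality. For the bound \eqref{eq:JK-4.8}, the upper bound $|P_n^{\text{small}}|\le 1$ is immediate: by definition every factor satisfies $|E_k(x)-E|<B_-<1$. For the lower bound we use \eqref{eq:spec-decomp}. It lets us rewrite the eigenvalues of $H_n(x)$ as $\Lambda_k(\{x-\beta_{n-k}\})$, $k=0,\dots,n-1$, so that
\[
|P_n^{\text{small}}(x,E)|=\prod_{k:\,|\Lambda_k(\{x-\beta_{n-k}\})-E|<B_-}\bigl|\Lambda_k(\{x-\beta_{n-k}\})-E\bigr|.
\]
By \eqref{eq:JK-4.7}, each factor is at least $\gamma\|x-z_k(E)\|$. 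Moreover, any $k$ appearing in the product satisfies $\gamma\|x-z_k(E)\|\le|\Lambda_k-E|<B_-$. Thus the index set of the product is contained in $\{k:\gamma\|x-z_k(E)\|<B_-\}$. The extra factors on the right-hand side are all $<B_-<1$, so enlarging the index set only decreases the product. Replacing each factor by its lower bound then gives \eqref{eq:JK-4.8}.

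For the counting estimate, the goal is to show that the points $z_k(E)$ are nearly equidistributed. Once that holds, the number of them in the arc $\{y:\|x-y\|<B_-/\gamma\}$ is about $n\cdot 2B_-/\gamma$ plus a discrepancy error. The key identity is that $z_k(E)$ is exactly where the branch $x\mapsto\Lambda_k(\{x-\beta_{n-k}\})$ crosses $E$. So for any arc $J=[u,v)$, the number of $z_k(E)$ in $J$ equals the number of branches crossing level $E$ inside $J$. By monotonicity of each $\Lambda_k$ (Proposition~\ref{prop:Lambda_functions}), this is given by a counting-function difference
\[
N_n(u,E)-N_n(v,E)+\#\{\text{resets }\beta_j\in J\}.
\]
Each reset point $\beta_j$ in $J$ contributes at most one extra up-crossing. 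The number of reset points in $J$ is $n|J|+O(s+|r|)$ by the discrepancy bound \eqref{eq:orbit_discrepancy}. By \eqref{eq:JK-3.8}, the difference $|N_n(u,E)-N_n(v,E)|$ is at most $(4n_0+44)s+8(s+|r|)$. Taking $J$ to be the arc of length $2B_-/\gamma$ centred at $x$, we get the count
\[
\frac{2nB_-}{\gamma}+O\bigl((n_0+1)(s+|r|)\bigr).
\]
Using $n\le 2sq$, which follows from $|r|\le sq/2$, the main term is at most $4sqB_-/\gamma$. Collecting the constants gives $(8n_0+100)(s+|r|)$.

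The main obstacle is the bookkeeping in the identity relating the number of $z_k$ in $J$ to counting functions and reset points. The delicate points are the wrap-around of arcs, the convention $\inf\emptyset=1$ for branches that never reach $E$, and the fact that $\Lambda_k$ may jump at $0$. I would handle these by working with the $n$ periodic branches directly: each branch has exactly one reset (at $\beta_{n-k}$) and is increasing otherwise, so it crosses level $E$ upward at most once per period. The rest of the argument is routine.
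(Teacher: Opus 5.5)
Your proposal is correct and follows the paper's proof. The first inequality comes from rewriting the eigenvalues via the $\Lambda_k$ and applying \eqref{eq:JK-4.7}. The count bounds the number of $z_k(E)$ in the arc $[x-B_-/\gamma,\,x+B_-/\gamma]$ by the number of jump points there minus the change in $N_n$, which is controlled by \eqref{eq:JK-3.8}. The only cosmetic difference is how the jump points are counted: you use the orbit discrepancy \eqref{eq:orbit_discrepancy} together with $n\le 2sq$, whereas the paper uses the block-decomposition count $s\lceil 2q(y-x)\rceil+|r|$, which gives the main term $4sqB_-/\gamma$ directly.
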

\begin{proof}
By definition
$$1\geq \left|P_n^{\text{small}}(x,E)\right| = \prod\limits_{k:|\Lambda_k(\{x-\beta_{n-k}\})-E|<B_-}\left|\Lambda_k\bigl(\{x-\beta_{n-k}\}\bigr)-E\right|$$
with the inequality a result of $B_-<1$. Then \eqref{eq:JK-4.8} follows immediately from \eqref{eq:JK-4.7}.

For $x<y$ we have 
$$
N_n(y,E)-N_n(x,E)\leq \# \{\text{jump points in }[x,y]\}- \# \{k: z_k(E) \in (x,y)\}
$$
where by jump points we mean the endpoints of monotonicity intervals. 

One can bound the number of jump points by
$$\#\{\text{jump points in }[x,y]\}\leq s\lceil 2q (y-x)\rceil + |r|,$$
e.g. by noting that this is the maximum number of jump points of the right-hand side of \eqref{eq:block_decomposition}. The number of points $z_k(E)$ with \mbox{$\gamma\|x-z_k(E)\|<B_-$} can now be estimated by applying this bound together with \eqref{eq:JK-3.8} to the interval $[x-\frac{B_-}{\gamma},x+\frac{B_-}{\gamma}]$ which results in the stated inequality.
\end{proof}
\subsection{Estimates for \texorpdfstring{$P_n^{\text{large}}$}{P_n^large}}
Here we bound the high-energy contribution
$$\left|P_n^{\text{large}}(x,E)\right| = \prod\limits_{k:\left|\Lambda_k\bigl(x-\beta_{n-k}\bigr)-E\right|>B_+^{\log(2+|E|)}}\left|\Lambda_k\bigl(x-\beta_{n-k}\bigr)-E\right|.$$
By basic perturbation theory every eigenvalue of $H_n(x)$ is an eigenvalue of the potential $V_n(x)$ shifted by at most $\norm{\Delta}=2$. Hence 
\begin{equation}\label{eq:JK_4.10}
\begin{aligned}
\prod\limits_{k:\left|f(x+k\alpha)-E\right|>B_+^{\log(2+|E|)} + 2}\bigl(\left|f(x+k\alpha)-E\right|-2\bigr)
&\leq  \left|P_n^{\text{large}}(x,E)\right|\\
&\leq \prod\limits_{k:\left|f(x+k\alpha)-E\right|>B_+^{\log(2+|E|)} -2}\bigl(\left|f(x+k\alpha)-E\right|+2\bigr).
\end{aligned}
\end{equation}

The number of factors in this product can be bounded as follows:
\begin{proposition}\label{prop:num_of_large_entries}
For large enough $B_+$ we have
\begin{equation}\label{eq:JK_4.11}
    \#\Bigl\{k:\, \lvert f(x+k\alpha)-E\rvert > B_+^{\log(2+|E|)} - 2\Bigr\}\leq 8n_0(s+|r|)+n\frac{n_0 C(f)}{\log B_+},
\end{equation}
where
\[
    C(f) := \sup_E \frac{1}{\log(2+\lvert E\rvert)}
\int_0^1 \log\bigl(\lvert f(x)-E\rvert + 6\bigr)\,dx < \infty .
\]

\end{proposition}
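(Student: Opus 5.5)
The plan is a Chebyshev/Markov-type bound obtained from the integrability quantity $C(f)$, combined with the discrepancy estimate \eqref{eq:orbit_discrepancy} for the orbit $x,x+\alpha,\dots,x+(n-1)\alpha$. Set $T:=B_+^{\log(2+|E|)}-2$. For $B_+$ large we have $T+6\ge \tfrac12 B_+^{\log(2+|E|)}$. Hence every index $k$ with $|f(x+k\alpha)-E|>T$ satisfies
\[
\log\bigl(|f(x+k\alpha)-E|+6\bigr)\ \ge\ \log(2+|E|)\log B_+ - \log 2\ \ge\ \tfrac12\log(2+|E|)\log B_+ .
\]
Using that the summand is always $\ge\log 6>0$, this gives
\[
\#\{k: |f(x+k\alpha)-E|>T\}\ \le\ \frac{2}{\log(2+|E|)\log B_+}\sum_{k=0}^{n-1}\log\bigl(|f(x+k\alpha)-E|+6\bigr).
\]
It then remains to replace the orbit sum by $n\int_0^1\log(|f-E|+6)\,dx\le n\,C(f)\log(2+|E|)$. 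That step yields the second term, with a constant absorbed into the factor $n_0$ (which we may take $\ge 2$).

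For this replacement I would not apply Koksma's inequality (Proposition \ref{prop:Koksma}) directly to $g_E(y)=\log(|f(y)-E|+6)$. The function $f$ need not be of bounded variation in the unbounded setting of Proposition \ref{prop:main}, and even when it is, $\|g_E\|_{BV}$ is not uniform in $E$. Instead I would first truncate. Put $h_E(y)=\min\{g_E(y),\,M\}$ with $M:=\log(T+6)$. Only the event $g_E>\log(T+6)$ matters for counting, so
\[
\#\{k:g_E(x+k\alpha)>M'\}\le \frac{1}{M'}\sum_k h_E(x+k\alpha)\qquad\text{for } M'<M \text{ comparable to } M.
\]
When $f$ is monotone (the case of interest), $h_E$ is a bounded function composed of a monotone function with a unimodal function. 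It is therefore of bounded variation with $\|h_E\|_{BV}\le 4M$. Koksma together with \eqref{eq:orbit_discrepancy} then gives
\[
\sum_k h_E(x+k\alpha)\le n\int_0^1 g_E\,dy + 4M\cdot 4(s+|r|).
\]
Dividing by $M'\approx M$ turns the error term into a multiple $O(s+|r|)$, matching $8n_0(s+|r|)$. The main term becomes $n\,C(f)\log(2+|E|)/M'\lesssim n\,C(f)/\log B_+$.

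The main obstacle is this BV control uniformly in $E$ without any regularity assumption on $f$ beyond log-integrability. If $f$ is not monotone, I would instead use Assumption \ref{Assumption1}. An alternative route avoids $f$ entirely: by \eqref{eq:JK_4.10} and Weyl, the indices in question correspond to eigenvalues $\Lambda_k$ of $H_n(x)$ with $|E_k-E|>T-4$. Those can be counted through the monotone functions $\Lambda_k$ and the uniform counting bound \eqref{eq:JK-3.8}. This gives $N_n(x,\cdot)$ comparable to $nN(\cdot)$ up to $O(n_0(s+|r|))$. Then Chebyshev against the density of states, using $\int\log(|E'-E|+6)\,dN(E')\lesssim C(f)\log(2+|E|)$, produces the same two terms. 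I expect the constants to work out with $n_0$ absorbing the numerical factors, and I would present whichever route gives that absorption cleanly.
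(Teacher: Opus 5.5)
Your worked route, which truncates $g_E=\log(|f-E|+6)$ at $M=\log(T+6)$ and applies Koksma to $h_E$ and then Markov, is correct for monotone $f$, since then $h_E$ is V-shaped on the circle. It does not prove the proposition as stated, however. The appendix assumes only Assumptions \ref{Assumption1}--\ref{Assumption2} and log-integrability. Moreover, for bounded $f$ the set in \eqref{eq:JK_4.11} is empty once $B_+$ is large: with your $T=B_+^{\log(2+|E|)}-2=(2+|E|)^{\log B_+}-2$ one has $T\ge\|f\|_\infty+|E|$ uniformly in $E$. So the statement has content only for unbounded $f$, and Proposition \ref{prop:main} explicitly allows such $f$ to be non-monotone. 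That is exactly the case where, as you note yourself, $h_E$ has no $E$-uniform BV bound. Monotone $f$ is therefore not ``the case of interest'', and your main argument leaves the real case open.

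Your fallback does close the gap, and it works more simply than you sketch.
\begin{enumerate}
\item By Weyl (sorted diagonal entries and sorted eigenvalues differ by at most $2$), $\#\{k:|f(x+k\alpha)-E|>T\}\le\#\{i:|E_i(x)-E|>T-2\}$.
\item Apply \eqref{eq:JK-3.8} at the energies $E\pm(T-2)$, using left limits at the lower one. This replaces $x$ by an average over $y\in[0,1)$ at cost at most $2[(4n_0+44)s+8(s+|r|)]\le(8n_0+104)(s+|r|)$.
\item Weyl in the other direction and translation invariance give $\int_0^1\#\{i:|E_i(y)-E|>T-2\}\,dy\le n\,|\{|f-E|>T-4\}|$.
\item Markov bounds this last quantity by $nC(f)/\log B_+$.
\end{enumerate}
No density of states is needed. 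You therefore avoid the log-moment bound $\int\log(|E'-E|+6)\,dN(E')\lesssim C(f)\log(2+|E|)$, which you assert without proof (it is true, e.g.\ by Jensen for the spectral measure of $\delta_0$). The first constant comes out as $8n_0+104$ rather than $8n_0$. An additive numerical constant cannot be ``absorbed into $n_0$'', but the difference is harmless downstream, where only $o(n)$ matters.

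The paper argues differently and uses Assumption \ref{Assumption1} only at the single scale $n_0$. It replaces the possibly wild set $\{|f-E|>T\}$ by the superset $S=\{x:\|H_{n_0}(x)-E\|>T\}$.
\begin{enumerate}
\item The extreme eigenvalues of $H_{n_0}$ increase on each of its $n_0$ monotonicity intervals. Hence $S^c=\{E_{\max}\le E+T\}\cap\{E_{\min}\ge E-T\}$ is a union of at most $n_0$ intervals, and $\|\chi_S\|_{\mathrm{BV}}\le 2n_0$ uniformly in $E$.
\item Weyl gives $S\subset\bigcup_{j<n_0}\{x:|f(x+j\alpha)-E|>T-2\}$, so $|S|\le n_0\,|\{|f-E|>T-2\}|$.
\item Koksma applied to $\chi_S$, followed by Markov, then yields exactly $8n_0(s+|r|)+n\,n_0C(f)/\log B_+$.
\end{enumerate}
The factor $n_0$ in the second term is the cost of this enlargement. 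Your fallback instead uses Assumption \ref{Assumption1} at scale $n$, through the counting machinery of Proposition \ref{JK:3.4}. It is heavier and has a worse additive constant, but it removes $n_0$ from the measure term.
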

\begin{proof}
Define
$$S=\{x\in [0,1): \|H_{n_0}(x)-E\|>B_+^{\log(2+|E|)}-2\}.$$
One has
$$|f(x)-E|\leq \|H_{n_0}(x)-E\|\leq \max_{0\leq j< n_0} |f(x+j\alpha)-E|+2$$
since $f(x)-E$ is a diagonal element of $H_{n_0}(x)-E$. This gives
$$\{|f-E|>B_+^{\log(2+|E|)}-2\}\subset S \subset \bigcup_{j=0}^{n_0-1} \{x: |f(x+j\alpha)-E|>B_+^{\log(2+|E|)}-4\}.$$
If one has $x\notin S$ then the largest and smallest eigenvalue of $H_{n_0}(x)$ respectively must satisfy
$$E_{\mathrm{min}}(x)\geq E-B_+^{\log(2+|E|)}+2, \qquad E_{\mathrm{max}}(x)\leq E+B_+^{\log(2+|E|)}-2.$$
Since the eigenvalues of $H_{n_0}$ are increasing outside $n_0$ of the reset points $\beta_j$, the set $S^c$ is a union of at most $n_0$ intervals. The characteristic functions $\chi_S$ therefore has variation $\|\chi_S\|_{\mathrm{BV}}\leq 2n_0$.

Using the Koksma inequality and \eqref{eq:orbit_discrepancy} we can bound 
$$
\begin{aligned}
&\#\bigl\{0\leq k<n:\,|f(x+k\alpha)-E|>B_+^{\log(2+|E|)}-2\bigr\}\\
&\leq 4(s+|r|)\|\chi_S\|_{\mathrm{BV}}+n|S|\\
&\leq 8n_0(s+|r|)
   +nn_0\bigl|\{x\in[0,1):\,|f(x)-E|>B_+^{\log(2+|E|)}-4\}\bigr|\\
&\leq 8n_0(s+|r|)
   +\frac{nn_0}{\log(2+|E|)\log B_+}
      \int_0^1\log(|f(u)-E|+6)\,du\\
&\leq 8n_0(s+|r|)
   +n\frac{n_0C(f)}{\log B_+}.
\end{aligned}
$$
Here we used translation-invariance of the Lebesgue measure and the Markov inequality. 

\end{proof}

\subsection{Estimates for \texorpdfstring{$P_n^{\text{mid}}$}{P_n^mid}}

We now bound  $P_n^{\text{mid}}$ as defined as in \eqref{eq:JK-4.4}. 
\begin{proposition}\label{prop:mid_factor_prop}
For $n=sq+r$ we have
    \begin{equation}
    \begin{split}
        \bigg|
        \frac{1}{n}\log |P^{\mathrm{mid}}_n(x,E)|
        -&\int_{\mathbb{R}}\log\lvert E-E'\rvert_{[B_-,B_+^{\log(2+|E|)}]}dN(E')
        \bigg| \\
        &        \le 2000\,n_0\,\frac{s+\lvert r\rvert}{n}(\log(2+|E|)\log B_+-\log B_-).
    \end{split}
    \end{equation}
\end{proposition}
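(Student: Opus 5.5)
The plan is to write $\frac1n\log|P^{\mathrm{mid}}_n(x,E)|$ as an empirical average of a bounded-variation test function over the finite-volume spectrum, and then apply Corollary~\ref{cor:JK-3.7}. Set $B:=B_+^{\log(2+|E|)}$ and $g(E'):=\log|E-E'|_{[B_-,B]}$. Since the truncated absolute value equals $1$ outside the window $B_-\le|E-E'|\le B$, we have $\log|E_k(x)-E|_{[B_-,B]}=0$ for exactly those $k$ excluded from $P^{\mathrm{mid}}_n$ in \eqref{eq:JK-4.4}. Hence
\[
\frac1n\log|P^{\mathrm{mid}}_n(x,E)|=\frac1n\sum_{j=0}^{n-1}g(E_j(x)),
\]
and the quantity to be estimated is exactly the left-hand side of Corollary~\ref{cor:JK-3.7} with this $g$.

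Next compute $\|g\|_{\mathrm{BV}}$. The function $g$ is zero outside $\{B_-\le|E-E'|\le B\}$. On each of the two intervals $E'\in[E+B_-,E+B]$ and $E'\in[E-B,E-B_-]$ it is monotone, running between $\log B_-$ and $\log B$. It has jump discontinuities only at the four endpoints: jumps of size $|\log B_-|=-\log B_-$ at $|E-E'|=B_-$ and of size $\log B$ at $|E-E'|=B$. Summing the monotone pieces and the jumps gives
\[
\|g\|_{\mathrm{BV}}\le 4\bigl(\log B-\log B_-\bigr)=4\bigl(\log(2+|E|)\log B_+-\log B_-\bigr).
\]
This is where the stated factor $(\log(2+|E|)\log B_+-\log B_-)$ comes from. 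If the paper's convention for $\|\cdot\|_{\mathrm{BV}}$ also includes a sup-norm term, that adds at most $\max(\log B,-\log B_-)$, which is of the same form.

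Then apply Corollary~\ref{cor:JK-3.7}. The error factor there is $\frac{8|r|}{sq}+\frac{8n_0+200}{q}$, and it has to be brought to the form $C n_0\frac{s+|r|}{n}$. With $n=sq+r$ and $|r|\le sq/2$ (the standing convention), we get $n\le\frac32 sq$, so
\[
\frac{1}{q}=\frac{s}{sq}\le\frac{3s}{2n},\qquad \frac{|r|}{sq}\le\frac{3|r|}{2n}.
\]
Hence the error factor is at most $\frac{3}{2}\cdot\frac{(8n_0+200)s+8|r|}{n}\le 312\,n_0\frac{s+|r|}{n}$, using $n_0\ge1$. Multiplying by $4(\log B-\log B_-)$ gives a constant $1248\,n_0$, which is below $2000\,n_0$.

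The main obstacle is checking that Corollary~\ref{cor:JK-3.7} really applies to a discontinuous $g$ with jumps. Its proof passes through half-lines via the Kolmogorov-distance bound, so it needs the bound for both open and closed half-lines. I would confirm this via the limiting remark in that proof, noting that $N$ is continuous (Lipschitz) by Lemma~\ref{lemma:ac_IDS}, so the atoms of $\nu$ cause no trouble. The decomposition $g=c+g_+-g_-$ must then be taken with jump-containing monotone pieces. This is allowed because the layer-cake argument only uses that the superlevel sets are half-lines, open or closed.
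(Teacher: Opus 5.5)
Your proposal is correct and follows the paper's proof. You write $\frac1n\log|P_n^{\mathrm{mid}}(x,E)|$ as the empirical average of the truncated logarithm $g$, apply Corollary~\ref{cor:JK-3.7}, and use $\|g\|_{\mathrm{BV}}=4(\log(2+|E|)\log B_+-\log B_-)$, exactly as the paper does; your explicit conversion of the error $\frac{8|r|}{sq}+\frac{8n_0+200}{q}$ into $312\,n_0\frac{s+|r|}{n}$ via $|r|\le sq/2$ checks out and fills in a step the paper leaves implicit.
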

\begin{proof}
Noting that
$$|P_n^{\text{mid}}(x,E)|= \prod_{j=0}^{n-1} |E-E_j(x)|_{[B_-,B_+^{\log(2+|E|)}]}$$
respectively
$$\log|P_n^{\text{mid}}(x,E)|= \sum_{j=0}^{n-1} \log |E-E_j(x)|_{[B_-,B_+^{\log(2+|E|)}]}$$
we can bound the difference using Corollary~\ref{cor:JK-3.7} for the BV-function
\[
    g(E')=\begin{cases}
        0 & |E-E'|<B_-\\
        \log |E-E'| & |E-E'|\in \left[B_-,B_+^{\log(2+|E|)}\right]\\
        0 & |E-E'|>B_+^{\log(2+|E|)}
    \end{cases},
\]
which satisfies $\|g\|_{BV}= 4\log(\frac{B_+^{\log(2+|E|)}}{B_-})$.
\end{proof}
    Suppose that $q(n)$ is the largest continued fraction denominator with the property
    \begin{equation}\label{eq:q(n)_def}
        n=sq(n)+r(n),\qquad 0\leq |r(n)|\leq\sqrt{n}, \qquad s\geq 1,
    \end{equation}
    and also assume $q(n)\geq n_0$. 
    Consequently, we have
    \[
\frac{s+|r|}{n}\le \frac{1}{q(n)}+\frac{2|r(n)|}{n}\le \frac{1}{q(n)}+\frac{2}{\sqrt{n}}.
\]
Under the assumptions of Proposition \ref{prop:mid_factor_prop}, we have
\begin{equation}\label{eq:middle_factor_bound}
\begin{aligned}
\left|\frac{1}{n}\log |P_n^{\mathrm{mid}}(x,E)|-L(E)\right|
&\le L_{\mathrm{corr}}(E,B_-,B_+) \\
			&\quad +2000n_0\left(\frac{1}{q(n)}+\frac{2|r(n)|}{n}\right)
(\log(2+|E|)\log B_+-\log B_-).
\end{aligned}
\end{equation}

where
$$
    L_{\text{corr}}\coloneqq \big\lvert L(E;[B_-,B_+^{\log(2+\lvert E\rvert)}]) - L(E)\big\rvert.
$$

Recall that by Lemma \ref{lemma:lyapunov_uniformity},
\[
L_{\mathrm{corr}}(E,B_-,B_+)
=o(1)(1+L(E))
\]
uniformly in $E$ as $B_-\to0$ and $B_+\to\infty$.
In addition, since we always have $|P_n^{\text{small}}(x,E)|\leq 1$, \eqref{eq:middle_factor_bound} leads to 
\begin{multline}\label{eq:middle_lower_bound}
\bigl|P_n^{\text{small}}(x,E)\,P_n^{\mathrm{mid}}(x,E)\bigr|
\le \exp\Biggl\{
nL(E)+ n\Biggl(
L_{\mathrm{corr}}(E,B_-,B_+)  \\
		+2000n_0\left(\frac{1}{q(n)}+\frac{2|r(n)|}{n}\right)
(\log(2+|E|)\log B_+-\log B_-)
\Biggr)
\Biggr\}.
\end{multline}

\section{Determinant estimates}\label{AppendixC}

In this Section we prove \cite[Lemma 6.2 and Lemma 6.3]{JK2} which are crucial for the determinant estimates needed to establish regularity.

\begin{lemma}[{\cite[Lemma 6.1]{JK2}}]\label{lemma:JK-6.1}
Let $p(x)=x^n+a_{n-1}x^{n-1}+\cdots+a_0$ be a polynomial with real coefficients, and let
$x_0,\dots,x_n\in\mathbb{R}$ satisfy $|x_i-x_j|\geq d$ for $i\ne j$.
Then, at least one of the above points satisfies
\begin{equation}\tag{6.3}
|p(x_j)| \geq \left(\frac{d}{2}\right)^n n!.
\end{equation}
\end{lemma}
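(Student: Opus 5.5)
The plan is to compare $p$ with the Lagrange interpolation polynomial through the nodes. Order the points so that $x_0<x_1<\cdots<x_n$; by the separation hypothesis, $|x_i-x_j|\geq |i-j|\,d$. Since $p$ has degree $n$ and there are $n+1$ nodes, Lagrange interpolation reproduces $p$ exactly:
\[
p(x)=\sum_{j=0}^n p(x_j)\prod_{i\neq j}\frac{x-x_i}{x_j-x_i}.
\]
Comparing the coefficients of $x^n$, and using that $p$ is monic, gives the identity
\[
1=\sum_{j=0}^n \frac{p(x_j)}{\prod_{i\neq j}(x_j-x_i)}.
\]

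Next I bound each denominator from below. For fixed $j$,
\[
\prod_{i\neq j}|x_j-x_i|\geq d^n\prod_{i\neq j}|i-j| = d^n\, j!\,(n-j)!.
\]
Now suppose, for contradiction, that $|p(x_j)|<(d/2)^n n!$ for every $j$. Plugging this and the lower bound on the denominators into the identity gives
\[
1<\sum_{j=0}^n \frac{(d/2)^n n!}{d^n j!(n-j)!}=2^{-n}\sum_{j=0}^n\binom{n}{j}=1,
\]
which is impossible. Hence at least one $j$ satisfies $|p(x_j)|\geq (d/2)^n n!$.

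There is no real obstacle. The only points needing care are the reordering of the nodes, which is needed to get $|x_i-x_j|\geq|i-j|d$, and the observation that the leading-coefficient identity uses exactly $n+1$ nodes for a degree-$n$ monic polynomial. Real coefficients are not actually needed; the argument works verbatim for any monic polynomial evaluated at real separated points.
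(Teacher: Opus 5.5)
Your proof is correct and complete. The Lagrange leading-coefficient identity $1=\sum_j p(x_j)/\prod_{i\neq j}(x_j-x_i)$, the bound $\prod_{i\neq j}|x_j-x_i|\geq d^n j!(n-j)!$ for the ordered nodes, and the binomial sum give exactly the constant $(d/2)^n n!$.

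The paper does not prove this lemma; it only quotes it from \cite{JK2}, so there is no proof here to compare with. Your interpolation argument is the standard proof of this classical estimate. The one tacit assumption is $d>0$, which makes the nodes distinct; when $d=0$ and $n\geq 1$ the claim is trivial.
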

\begin{lemma}[{\cite[Lemma 6.2]{JK2}}]\label{lemma:JK-6.2}
Let $I\subset\mathbb{R}$ be an interval with $|I|<(10\gamma)^{-1}$, and let
$x_0,\dots,x_l\in I$ satisfy $|x_i-x_j|\geq d>0$ for $i\ne j$.
Assume that $I$ contains at most $t\leq l$ intersection points \eqref{eq:intersection_pts} for $P_n$,
and all other intersection points $z$ of $P_n$ satisfy
\begin{equation}\tag{6.4}
\mathrm{dist}(z,I)\geq \gamma^{-1}B_- .
\end{equation}
Then, for at least one $j\in[0,l]$, we have
\begin{equation}\tag{6.5}
\bigl|P_n^{\text{small}}(x_j)\bigr|\geq \gamma^{t}\left(\frac{d}{2}\right)^{t}t!.
\end{equation}
\end{lemma}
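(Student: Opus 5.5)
The plan is to reduce Lemma~\ref{lemma:JK-6.2} to Lemma~\ref{lemma:JK-6.1} by factoring $P_n^{\text{small}}$ into a product of eigenvalue differences, one per nearby intersection point. Each such factor should behave, up to the anti-Lipschitz constant, like a linear function of $x$ vanishing at the corresponding intersection point. Concretely, $P_n^{\text{small}}(x)=\prod_{k:|\Lambda_k(\{x-\beta_{n-k}\})-E|<B_-}(\Lambda_k(\{x-\beta_{n-k}\})-E)$.

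The first step is to show that for $x\in I$ only the at most $t$ branches $k$ whose intersection point $z_k(E)$ lies in $I$ can contribute a factor. If $z_k$ lies outside $I$, then \eqref{eq:JK-4.7} together with (6.4) gives $|\Lambda_k(\{x-\beta_{n-k}\})-E|\ge\gamma\|x-z_k\|\ge B_-$, so that branch is excluded from $P_n^{\text{small}}$. One has to check that $\|x-z_k\|=\dist(x-z_k,\ZM)$ really is at least $\dist(z_k,I)$. This holds because $|I|<(10\gamma)^{-1}$ is small and we may assume $B_-/\gamma$ is small, so no wrap-around issue arises. Consequently, for every $x\in I$,
\[
|P_n^{\text{small}}(x)|\ \ge\ \prod_{k:\,z_k\in I,\ |\Lambda_k-E|<B_-}\gamma\|x-z_k\|\ \ge\ \prod_{k:\,z_k\in I}\gamma\min(|x-z_k|,\gamma^{-1}B_-),
\]
where the last form uses that the factors with $|\Lambda_k-E|\ge B_-$ are simply absent. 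Since $B_-<1$ and $|x-z_k|\le|I|$, we have $\gamma|x-z_k|<1/10<1$, so the lower bound can be replaced by the smaller product $\prod_{k:z_k\in I}\gamma|x-z_k|$.

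Next, pad the list of intersection points in $I$ to exactly $t$ points and consider the monic polynomial $p(x)=\prod_{i=1}^{t}(x-z_{k_i})$ of degree $t$. The padding points are arbitrary points of $I$. Adding padding factors only decreases the product, because each such factor $\gamma|x-z|$ is less than $1$. We then get $|P_n^{\text{small}}(x_j)|\ge\gamma^t|p(x_j)|$ for all $j$.

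Finally, apply Lemma~\ref{lemma:JK-6.1} to $p$. The points $x_0,\dots,x_l$ are $d$-separated and there are $l+1\ge t+1$ of them; choosing any $t+1$ of them yields some $j$ with $|p(x_j)|\ge (d/2)^t t!$. This gives (6.5). The only delicate point is the first step, namely identifying the contributing factors with branches whose intersection point lies in $I$. This requires \eqref{eq:JK-4.7} in its torus-distance form and handling the case where $\Lambda_k$ never reaches $E$, so that $t_k=1$. I expect this bookkeeping, rather than the polynomial argument, to be the main obstacle.
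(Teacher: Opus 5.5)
Your strategy matches the paper's. You use \eqref{eq:JK-4.7} together with (6.4) to discard every branch whose intersection point lies outside $I$, bound $|P_n^{\text{small}}(x)|$ from below by $\gamma^t|p(x)|$ for a monic polynomial $p$ of degree $t$, and conclude with Lemma~\ref{lemma:JK-6.1}. Your padding to exactly $t$ roots is more careful than the paper. The paper writes $\gamma^t\prod_{k:z_k\in I}\|x-z_k\|$ without addressing the case where $I$ contains fewer than $t$ intersection points.

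However, one inequality in your chain runs the wrong way. Your bound by $\prod_{k:z_k\in I}\gamma\min(|x-z_k|,\gamma^{-1}B_-)=\prod_{k:z_k\in I}\min(\gamma|x-z_k|,B_-)$ is valid. But you cannot then pass to $\prod_{k:z_k\in I}\gamma|x-z_k|$ as a ``smaller product''. Each factor $\min(\gamma|x-z_k|,B_-)$ is at most $\gamma|x-z_k|$, so the new product is larger as soon as some non-contributing $z_k\in I$ has $\gamma|x-z_k|>B_-$. This is the typical situation, since $B_-$ is tiny while $\gamma|x-z_k|$ can be close to $1/10$. Capping the absent factors at $B_-$ throws away exactly what you need.

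The fact you invoke, $\gamma|x-z_k|<1/10<1$, is the right one. It must be applied to the product in which the non-contributing branches enter with factor $1$:
\[
|P_n^{\text{small}}(x)|\ \ge \prod_{k:\,z_k\in I,\ |\Lambda_k-E|<B_-}\gamma|x-z_k|\ \ge\ \prod_{k:\,z_k\in I}\gamma|x-z_k|,
\]
since every inserted factor is $<1$. This is what the paper does via \eqref{eq:JK-4.8}, where the inserted factors are those with $B_-\le\gamma\|x-z_k\|<1$. Delete the $\min$ step and the rest of your argument goes through.

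Two minor remarks. First, ``$|I|<(10\gamma)^{-1}$ is small'' does not by itself rule out wrap-around, since $\gamma$ may be small. Identifying $\|x-z_k\|$ with $|x-z_k|$ requires $|I|\le 1/2$, or reading (6.4) over all lifts $z+\mathbb{Z}$; the paper uses this convention tacitly as well. You need that identification both to discard outside points and to have an honest polynomial. Second, the case $t_k=1$ needs no separate bookkeeping, since \eqref{eq:JK-4.7} already covers it.
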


\begin{proof}
    Recall that we have
    \[
        |P_n^{\text{small}}(x)|\geq\prod\limits_{k:\gamma\|x-z_k(E)\|<B_-}\gamma\|x-z_k(E)\|.
    \]
    Combining this with the fact that $I$ contains at most $t$ intersection points, and that all other intersection points satisfy $\dist(z_k(E),I)\geq \frac{B_-}{\gamma}$ (and so for $x\in I$, $\gamma\|x-z_k(E)\|\geq B_-$). Due to the fact that $|I|\leq\frac{1}{10\gamma}$, we also have that for all intersection points inside $I$ and all $x\in I$, $\gamma\|x-z_k(E)\|<1$. Hence, we obtain
    \[
        |P_n^{\text{small}}(x)|\geq\gamma^t\prod\limits_{k:z_k(E)\in I}\|x-z_k(E)\|.
    \]
    The result now follows from Lemma \ref{lemma:JK-6.1}.
\end{proof}
Let $q(n),r(n),s(n)$ be as in \eqref{eq:q(n)_def}. Define $B_+(n)=q(n)$, $B_+'(n)=q(n)+4$. Denote $P_n^{\text{large}}$ and $P_n^{\text{large}'}$ to be as in \eqref{eq:pn_large_def} with $B_+(n),B_+'(n)$ respectively.
\begin{lemma}[{\cite[Lemma 6.3]{JK2}}]\label{lemma:JK-6.3}
Let $0\leq n'\leq n$, and suppose that
\begin{equation}\label{eq:JK_6.8}
[h';\,h'+n']\subset [h;\,h+n]
\end{equation}
are two integer intervals. Then
\[
\left|
\frac{P_{n'}^{\text{large}'}(x+h'\alpha)}{P_{n}^{\text{large}}(x+h\alpha)}
\right|
\leq
\exp\!\left(
n\log(2+|E|)o(1)\right),\qquad n\to\infty,
\]
uniformly in $E\in\mathbb{R}$.
\end{lemma}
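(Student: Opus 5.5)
The plan is to follow the structure of \cite[Lemma 6.3]{JK2}, using Proposition~\ref{prop:num_of_large_entries} in place of the anti-Lipschitz structure of $f$.

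\textbf{Step 1: pass to products over potential values.} Write both $P^{\text{large}}$ products as products over sites using the two-sided bound \eqref{eq:JK_4.10}. Every eigenvalue of $H_n(x+h\alpha)$ lies within $2$ of a diagonal entry. By the Cauchy interlacing/minimax principle, one can match eigenvalues of the large block $H_{n'}(x+h'\alpha)$ with diagonal values $f(x+k\alpha)$, $k\in[h',h'+n']$. The shifted threshold $B_+'=B_++4$ leaves a margin of $4$. This margin guarantees that every site contributing to the numerator, i.e.\ with $|f(x+k\alpha)-E|>(B_+')^{\log(2+|E|)}-2$, also contributes to the denominator with $|f(x+k\alpha)-E|>B_+^{\log(2+|E|)}+2$. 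The latter needs $(q+4)^{\log(2+|E|)}-q^{\log(2+|E|)}\ge 4$, which holds since $\log(2+|E|)\ge\log 2$ and, after adjusting constants, $\ge 1$ in the relevant regime. I would check the small-$|E|$ case separately if needed.

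\textbf{Step 2: bound the quotient factor by factor.} The quotient is then bounded by a product over matched sites of
\[
\frac{|f(x+k\alpha)-E|+2}{|f(x+k\alpha)-E|-2}\le 1+\frac{4}{B_+^{\log(2+|E|)}}.
\]
This product is at most $\exp\bigl(n\cdot 4B_+^{-\log(2+|E|)}\bigr)=\exp(n\,o(1))$, since $B_+=q(n)\to\infty$. The denominator also contains sites not present in the numerator, namely sites outside $[h',h'+n']$ and threshold-intermediate sites. Each of these contributes a factor $>B_+^{\log(2+|E|)}\ge 1$ to the denominator, so they only help, and dropping them is harmless.

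\textbf{Step 3: control the mismatch in the counts.} The one real issue is that the eigenvalue-to-diagonal matching is not literally bijective. The counts of large eigenvalues versus large diagonal entries can differ, because eigenvalues near the threshold may be displaced by up to $2$. Each mismatched eigenvalue contributes to the numerator a factor at most $\max_k|f(x+k\alpha)-E|+2$. I would bound the number of such factors using Proposition~\ref{prop:num_of_large_entries}: the total number of large factors is at most $8n_0(s+|r|)+n\,n_0C(f)/\log B_+=n\,o(1)$.

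The sizes of these factors are controlled through the logarithmic integrability $\int\log(1+|f|)<\infty$. I would use Koksma-type Birkhoff control together with the Markov estimate from the proof of Proposition~\ref{prop:num_of_large_entries} to get
\[
\sum_{\text{large }k}\log\bigl(|f(x+k\alpha)-E|+2\bigr)\le n\log(2+|E|)\,o(1).
\]
This is the expected main obstacle. For unbounded $f$, the ergodic bound for $\log(|f|+6)$ restricted to the large set must be made uniform in $x$. I would handle it by truncating $f$ at level $B_+^{\log(2+|E|)}$ and applying Koksma to the BV-truncated function, exactly as $\chi_S$ was treated above. For bounded $f$, as in all our theorems, this step is trivial: the large set is empty once $B_+^{\log(2+|E|)}$ exceeds $2+\|f\|_\infty+|E|$, except for large $|E|$, where all factors are comparable to $|E|$ and the ratio $(|E|+C)/(|E|-C)$ raised to at most $n$ powers is $\exp(n\,o(1))$.
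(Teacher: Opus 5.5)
Your Steps 1 and 2 are the paper's argument, and they already finish the proof. By \eqref{eq:JK_4.10}, the numerator is bounded above by $\prod_{k\in S'}(|f(x+k\alpha)-E|+2)$. The denominator is bounded below by $\prod_{k\in S''}(|f(x+k\alpha)-E|-2)$, and every factor of the latter exceeds $B_+^{\log(2+|E|)}\ge 1$. So once $S'\subset S''$, the ratio is at most $\bigl(1+4B_+^{-\log(2+|E|)}\bigr)^{n+1}\le \exp\bigl(8n\,q(n)^{-\log 2}\bigr)$, uniformly in $x$ and $E$. You do not even need Proposition~\ref{prop:num_of_large_entries} for the count, although the paper invokes it.

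Step 3 should be deleted rather than proved, because it rests on a misdiagnosis. There is no residual mismatch. \eqref{eq:JK_4.10} comes from Weyl's inequality, which pairs the \emph{sorted} eigenvalues bijectively with the \emph{sorted} diagonal entries at distance at most $2$. Any difference between the numbers of large eigenvalues and large diagonal values is exactly what the thresholds shifted by $\mp 2$ absorb. Moreover, the estimate you propose there is false in the generality of Proposition~\ref{prop:main}. For unbounded $f$, a single sample $f(x+k\alpha)$ is arbitrarily large for suitable $x$, so $\sum_{\text{large }k}\log(|f(x+k\alpha)-E|+2)$ admits no bound uniform in $x$. Koksma applied to a truncation of $f$ says nothing about the untruncated factors. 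The mechanism of the lemma is that such huge factors appear in both numerator and denominator and cancel in Step 2, so their size never has to be estimated.

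For bounded $f$ there is also no large-$|E|$ exception. Indeed $B_+^{\log(2+|E|)}=(2+|E|)^{\log q(n)}$ exceeds $|E|+\|f\|_\infty+2$ for all $E$ once $q(n)$ is large, so the numerator is an empty product and the denominator is at least $1$. Your bound $\bigl((|E|+C)/(|E|-C)\bigr)^n$ would not be $\exp(n\,o(1))$ uniformly in $E$ anyway.

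The one genuinely delicate point is the inclusion $S'\subset S''$ that you flagged in Step 1, i.e.\ $(q+4)^{a}-q^{a}\ge 4$ with $a=\log(2+|E|)$. This holds for $a\ge 1$. For $|E|<e-2$, however, $(q+4)^a-q^a\le 4a\,q^{a-1}\to 0$, so no adjustment of constants gives it. The paper's written proof passes over this too: its displayed thresholds carry the $\pm 2$ shifts reversed relative to \eqref{eq:JK_4.10}, which makes the inclusion look automatic. By the remark above, the issue is vacuous for bounded $f$.

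For unbounded $f$ you can treat the window sites $(q+4)^a-2<|f(x+k\alpha)-E|\le q^a+2$ separately. Each contributes a numerator factor at most $q+4$. Their number times $\log(q+4)$ is $o(n)$, by the Koksma--Markov argument of Proposition~\ref{prop:num_of_large_entries} with Markov's inequality replaced by the tail integral $\int_{\{|f-E|>q^{\log 2}-4\}}\log(|f-E|+6)\,dx\to 0$. This, rather than Step 3, is where the logarithmic integrability of $f$ is actually needed.
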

\begin{proof}
    By \eqref{eq:JK_4.10}, we have
\[
P_{n'}^{\text{large}'}(x+h'\alpha,E)\leq\prod\limits_{k:|f(x+(h'+k)\alpha)-E|>B_+'(n)^{\log(2+|E|)}+2}\left(|f(x+(h'+k)\alpha)-E|+2\right)
\]
and
\[
P_n^{\text{large}}(x+h\alpha,E)\geq\prod\limits_{k:|f(x+(h'+k)\alpha)-E|>B_+(n)^{\log(2+|E|)}-2}\left(|f(x+(h'+k)\alpha)-E|-2\right).
\]
Denoting
\[
S:=\Bigl\{0\le k\le n'-1:\ |f(x+(h'+k)\alpha)-E|>B_+(n)^{\log(2+|E|)} +2\Bigr\},
\]
we obtain
\[
\left|\frac{P_{n'}^{\text{large}'}(x+h'\alpha)}{P_{n}^{\text{large}}(x+h\alpha)}
\right|\leq\frac{\prod\limits_{k\in S}\left(|f(x+(h'+k)\alpha)-E|-2\right)}{\prod\limits_{k\in S}\left(|f(x+(h'+k)\alpha)-E|+2\right)}\leq\left(\frac{q(n)+4}{q(n)}\right)^{\log(2+|E|)\left[4\left(s+|r|\right)+n\frac{n_0 C(f)}{\log B_+(n)}\right]},
\]
where the last inequality follows from Proposition \ref{prop:num_of_large_entries}.
\end{proof}

\section{Derivation of lemmas \texorpdfstring{\ref{lemma:diophantine_transition}, \ref{lemma:diophantine_transition_case_2},
\ref{lemma:Liouville_transition_nonresonant} and \ref{lemma:Liouville_transition_resonant}}{2.7, 2.8, 2.9 and 2.10}}

Given the results established in Appendices~A, B, and~C, the proofs of the regularity lemmas
Lemmas~\ref{lemma:diophantine_transition},\ref{lemma:diophantine_transition_case_2}, \ref{lemma:Liouville_transition_nonresonant}, and
\ref{lemma:Liouville_transition_resonant}
all follow the same mechanism as in \cite[Section 7]{JK2}. The difference between the Diophantine/Liouville (and resonant/nonresonant)
regimes is in the arithmetic step
which guarantees that the hypotheses of the determinant estimates from Appendix \ref{AppendixC} are satisfied.

Fix an interval $I=[h_j,h_j+n-1]\cap\mathbb{Z}$ and a site $m\in I$ with $\dist(m,\partial I)\ge \sigma n$.
	Whenever $P_n(x+h_j\alpha,E)\neq0$, Cramer's rule gives
\begin{equation}\label{eq:cramer_split}
\bigl|(H_{[h_j,h_j+n-1]}(x)-E)^{-1}(m,h_j+n-1)\bigr|
=
\left|\frac{P_{m-h_j}(x+h_j\alpha,E)}{P_n(x+h_j\alpha,E)}\right|.
\end{equation}
We then split determinants into parts using cutoffs $B_-(n)$ and $B_+(n)$,
\[
P_k(\cdot,E)=P_k^{\text{small}}(\cdot,E)\,P_k^{\mathrm{mid}}(\cdot,E)\,P_k^{\text{large}}(\cdot,E),
\]
	and split the determinant into three factors (cf.\ (6.11) in \cite{JK2}):
\begin{align}\label{eq:611_split}
\bigl|(H_{[h_j,h_j+n-1]}(x)-E)^{-1}(m,h_j+n-1)\bigr|
&=
\left|
\frac{
P_{m-h_j}^{\mathrm{small}}(x+h_j\alpha,E)\,
P_{m-h_j}^{\mathrm{mid'}}(x+h_j\alpha,E)
}{
P_n^{\mathrm{mid}}(x+h_j\alpha,E)
}
\right|
\nonumber\\
&\qquad\cdot
\left|
\frac{1}{P_n^{\mathrm{small}}(x+h_j\alpha,E)}
\right|
\cdot
\left|
\frac{
P_{m-h_j}^{\mathrm{large'}}(x+h_j\alpha,E)
}{
P_n^{\mathrm{large}}(x+h_j\alpha,E)
}
\right|.
\end{align}
	Here, $P_{m-h_j}^{\mathrm{mid'}}$ is defined as $P_{m-h_j}^{\mathrm{mid}}$ but with upper cutoff $B_+'(n)^{\log(2+|E|)}$ instead of $B_+(n)^{\log(2+|E|)}$. The cutoffs themselves are chosen as $B_-(n)=q(n)^{-2C_d+1}$ with some large but fixed constant $C_d$ and as above $B_+(n)=q(n)$, which allows one to control numerator and denominator using the large deviation bounds up to uniform errors.

	The rest of the argument including the entire arithmetic constructions and interval-selection arguments are now as in \cite[Section~7]{JK2} using the estimates from this appendix:
\begin{itemize}
\item The first factor is controlled using the large deviation bounds from Appendix \ref{AppendixB} (upper bounds for the numerator
and a lower bound for the middle part $P_n^{\mathrm{mid}}$ in the denominator).
\item The second factor is controlled by Lemma~\ref{lemma:JK-6.2} (Appendix \ref{AppendixC}), which provides a quantitative lower bound on $|P_n^{\mathrm{small}}|$ once its hypotheses are met. 
\item The \emph{third factor} is controlled by Lemma~\ref{lemma:JK-6.3} (Appendix \ref{AppendixC}), which bounds the ratio of the large parts.
\item The main difficulty is to control the small eigenvalues. This is done using arithmetic input which is specific to each of the different cases of Lemma~\ref{lemma:diophantine_transition},\ref{lemma:diophantine_transition_case_2}, \ref{lemma:Liouville_transition_nonresonant}, or
\ref{lemma:Liouville_transition_resonant}. In the Diophantine and nonresonant cases, the resulting determinant
estimates give exponential decay of the relevant Green functions at rate $L(E)-o(1)$. In the resonant case an additional argument is required. Following
\cite[Sections~7.7-7.8]{JK2}, one applies the Poisson formula and uses
nonresonant regularity to estimate the boundary values in terms of
maxima of an eigenfunction $|\psi|$ on neighboring resonant regions. This does not give exponential decay of the Green function, but gives the claimed $\psi$-regularity with decay rate $L(E)-\beta(\alpha)-o(1)$.
\end{itemize}

\noindent\small{{\bf Acknowledgements and AI statement}
This research was supported by NSF DMS-2052899, DMS-2155211, and Simons
896624}.

No AI tools were used in developing the results or preparing the manuscript until the final pre-submission stage. At that stage,  LLM was used to stress-test the arguments, leading to a number of minor corrections and improvements in wording, and to prepare Figure~\ref{fig:sturmian_comparison}. The mathematical results and proofs were complete by May 2026, and  this work was presented in the third named author's talk at the 39th Western States Mathematical Physics Meeting. Posting was delayed by the first named author’s overly optimistic assessment of her available time.

\end{document}

%% file: figure_smoothing.tex
\scalebox{0.75}{
\begin{tikzpicture}[
  x=0.5pt,y=-0.5pt,
  every node/.style={inner sep=0pt,outer sep=0pt},
  axis/.style={draw=axisgray,line width=0.3pt,
    -{Latex[length=6pt,width=4.5pt]}},
  tick/.style={draw=axisgray,line width=0.3pt},
  guide/.style={draw=guidegray,line width=0.25pt},
  endpointguide/.style={draw=axisgray,line width=0.3pt,
    dash pattern=on 3.5pt off 3.5pt},
  graph/.style={line width=1.5pt,line cap=round,line join=round},
  heading/.style={font=\fontsize{13.8}{17}\selectfont\bfseries},
  conclusion/.style={font=\fontsize{13.8}{17}\selectfont\bfseries},
  labeltext/.style={font=\fontsize{13}{15.6}\selectfont},
  brace/.style={draw=axisgray,line width=0.5pt,line cap=round}
]
  \definecolor{sturmianblue}{RGB}{37,79,120}
  \definecolor{smoothteal}{RGB}{8,127,128}
  \definecolor{axisgray}{RGB}{91,98,105}
  \definecolor{guidegray}{RGB}{225,227,228}
  \definecolor{intervalyellow}{RGB}{245,232,199}



  \node[heading,text=sturmianblue] at (278,91)
    {(a) Sturmian sampling};
  \node[heading,text=smoothteal] at (861,91)
    {(b) Smoothed sampling};
  \node[labeltext] at (278,135)
    {$f_0=\lambda\mathbf{1}_{[1-\alpha,1)}$};
  \node[labeltext] at (861,135)
    {$f_\varepsilon\in C^1([0,1))$};

  \foreach \offset/\ylabel in {0/{f_0(x)},583/{f_\varepsilon(x)}} {
    \begin{scope}[xshift=\offset*0.5pt]
      \fill[intervalyellow] (210,200) rectangle (247,397);
      \draw[guide] (69,205) -- (486,205);
      \draw[guide] (69,395) -- (486,395);
      \draw[endpointguide] (486,205) -- (486,395);
      \draw[axis] (50,417) -- (518,417);
      \draw[axis] (50,417) -- (50,177);
      \draw[tick] (46,205) -- (55,205);
      \draw[tick] (46,395) -- (55,395);
      \foreach \tickx in {69,229,486}
        \draw[tick] (\tickx,417) -- (\tickx,423);
      \node[labeltext] at (50,155) {$\ylabel$};
      \node[labeltext] at (27,207) {$\lambda$};
      \node[labeltext] at (27,395) {$0$};
      \node[labeltext] at (69,443) {$0$};
      \node[labeltext] at (229,443) {$1-\alpha$};
      \node[labeltext] at (486,443) {$1$};
      \node[labeltext] at (535,417) {$x$};

      \draw[brace] (210,195)
        .. controls (214,190) and (217,192) .. (221,191)
        .. controls (226,192) and (226,187) .. (229,187)
        .. controls (232,187) and (232,192) .. (237,191)
        .. controls (241,192) and (244,190) .. (247,195);
      \node[labeltext] at (229,169) {$\varepsilon$};
    \end{scope}
  }

  \draw[sturmianblue,line width=0.55pt,
    dash pattern=on 3.5pt off 3.5pt] (229,205) -- (229,395);
  \draw[graph,sturmianblue] (69,395) -- (229,395);
  \draw[graph,sturmianblue] (229,205) -- (486,205);
  \foreach \point in {(69,395),(229,205),(486,395)}
    \fill[sturmianblue] \point circle[radius=2pt];
  \foreach \point in {(229,395),(486,205)}
    \draw[sturmianblue,fill=white,line width=1.1pt]
      \point circle[radius=2.3pt];

  \begin{scope}[xshift=291.5pt]
    \draw[graph,smoothteal]
      (69,395) -- (210,395)
      .. controls (219,395) and (238,205) .. (247,205)
      -- (486,205);
    \foreach \point in {(69,395),(486,395)}
      \fill[smoothteal] \point circle[radius=2pt];
    \draw[smoothteal,fill=white,line width=1.1pt]
      (486,205) circle[radius=2.3pt];
  \end{scope}

  \node[conclusion,text=sturmianblue] at (278,487)
    {Purely singular continuous spectrum};
  \node[conclusion,text=smoothteal] at (861,487)
    {Anderson localization};
  \node[labeltext] at (278,523) {every phase};
  \node[labeltext] at (861,523) {almost every phase};
\end{tikzpicture}
}